\def\red{\color{red}}
\def\rr{{\mathbb R}}
\def\rn{{\mathbb{R}^n}}
\def\zz{{\mathbb Z}}
\def\nn{{\mathbb N}}
\def\cm{{\mathcal M}}
\def\fz{\infty }
\def\az{\alpha}
\def\lz{\lambda}
\def\lf{\left}
\def\r{\right}
\def\ls{\lesssim}
\def\noz{\nonumber}
\def\com{\complement}
\def\loc{{\mathop\mathrm{\,loc\,}}}
\def\supp{\mathop\mathrm{\,supp\,}}
\def\XXint#1#2#3{{\setbox0=\hbox{$#1{#2#3}{\int}$ }
		\vcenter{\hbox{$#2#3$ }}\kern-.6\wd0}}
\DeclareMathOperator{\esssup}{ess\,sup}
\DeclareMathOperator{\essinf}{ess\,inf}
\def\ta{\theta}
\def\B{{B}_{p,q}^\az}
\def\sub{\substack}
\def\f{\frac}
\def\vi{\varphi}
\def\({\left(}
\def \){ \right)}
\def\da{\delta}
\def\lz{{\lambda}}
\def\va{\varepsilon}
\def\CD{{\mathcal D}}
\def\BB{{\mathbb B}}
\newcommand{\wt}{\widetilde}
\newcommand{\p}{\partial}
\def\al{\alpha}
\def\RR{\mathbb{R}}
 \def\supp{\operatorname{supp}}
\def\BB{\mathbb{B}}
\def\ld{\lambda}
\def\og{\omega}
\newcommand{\bp}{ \begin{proof} }
\newcommand{\ep}{ \end{proof} }
\def\XXint#1#2#3{{\setbox0=\hbox{$#1{#2#3}{\int}$ }
		\vcenter{\hbox{$#2#3$ }}\kern-.6\wd0}}
\def\cA{\mathcal{A}}
\newtheorem{theorem}{Theorem}[section]
\newtheorem{lemma}[theorem]{Lemma}
\newtheorem{corollary}[theorem]{Corollary}
\newtheorem{proposition}[theorem]{Proposition}
\theoremstyle{definition}
\newtheorem{remark}[theorem]{Remark}
\newtheorem{definition}[theorem]{Definition}
\renewcommand{\appendix}{\par
	\setcounter{section}{0}%
	\setcounter{subsection}{0}%
	\setcounter{subsubsection}{0}%
	\gdef\thesection{\@Alph\c@section}%
	\gdef\thesubsection{\@Alph\c@section.\@arabic\c@subsection}%
	\gdef\theHsection{\@Alph\c@section.}%
	\gdef\theHsubsection{\@Alph\c@section.\@arabic\c@subsection}%
	\csname appendixmore\endcsname
}
\numberwithin{equation}{section}
\begin{document}
	
\title{\bf\Large Brezis--Van Schaftingen--Yung Formulae in Ball Banach Function
Spaces with Applications to Fractional
Sobolev and Gagliardo--Nirenberg Inequalities
	\footnotetext{\hspace{-0.35cm} 2020 {\it
			Mathematics Subject Classification}. Primary 46E35;
		Secondary 26D10, 42B25, 26A33, 35A23.
		\endgraf {\it Key words and phrases.} Sobolev semi-norm,
		Gagliardo semi-norm, ball Banach function space,
		fractional Sobolev inequality, Gagliardo--Nirenberg
		inequality.
		\endgraf The first author is supported by
		NSERC of Canada Discovery grant RGPIN-2020-03909 and
        this project is also supported by the National
		Natural Science Foundation of China (Grant Nos.
		11971058, 12071197 and 12122102)
		and the National Key Research
		and Development Program of China
		(Grant No. 2020YFA0712900).}}
\author{Feng Dai, Xiaosheng Lin, Dachun Yang\footnote{Corresponding
author, E-mail: \texttt{dcyang@bnu.edu.cn}/{\red August 10, 2022}/Final version.},
	\ Wen Yuan and Yangyang Zhang}
\date{}
\maketitle
\vspace{-0.7cm}
\begin{center}
\begin{minipage}{13cm}
{\small {\bf Abstract}\quad
Let $X$ be a ball Banach function space on ${\mathbb R}^n$.
In this article, under some mild assumptions
about both $X$ and the boundedness of the Hardy--Littlewood maximal operator
on the associate space  of the convexification of $X$,
the authors prove that, for
any locally integrable function $f$ with $\|\,|\nabla f|\,\|_{X}<\infty$,
$$\sup_{\lambda\in(0,\infty)}\lambda\left
\|\left|\left\{y\in{\mathbb R}^n:\ |f(\cdot)-f(y)|
>\lambda|\cdot-y|^{\frac{n}{q}+1}\right\}\right|^{\frac{1}{q}}
\right\|_X\sim \|\,|\nabla f|\,\|_X$$
with the positive equivalence constants independent of $f$,
where the index $q\in(0,\infty)$ is related to $X$
and $|\{y\in{\mathbb R}^n:\ |f(\cdot)-f(y)|
>\lambda|\cdot-y|^{\frac{n}{q}+1}\}|$ is the Lebesgue measure of
the set under consideration.
In particular, when $X:=L^p({\mathbb R}^n)$ with $p\in [1,\infty)$, the above formulae
hold true for any given $q\in (0,\infty)$ with $n(\frac{1}{p}-\frac{1}{q})<1$,
which when $q=p$
are exactly the recent surprising formulae
of H. Brezis, J. Van Schaftingen, and P.-L. Yung, and which in other cases
are new. This generalization has a wide
range of applications and, particularly, enables the
authors to establish new fractional Sobolev and new
Gagliardo--Nirenberg inequalities in various function spaces,
including Morrey spaces, mixed-norm Lebesgue spaces,
variable Lebesgue spaces, weighted Lebesgue spaces,
Orlicz spaces, and Orlicz-slice (generalized amalgam) spaces,
and, even in all these special cases,
the obtained results are new. The proofs of these results strongly depend on
the Poincar\'e inequality, the extrapolation,
the exact operator norm on $X'$ of the Hardy--Littlewood maximal operator, and
the exquisite geometry of $\mathbb{R}^n.$
}
\end{minipage}
\end{center}
	
\vspace{0.2cm}
	
\section{Introduction}

It is well known that, for any given $s\in(0,1)$ and $p\in[1,\infty)$,
the \emph{homogeneous fractional
Sobolev space} $\dot{W}^{s,p}(\rn)$ is defined to be the set of all the measurable
functions $f$ on $\rn$ having the following finite Gagliardo semi-norm
\begin{align}\label{wsp}
\|f\|_{\dot{W}^{s,p}(\rn)}:=&\,\left[\int_{\rn}\int_{\rn}\frac{|f(x)-f(y)|^p}
{|x-y|^{n+sp}}\,dx\,dy\right]^{\frac{1}{p}}\\
=&:\left\| \f {f(x)-f(y)} {|x-y|^{\f np+s}}\right\|_{L^p(\rn\times\rn)}.\noz
\end{align}
These spaces play a key role in harmonic analysis
and partial differential equations (see, for instance, \cite{bbm02,crs10,cv11,H08,HT08,m11,mm11,npv}).

A well-known \emph{drawback} of the Gagliardo semi-norm
in \eqref{wsp} is that one can not
recover the homogeneous Sobolev semi-norm $\|\,|\nabla f|\,\|_{L^p(\rn)}$ when $s=1$,
in which case the integral in \eqref{wsp} is infinite unless $f$
is a constant (see \cite{bbm,ref7}). Here and thereafter, for any
differentiable function $f$ on $\rn$, $\nabla f$ denotes the
gradient of $f$, namely,
$$\nabla f:=\left(\frac{\partial f}{\partial x_1},
\cdots,\frac{\partial f}{\partial x_n}\right),$$
and, for any given $p\in[1,\infty)$, the \emph{homogeneous Sobolev space} $\dot{W}^{1,p}(\rn)$
is defined to be the set of all the
locally integrable functions $f$ on $\rn$ having the following finite semi-norm
$$\|f\|_{\dot{W}^{1,p}(\rn)}:=
\|\,|\nabla f|\,\|_{L^p(\rn)}.$$
An important approach to recover $\|\,|\nabla f|\, \|_{L^p(\rn)}$ out of
Gagliardo semi-norms is due to Bourgain et al.
\cite{bbm01} who in particular proved that,
for any given $p\in[1,\infty)$ and for any $f\in \dot{W}^{1,p}(\rn)$,
\begin{align*}
\lim_{s\in(0,1), s\to1}(1-s)\|f\|_{\dot{W}^{s,p}(\rn)}^p=C_{(p,n)}
 \|\,|\nabla f|\,\|_{L^p(\rn)}^p,
\end{align*}
where $C_{(p,n)}$ is a positive constant depending only on both $p$ and $n.$
Very recently, Brezis et al.  \cite{ref8}
discovered an alternative way to repair this \emph{defect}
by replacing the $L^p$ norm in \eqref{wsp} with the weak
$L^p$ quasi-norm, namely, $\|\cdot\|_{L^{p,\infty}(\rn\times\rn)}$.
For any given $p\in[1,\infty),$ Brezis et al. in
\cite{ref8,BSSY21} proved that there exist positive constants
$C_1$ and $C_2$ such that, for any
 $f\in \dot{W}^{1,p}(\rn)$,
\begin{align}\label{bsy}
C_1 \|\,|\nabla f|\,\|_{L^p(\rn)}\leq \left\|\frac{f(x)-f(y)}{|x-y|
^{\frac{n}{p}+1}}\right\|
_{L^{p,\infty}(\rn\times\rn)}\le C_2 \|\,|\nabla f|\,\|_{L^p(\rn)},
\end{align}
where
\begin{align}\label{weak-norm}
&\left\|\frac{f(x)-f(y)}{|x-y|^{\frac{n}{p}+1}}\right\|_{L^{p,\infty}
(\rn\times\rn)}\\
&\quad:=\sup_{\ld\in(0,\infty)}\lambda
\left|\left\{(x,y)\in\rn\times\rn:\
\frac{|f(x)-f(y)|}{|x-y|^{\frac{n}{p}+1}}>
\lambda\right\}\right|^{\frac{1}{p}},\notag
\end{align}
here and thereafter, for any given $m\in\nn$ and any Lebesgue
measurable set $E\subset \RR^m$, the \emph{symbol}
$|E|$ denotes its Lebesgue measure.
The equivalence \eqref{bsy} in
particular allows Brezis et al. in \cite{ref8} to derive
some surprising alternative estimates of fractional Sobolev and
Gagliardo--Nirenberg
inequalities in some exceptional
cases involving $\dot{W}^{1,1}(\rn)$, where
the \emph{anticipated} fractional Sobolev and
Gagliardo--Nirenberg inequalities fail; see also \cite{bm18,bm19}
for more studies on the Gagliardo--Nirenberg
inequality. For later discussions, we
use the Fubini theorem to write the weak $L^p$-norm
in \eqref{weak-norm} and the corresponding Gagliardo
semi-norm in \eqref{wsp}, respectively, as
\begin{align*}
&\left\|\frac{f(x)-f(y)}{|x-y|^{\frac{n}{p}+1}}\right\|
_{L^{p,\infty}(\rn\times\rn)}\\
&\quad=\sup_{\ld\in(0,\infty)}\lambda\left[\int_{\rn}
\left|\left\{y\in\rn:\ |f(x)-f(y)|>\lambda|x-y|
^{\frac{n}{p}+1}\r\}\right|dx\right]^{1/p}
\end{align*}
and
\begin{align*}
\|f\|_{\dot{W}^{s,p}(\rn)}=\left\|\left[\int_{\rn}
\frac{|f(\cdot)-f(y)|^p}{|\cdot-y|^{n+sp}}\,dy \right]
^{\frac 1p}\right\|_{L^p(\rn)}.
\end{align*}
Consequently,
the estimate \eqref{bsy} takes
the following version: for any
 $f\in \dot{W}^{1,p}(\rn)$,
\begin{align}\label{bsy-2}
&\sup_{\ld\in(0,\infty)}\lambda\left[\int_{\rn}
\left|\left\{y\in\rn:\ |f(x)-f(y)|>\lambda|x-y|
^{\frac{n}{p}+1}\r\}\right|dx\right]^{1/p}\\
&\quad\sim \|\,|\nabla f|\,\|_{L^p(\rn)}\notag
\end{align}
with the positive equivalence constants independent of $f$.
More related works can be found in
\cite{bsvy22,bvy21,dlyyz,DMariv,DMariv20,DT19,gy21,bn16}.

Let us also give a few comments on the proof of
\eqref{bsy} in \cite{ref8}. The proof of the
lower bound is relatively simpler. Indeed, a
substantially sharper lower bound was obtained
in \cite{ref8}, using a method of rotation and
the Taylor remainder theorem. On the other hand,
as was pointed out in \cite{ref8}, the stated
upper bound for any given $p\in(1,\infty)$ can be easily
deduced from the following Lusin--Lipschitz inequality in \cite{b91}:
for any differentiable function $f$ and any $x,y\in\rn$,
\begin{equation}\label{Lusin}
|f(x)-f(y)|\lesssim |x-y| \left[ \mathcal{M}
(|\nabla f|)(x)+\mathcal{M} (|\nabla f|)(y)\right],
\end{equation}
where the implicit positive  constant is independent of
$x,y$, and $f$.
Here and thereafter, the \emph{Hardy--Littlewood
maximal operator} $\cm$
is defined by setting, for any $f\in L_{\loc}^1(\rn)$ (the set of all locally
 integrable functions on $\rn$)
and $x\in\rn$,
\begin{equation}\label{2-4-c}
\cm(f)(x):=\sup_{B\ni x}\frac1{|B|}\int_B|f(y)|\,dy,
\end{equation}
where the supremum is taken over all balls $B\subset\rn$
containing $x$. Thus, \emph{the hard core of the proof of
the upper bound in \eqref{bsy} is the case $p=1$}.
The proof in \cite{ref8}, which actually works for
the full range  $p\in[1,\infty)$,
uses the Vitali covering lemma in one variable  and then a
method of rotation. Thus, the rotation invariance
of the space $L^p(\rn)$ seems to play a vital
role in the proof of
\eqref{bsy} in \cite{ref8}.

The main purpose of this article is to give an essential
extension of the main results
[particularly, the equivalence \eqref{bsy-2}] in \cite{ref8,BSSY21}. Such
extensions are fairly nontrivial because our
setting typically involves function spaces that
are neither rotation invariance nor translation invariance.
Somewhat surprisingly, even when returning to the
standard Lebesgue space $L^p(\rn)$, we  have the
following new estimate (see Theorem \ref{thm-6-12} below):
for any given $p\in[1,\infty)$ and $q\in(0,\infty)$ with $n(\frac{1}{p}-\frac{1}{q})<1$, and for any
$f\in \dot{W}^{1,p}(\rn)$,
\begin{align}\label{1-6-1}
&\sup_{\ld\in(0,\infty)}\lambda\left[\int_{\rn}
\lf|\lf\{y\in\rn:\ |f(x)-f(y)|>\lambda|x-y|
^{\frac{n}{q}+1}\r\}\r|^{\frac{p}{q}} dx \right]
^{\frac 1p}\\\noz
&\quad\sim\|\,|\nabla f|\,\|_{L^p(\rn)},
\end{align}
where the positive equivalence
constants are independent of $f$.
In the case of $p=q$, \eqref{1-6-1} is exactly the
surprising estimate \eqref{bsy}
in \cite{ref8,BSSY21}.

Our main result extends the results
\eqref{bsy}
in \cite{ref8,BSSY21} to
a wide class of function spaces on $\rn$, including
Morrey spaces, mixed-norm Lebesgue spaces, variable
Lebesgue spaces, weighted Lebesgue spaces,
Orlicz spaces, and Orlicz-slice spaces (see,
respectively, Subsections \ref{s6.1} through \ref{s6.6}
below for their histories and definitions). We
treat these spaces in a uniform manner in the
setting of ball quasi-Banach function spaces
recently introduced by Sawano et al. \cite{ref3}.
Ball quasi-Banach function spaces are quasi-Banach
spaces of measurable functions on $\rn$ in which
the quasi-norm is related to the Lebesgue measure
on $\rn$ in an appropriate way (see Definition
\ref{Debqfs} below). These function spaces play an
important role in many branches of  analysis.
They are less restrictive than the classical
Banach function spaces introduced in the
book \cite[Chapter 1] {ref4}. For more studies
on ball quasi-Banach function spaces,
we refer the reader to \cite{ref3,s,ref3,yyy20,zhyy21,cwyz20,yhyy22-1,yhyy22-2}
for the Hardy space associated with ball quasi-Banach function spaces,
to \cite{ref5,h21,wyy20} for the boundedness of operators on ball quasi-Banach function spaces,
and to \cite{ins,WYYZ,hcy21,tyyz21} for the applications of ball quasi-Banach function spaces.

To be precise, in this article, our aim is to establish the following
analogue of \eqref{bsy-2} for the
quasi-norm $\|\cdot\|_X$ of a given ball quasi-Banach function
space $X$ under some mild assumptions about both $X$ and
the boundedness of the Hardy--Littlewood maximal operator
on the associate space  of the convexification of $X$
(see Theorems \ref{theorem3.9} and \ref{theorem3} below): for any
$f\in \dot{W}^{1,X}(\rn)$,
\begin{align}\label{1-13}
\sup_{\ld\in(0,\infty)}\lambda\left
\|\left|\lf\{y\in\rn:\ |f(\cdot)-f(y)|>\lambda|\cdot-y|
^{\frac{n}{q}+1}\r\}\right|^{\frac{1}{q}}
\right\|_X\sim \|\,|\nabla f|\,\|_X,
\end{align}
where the index $q\in(0,\infty)$ is related to $X$ under
consideration and the positive equivalence
constants are independent of $f.$
In particular, when returning
to the special case of $X:=L^p(\rn)$
with $p\in[1,\infty)$, we obtain the
formula \eqref{1-6-1}, which  when $p=q$ is just \eqref{bsy} obtained in
\cite{ref8,BSSY21}, and which when $q\in(0,\infty)$ satisfying 
$n(\frac{1}{p}-\frac{1}{q})<1$ seems new. Moreover, we prove that, when $q\in[1,\infty)$,
the condition  $n(\frac{1}{p}-\frac{1}{q})<1$
is \emph{sharp} in some sense [see Remark \ref{trie}(iii) 
below for the details].
Similarly to the case of $X:=L^p(\rn)$ in \cite{ref8,BSSY21},
\eqref{1-13} also allows us to extend
the fractional Sobolev and  the Gagliardo--Nirenberg
inequalities to the setting of ball quasi-Banach function spaces
(see Corollaries \ref{corollary3.111} and \ref{corollary1001} below).

The formula \eqref{1-13}
gives an equivalence
between the Sobolev semi-norm and the quantity involving the
difference of the function under consideration. It is quite
remarkable that such an equivalence holds true
for a ball Banach function
space $X$.
Indeed, finding an appropriate way to
characterize smoothness of functions
via their finite differences is a
notoriously difficult problem in approximation
theory, even for some simple weighted Lebesgue space
in one dimension (see \cite{K15, MT} and the
references therein). A major difficulty
comes from the fact that difference
operators $\Delta_h f:=f(\cdot +h)-f(\cdot)$
for any $h\in\rn$ are no longer bounded
on general weighted $L^p$ spaces.
It turns out that, via using the extrapolation in \cite{ref6} and
the exact operator norm on the associate space of $X$ of the Hardy--Littlewood
maximal operator, the estimate \eqref{1-13} in $X$
follows from the following  estimates in
weighted Lebesgue spaces with Muckenhoupt weights.
\begin{theorem}\label{thm-1-3}
Let $p\in[1,\infty)$ and $\og\in A_{1}(\rn)$.
Then, for any $f\in \dot{W}^{1,p}_\omega(\rn)$,
\begin{equation}\label{1-14a}
  \sup_{\ld\in(0,\infty)} 	\ld^p \int_{\RR^n}
	 \int_{\rn} \mathbf{1}_{_{E_f(\ld,p)}}(x, y)\, dy\,
	 \og(x)\, dx \sim\int_{\RR^n} |
	 \nabla f(x)|^p \og(x) \, dx,
\end{equation}
where, for any $\ld\in(0,\infty),$
	$$E_{f}(\ld,p):=\lf\{ (x, y)\in \RR^n\times \RR^n:\
	|f(x)-f(y)|>\ld |x-y|^{\f np+1} \r\}$$
	and the positive equivalence
	constants are independent of $f.$
\end{theorem}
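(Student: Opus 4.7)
My plan is to prove the equivalence in \eqref{1-14a} by establishing its two directions separately; as observed in \cite{ref8}, the lower bound is the more direct and geometric of the two, while the upper bound splits naturally into a routine case when $p\in(1,\infty)$ and a substantially more delicate one when $p=1$.

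For the lower bound I would imitate the Taylor-expansion and rotation scheme from \cite{ref8}. At almost every $x$ with $\nabla f(x)\neq 0$, set $\theta_{0}:=\nabla f(x)/|\nabla f(x)|$. For any $\theta\in\mathbb{S}^{n-1}$ with $\theta\cdot\theta_{0}\geq\tfrac12$ and any sufficiently small $r>0$, Taylor's theorem gives $|f(x+r\theta)-f(x)|\geq\tfrac{r}{4}|\nabla f(x)|$. Forcing this to exceed $\lambda r^{n/p+1}$ confines $r$ to an interval of length $\sim(|\nabla f(x)|/\lambda)^{p/n}$ and produces a truncated cone of admissible $y$'s of volume $\gtrsim(|\nabla f(x)|/\lambda)^{p}$, valid once $\lambda$ exceeds a threshold $\lambda_{0}(x)$. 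Multiplying by $\lambda^{p}$, integrating against $\omega(x)\,dx$, and letting $\lambda\to\infty$ via dominated convergence (with dominant $|\nabla f|^{p}\omega\in L^{1}$) yields the desired $\sup_{\lambda}\lambda^{p}\int\!\int\mathbf{1}_{E_{f}(\lambda,p)}\,dy\,\omega\,dx\gtrsim\int|\nabla f|^{p}\omega\,dx$.

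For the upper bound when $p\in(1,\infty)$, I would apply the Lusin--Lipschitz inequality \eqref{Lusin} to obtain the decomposition $E_{f}(\lambda,p)\subset E^{(1)}\cup E^{(2)}$, where $E^{(j)}$ is the portion on which the $j$-th maximal-function term of \eqref{Lusin} dominates. Freezing $x$ confines the $y$-slice of $E^{(1)}$ to a ball of radius $\sim(\mathcal{M}(|\nabla f|)(x)/\lambda)^{p/n}$, whence $\int\mathbf{1}_{E^{(1)}}(x,y)\,dy\lesssim\lambda^{-p}[\mathcal{M}(|\nabla f|)(x)]^{p}$. Reversing the order of integration for $E^{(2)}$ and invoking the $A_{1}$ bound $\int_{B(y,R)}\omega(x)\,dx\lesssim R^{n}\omega(y)$ yields the symmetric estimate. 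Since $\omega\in A_{1}\subset A_{p}$, the Muckenhoupt theorem delivers $\|\mathcal{M}(|\nabla f|)\|_{L^{p}(\omega)}\lesssim\||\nabla f|\|_{L^{p}(\omega)}$, which closes the estimate in this range.

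The main obstacle is the endpoint $p=1$, where $\mathcal{M}$ is only of weak type $(1,1)$ on $L^{1}(\omega)$ and the preceding strategy produces the divergent quantity $\int\mathcal{M}(|\nabla f|)\omega$ on the right-hand side. To circumvent this I would adapt the one-dimensional Vitali-type covering argument of \cite{ref8,BSSY21}: after passing to polar coordinates $y=x+r\theta$, the problem localizes to a weak-type estimate along each ray, which is extracted via a stopping-time selection of a disjoint subfamily from the super-level set $\{r>0:\ |f(x+r\theta)-f(x)|>\lambda r^{n+1}\}$ and controlled piecewise by the $L^{1}$-norm of the directional derivative. Integrating out the angular variable via Fubini and using only the pointwise $A_{1}$ comparison $\frac{1}{|B|}\int_{B}\omega\lesssim\omega$ on the perpendicular slabs (rather than any $L^{1}(\omega)$ mapping property of $\mathcal{M}$) should then recover $\int|\nabla f|\omega$.
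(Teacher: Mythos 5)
The lower bound part of your proposal is essentially the same as the paper's: Taylor expansion in polar coordinates, bracketing the quotient $|f(x+r\theta)-f(x)|/r$ between $|\nabla f(x)\cdot\theta|\pm O(r)$, and passing to the limit $\lambda\to\infty$. Your upper bound for $p\in(1,\infty)$ via Lusin--Lipschitz and $L^p_\omega$-boundedness of $\mathcal{M}$ is also correct and is exactly the route the paper dismisses as ``not enough for our purpose because it excludes the endpoint case $p=1$.''

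The genuine gap is in your treatment of $p=1$. You propose to ``adapt the one-dimensional Vitali-type covering argument of \cite{ref8,BSSY21}'' by passing to polar coordinates and running a stopping-time selection along each ray. This is precisely the strategy the paper explicitly rules out: ``the proof of \eqref{bsy} in the unweighted case in the article \cite{ref8,BSSY21} is based on a method of rotation, and seems to be inapplicable in the weighted case here.'' The obstruction you do not address is concrete: after freezing a direction $\theta$, the one-dimensional covering estimate along each ray controls the weighted measure of the exceptional set only if the restriction of $\omega$ to that line is an $A_1$ weight on the line, which fails for a general $\omega\in A_1(\rn)$. Your remark that one should use ``only the pointwise $A_1$ comparison on perpendicular slabs'' does not resolve this, because the covering argument produces intervals \emph{along} the ray, not slabs transverse to it; the Carleson-type quantity one must bound is the $\omega$-measure of a union of one-dimensional intervals sitting inside $\rn$, for which $n$-dimensional averages are of no help.

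The paper circumvents this with an entirely different mechanism. Instead of the Lusin--Lipschitz pointwise bound, it decomposes $E_f$ according to whether $|f(x)-f_{B_{x,y}}|$ or $|f(y)-f_{B_{x,y}}|$ is large (where $B_{x,y}$ is the ball with center $\tfrac{x+y}{2}$ and radius $|x-y|$), bounds these deviations by a geometric series of dyadic averages of $|\nabla f|$ via the Poincar\'e inequality (Lemma \ref{lem-1-2a}), selects maximal dyadic cubes from the three adjacent dyadic grids of Lemma \ref{lem-HK}, and sums using the $A_1$ condition at the cube level. This handles all $p\in[1,\infty)$ in one pass, the weight enters only through $\omega(Q)$ on each selected cube (where $A_1$ applies directly), and no reduction to lines is needed. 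For the $E^{(2)}$ piece with $q<p$ it additionally invokes the Rubio de Francia iteration (Lemma \ref{lemma3.77}) to dualize. You should replace the rotation/Vitali sketch for $p=1$ with an argument along these lines, or supply a genuinely new reason why the one-dimensional slices of $\omega$ can be controlled.
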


Indeed, we prove an improved version of Theorem
\ref{thm-1-3} (see Theorem \ref{thm-4-1} below). The
$A_p(\rn)$-condition on the weights $\og$ in Theorem
\ref{thm-1-3} is necessary in some sense
in the case of $n=1$ (see Theorem \ref{thm-4-2} below).
Note that, unlike the integral $\int_{\rn} \int_{\rn}
\cdots \,dx\, dy$ in the estimate \eqref{bsy},
the integral $\int_{\rn}
\int_{\rn} \cdots \,dy \og(x) \,dx$ in \eqref{1-14a} is
not symmetric with respect to both $x$ and $y$, which
causes additional technical difficulties in the proof
of the upper bound in \eqref{1-14a}. Indeed,
the proof of the upper estimate in \eqref{1-14a} is
fairly nontrivial. On one hand, the proof of \eqref{bsy}
in the unweighted case in the article \cite{ref8,BSSY21} is
based on a method of rotation,
and  seems to be inapplicable in the weighted
case here. On the other hand, \emph{using the
Lusin-Lipschitz inequality \eqref{Lusin} would give
the stated upper bound in \eqref{1-14a} for any given $p\in(1,\infty)$,
which is not enough for our purpose because it excludes the endpoint case $p=1$}.
Recall that \emph{the hard core of both the main results in \cite{ref8,BSSY21}
and also Theorem \ref{thm-1-3} is the endpoint case
$p=1$} [see Remark \ref{trie}(ii) for more details]. Instead of the Vitali covering lemma, we use several
adjacent systems of dyadic cubes in $\rn$
and hence the exquisite geometry of $\rn$ (see, for instance, \cite[Section 2.2]{LSU12}) to
overcome these obstacles.

The remainder of this article is organized as follows.

In Section \ref{sec-density}, we first
introduce the homogeneous ball Banach
Sobolev space $\dot{W}^{1,X}(\rn)$ which extend the concept of the homogeneous
Sobolev space $\dot{W}^{1,p}(\rn)$ to the ball
Banach function space. Motivated by
\cite{HK95},
we show that
the set of  all infinitely  differentiable  functions whose
gradients have compact supports is dense in  $\dot{W}^{1,X}(\rn)$
(see Theorem \ref{density} below),
which plays a key role in the proofs of both Theorems \ref{thm-1-3}
and \ref{theorem3.9}.  To prove
Theorem \ref{density},
we first establish the Poincar\'e inequality
on the homogeneous ball Banach
Sobolev space $\dot{W}^{1,X}(\rn)$
(see Proposition \ref{zidong} below).

Section \ref{sec:4} is devoted to the proof of  Theorem
\ref{thm-1-3} which characterizes the Sobolev semi-norm
in weighted Lebesgue spaces. To show Theorem \ref{thm-1-3},
we prove a more general result (see Theorem \ref{thm-4-1}
below), which plays an essential role in the proof of
Theorem \ref{theorem3.9}. First, we establish
the lower
estimate of Theorem \ref{thm-4-1} in ball quasi-Banach
function spaces
(see Theorem \ref{theorem4.88} below),
which is a part of Theorem \ref{theorem3.9}.
In the proof of the upper estimate of Theorem \ref{thm-4-1},
as was aforementioned,
since the  integral in the estimate
\eqref{1-1e} of Theorem \ref{thm-4-1} is not symmetric,
we use several adjacent systems of dyadic cubes in $\rn$
(see Lemma \ref{lem-HK} below) and hence the exquisite
geometry of $\rn$  to overcome this obstacle.
Finally, we prove Theorem \ref{thm-4-2} which
shows that the $A_p(\rn)$-condition on
the weight $\omega$ in Theorem \ref{thm-1-3} is \emph{necessary} in
some sense in the case of $n=1.$

In Section \ref{sec:5}, we generalize \eqref{bsy}
to ball Banach function spaces under some mild assumptions.
However, the calculations in \cite{ref8} need to use the following three crucial
properties of $L^p(\rn)$, which are not available for ball Banach function spaces:
the rotation invariance, the translation invariance, and the explicit expression of the norm.
Borrowing some
ideas from the extrapolation theorem in \cite{ref6}, using Theorem \ref{thm-4-1}
and the exact operator norm on the associate space of $X$ of the Hardy--Littlewood
maximal operator, we  establish the characterization
of the Sobolev semi-norm in ball
Banach function spaces (see both Theorems \ref{theorem3.9} and \ref{theorem3}
below). As applications,
we also establish alternative fractional Sobolev
and alternative Gagliardo--Nirenberg
inequalities in ball Banach function spaces
(see Corollaries \ref{corollary3.111} and \ref{corollary1001} below).

In Section \ref{sec:6}, we apply all these results
obtained in Section \ref{sec:5}, respectively, to
$X:=M_r^\alpha({\mathbb R}^n)$ (the Morrey space),
$X:=L^{p(\cdot)}({\mathbb R}^n)$ (the variable Lebesgue space),
$X:=L^{\vec{p}}(\rn)$ (the mixed-norm Lebesgue space),
$X:=L^p_{\omega}(\rn)$ (the weighted Lebesgue space),
$X:=L^\Phi({\mathbb R}^n)$ (the Orlicz space),
or $X:=(E_\Phi^r)_t({\mathbb R}^n)$
(the Orlicz-slice space or the generalized amalgam space),
all these results are totally new.

Finally, we make some conventions on notation.
Let $\nn:=\{1,2,\ldots\}$ and $\zz_+:=\nn\cup\{0\}$.
 We always denote by $C$ a \emph{positive constant}
which is independent of the main parameters, but it
may vary from line to line. We also use
$C_{(\alpha,\beta,\ldots)}$ to denote a positive
constant depending on the indicated parameters $\alpha,
\beta,\ldots.$ The \emph{symbol} $f\lesssim g$ means
that $f\le Cg$. If $f\lesssim g$ and $g\lesssim f$,
we then write $f\sim g$. If $f\le Cg$ and $g=h$ or
$g\le h$, we then write $f\ls g\sim h$ or $f\ls g\ls h$,
\emph{rather than} $f\ls g=h$ or $f\ls g\le h$.
We use $\mathbf{0}$ to denote the \emph{origin} of $\rn$.
If $E$ is a subset of $\rn$, we denote by $\mathbf{1}_E$
its characteristic function and, for any
measurable set $E\subset\rn$ with $|E|<\infty$
and for any $f\in L^1_{\loc}(\rn)$, let
\begin{align*}
	\fint_Ef(x)\,dx:=
	\f1{|E|}\int_{E}f(x)\,dx=:f_E.
\end{align*}
For any $x\in\rn$ and $r\in(0,\infty),$ let
$B(x,r):=\{y\in\rn:\
|x-y|<r\}$ and
\begin{align}\label{4}
	\mathbb{B}:=\{B(x,r):\ x\in\rn\ \mathrm{and}
	\ r\in(0,\infty)\}.
\end{align}
For any $\alpha\in(0,\infty)$ and any ball
$B:=B(x_B,r_B)$ in $\rn$, with $x_B\in\rn$ and
$r_B\in(0,\infty)$, let $\alpha B:=B(x_B,\alpha r_B)$.
Also, for any
$q\in[1,\infty]$, we denote by $q'$ its
\emph{conjugate exponent}, namely, $1/q+1/q'=1$.
Finally, when we prove a lemma,
proposition, theorem, or corollary, we always
use the same symbols in the wanted proved lemma,
proposition, theorem, or corollary.

\section{Density in $\dot{W}^{1,X}(\rn)$}\label{sec-density}

In this section,
We first extend the concept of homogeneous Sobolev spaces to the
ball Banach function space $\dot{W}^{1,X}(\rn)$. Moreover,
we establish the
Poincar\'e inequality  on
the homogeneous ball Banach Sobolev space $\dot{W}^{1,X}(\rn)$
and
then show that the set of  all infinitely  differentiable
functions whose gradients have compact supports is dense in $\dot{W}^{1,X}(\rn)$
(Theorems \ref{zidong} and \ref{density} below).

First, we recall some preliminaries on ball
quasi-Banach function spaces introduced in \cite{ref3}.
Denote by the \emph{symbol} $\mathscr M(\rn)$ the
set of all measurable functions on $\rn$.
\begin{definition}\label{Debqfs}
	A quasi-Banach space $X\subset{
		\mathscr M}(\rn)$, equipped with
	a quasi-norm
	$\|\cdot\|_X$ which makes sense for
	all functions in ${\mathscr M}(\rn)$,
	is called a \emph{ball quasi-Banach
		function space} if it satisfies that
	\begin{itemize}
		\item[(i)] for any $f\in {\mathscr M}(\rn)$,
		$\|f\|_X=0$ implies that $f=0$ almost everywhere;
		\item[(ii)] for any $f,g\in {\mathscr M}(\rn)$,
		$|g|\le |f|$ almost everywhere implies
		that $\|g\|_X\le\|f\|_X$;
		\item[(iii)] for any $\{f_m\}_{m\in\nn}
		\subset {\mathscr M}(\rn)$
		and $f\in {\mathscr M}(\rn)$, $0\le f_m\uparrow f$
		almost everywhere as $m\to\fz$ implies that
		$\|f_m\|_X\uparrow\|f\|_X$ as $m\to\fz$;
		\item[(iv)] $B\in\BB$ implies that $\mathbf
		{1}_B\in X$, where $\BB$ is the same as in \eqref{4}.
	\end{itemize}
	Moreover, a ball quasi-Banach function space
	$X$ is called a \emph{ball Banach function space}
	if the norm of $X$ satisfies the triangle
	inequality: for any $f,g\in X$,
	\begin{align*}
		\|f+g\|_X\le \|f\|_X+\|g\|_X,
	\end{align*}
	and that, for any $B\in \BB$, there exists a
	positive constant $C_{(B)}$, depending on $B$, such that,
	for any $f\in X$,
	\begin{equation*}\label{eq2.3}
		\int_B|f(x)|\,dx\le C_{(B)}\|f\|_X.
	\end{equation*}
\end{definition}
\begin{remark}
\begin{itemize}
\item[{\rm(i)}] Let $X$ be a ball
quasi-Banach function space on $\rn$.
By \cite[Remark 2.5(i)]{yhyy22-1} (see also \cite[Remark 2.6(i)]{yhyy22-2}),
we conclude that, for any $f\in {\mathscr M}
(\rn)$, $\|f\|_X=0$ if and only if $f=0$
almost everywhere.

\item[{\rm(ii)}] As was mentioned in
\cite[Remark 2.5(ii)]{yhyy22-1} (see also \cite[Remark 2.6(ii)]{yhyy22-2}),
we obtain an equivalent formulation of
Definition \ref{Debqfs}
via replacing any ball $B$ by any bounded
measurable set $E$ therein.

\item[{\rm(iii)}] In Definition \ref{Debqfs},
if we replace any ball $B$ by any measurable
set E with $|E|<\infty$,
we obtain the definition of (quasi-)Banach
function spaces originally introduced in
\cite[Definitions 1.1 and 1.3]{ref4}.
Thus, a (quasi-)Banach function space
is always a ball (quasi-)Banach function space.

\item[{\rm(iv)}] By \cite[Theorem 2]{dfmn21},
we conclude that both (ii) and (iii)
of Definition \ref{Debqfs} imply that
any ball quasi-Banach function space is complete.
\end{itemize}
\end{remark}

We recall the   definition of ball Banach function
spaces with absolutely continuous norm;
see
\cite[Definition 3.1]{ref4} and \cite[Definition 3.2]{wyy20}.

\begin{definition}
	A ball Banach function space $X$ is said to have an
	\emph{absolutely continuous norm} if, for any $f\in X$
	and any sequence of measurable sets  $\{E_j\}_{j\in\nn}\subset \rn$
	satisfying that $\mathbf{1}_{E_j}\to 0$ almost everywhere as $j\to\fz$,
	$\|f\mathbf{1}_{E_j}\|_X\to 0$ as $j\to\fz$.
\end{definition}
We extend the concept of homogeneous Sobolev spaces to ball Banach function spaces.
\begin{definition}
	Let $X$ be a ball Banach function space.
	The \emph{homogeneous ball Banach Sobolev space $\dot{W}^{1,X}(\rn)$}
	is defined to be the set of all the
	distributions $f$ on $\rn$ such that
	$|\nabla f|\in X$ equipped with the quasi-norm
	\begin{equation*}
		\|f\|_{\dot{W}^{1,X}(\rn)}:=\left\|\,|\nabla f|\,\right\|_X,
	\end{equation*}
	where $\nabla f:=(\partial_1f,\cdots,\partial_nf)$
	denotes the distributional gradient of $f$.
\end{definition}
In what follows, for any given $r\in(0,\infty),$ we use $L^r_{\loc}(\rn)$ to
denote the set of all locally $r$-order integrable functions on $\rn$.
For any given $r\in(0,\infty)$, the \emph{centered ball average operator} $B_r$
is defined by setting, for any $f\in L_{\loc}^1(\rn)$ and $x\in\rn$,
\begin{align}\label{pingjun}
	B_r(f)(x):=\frac1{|B(x,r)|}\int_{B(x,r)}|f(y)|\,dy.
\end{align}
In what follows, we denote by the symbol $C_{\mathrm{c}}(\rn)$ [resp., $C^\fz(\rn)$]
the set of all continuous functions with compact support [resp., all
infinitely differentiable functions] on $\rn$.
The following  conclusion is the main result of this section.
\begin{theorem}\label{density}
	Let $X$ be a ball Banach function space. Assume that
	the centered ball average operators $\{B_r\}_{r\in(0,\infty)}$ are uniformly bounded on $X$ and
	$X$ has an absolutely continuous norm.
	Then, for any $f\in \dot{W}^{1,X}(\rn)$, there exists
	a sequence  $\{f_{k}\}_{k\in\nn}\subset C^\infty(\rn)$ with
   $|\nabla f_k|\in C_{\rm{c}}(\rn)$ for any $k\in\nn$ such that
	\begin{align*}
		\lim_{k\to\infty}\|f-f_k\|_{\dot{W}^{1,X}(\rn)}=0\ \ \text{and}\ \
		\lim_{k\to\infty}\|(f-f_k)\mathbf{1}_{B(\mathbf{0},R)}\|_{X}=0
	\end{align*}
	for any  $R\in(0,\infty)$.
\end{theorem}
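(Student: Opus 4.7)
The plan is a classical truncation-and-mollification scheme, but with the additive constants chosen so that the absolute continuity of $\|\cdot\|_X$ can absorb the bad terms. Fix a nonnegative radial mollifier $\rho\in C^\infty(\rn)$ with $\int\rho=1$ and $\supp\rho\subset B(\mathbf{0},1)$, and set $\rho_\epsilon(x):=\epsilon^{-n}\rho(x/\epsilon)$. Fix also cutoffs $\eta_k\in C^\infty(\rn)$ with compact support, satisfying $\mathbf{1}_{B(\mathbf{0},k)}\le \eta_k\le \mathbf{1}_{B(\mathbf{0},2k)}$ and $\||\nabla\eta_k|\|_{L^\infty(\rn)}\le 2/k$. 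I will define
$$f_k:=\rho_{1/k}*\bigl[(f-c_k)\eta_k\bigr]+c_k,$$
where $c_k\in\RR$ is to be chosen in the next step. Each $f_k$ lies in $C^\infty(\rn)$, and $\nabla f_k=\rho_{1/k}*\nabla[(f-c_k)\eta_k]$ is supported inside $\overline{B(\mathbf{0},2k+1)}$, so $|\nabla f_k|\in C_{\rm c}(\rn)$.

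The first step is to show that $\rho_\epsilon*g\to g$ in $X$ for every $g\in X$. The pointwise estimate $|\rho_\epsilon*g|\le C_\rho\, B_\epsilon(|g|)$, together with the assumed uniform boundedness of $\{B_r\}_{r\in(0,\infty)}$ on $X$, yields $\|\rho_\epsilon*g\|_X\le C\|g\|_X$ uniformly in $\epsilon$. A truncation argument using the absolute continuity of $\|\cdot\|_X$ shows that bounded continuous functions with compact support are dense in $X$; for such $g$, $\rho_\epsilon*g\to g$ uniformly on a fixed compact set and hence in $X$, and the uniform bound extends this to all $g\in X$ by a standard three-$\epsilon$ argument. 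The second step is to take $c_k$ to be the average of $f$ over the annulus $A_k:=B(\mathbf{0},2k)\setminus\overline{B(\mathbf{0},k)}$, so that an annular version of Proposition \ref{zidong} (obtained from the ball case via a scale-invariant chain-of-balls covering of $A_k$) gives
$$\bigl\|(f-c_k)\mathbf{1}_{A_k}\bigr\|_X\le Ck\,\bigl\|\,|\nabla f|\,\mathbf{1}_{A_k}\bigr\|_X.$$
Decomposing $\nabla f_k-\nabla f=\rho_{1/k}*[(\eta_k-1)\nabla f]+[\rho_{1/k}*\nabla f-\nabla f]+\rho_{1/k}*[(f-c_k)\nabla\eta_k]$, the first term is controlled by $C\|(1-\eta_k)|\nabla f|\|_X$ which tends to $0$ by absolute continuity, the second tends to $0$ by Step 1, and the third is bounded by $Ck^{-1}\|(f-c_k)\mathbf{1}_{A_k}\|_X\le C\||\nabla f|\mathbf{1}_{A_k}\|_X$, which also tends to $0$ by absolute continuity.

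For the local convergence, fix $R\in(0,\infty)$; for any $k\ge 2R+1$ the convolution integral defining $f_k(x)$ at $x\in B(\mathbf{0},R)$ only samples $f$ on a region where $\eta_k\equiv 1$, so $f_k=\rho_{1/k}*f$ on $B(\mathbf{0},R)$. Writing $f=(f-f_{B(\mathbf{0},3R)})+f_{B(\mathbf{0},3R)}$ and invoking Proposition \ref{zidong}, the truncated function $(f-f_{B(\mathbf{0},3R)})\mathbf{1}_{B(\mathbf{0},3R)}$ belongs to $X$, so applying Step 1 to it yields $\|(f-\rho_{1/k}*f)\mathbf{1}_{B(\mathbf{0},R)}\|_X\to 0$. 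The main obstacle is precisely the choice of the constants $c_k$: with naive choices such as $c_k=0$ or $c_k=f_{B(\mathbf{0},2k)}$, the factor $1/k$ coming from $|\nabla\eta_k|$ is exactly cancelled by the size of the ball-Poincar\'e constant on $B(\mathbf{0},2k)$, producing a tail term $\||\nabla f|\mathbf{1}_{B(\mathbf{0},2k)}\|_X$ that need not decay; the annular centering replaces this by the genuinely vanishing tail $\||\nabla f|\mathbf{1}_{A_k}\|_X$, and here absolute continuity of $\|\cdot\|_X$ is indispensable.
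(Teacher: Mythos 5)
Your construction is essentially the paper's (annular centering plus cutoff, with mollification folded into a single step rather than done in a preliminary proposition), and the analysis of the three error terms is sound for $n\geq 2$. However, there is a genuine gap in the case $n=1$: the annular Poincar\'e inequality you invoke,
\[
\bigl\|(f-c_k)\mathbf{1}_{A_k}\bigr\|_X\le Ck\,\bigl\|\,|\nabla f|\,\mathbf{1}_{A_k}\bigr\|_X,\qquad A_k:=B(\mathbf{0},2k)\setminus\overline{B(\mathbf{0},k)},
\]
is false when $n=1$, because $A_k=(-2k,-k)\cup(k,2k)$ is disconnected and no chain of balls inside $A_k$ joins its two components. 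Concretely, take $f$ smooth with $f'$ a compactly supported bump inside $(-k,k)$ and with distinct limits $f(-\infty)=a_-\ne a_+=f(+\infty)$; then $|\nabla f|\mathbf{1}_{A_k}\equiv 0$ while $(f-c)\mathbf{1}_{A_k}$ is a nonzero pair of constants for every choice of $c$, so the left-hand side cannot be controlled. This is not merely a technicality: for $X=L^1(\mathbb{R})$ the tail term $\|(f-c_k)\nabla\eta_k\|_{X}\sim k^{-1}\bigl(|a_- - c_k|+|a_+ - c_k|\bigr)\|\mathbf{1}_{A_k}\|_{L^1}\sim |a_- - c_k|+|a_+ - c_k|$ simply does not vanish for any choice of $c_k$. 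Note that Proposition~\ref{zidong} in the paper explicitly restricts the annular domain to $n\geq 2$ for exactly this reason. The paper circumvents this for $n=1$ by first mollifying (Proposition~\ref{nuanuan}) and then, instead of multiplying by a cutoff, using the one-dimensional antiderivative construction $f_k(x):=\int_0^x f'(t)\eta_k(t)\,dt+f(0)$, which yields $f_k'=f'\eta_k$ exactly and hence $\|f'-f_k'\|_X=\|(1-\eta_k)f'\|_X\to 0$ by absolute continuity with no Poincar\'e step at all. You would need to adopt some analogous device for $n=1$; the annular-centering trick alone does not close the argument there.
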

\begin{remark}
Let $X$ be the same as in Theorem \ref{density}.
Here and thereafter,
the \emph{symbol} $C_{\mathrm{c}}^\fz(\rn)$ denotes
the set of all infinitely differentiable functions on $\rn$
with compact support.
We  point out that,  when $n\in[2,\infty)\cap\nn$,  a
slight modification of the proof
that used in  Theorem \ref{density} shows that
$C^\infty_{\mathrm{c}}(\rn)$ is density in $\dot{W}^{1,X}(\rn)$
which when $X:=L^p(\rn)$ with $p\in[1,\infty)$ is a part of \cite[Theorem 4]{HK95}.
\end{remark}
Recall that, for any given bounded open set $U\subset\rn$,
the \emph{Sobolev space} $W^{1,1}(U)$
is defined to be the set of all the integrable functions $f$ on $U$
such that
$$\|f\|_{W^{1,1}(U)}:=
\|f\|_{L^1(U)}+\|\,|\nabla f|\,\|_{L^1(U)}<\infty,$$
where $\nabla f:=(\p_1f,\cdots,\p_n f)$
is the distributional gradient of $f$.
Moreover, we denote by $W^{1,1}_{\mathrm{loc}}(\rn)$
the set of
all the locally integrable functions $f$ on $\rn$ satisfying that
$f\in W^{1,1}(U)$ for any bounded open set $U\subset\rn$.
To prove Theorem \ref{density},
we need the following conclusion.
\begin{lemma}\label{jubu}
	Let
	$X$ be a ball Banach function space.
	Then
	\begin{align*}
		\dot{W}^{1,X}(\rn)\subset
		W^{1,1}_{\rm{loc}}(\rn).
	\end{align*}
\end{lemma}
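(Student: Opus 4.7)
The plan is to show two things for any $f\in\dot{W}^{1,X}(\rn)$ and any bounded open set $U\subset\rn$: first, that $|\nabla f|\in L^1(U)$; and second, that $f$ itself is represented by a function in $L^1(U)$. The first half is an immediate consequence of the definition of a ball Banach function space. Namely, choose any ball $B\supset U$; from Definition \ref{Debqfs}, there exists a positive constant $C_{(B)}$ such that
\begin{equation*}
\int_{U}|\nabla f(x)|\,dx\le\int_{B}|\nabla f(x)|\,dx\le C_{(B)}\left\|\,|\nabla f|\,\right\|_{X}=C_{(B)}\|f\|_{\dot{W}^{1,X}(\rn)}<\infty,
\end{equation*}
so $|\nabla f|\in L^{1}_{\mathrm{loc}}(\rn)$.

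The main point is to upgrade the a priori distribution $f$ to a locally integrable function. First I would fix a standard mollifier $\rho_{\varepsilon}(x):=\varepsilon^{-n}\rho(x/\varepsilon)$ with $\rho\in C^{\infty}_{\mathrm{c}}(\rn)$, $\rho\ge0$, and $\int_{\rn}\rho=1$, and set $f_{\varepsilon}:=f\ast\rho_{\varepsilon}\in C^{\infty}(\rn)$. Then $\nabla f_{\varepsilon}=(\nabla f)\ast\rho_{\varepsilon}$, and, since $|\nabla f|\in L^{1}_{\mathrm{loc}}(\rn)$ by the previous paragraph, $\nabla f_{\varepsilon}\to\nabla f$ in $L^{1}_{\mathrm{loc}}(\rn)$ as $\varepsilon\to0^{+}$. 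Fixing any ball $B:=B(x_{0},r)\subset\rn$ and applying the classical Poincar\'e inequality on $B$ to each $f_{\varepsilon}$, I obtain a positive constant $C$, independent of $\varepsilon$, such that
\begin{equation*}
\bigl\|f_{\varepsilon}-(f_{\varepsilon})_{B}\bigr\|_{L^{1}(B)}\le Cr\|\nabla f_{\varepsilon}\|_{L^{1}(B)},
\end{equation*}
and, more importantly, the same inequality applied to $f_{\varepsilon_{1}}-f_{\varepsilon_{2}}$ shows that $\{f_{\varepsilon}-(f_{\varepsilon})_{B}\}_{\varepsilon>0}$ is Cauchy in $L^{1}(B)$. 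Denote its limit by $g_{B}\in L^{1}(B)$.

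To finish, I would combine this $L^{1}$-convergence with the distributional convergence $f_{\varepsilon}\to f$ in $\mathcal{D}'(\rn)$. The sequence of constants $(f_{\varepsilon})_{B}=f_{\varepsilon}-[f_{\varepsilon}-(f_{\varepsilon})_{B}]$ must then converge in $\mathcal{D}'(B)$ to $f-g_{B}$, which is necessarily a real constant $c_{B}$; hence $f=g_{B}+c_{B}$ almost everywhere on $B$, so $f\in L^{1}(B)$. Covering $U$ (or all of $\rn$) by countably many such balls and noting that the local representatives must agree a.e.\ on overlaps (since they both equal $f$ as distributions), I conclude that $f\in L^{1}_{\mathrm{loc}}(\rn)$ with distributional gradient the same $L^{1}_{\mathrm{loc}}$ function $\nabla f$, and therefore $f\in W^{1,1}(U)$ for every bounded open $U\subset\rn$.

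The only genuinely nontrivial step is the passage from ``distributional gradient in $L^{1}_{\mathrm{loc}}$'' to ``function in $L^{1}_{\mathrm{loc}}$''; this is a classical fact but in our setting it is cleanest to get it via the mollification-plus-Poincar\'e argument above, because we cannot use any norm estimate stronger than what Definition \ref{Debqfs} provides. Everything else is bookkeeping inside the ball Banach function space structure.
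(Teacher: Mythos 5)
Your argument is correct. The opening step is identical to the paper's: you invoke the defining property of a ball Banach function space (that $\int_B |g|\,dx\le C_{(B)}\|g\|_X$) to conclude $|\nabla f|\in L^1_{\loc}(\rn)$. Where you diverge is in handling the passage from ``distributional gradient in $L^1_{\loc}$'' to ``$f$ itself in $L^1_{\loc}$'': the paper simply cites \cite[Section 1.1.2]{m11} for this classical fact, whereas you reprove it from scratch via mollification, the Poincar\'e inequality applied to $f_{\varepsilon_1}-f_{\varepsilon_2}$, and an identification of the limit in $\mathcal{D}'(B)$. Your version is fully self-contained and spells out why the constants $(f_\varepsilon)_B$ must converge (by testing against a single test function with integral one), at the cost of a page versus the paper's two lines; the paper's version is terser but relies on the reader having access to Maz'ya's text. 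Both routes are sound, and the substance of the lemma really lies in the first step that you both share; the rest is a standard Sobolev-space fact, so the choice between citing and proving is a matter of exposition style.
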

\begin{proof}
Let $f\in \dot{W}^{1,X}(\rn)$. Then $|\nabla f|\in X.$
From this and the definition of  $X$, it follows that $|\nabla f|\in L^1_{\loc}(\rn)$. This, together with
\cite[Section 1.1.2]{m11}, implies that $f\in L^1_{\rm{loc}}(\rn)$. This finishes the proof of Lemma \ref{jubu}.
\end{proof}
We now establish the following
Poincar\'e inequality
on the homogeneous ball Banach
Sobolev space $\dot{W}^{1,X}(\rn)$,
which plays a key role in the proof of
Theorem \ref{density}.
\begin{proposition}\label{zidong}
	Let  $x_0\in\rn$, $R\in(0,\infty)$,
	and
	$\Omega:=B(x_0,R)$ when $n\in[1,\infty)\cap\nn$ or $$\Omega:=\{x\in\rn:\ R<|x|<2R\}$$ when $n\in[2,\infty)\cap\nn.$
	Assume that $X$ is a ball Banach function space
	and
	the centered ball average operators $\{B_r\}_{r\in(0,\infty)}$ are uniformly bounded on $X$.
	Then there exists a positive constant $C$, independent of $R$, such that, for any $f\in \dot{W}^{1,X}(\rn)$,
	\begin{align}\label{zhencang}
		\|(f-f_\Omega)\mathbf{1}_{\Omega}\|_{X}\leq C
		R\|\,|\nabla f|\mathbf{1}_{\Omega}\|_{X}.
	\end{align}
\end{proposition}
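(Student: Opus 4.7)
The plan is to deduce \eqref{zhencang} from a pointwise Riesz-potential bound on $|f-f_\Omega|$, to convert that potential into a geometrically decaying sum of centered ball averages $B_{r_k}(|\nabla f|\mathbf{1}_\Omega)$, and then to invoke the uniform boundedness of the family $\{B_r\}_{r\in(0,\infty)}$ on $X$. By Lemma \ref{jubu}, every $f\in\dot{W}^{1,X}(\rn)$ belongs to $W^{1,1}_{\loc}(\rn)$, so the classical representation formula applies on $\Omega$ and yields the $R$-independent pointwise estimate
\begin{equation*}
|f(x)-f_\Omega|\leq C_n\int_\Omega \frac{|\nabla f(y)|}{|x-y|^{n-1}}\,dy\qquad\text{for a.e. }x\in\Omega.
\end{equation*}
When $\Omega=B(x_0,R)$ this is the standard Sobolev representation on a convex set, whose constant depends only on the ratio $(\operatorname{diam}\Omega)^n/|\Omega|$ and hence only on $n$. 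For the annular case I would rescale by $x\mapsto x/R$ to reduce to the fixed annulus $\{1<|y|<2\}$, on which the estimate follows by covering the set with a bounded (depending only on $n$) collection of balls contained in it and telescoping the convex-case bound along a chain of such balls between $x$ and a fixed reference ball.

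Next, for fixed $x\in\Omega$ and $k\in\zz_+$, set $A_k:=\{y\in\Omega:\ 2^{-k-1}(4R)<|x-y|\leq 2^{-k}(4R)\}$, so that $\{A_k\}_{k\in\zz_+}$ partitions $\Omega$ because $\operatorname{diam}\Omega\leq 4R$. On each $A_k$ one has $|x-y|^{-(n-1)}\leq C_n(2^{-k}R)^{-(n-1)}$ and $A_k\subset B(x,2^{-k+2}R)$, so
\begin{equation*}
\int_{A_k}\frac{|\nabla f(y)|}{|x-y|^{n-1}}\,dy\leq C_n\,2^{-k}R\,B_{2^{-k+2}R}(|\nabla f|\mathbf{1}_\Omega)(x),
\end{equation*}
and summing in $k$ together with the pointwise Poincar\'e bound produces
\begin{equation*}
|f(x)-f_\Omega|\mathbf{1}_\Omega(x)\leq C_n R\sum_{k=0}^\infty 2^{-k}B_{2^{-k+2}R}(|\nabla f|\mathbf{1}_\Omega)(x).
\end{equation*}

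Finally, I would take the $X$-norm, pass it through the series by means of property (iii) of Definition \ref{Debqfs} together with the triangle inequality of the Banach function norm, and invoke the hypothesis that $C_0:=\sup_{r\in(0,\infty)}\|B_r\|_{X\to X}<\infty$. The geometric factor $2^{-k}$ renders the resulting sum absolutely convergent, yielding $\|(f-f_\Omega)\mathbf{1}_\Omega\|_X\leq C R\,\|\,|\nabla f|\mathbf{1}_\Omega\|_X$ with $C$ independent of $R$ and $f$. The main obstacle is the $R$-uniform pointwise Poincar\'e bound in the annular case, since the annulus is not convex; this is where the rescaling-and-chaining argument is essential. Once it is in place, the rest is a general mechanism that trades the Riesz potential for a geometric series of ball averages and makes the uniform boundedness of $\{B_r\}_{r\in(0,\infty)}$ on $X$ precisely the right ingredient—notably, full $L^p$-type boundedness of the maximal operator $\cm$ is not needed.
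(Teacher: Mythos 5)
Your architecture is the same as the paper's: reduce to a pointwise Riesz-potential estimate $|f(x)-f_\Omega|\lesssim\mathrm{I}_1(|\nabla f|\mathbf{1}_\Omega)(x)$, decompose the potential dyadically into centered ball averages with a geometric factor $2^{-k}$, and invoke the uniform bound $\sup_r\|B_r\|_{X\to X}<\infty$ (this dyadic decomposition is exactly Lemma~\ref{jinbule}). The one point you gloss over is the pointwise inequality on the annulus: the chaining argument you sketch yields $|f(x)-f_{B_0}|\lesssim\mathrm{I}_1(|\nabla f|\mathbf{1}_\Omega)(x)$ for a fixed reference ball $B_0\subset\Omega$ (this is the content of \cite[Lemma~8.2.1(b)]{DHHR}, which the paper simply cites), but passing from $f_{B_0}$ to $f_\Omega$ takes one more step. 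That step does work pointwise, because averaging the chain bound over $\Omega$ gives $|f_{B_0}-f_\Omega|\lesssim\frac{1}{|\Omega|}\int_\Omega\mathrm{I}_1(|\nabla f|\mathbf{1}_\Omega)\lesssim\frac{R}{|\Omega|}\int_\Omega|\nabla f|$, and since $|x-y|\le 4R$ for $x,y\in\Omega$ one has $\mathrm{I}_1(|\nabla f|\mathbf{1}_\Omega)(x)\gtrsim R^{1-n}\int_\Omega|\nabla f|$, so the correction is absorbed into the potential. Making this absorption pointwise is a slight strengthening of the paper's route: the paper instead splits $f-f_\Omega=(f-f_B)+(f_B-f_\Omega)$ and treats the constant $f_B-f_\Omega$ in norm, using the classical $L^1$ Poincar\'e inequality together with Lemma~\ref{fushe} ($\|\mathbf{1}_\Omega\|_X\|\mathbf{1}_\Omega\|_{X'}\sim|\Omega|$). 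Your version therefore does not need Lemma~\ref{fushe} at all, which is a small economy; on the other hand, the paper's $n=1$ case is handled by an elementary mollification and H\"older argument, bypassing $\mathrm{I}_1$ entirely, whereas you run the Riesz-potential machinery uniformly in $n\ge1$ (which does work, since the dyadic estimate $\int_{A_k}|x-y|^{1-n}|\nabla f|\,dy\lesssim 2^{-k}R\,B_{2^{-k+2}R}(|\nabla f|\mathbf{1}_\Omega)(x)$ remains valid when $n=1$, even though the kernel is trivial). So: correct, essentially the same approach, with a modest reorganization that trades one cited lemma for a chaining-plus-averaging step you should spell out.
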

\begin{remark}
	Let $X$ be the same as in Proposition \ref{zidong} and $f\in\dot{W}^{1,X}(\rn)$. Then, for
	any  ball $B\subset\rn$, $f\mathbf{1}_{B}\in X.$ Indeed, by Lemma \ref{jubu}, we find that
	$f\in L^1_{\rm{loc}}(\rn)$, which, together with \eqref{zhencang}, further implies that, for any ball $B\subset\rn,$
	\begin{align*}
		\|f\mathbf{1}_B\|_{X}
		&\leq
		\|(f-f_B)\mathbf{1}_{B}\|_{X}+\|f_B\mathbf{1}_{B}\|_{X}\\\noz
		&\lesssim
		r_B\|f\|_{\dot{W}^{1,X}(\rn)}
		+\fint_{B}|f(y)|\,dy\|\mathbf{1}_{B}\|_{X}<\infty.
	\end{align*}
\end{remark}
To prove Proposition \ref{zidong}, we need several lemmas.
Let $n\in[2,\infty)\cap\nn.$
Recall that,
for any given measurable function $g$ in $\rn$, the \emph{Riesz potential} $\mathrm{I}_1(g)$ is defined by setting, for any $x\in\rn,$
\begin{align*}
\mathrm{I}_1(g)(x):=\int_{\rn}\frac{g(y)}{|x-y|^{n-1}}\,dy.
\end{align*}
We have the following conclusion.
\begin{lemma}\label{jinbule}
Assume that $X$ is a ball Banach function space and
the centered ball average operators $\{B_r\}_{r\in(0,\infty)}$ are uniformly bounded on $X$.
Let $\Omega$ be a bounded domain in $\rn$ and $n\in[2,\infty)\cap\nn.$
Then there exists a positive constant $C$, independent of  $\Omega$, such that, for any $g\in X$,
\begin{align*}
\|\mathrm{I}_1(|g|\mathbf{1}_{\Omega})\mathbf{1}_\Omega\|_{X}
\leq C
\mathrm{diam}\,(\Omega)
\|g\mathbf{1}_\Omega\|_{X},
\end{align*}
where $\,\mathrm{diam}\,(\Omega):=\sup_{x,y\in\Omega}|x-y|.$
\end{lemma}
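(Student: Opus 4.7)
The plan is to decompose the Riesz potential into dyadic annular pieces centered at the evaluation point, estimate each piece by a centered ball average, and then sum using the triangle inequality in $X$ together with the uniform boundedness of $\{B_r\}_{r\in(0,\infty)}$.

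Write $d := \mathrm{diam}(\Omega)$ and fix $x \in \Omega$. Since $\Omega \subset B(x,d)$, we may replace $\mathbf{1}_\Omega$ inside the integral by $\mathbf{1}_\Omega \mathbf{1}_{B(x,d)}$. I will decompose $B(x,d)$ into the dyadic annuli $A_k(x) := B(x, 2^{-k}d) \setminus B(x, 2^{-k-1}d)$ for $k \in \mathbb{Z}_+$. On $A_k(x)$ we have $|x-y|^{-(n-1)} \leq (2^{-k-1}d)^{n-1}\,{}^{-1} = 2^{(k+1)(n-1)} d^{-(n-1)}$, and hence
\begin{align*}
\mathrm{I}_1(|g|\mathbf{1}_\Omega)(x)
\leq \sum_{k=0}^\infty \frac{2^{(k+1)(n-1)}}{d^{n-1}}\int_{B(x,2^{-k}d)} |g(y)|\mathbf{1}_\Omega(y)\,dy.
\end{align*}
Recognising the inner integral as $|B(x,2^{-k}d)|\,B_{2^{-k}d}(g\mathbf{1}_\Omega)(x)$ and using $|B(x,2^{-k}d)| = c_n (2^{-k}d)^n$, the last display simplifies to
\begin{align*}
\mathrm{I}_1(|g|\mathbf{1}_\Omega)(x)
\leq 2^{n-1} c_n\, d \sum_{k=0}^\infty 2^{-k}\, B_{2^{-k}d}(g\mathbf{1}_\Omega)(x),
\end{align*}
where the constant $c_n$ is the volume of the unit ball.

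Now I multiply through by $\mathbf{1}_\Omega$, take the $X$-quasi-norm, and apply the triangle inequality (available because $X$ is a ball \emph{Banach} function space) to obtain
\begin{align*}
\|\mathrm{I}_1(|g|\mathbf{1}_\Omega)\mathbf{1}_\Omega\|_X
\leq 2^{n-1} c_n\, d \sum_{k=0}^\infty 2^{-k}\, \|B_{2^{-k}d}(g\mathbf{1}_\Omega)\|_X.
\end{align*}
By hypothesis there exists a constant $M$ (independent of $r$) such that $\|B_r(h)\|_X \leq M\|h\|_X$ for every $r \in (0,\infty)$ and every $h \in X$. Applying this to each summand with $r = 2^{-k}d$ and $h = g\mathbf{1}_\Omega$ and summing the resulting geometric series $\sum_{k\geq 0} 2^{-k} = 2$ gives
\begin{align*}
\|\mathrm{I}_1(|g|\mathbf{1}_\Omega)\mathbf{1}_\Omega\|_X
\leq 2^n c_n M\, d\, \|g\mathbf{1}_\Omega\|_X,
\end{align*}
which is the claim with $C := 2^n c_n M$. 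The constant is manifestly independent of $\Omega$.

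There is no real obstacle here: the key point is that the $\{B_r\}$-hypothesis was tailored precisely to kill the ``$L^p$ to $L^p$'' step in the standard Riesz-potential bound, while the dyadic annular decomposition converts the singular kernel $|x-y|^{-(n-1)}$ into a summable chain of averaging operators at scales $2^{-k}d$, the factor of $d$ out front producing the required $\mathrm{diam}(\Omega)$ dependence. The hypothesis $n \geq 2$ enters only through the exponent $n-1 \geq 1$ making the kernel genuinely singular; for $n = 1$ the kernel would be bounded and the statement becomes trivial (or vacuous, depending on interpretation), which is presumably why the lemma is stated under that restriction.
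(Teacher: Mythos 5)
Your proof is correct and follows essentially the same route as the paper's: decompose into dyadic annuli centered at $x$, bound the kernel on each annulus, recognize the resulting integral as a scaled centered ball average, factor out $\mathrm{diam}(\Omega)$, and sum the geometric series using the uniform boundedness of $\{B_r\}_{r\in(0,\infty)}$. The only differences are cosmetic (indexing of the annuli and working with $d$ rather than $D=2d$).
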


\begin{proof}
Let  $D:=2\,\mathrm{diam}\,(\Omega)$. Then, for any $x\in\Omega,$
\begin{align*}
\mathrm{I}_1(|g|\mathbf{1}_{\Omega})(x)&=
\int_{\Omega}\frac{|g(y)|}{|x-y|^{n-1}}\,dy
=
\int_{B(x,D)}\frac{|g(y)\mathbf{1}_{\Omega}(y)|}{|x-y|^{n-1}}\,dy\\\noz
&\leq
\sum_{k=1}^\infty
(2^{-k}D)^{1-n}\int_{2^{-k}D\leq |x-y|<2^{-k+1}D}|g(y)\mathbf{1}_{\Omega}(y)|\,dy\\\noz
&\lesssim
D\sum_{k=1}^\infty
2^{-k}\fint_{B(x,2^{-k+1}D)}|g(y)\mathbf{1}_{\Omega}(y)|\,dy\\
&\sim
D\sum_{k=1}^\infty2^{-k}B_{2^{-k+1}D}(g\mathbf{1}_{\Omega})(x).
\end{align*}
From this and the assumptions that both $X$ is a ball Banach function space and
the operators $\{B_r\}_{r\in(0,\infty)}$ are uniformly bounded on $X$,
it follows that
\begin{align*}
\|\mathrm{I}_1(|g|\mathbf{1}_{\Omega})\mathbf{1}_\Omega\|_{X}
&\lesssim
\mathrm{diam}\,(\Omega)\left\|\sum_{k=1}^\infty2^{-k}B_{2^{-k+1}D}(g\mathbf{1}_{\Omega})\right\|_{X}\\\noz
&\lesssim\mathrm{diam}\,(\Omega)\sum_{k=1}^\infty2^{-k}\left\|B_{2^{-k+1}D}(g\mathbf{1}_{\Omega})\right\|_{X}
\lesssim
\mathrm{diam}\,(\Omega)\|g\mathbf{1}_{\Omega}\|_{X}.
\end{align*}
This finishes the proof of Lemma \ref{jinbule}.
\end{proof}
The following concept of the
associate space of  ball Banach function spaces
can be found in \cite{ref3}; see also, for instance,
\cite[Chapter 1, Definitions 2.1 and 2.3]{ref4} for the corresponding one of Banach function spaces.
\begin{definition}\label{def-X'}
	For any ball Banach function space $X$,
	the \emph{associate space} (also called the
	\emph{K\"othe dual}) $X'$ is defined by setting
	\begin{equation}\label{asso}
		X':=\lf\{f\in\mathscr M(\rn):\ \|f\|_{X'}
		:=\sup_{\{g\in X:\ \|g\|_X=1\}}\|fg\|
		_{L^1(\rn)}<\infty\r\},
	\end{equation}
	where $\|\cdot\|_{X'}$ is called the
	\emph{associate norm} of $\|\cdot\|_X$.
\end{definition}
\begin{remark}\label{bbf}
	By \cite[Proposition 2.3]{ref3},
	we find that, if $X$ is a ball Banach
	function space, then its associate space $X'$ is
	also a ball Banach function space.
\end{remark}
The following H\"older inequality is a direct
corollary of both Definition \ref{Debqfs}(i)
and \eqref{asso} (see \cite[Theorem 2.4]{ref4}).
\begin{lemma}\label{lemma3.5}
	Let $X$ be a ball Banach function space and $X'$ its associate space.
	If $f\in X$ and $g\in X'$, then $fg$ is integrable and
	\begin{equation*}\label{12}
		\int_\rn|f(x)g(x)|\,dx\le \|f\|_X\|g\|_{X'}.
	\end{equation*}
\end{lemma}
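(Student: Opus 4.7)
The statement to prove is the standard H\"older inequality for the duality between $X$ and its associate space $X'$, and the proof is a direct unpacking of Definition \ref{def-X'}. My plan is as follows.

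First, I would dispose of the degenerate case. If $\|f\|_X=0$, then Definition \ref{Debqfs}(i) forces $f=0$ almost everywhere, so $fg=0$ almost everywhere and both sides of the claimed inequality vanish. Hence I may assume $\|f\|_X\in(0,\infty)$ from now on.

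Next, I would normalize. Using that $|f|\le|f|$ and $|f|\le|f|$ almost everywhere together with Definition \ref{Debqfs}(ii), I observe that $\||f|\|_X=\|f\|_X$, so the function
$$h:=\frac{|f|}{\|f\|_X}$$
lies in $X$ with $\|h\|_X=1$. Then, since $h\ge 0$, I have $|hg|=h|g|$ pointwise almost everywhere, and hence
$$\int_{\rn}|f(x)g(x)|\,dx=\|f\|_X\int_{\rn}h(x)|g(x)|\,dx=\|f\|_X\|hg\|_{L^1(\rn)}.$$
Taking the supremum over all $\widetilde h\in X$ with $\|\widetilde h\|_X=1$ and appealing to the definition \eqref{asso} of $\|g\|_{X'}$ gives $\|hg\|_{L^1(\rn)}\le \|g\|_{X'}$, and in particular $hg$ (and therefore $fg$) is integrable, which yields the desired inequality.

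There is really no serious obstacle here: the only point requiring a small amount of care is the need to replace $f$ by $|f|$ before normalizing, since the supremum in \eqref{asso} ranges over $g$-weighted $L^1$ masses of elements of the unit sphere of $X$ and it is cleanest to test it against a nonnegative function so that $\|hg\|_{L^1(\rn)}$ is directly equal to $\int h|g|$. Everything else is bookkeeping from Definition \ref{def-X'}.
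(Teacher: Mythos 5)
Your proof is correct and unpacks precisely what the paper terms a ``direct corollary of both Definition \ref{Debqfs}(i) and \eqref{asso}'' (the paper itself gives no written argument, only a citation to \cite[Theorem 2.4]{ref4}). The normalization $h:=|f|/\|f\|_X$ and appeal to the supremum defining $\|g\|_{X'}$ is exactly the standard route.
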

\begin{lemma}\label{fushe}
Assume that $X$ is a ball Banach function space and
the centered ball average operators $\{B_r\}_{r\in(0,\infty)}$ are
uniformly bounded on $X$. Then, for any ball $B\subset\rn$,
\begin{align*}
\|\mathbf{1}_{B}\|_{X}\|\mathbf{1}_{B}\|_{X'}\sim|B|
\end{align*}
with the positive equivalence constants depending only on $X$.
\end{lemma}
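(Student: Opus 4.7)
The lower estimate $|B|\lesssim \|\mathbf{1}_B\|_X\|\mathbf{1}_B\|_{X'}$ is immediate from Lemma \ref{lemma3.5} applied to the pair $(\mathbf{1}_B,\mathbf{1}_B)$, since
$$|B|=\int_{\rn}\mathbf{1}_B(x)\mathbf{1}_B(x)\,dx\le \|\mathbf{1}_B\|_X\,\|\mathbf{1}_B\|_{X'}.$$
So the entire content of the lemma lies in the reverse inequality, and this is where the hypothesis that $\{B_r\}_{r\in(0,\infty)}$ be uniformly bounded on $X$ must enter.

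For the upper bound, fix $B=B(x_B,r_B)$ and an arbitrary $f\in X$ with $\|f\|_X=1$. The key observation is a pointwise lower bound for a single average: for every $x\in B$ one has $B\subset B(x,2r_B)$, hence
$$B_{2r_B}(f)(x)=\frac{1}{|B(x,2r_B)|}\int_{B(x,2r_B)}|f(y)|\,dy\ge \frac{1}{2^n|B|}\int_B|f(y)|\,dy.$$
Multiplying by $\mathbf{1}_B(x)$ and taking $\|\cdot\|_X$ norms, Definition \ref{Debqfs}(ii) gives
$$\frac{1}{2^n|B|}\left(\int_B|f(y)|\,dy\right)\|\mathbf{1}_B\|_X\le \|B_{2r_B}(f)\mathbf{1}_B\|_X\le \|B_{2r_B}(f)\|_X\lesssim \|f\|_X,$$
where the last step uses the uniform boundedness assumption on $\{B_r\}_{r\in(0,\infty)}$ with a constant independent of $r_B$. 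Rearranging, $\int_B|f|\,dy\lesssim \frac{|B|}{\|\mathbf{1}_B\|_X}\|f\|_X$; taking the supremum over all $f\in X$ with $\|f\|_X=1$ and invoking Definition \ref{def-X'} yields
$$\|\mathbf{1}_B\|_{X'}=\sup_{\|f\|_X=1}\int_B|f(y)|\,dy\lesssim \frac{|B|}{\|\mathbf{1}_B\|_X},$$
which is the desired upper bound.

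There is no serious obstacle; the only point that requires attention is ensuring that the constants produced are independent of the ball $B$, which is automatic here because the chosen radius $r=2r_B$ enters only through the uniform bound on $\{B_r\}_{r\in(0,\infty)}$ and because the containment $B\subset B(x,2r_B)$ holds for every $x\in B$ with the dimensional constant $2^n$.
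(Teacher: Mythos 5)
Your proof is correct and follows essentially the same route as the paper: for both, the key step is observing that $B\subset B(x,2r_B)$ for $x\in B$, so that $\frac{1}{|B|}\int_B|f|\,dy\,\mathbf{1}_B(x)\le 2^n B_{2r_B}(f)(x)\mathbf{1}_B(x)$, and then invoking the uniform boundedness of the centered ball averages together with the definition of $\|\cdot\|_{X'}$. The lower bound via the H\"older inequality (Lemma~\ref{lemma3.5}) is also identical.
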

\begin{proof}
Let $B$ be a ball in $\rn$.
Notice that, for any $g\in L^1_{\rm{loc}}(\rn)$ and $x\in\rn,$
\begin{align*}
\frac{1}{|B|}\int_{B}|g(y)|\,dy\mathbf{1}_{B}(x)
&\leq
\frac{2^n}{|B(x,2r_B)|}\int_{B(x,2r_B)}|g(y)|\,dy\mathbf{1}_{B}(x)\\
&=2^nB_{2r_B}(g)(x)\mathbf{1}_{B}(x),
\end{align*}
where $r_B$ denotes the radius of $B$.
From this, the definition of $X'$,
and the assumption that  the centered ball average operators
$\{B_r\}_{r\in(0,\infty)}$ are uniformly bounded on $X$, it follows that
\begin{align*}
\frac{\|\mathbf{1}_{B}\|_{X}\|\mathbf{1}_{B}\|_{X'}}{|B|}
\leq
\sup_{\|g\|_{X}=1}\frac{1}{|B|}\int_{B}|g(y)|\,dy\|\mathbf{1}_{B}\|_{X}
\leq
\sup_{\|g\|_{X}=1}2^n\|B_{2r_B}(g)\mathbf{1}_{B}\|_{X}\lesssim 1.
\end{align*}
On the other hand, by Lemma \ref{lemma3.5}, we obtain
\begin{align*}
|B|=\int_{\rn}\mathbf{1}_{B}\,dx
\leq
\|\mathbf{1}_{B}\|_{X}\|\mathbf{1}_{B}\|_{X'}.
\end{align*}
This
finishes the proof of Lemma \ref{fushe}.
\end{proof}
Now, we show Proposition \ref{zidong}.
\begin{proof}[Proof of Proposition \ref{zidong}]
Let $f\in\dot{W}^{1,X}(\rn)$.
By Lemma \ref{jubu}, we find that
$f\in W^{1,1}_{\rm{loc}}(\rn)$.	
Next, we show Proposition \ref{zidong} by considering the following two cases on $n.$

Case 1) $n=1.$
In this case, $\Omega=B(x_0,R)$ for some $x_0\in\rr$ and $R\in(0,\infty)$.
We first show that, for almost every $x\in\Omega$,
\begin{align}\label{2022717b}
|f(x)-f_\Omega|\leq \int_{\Omega}|f'(y)|\,dy.
\end{align}
Let $\eta\in C^\infty_{\mathrm{c}}(\rr)$ satisfy
$\supp (\eta)\subset(-1,1)$ and $\int_{\rr}\eta(x)\,dx=1.$
For any $k\in\nn$, let $\eta_k(\cdot):=k\eta(k\cdot)$ and
$f_k(\cdot):=(f\ast \eta_k)(\cdot)$.
By both  (i) and (vi) of
\cite[Theorem 4.1]{eg15}, we find that $f_k\in C^\infty(\rr)$ for any $k\in\nn$,
\begin{align}\label{2022717}
\lim_{k\to\infty}\|f-f_k\|_{L^1(\Omega)}=0,
\ \ \text{and}\ \
\lim_{k\to\infty}\|f'-f_k'\|_{L^1(\Omega)}=0.
\end{align}
Without loss of generality, we may assume that $\lim_{k\to\infty}f_k(x)=f(x)$ for almost every $x\in\Omega.$ Since
$f_k\in C^\infty(\rr)$ for any $k\in\nn$, it follows that, for any $k\in\nn$ and $x,y\in\Omega,$
\begin{align*}
|f_k(x)-f_k(y)|
=
\left|\int_{y}^xf'_k(t)\,dt\right|
\leq
\int_{\Omega}|f'_k(t)|\,dt
\end{align*}
and hence
\begin{align*}
\left|f_k(x)-\frac{1}{|\Omega|}\int_{\Omega}f_k(y)\,dy\right|
\leq
\frac{1}{|\Omega|}\int_{\Omega}|f_k(x)-f_k(y)|\,dy
\leq
\int_{\Omega}|f'_k(t)|\,dt.
\end{align*}
Using this and \eqref{2022717}, and
letting $k\to\infty$,  we conclude that
\eqref{2022717b} holds true.

By \eqref{2022717b} and Lemmas \ref{lemma3.5} and \ref{fushe}, we obtain
\begin{align*}
\|(f-f_\Omega)\mathbf{1}_{\Omega}\|_{X}
&\leq
\|\mathbf{1}_{\Omega}\|_{X}\int_\Omega|f'(y)|\,dy
\leq
\|\mathbf{1}_{\Omega}\|_{X}
\|\mathbf{1}_{\Omega}\|_{X'}\|f'\mathbf{1}_{\Omega}\|_{X}\\\noz
&\lesssim
R\|f'\mathbf{1}_{\Omega}\|_{X}.
\end{align*}
This proves \eqref{zhencang} in this case.

Case 2) $n\in[2,\infty)\cap\nn.$
In this case, from \cite[Lemma 8.2.1(b)]{DHHR}, we infer that there
exists a ball $B\subset \Omega$ and positive constants $c_1,c_2$ such that,
for almost every $x\in\Omega$,
\begin{align}\label{qingxing}
|f(x)-f_{B}|\leq c_1 \mathrm{I}_1(|\nabla f|\mathbf{1}_{\Omega})(x)
\end{align}
and
\begin{align}\label{qingxing2}
|B|\leq |\Omega|\leq c_2 |B|,
\end{align}
where the positive constants $c_1$ and $c_2$ depend only on $n$.
Using \eqref{qingxing} and Lemma \ref{jinbule}, we further obtain
\begin{align}\label{bkb0}
\|(f-f_B)\mathbf{1}_{\Omega}\|_{X}
\lesssim
\|\mathrm{I}_1(|\nabla f|\mathbf{1}_{\Omega})\mathbf{1}_{\Omega}\|_{X}
\lesssim
R\|\,|\nabla f|\mathbf{1}_{\Omega}\|_{X}.
\end{align}

On the other hand,
by \cite[Theorem 8.2.4(b)]{DHHR},
we have the following classical Poincar\'e inequality
\begin{align*}
\|f-f_{\Omega}\|_{L^1(\Omega)}
\lesssim
R\|\,|\nabla f|\,\|_{L^1(\Omega)},
\end{align*}
where the implicit positive constant
depends only on $n$.
From this,
Lemmas \ref{lemma3.5} and
\ref{fushe}, and \eqref{qingxing2},
it follows that
\begin{align*}
\|(f_{B}-f_{\Omega})\mathbf{1}_{\Omega}\|_{X}
&\leq
\frac{\|\mathbf{1}_{\Omega}\|_{X}}{|B|}\int_{B}|f(x)-f_\Omega|\,dx
\lesssim
\frac{\|\mathbf{1}_{\Omega}\|_{X}}{|\Omega|}\int_{\Omega}|f(x)-f_\Omega|\,dx\\
&\lesssim
\frac{R\|\mathbf{1}_{\Omega}\|_{X}}{|\Omega|}\int_{\Omega}|\nabla f(x)|\,dx
\lesssim
R\frac{\|\mathbf{1}_{\Omega}\|_{X}\|\mathbf{1}_{\Omega}\|_{X'}}{|\Omega|}\|\,|\nabla f|\mathbf{1}_{\Omega}\|_{X}\\
&\lesssim
R\frac{\|\mathbf{1}_{B(\mathbf{0},2R)}\|_{X}\|\mathbf{1}_{B(\mathbf{0},2R)}\|_{X'}}{|B(\mathbf{0},2R)|}\|\,|\nabla f|\mathbf{1}_{\Omega}\|_{X}
\lesssim
R \|\,|\nabla f|\mathbf{1}_{\Omega}\|_{X}.
\end{align*}
This, together with \eqref{bkb0}
and the assumption that $X$ is a ball
Banach function space, further implies that
\eqref{zhencang} holds true. This finishes
 the proof of Proposition \ref{zidong}.
\end{proof}

To prove Theorem \ref{density}, we need the following conclusion.
\begin{proposition}\label{nuanuan}
Let $X$ be a ball Banach function space. Assume that
the centered ball average operators $\{B_r\}_{r\in(0,\infty)}$ are uniformly bounded on $X$ and
$X$ has an absolutely continuous norm.
Then,
for any $f\in \dot{W}^{1,X}(\rn)$,
there exists a sequence $\{f_k\}_{k\in\nn}\subset C^\infty(\rn)\cap \dot{W}^{1,X}(\rn)$
such that
\begin{align*}
\lim_{k\to\infty}\|f-f_k\|_{\dot{W}^{1,X}(\rn)}=0
\ \ \text{and}\ \
\lim_{k\to\infty}\|(f-f_k)\mathbf{1}_{B(\mathbf{0},R)}\|_{X}=0
\end{align*}
for  any $R\in(0,\infty)$.
\end{proposition}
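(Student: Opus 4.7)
The plan is to produce the approximating sequence by standard mollification. Fix a non-negative $\eta\in C_{\rm c}^\infty(\rn)$ with $\supp\eta\subset B(\mathbf{0},1)$ and $\int_{\rn}\eta=1$, set $\eta_k(x):=k^n\eta(kx)$, and define $f_k:=f\ast\eta_k$. By Lemma \ref{jubu}, $f\in W^{1,1}_{\mathrm{loc}}(\rn)$, so $f_k\in C^\infty(\rn)$ and $\nabla f_k=(\nabla f)\ast\eta_k$. The basic pointwise estimate
$$
|(g\ast\eta_k)(x)|\le \|\eta\|_\infty\,k^n\int_{B(x,1/k)}|g(y)|\,dy\lesssim B_{1/k}(|g|)(x),
$$
combined with the uniform boundedness of $\{B_r\}_{r\in(0,\infty)}$ on $X$, shows that convolution with $\eta_k$ is a bounded operator on $X$ with bound independent of $k$. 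Applied to $g=|\nabla f|$, this gives $|\nabla f_k|\in X$, and hence $f_k\in C^\infty(\rn)\cap\dot{W}^{1,X}(\rn)$.

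Both required convergences then reduce to a single density lemma: for every $g\in X$, $\lim_{k\to\infty}\|g-g\ast\eta_k\|_X=0$. Applying it with $g:=\partial_if$ for $i=1,\ldots,n$ yields $\|f-f_k\|_{\dot{W}^{1,X}(\rn)}\to 0$. For the local convergence, note that for any $R\in(0,\infty)$, any $k\ge 1$, and any $x\in B(\mathbf{0},R)$, the convolution $(f\ast\eta_k)(x)$ depends only on the values of $f$ on $B(\mathbf{0},R+1)$; by the Remark following Proposition \ref{zidong}, $h:=f\mathbf{1}_{B(\mathbf{0},R+1)}\in X$, and hence
$$
\|(f-f_k)\mathbf{1}_{B(\mathbf{0},R)}\|_X\le \|h-h\ast\eta_k\|_X\longrightarrow 0
$$
by the lemma applied to $h$.

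To prove the density lemma, I first use absolute continuity of the norm to approximate $g\in X$ by bounded compactly supported truncations $g_N:=g\,\mathbf{1}_{\{|g|\le N\}\cap B(\mathbf{0},N)}$: since $\mathbf{1}_{\{|g|>N\}\cup(\rn\setminus B(\mathbf{0},N))}\to 0$ almost everywhere as $N\to\infty$, absolute continuity gives $\|g-g_N\|_X\to 0$. For each fixed $N$, the functions $g_N\ast\eta_k$ converge to $g_N$ almost everywhere, with differences uniformly bounded by $2\|g_N\|_\infty$ and supported in one fixed compact set; Egorov's theorem combined with absolute continuity (which upgrades to a dominated convergence statement in $X$) then forces $\|g_N-g_N\ast\eta_k\|_X\to 0$. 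The uniform bound on convolution established above gives $\|(g-g_N)\ast\eta_k\|_X\lesssim\|g-g_N\|_X$, and the triangle inequality
$$
\|g-g\ast\eta_k\|_X\lesssim\|g-g_N\|_X+\|g_N-g_N\ast\eta_k\|_X
$$
closes the argument by choosing $N$, then $k$, large.

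The main obstacle is precisely this density step. For a general ball Banach function space one cannot appeal to classical density of $C_{\rm c}^\infty(\rn)$ or simple functions in $X$, so the density of mollifications must be bootstrapped from the two structural hypotheses at hand: absolute continuity of the norm (which supplies both the truncation approximation and the dominated convergence theorem in $X$) and uniform boundedness of the ball averages $\{B_r\}_{r\in(0,\infty)}$ (which supplies a $k$-uniform operator bound on convolution with $\eta_k$). A secondary subtlety worth flagging is that $f$ itself need not belong to $X$ globally, so the local convergence on $B(\mathbf{0},R)$ must be obtained by first truncating $f$ to the slightly larger ball $B(\mathbf{0},R+1)$ before invoking the density lemma.
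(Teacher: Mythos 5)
Your proof is correct and, while it shares the same mollification scaffold as the paper's proof (define $f_k := f\ast\eta_k$ and verify the two convergences), it diverges from the paper at both key steps. For $\|f-f_k\|_{\dot{W}^{1,X}(\rn)}\to 0$, the paper invokes the external fact that $C_{\mathrm{c}}(\rn)$ is dense in $X$ whenever $X$ has absolutely continuous norm, and then runs a three-term triangle inequality through an intermediate $g\in C_{\mathrm{c}}(\rn)$ (using uniform continuity of $g$ to treat $\|g-g\ast\eta_k\|_X$ and the uniform $B_r$-bound for the remainder). You instead prove the mollifier density statement $\|g-g\ast\eta_k\|_X\to 0$ self-containedly for arbitrary $g\in X$ via bounded, compactly supported truncations and an Egorov-plus-absolute-continuity argument; this is a genuinely different and more self-sufficient route, though it quietly uses a dominated-convergence principle in $X$ (convert small-measure sets to a.e.-vanishing indicators by Borel--Cantelli before applying the definition of absolute continuity), which deserves to be stated precisely. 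For $\|(f-f_k)\mathbf{1}_{B(\mathbf{0},R)}\|_X\to 0$, the paper uses the Poincar\'e inequality (Proposition \ref{zidong}) together with the $L^1$-convergence of mollifications on compacta; you instead exploit the locality of convolution to rewrite $(f-f_k)\mathbf{1}_{B(\mathbf{0},R)}=(h-h\ast\eta_k)\mathbf{1}_{B(\mathbf{0},R)}$ with $h:=f\mathbf{1}_{B(\mathbf{0},R+1)}\in X$ (via the Remark after Proposition \ref{zidong}), and then feed $h$ into the same density lemma, which is tidier and unifies both convergences under one statement. The net effect is that your argument is more modular and dispenses with the direct appeal to Poincar\'e in the local step (though the Remark you invoke itself rests on it), at the cost of a slightly heavier measure-theoretic lemma that should be spelled out.
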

\begin{proof}
Let $f\in \dot{W}^{1,X}(\rn)$.
It is well known that there exists a function $\eta \in C^\infty_{\mathrm{c}}(\rn)$  such that
$\supp\,(\eta) \subset B(\mathbf{0},1)$
and
$\int_\rn \eta(x)\,dx=1$. For any $k\in \nn$, let  $\eta_k(\cdot):=k^n\eta(k\cdot)$ and
$f_k(\cdot):=(f\ast\eta_k)(\cdot)$.
Next, we show that
\begin{align}\label{jintou}
\lim_{k\to\infty}\|f-f_k\|_{\dot{W}^{1,X}(\rn)}=0.
\end{align}

By Lemma \ref{jubu}, we find that $f\in W^{1,1}_{\rm{loc}}(\rn)$.
Using this and both (i) and (v) of
\cite[Theorem 4.1]{eg15},
we find that, for any $k\in\nn$ and $i\in\{1,\ldots,n\}$,
$f_k\in C^\infty(\rn)$ and
\begin{align}\label{gu2}
\frac{\p f_k}{\p x_i}=\frac{\p f}{\p x_i}\ast\eta_k.
\end{align}
Let $i\in\{1,\ldots,n\}$.
Since $X$ has an absolutely continuous norm,
from \cite[Proposition 3.8]{tyyz21},
it follows that $C_{\mathrm{c}}(\rn)$ is dense in $X$. Thus,  for any $\epsilon\in(0,\infty)$, there exists a
function $g\in C_{\mathrm{c}}(\rn)$ such that
\begin{align}\label{nageren}
\left\|\frac{\p f}{\p x_i}-g\right\|_{X}\leq \epsilon.
\end{align}
By this, \eqref{gu2}, and the assumption that $X$ is a ball Banach function space, we have
\begin{align}\label{gu}
\left\|\frac{\p f}{\p x_i}-\frac{\p f_k}{\p x_i}\right\|_{X}
\leq
\left\|\frac{\p f}{\p x_i}-g\right\|_{X}
+
\|g-g\ast\eta_k\|_{X}
+
\left\|g\ast\eta_k-\frac{\p f}{\p x_i}\ast \eta_k\right\|_{X}.
\end{align}
Assume that $\supp\,(g)\subset B(\mathbf{0},N)$ with some $N\in(0,\infty)$. Then we have
\begin{align}\label{gu4}
\|g-g\ast \eta_k\|_{X}
&\leq
\left\|k^{n}\int_{\rn}|g(\cdot)-g(y)|\eta(k[\cdot-y])\,dy\right\|_{X}\\\noz
&\lesssim
\left\|\,
\fint_{B(\cdot,k^{-1})}|g(\cdot)-g(y)|\,dy\right\|_{X}\\\noz
&\lesssim
\sup_{|x-y|\leq k^{-1}}|g(x)-g(y)|\,\|\mathbf{1}_{B(\mathbf{0},N+1)}\|_{X}\to0
\end{align}
as $k\to\infty.$ On the other hand, using
\eqref{nageren}
and the assumption that  the operators $\{B_r\}_{r\in(0,\infty)}$ are
uniformly bounded on $X$, we conclude that, for any $k\in\nn,$
\begin{align*}
\left\|g\ast\eta_k-\frac{\p f}{\p x_i}\ast \eta_k\right\|_{X}
&\leq
\left\|k^n\int_{\rn}\left|g(\cdot-y)-\frac{\p f}{\p x_i}(\cdot-y)\right|\eta(ky)\,dy\right\|_{X}\\\noz
&=
\left\|\int_{\rn}\left|g(\cdot-y/k)-\frac{\p f}{\p x_i}(\cdot-y/k)\right|\eta(y)\,dy\right\|_{X}\\\noz
&\lesssim
\left\|\,\fint_{B(\cdot,k^{-1})}\left|g(y)-\frac{\p f}{\p x_i}(y)\right|\,dy
\right\|_{X}\\\noz
&\sim
\left\|B_{k^{-1}}\left(g-\frac{\p f}{\p x_i}\right)\right\|_{X}
\lesssim
\left\|g-\frac{\p f}{\p x_i}\right\|_{X}
\lesssim
\epsilon.
\end{align*}
From this, \eqref{nageren}, \eqref{gu4}, and \eqref{gu}, it follow that, for any $\epsilon\in(0,\infty)$,
\begin{align*}
\limsup_{k\to\infty}\left\|\frac{\p f}{\p x_i}-\frac{\p f_k}{\p x_i}\right\|_{X}
\lesssim \epsilon.
\end{align*}
Let $\epsilon\to0$. Then we have, for any $i\in\{1,\ldots,n\}$,
\begin{align*}
\lim_{k\to\infty}\left\|\frac{\p f}{\p x_i}-\frac{\p f_k}{\p x_i}\right\|_{X}=0.
\end{align*}
This implies that \eqref{jintou} holds true.

Finally, we show that
\begin{align}\label{gala}
\lim_{k\to\infty}\|(f-f_k)\mathbf{1}_{B(\mathbf{0},R)}\|_{X}=0
\end{align}
for any $R\in(0,\infty)$.
By \cite[Theorem 4.1(iii)]{eg15}, we find that, for any $R\in(0,\infty)$,
\begin{align*}
\lim_{k\to\infty}\int_{B(\mathbf{0},R)}
|f(y)-f_k(y)|\,dy=0.
\end{align*}
Using this, the assumption that $X$ is a ball Banach function space,
Proposition \ref{zidong}, and \eqref{jintou}, we obtain, for any $R\in(0,\infty)$,
\begin{align*}
&\|(f-f_k)\mathbf{1}_{B(\mathbf{0},R)}\|_{X}\\\noz
&\quad\leq
\left\|\left[f-f_k-\frac{1}{|B(\mathbf{0},R)|}\int_{B(\mathbf{0},R)}
|f(y)-f_k(y)|\,dy\right]\mathbf{1}_{B(\mathbf{0},R)}\right\|_{X}\\\noz
&\quad\quad+\frac{1}{|B(\mathbf{0},R)|}\int_{B(\mathbf{0},R)}
|f(y)-f_k(y)|\,dy\|\mathbf{1}_{B(\mathbf{0},R)}\|_{X}\\\noz
&\quad\lesssim
R\|\,|\nabla f-\nabla f_k|\,\|_{X}
+\frac{1}{|B(\mathbf{0},R)|}\int_{B(\mathbf{0},R)}|f(y)-f_k(y)|\,dy\|\mathbf{1}_{B(\mathbf{0},R)}\|_{X}\to0
\end{align*}
as $k\to\infty$. This proves \eqref{gala} and hence finishes the proof of Theorem \ref{nuanuan}.
\end{proof}
Now, we show Theorem \ref{density}.

\begin{proof}[Proof of Theorem \ref{density}]
Let  $f\in \dot{W}^{1,X}(\rn)$.
By Proposition \ref{nuanuan},
we can assume that $f\in C^\infty(\rn) \cap\dot{W}^{1,X}(\rn)$.
Let $\varphi\in C^\infty_{\mathrm{c}}(\rn)$ be such that $0\leq\varphi\leq1$,
$\supp\, (\varphi)\subset B(\mathbf{0},2)$, and
$\varphi\equiv1$ in $B(\mathbf{0},1)$.
For any given $k\in\nn$ and any $x\in\rn$, let $\eta_k(x):=\varphi(x/k)$.
Now, we prove Theorem \ref{density} by considering the following two cases on $n.$

Case 1) $n=1.$ In this case,
let $f_k(x):=\int_0^xf'(t)\eta_k(t)\,dt+f(0)$
for any $x\in\rr$ and  any given $k\in\nn.$
By the assumption that
$X$ has an absolutely continuous norm, we have
\begin{align*}
\lim_{k\to\infty}\left\|f_k'-f'\right\|_X=
\lim_{k\to\infty}
\left\|f'\eta_k-f'\right\|_X=0.
\end{align*}
Let $R\in(0,\infty)$. Notice that, for any $k\in\nn\cap[R,\infty)$ and $x\in B(\mathbf{0},R)$,
$$f_{k}(x)=\int_0^xf'(t)\,dt+f(0)=f(x)$$
and hence
\begin{align*}
\lim_{k\to\infty}\|(f-f_k)\mathbf{1}_{B(\mathbf{0},R)}\|_{X}=0.
\end{align*}
This finishes the proof of Theorem \ref{density} in this case.

Case 2)  $ n\in[2,\infty)\cap\nn$.
In this case, for any $k\in\nn$, let $\Omega_k:=\{x\in\rn:\ k<|x|<2k\}.$
By Proposition \ref{zidong}, we obtain, for any $k\in\nn,$
\begin{align}\label{14533}
\|(f-f_{\Omega_k})\mathbf{1}_{\Omega_k}\|_{X}
\lesssim
k\|\,|\nabla f|\mathbf{1}_{\Omega_k}\|_{X}.
\end{align}
For any $k\in\nn$ and $x\in\rn$, let $f_k(x):=(f(x)-f_{\Omega_k})\eta_k(x)+f_{\Omega_k}$. Now, we show that
$f_k\to f$ in $\dot{W}^{1,X}(\rn).$
Notice that, for any $j\in\{1,\ldots n\},$
\begin{align}\label{tianheii}
\frac{\p(f_k-f)}{\p x_j}=\frac{\p f}{\p x_j}(\eta_k-1)+(f-f_{\Omega_k})\frac{\p \eta_k}{\p x_j}.
\end{align}
Since
$X$ has an absolutely continuous norm, we deduce that
\begin{align}\label{wuwo}
\lim_{k\to\infty}\left\|\frac{\p f}{\p x_j}(\eta_k-1)\right\|_{X}=0.
\end{align}
Observe that, for any $k\in\nn$ and $x\in\rn,$
\begin{align*}
\left|\frac{\p \eta_k}{\p x_j}(x)\right|
=
\frac{1}{k}\left|\frac{\p \varphi}{\p x_j}\left(\frac{x}{k}\right)\right|
\leq \frac{1}{k}\|\,|\nabla \varphi|\,\|_{L^\infty(\rn)}.
\end{align*}
From this, \eqref{14533}, and the assumption that
$X$ has an absolutely continuous norm,   it follows that
\begin{align*}
\left\|(f-f_{\Omega_k}) \frac{\p \eta_k}{\p x_j}\right\|_{X}
&=
\left\|(f-f_{\Omega_k}) \frac{\p \eta_k}{\p x_j}\mathbf{1}_{\Omega_k}\right\|_{X}\\
&\lesssim
\|\,|\nabla f|\mathbf{1}_{\Omega_k}\|_{X}
\lesssim
\|\,|\nabla f|\mathbf{1}_{[B(\mathbf{0},k)]^\com}\|_{X}\to0
\end{align*}
as $k\to\infty.$  This, together with both \eqref{wuwo} and \eqref{tianheii},
further implies that, for any $j\in\{1,\ldots,n\},$
\begin{align*}
\lim_{k\to\infty}\left\|\frac{\p(f_k-f)}{\p x_j}\right\|_{X}=0
\end{align*}
and hence
\begin{align*}
\lim_{k\to\infty}\|f-f_k\|_{\dot{W}^{1,X}(\rn)}=0.
\end{align*}
Let $R\in(0,\infty)$. Observe that,
for any $k\in\nn\cap [R,\infty)$ and
$x\in B(\mathbf{0},R)$,
\begin{align*}
f_k(x)=f(x)-f_{\Omega_k}+f_{\Omega_k}=f(x)
\end{align*}
and hence
\begin{align*}
	\lim_{k\to\infty}\|(f-f_k)\mathbf{1}_{B(\mathbf{0},R)}\|_{X}=0.
\end{align*}
This then finishes the proof of Theorem \ref{density}.
\end{proof}
The the following definition of the convexification of a ball Banach function
space can be found in \cite[Definition 2.6]{ref3}.
\begin{definition}\label{Debf}
	Assume that $X$ is a ball quasi-Banach function space
	and $p\in(0,\infty)$. The $p$-\emph{convexification}
	$X^p$ of $X$ is defined by setting $X^p:=\{f
	\in\mathscr M(\rn):\ |f|^p\in X\}$
	equipped with the quasi-norm $\|f\|_{X^p}:=\|\,|f|^p\|_X^{\frac{1}{p}}$ for any $f\in X^p$.
\end{definition}
The following lemma gives a sufficient condition for
the uniform boundedness of centered ball average operators
$\{B_r\}_{r\in(0,\infty)}$ on $X$,
which is just \cite[Lemma 3.11]{dgpyyz}.
\begin{lemma}\label{yongwu}
Let $X$ be a ball Banach function space and $p\in[1,\infty)$.
Assume that $X^{\f 1 p}$ is a ball Banach function space
and the Hardy--Littlewood maximal
operator $\cm$ is bounded on $(X^{1/p})'$.
Then the centered ball average operators $\{B_r\}_{r\in(0,\infty)}$
are uniformly bounded on $X$.
\end{lemma}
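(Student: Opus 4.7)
The plan is to first reduce the statement to the case $p=1$ via Jensen's inequality, and then deploy a duality argument exploiting the self-adjointness of $B_r$ with respect to the integral pairing, together with the pointwise dominance $B_r(f)\leq \cm(f)$.

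For the reduction step, I would note that, for any $p\in[1,\infty)$ and any $x\in\rn$, Jensen's inequality applied to the probability measure $|B(x,r)|^{-1}\mathbf{1}_{B(x,r)}(y)\,dy$ yields the pointwise bound $[B_r(f)(x)]^p\le B_r(|f|^p)(x)$. Since the convexification identity in Definition \ref{Debf} gives $\|h\|_X^p=\||h|^p\|_{X^{1/p}}$, this leads immediately to $\|B_r(f)\|_X^p\le \|B_r(|f|^p)\|_{X^{1/p}}$. Writing $Y:=X^{1/p}$, which is a ball Banach function space by hypothesis, it therefore suffices to prove that $B_r$ is uniformly bounded on $Y$ under the sole hypothesis that $\cm$ is bounded on $Y'$.

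For the duality step, an application of the Fubini theorem using the symmetry $\mathbf{1}_{B(x,r)}(y)=\mathbf{1}_{B(y,r)}(x)$ and the equality $|B(x,r)|=|B(y,r)|$ gives the self-adjointness
\begin{align*}
\int_{\rn} B_r(h)(x)\,g(x)\,dx=\int_{\rn} h(y)\,B_r(g)(y)\,dy
\end{align*}
for any nonnegative measurable $h,g$. I would then invoke the Lorentz--Luxemburg-type identity $\|h\|_Y=\sup_{\|g\|_{Y'}\le 1}\int_{\rn}|hg|\,dx$ valid for ball Banach function spaces (established in \cite{ref3}), combined with Lemma \ref{lemma3.5}, the trivial dominance $B_r(g)\le \cm(g)$, and the hypothesized boundedness of $\cm$ on $Y'$, to deduce
\begin{align*}
\|B_r(h)\|_Y\le \|h\|_Y\sup_{\|g\|_{Y'}\le 1}\|B_r(g)\|_{Y'}\le \|h\|_Y\sup_{\|g\|_{Y'}\le 1}\|\cm(g)\|_{Y'}\lesssim \|h\|_Y,
\end{align*}
with a constant independent of $r$. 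Chaining this with the reduction of the previous paragraph yields $\|B_r(f)\|_X\lesssim \|f\|_X$ uniformly in $r\in(0,\infty)$.

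The main technical subtlety I anticipate is the Lorentz--Luxemburg representation $Y=Y''$ in the setting of ball Banach function spaces rather than classical Banach function spaces; this however has been settled in \cite{ref3}, so the remaining argument is routine. Everything else amounts to convexity (Jensen), Fubini symmetry (self-adjointness of $B_r$), the elementary domination $B_r\le \cm$, and the H\"older inequality of Lemma \ref{lemma3.5}, all of which use only the hypotheses already stated.
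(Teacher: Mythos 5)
Your proof is correct. The paper does not prove this lemma itself but cites it from \cite[Lemma 3.11]{dgpyyz}; your argument — reduction to $p=1$ via Jensen and the $p$-convexification identity, self-adjointness of $B_r$ under the $L^1$ pairing, the pointwise bound $B_r\le\cm$, the Lorentz--Luxemburg identity $Y=Y''$ (Lemma \ref{lemma3.4} in the paper), and the assumed boundedness of $\cm$ on $Y'$ — is exactly the standard route for this type of statement, and all the ingredients you invoke are available in the paper's setting. The only point worth making explicit when writing it out is that $B_r(g)\in Y'$ (needed before applying Hölder) follows from $B_r(g)\le\cm(g)$, $\cm(g)\in Y'$, and the lattice property of $Y'$; you clearly have this in mind.
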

Using Lemma \ref{yongwu} and
Theorem \ref{density}, we immediately obtain the following conclusion.
\begin{corollary}\label{density2}
	Let $X$ be a ball Banach function space and $p\in[1,\infty)$.
	Assume that $X^{\f 1 p}$ is a ball Banach function space,
	 the Hardy--Littlewood maximal
	operator $\cm$ is bounded on $(X^{1/p})'$, and
	$X$ has an absolutely continuous norm.
	Then, for any $f\in \dot{W}^{1,X}(\rn)$, there exists
	a sequence  $\{f_{k}\}_{k\in\nn}\subset C^\infty(\rn)$ with
$|\nabla f_k|\in C_{\rm{c}}(\rn)$ for any $k\in\nn$ such that
	\begin{align*}
		\lim_{k\to\infty}\|f-f_k\|_{\dot{W}^{1,X}(\rn)}=0\ \ \text{and}\ \
		\lim_{k\to\infty}\|(f-f_k)\mathbf{1}_{B(\mathbf{0},R)}\|_{X}=0
	\end{align*}
	for any  $R\in(0,\infty)$.
\end{corollary}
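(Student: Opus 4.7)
The plan is to observe that Corollary \ref{density2} is an immediate consequence of combining Lemma \ref{yongwu} with Theorem \ref{density}, so essentially no new work is needed beyond verifying that the hypotheses of Corollary \ref{density2} imply the hypotheses of Theorem \ref{density}.

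First, I would invoke Lemma \ref{yongwu} with the given $p\in[1,\infty)$. The assumptions of Corollary \ref{density2} tell us that $X$ is a ball Banach function space, that $X^{1/p}$ is a ball Banach function space, and that the Hardy--Littlewood maximal operator $\cm$ is bounded on $(X^{1/p})'$. These are precisely the hypotheses of Lemma \ref{yongwu}, so its conclusion applies: the centered ball average operators $\{B_r\}_{r\in(0,\infty)}$ are uniformly bounded on $X$.

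Next, I would feed this into Theorem \ref{density}. That theorem requires $X$ to be a ball Banach function space with absolutely continuous norm such that $\{B_r\}_{r\in(0,\infty)}$ is uniformly bounded on $X$. The first and third conditions are hypotheses of Corollary \ref{density2}, and the second has just been established via Lemma \ref{yongwu}. Therefore, for every $f\in\dot{W}^{1,X}(\rn)$ there exists a sequence $\{f_k\}_{k\in\nn}\subset C^\infty(\rn)$ with $|\nabla f_k|\in C_{\mathrm{c}}(\rn)$ for each $k\in\nn$, satisfying both $\lim_{k\to\infty}\|f-f_k\|_{\dot{W}^{1,X}(\rn)}=0$ and $\lim_{k\to\infty}\|(f-f_k)\mathbf{1}_{B(\mathbf{0},R)}\|_X=0$ for every $R\in(0,\infty)$. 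This is exactly the conclusion of Corollary \ref{density2}, so the proof is complete.

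There is no real obstacle here; the corollary is set up precisely so that Lemma \ref{yongwu} converts the more standard hypothesis (maximal operator bounded on $(X^{1/p})'$) into the abstract hypothesis (uniform boundedness of ball averages) required by Theorem \ref{density}. The only thing to double-check is that the absolute continuity of the norm assumed in the corollary is passed directly to Theorem \ref{density} without modification, which it is, since it appears identically in both statements.
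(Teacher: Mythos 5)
Your proposal is correct and matches the paper's own argument exactly: the paper states that Corollary \ref{density2} follows immediately from combining Lemma \ref{yongwu} (which converts the hypothesis on $\cm$ into uniform boundedness of $\{B_r\}_{r\in(0,\infty)}$) with Theorem \ref{density}. Nothing is missing.
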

\section{Estimates in Weighted Lebesgue Spaces}\label{sec:4}
In this section, we establish the characterization
of the Sobolev semi-norm in the weighted Lebesgue space
(see Theorem \ref{thm-4-1} below), which is just
Theorem \ref{thm-1-3} when $p=q$. We should point out
that
Theorem \ref{thm-4-1} plays a vital role in the
proof of Theorem \ref{theorem3.9} below.
Moreover, we show that the $A_p(\rn)$ condition
in Theorem \ref{thm-1-3} is sharp in some sense
(see Theorem \ref{thm-4-2} below).

We first recall the concept of
Muckenhoupt weights $A_p(\rn)$ (see, for
instance, \cite{ref1}).
\begin{definition}\label{weight}
	An \emph{$A_p(\rn)$-weight} $\omega$, with
	$p\in[1,\infty)$, is a nonnegative locally
	integrable function on $\rn$ satisfying
	that, when $p\in(1,\infty),$
	$$[\omega]_{A_p(\rn)}:=\sup_{Q \subset\rn}
	\left[\frac{1}{|Q|}\int_{Q}\omega(x)\,dx
	\right]\left\{\frac{1}{|Q|}\int_Q[\omega(x)]^
	{\frac{1}{1-p}}\,dx\right\}^{p-1}<\infty,$$
	and
	$$[\omega]_{A_1(\rn)}:=\sup_{Q\subset\rn}
	\frac{1}{|Q|}\int_Q\omega(x)\,dx\left
	[\|\omega^{-1}\|_{L^\infty(Q)}\right]<\infty,$$
	where the suprema are taken over all
	cubes $Q\subset\rn$.
	
	Moreover, let
	$A_{\infty}(\rn):=\bigcup_{p\in[1,\infty)}A_p(\rn).$
\end{definition}
\begin{definition}\label{twl}
	Let $p\in[0,\infty)$ and $\omega\in A_{\infty}
	(\rn).$ The \emph{weighted Lebesgue space}
	$L^p_{\omega}(\rn)$ is defined to be the
	set of all the measurable functions $f$ on $\rn$
	such that
	$$\|f\|_{L^p_{\omega}(\mathbb{R}^n)}:=\left
	[\int_{\mathbb{R}^n}|f(x)|^p\omega(x)\,dx
	\right]^{\frac{1}{p}}<\infty.$$
\end{definition}
The following lemma is a part of
\cite[Proposition 7.1.5]{ref1}.
\begin{lemma}\label{Lemma2.1}
	Let $p\in[1,\infty)$ and $\omega\in A_p(\rn).$ Then the following statements hold true.
	\begin{itemize}
		\item[{\rm(i)}] For any $\lambda
		\in(1,\infty)$ and any cube $Q\subset\rn$, one has
		$\omega(\lambda Q)\leq [\omega]_{A_p
			(\rn)}\lambda^{np}\omega(Q);$
		\item[{\rm(ii)}]
		$$[\omega]_{A_p(\rn)}=\sup_{Q \subset\rn}
		\sup_{\substack{f\mathbf{1}_Q\in L^p_\omega(\rn)\\
				\int_Q|f(t)|^p\omega(t)\,dt\in(0,\infty)}}\frac
		{[\frac{1}{|Q|}\int_Q|f(t)|\,dt]^p}
		{\frac{1}{\omega(Q)}\int_Q|f(t)|^p\omega(t)\,dt},$$
		where the first supremum is taken over all cubes $Q\subset\rn$.
	\end{itemize}
\end{lemma}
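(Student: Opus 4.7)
The plan is to derive both parts from the $A_p$ definition by testing it against suitable functions, handling (ii) first since (i) will be an immediate corollary.

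For the upper bound $\sup \le [\omega]_{A_p(\rn)}$ in part (ii), I would split $|f| = (|f|\,\omega^{1/p})\cdot \omega^{-1/p}$ on the cube $Q$ and apply Hölder's inequality with exponents $p$ and $p'=p/(p-1)$, obtaining
\begin{align*}
\left[\frac{1}{|Q|}\int_Q |f(t)|\,dt\right]^p
\le \frac{1}{|Q|^p}\left(\int_Q |f(t)|^p \omega(t)\,dt\right)
\left(\int_Q \omega(t)^{1/(1-p)}\,dt\right)^{p-1}.
\end{align*}
Factoring out $\omega(Q)$ rewrites the right-hand side as $\frac{1}{\omega(Q)}\int_Q |f|^p\omega\,dt$ multiplied by the quantity $\frac{\omega(Q)}{|Q|}\bigl[\frac{1}{|Q|}\int_Q \omega^{1/(1-p)}\,dt\bigr]^{p-1}$, which is at most $[\omega]_{A_p(\rn)}$ by Definition \ref{weight}. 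For the boundary case $p=1$ the Hölder step is replaced by the pointwise bound $\omega(x)^{-1}\le \|\omega^{-1}\|_{L^\infty(Q)}$ on $Q$, yielding the same conclusion.

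For the reverse inequality in (ii), I would test against the natural extremizer $f = \omega^{1/(1-p)}\mathbf{1}_Q$ (with a standard truncation such as $f=\min\{\omega,N\}^{1/(1-p)}\mathbf{1}_Q$ followed by $N\to\infty$ if $\omega^{1/(1-p)}$ is not a priori integrable). A direct computation shows that the resulting ratio on the right-hand side of (ii) equals precisely $\bigl[\frac{1}{|Q|}\int_Q \omega\bigr]\bigl[\frac{1}{|Q|}\int_Q \omega^{1/(1-p)}\bigr]^{p-1}$ for that cube, so taking the supremum over cubes $Q$ recovers $[\omega]_{A_p(\rn)}$. When $p=1$, one instead tests against $\mathbf{1}_E$ for sets $E\subset Q$ on which $\omega$ approaches its essential infimum, which again produces ratios converging to $\frac{\omega(Q)}{|Q|}\|\omega^{-1}\|_{L^\infty(Q)}$.

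Part (i) then follows by applying (ii) to the enlarged cube $\lambda Q$ with test function $f=\mathbf{1}_Q$: since $\frac{1}{|\lambda Q|}\int_{\lambda Q} \mathbf{1}_Q\,dt = \lambda^{-n}$ and $\frac{1}{\omega(\lambda Q)}\int_{\lambda Q}\mathbf{1}_Q^p \omega\,dt = \omega(Q)/\omega(\lambda Q)$, the inequality in (ii) reduces to $\lambda^{-np}\le [\omega]_{A_p(\rn)}\,\omega(Q)/\omega(\lambda Q)$, which rearranges to the claimed doubling bound. There is no substantive obstacle here; the only items requiring care are the separate treatment of $p=1$ and the truncation argument that makes the extremizer in (ii) admissible when $\omega^{1/(1-p)}$ is not locally integrable.
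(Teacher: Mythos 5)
Your proof is correct in all three steps. The Hölder splitting $|f|=(|f|\,\omega^{1/p})\cdot\omega^{-1/p}$ gives exactly the upper bound in (ii), the test function $\omega^{1/(1-p)}\mathbf{1}_Q$ (respectively indicator functions of near-minimizing subsets of $Q$ when $p=1$) saturates it, and applying the resulting inequality on $\lambda Q$ with $f=\mathbf{1}_Q$ yields the doubling bound (i). Note that the paper itself gives no proof of this lemma — it simply cites Proposition 7.1.5 of Grafakos — so there is no paper argument to compare against; your proof is the standard one found in that reference. One small remark: the truncation you flag as a precaution is in fact unnecessary, because for $\omega\in A_p(\rn)$ with $p\in(1,\infty)$ the dual weight $\omega^{1/(1-p)}$ lies in $A_{p'}(\rn)$ and is therefore locally integrable, so the extremizer is already admissible as written.
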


\begin{definition}\label{202276}
Let $p\in[1,\infty]$ and $\omega\in A_\infty(\rn)$. The
\emph{homogeneous weighted Sobolev space} $\dot{W}^{1,p}_\omega(\rn)$
is defined to be the set of all the
distributions $f$ on $\rn$ whose distributional gradients
$\nabla f:=(\p_1 f,\cdots,\p_n f)$ satisfy $|\nabla f|\in L^p_\omega(\rn)$
and, moreover, for any $f\in \dot{W}^{1,p}_\omega(\rn)$, let
\begin{align*}
\|f\|_{\dot{W}^{1,p}_\omega(\rn)}:=\|\,|\nabla f|\,\|_{L^p_\omega(\rn)}.
\end{align*}
\end{definition}
The following conclusion is the main result of this section.
\begin{theorem}\label{thm-4-1}
Let $p\in[1,\infty)$
		and $q\in(0,\infty)$ satisfy $n(\f 1p-\f1q) <1$.
		Assume that $\og\in A_1(\rn)$. Then there exist
		positive constants $C_1$ and $C_{([\omega]
		_{A_1(\rn)})}$ such that, for any $f\in \dot{W}^{1,p}_\omega(\rn)$,	
	\begin{align}\label{1-1e}
		&\left[\frac
		{K(q,n)}{n}\right]^{\frac{p}{q}}\int_{\RR^n} | \nabla f(x)|^p \og(x) \, dx\\\noz
		&\quad\leq\sup_{\ld\in(0,\infty)} \ld^p \int_
		{\RR^n}\left[\int_{ \RR^n} \mathbf{1}_
		{E_f(\ld,q)}(x, y)\, dy \right]^{\f pq}
		\og(x) \,dx \\\noz
		&\quad\leq C_1C_{([\omega]_{A_1(\rn)})}	
		\int_{\RR^n} | \nabla f(x)|^p \og(x) \, dx,
	\end{align}
	where, for any $\ld\in(0,\infty),$ $q\in(0,\infty),$ and any measurable function $f,$
	\begin{align}\label{E}
		E_f(\ld,q) :=\lf\{ (x, y)\in \RR^n\times \RR^n:
		\ |f(x)-f(y)|> \ld |x-y|^{\f nq+1} \r\},
	\end{align}
the positive constant $C_1$  depends only on $p,q$
and $n$, the positive constant
$C_{([\omega]_{A_1(\rn)})}$ increases as $[\omega]_
{A_1(\rn)}$ increases, and $C_{(\cdot)}$ is
continuous on $(0,\infty)$. Moreover, for any $f\in \dot{W}^{1,p}_\omega(\rn)$,
\begin{align*}
&\lim_{\ld\to\infty}\ld^p \int_{\RR^n}\left
[\int_{ \RR^n} \mathbf{1}_{E_f(\ld,q)}(x, y)\,
dy \right]^{\f pq} \og(x)\, dx\\
&\quad=\left[\frac
{K(q,n)}{n}\right]^{\frac{p}{q}}\int_{\RR^n}
| \nabla f(x)|^p \og(x) \, dx,
\end{align*}
where
\begin{align}\label{kqn}
K(q,n):=\int_{\mathbb{S}^{n-1}}|\xi\cdot
e|^q\,d\sigma(\xi)
\end{align}
with $e$ being some unit vector in $\rn.$
\end{theorem}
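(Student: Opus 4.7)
The plan is to first reduce to the case where $f\in C^\infty(\rn)$ has compactly supported gradient, using Corollary \ref{density2}: for $\omega\in A_1(\rn)$ the weighted Lebesgue space $L^p_\omega(\rn)$ is a ball Banach function space with absolutely continuous norm whose centered ball averages are uniformly bounded, so such smooth functions are dense in $\dot{W}^{1,p}_\omega(\rn)$. After this reduction, the argument splits into three ingredients: the lower bound, the limiting identity as $\lambda\to\infty$, and the upper bound.

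For the lower bound together with the limit, I would perform a pointwise Taylor expansion at each $x$ with $\nabla f(x)\neq \mathbf{0}$. Writing $y=x+r\xi$ with $r>0$ and $\xi\in\mathbb{S}^{n-1}$, the identity $f(y)-f(x)=r\nabla f(x)\cdot\xi+o(r)$ shows that the radial cross-section of the defining set in $E_f(\lambda,q)$ collapses, for $\lambda$ large, to the interval $0<r<(|\nabla f(x)\cdot\xi|/\lambda)^{q/n}$. Integrating $r^{n-1}\,dr$ over this interval and then $\xi$ over $\mathbb{S}^{n-1}$ yields
\begin{align*}
\lambda^q \bigl|\{y\in\rn:\ |f(x)-f(y)|>\lambda|x-y|^{n/q+1}\}\bigr|\longrightarrow \frac{1}{n}\int_{\mathbb{S}^{n-1}}|\nabla f(x)\cdot\xi|^q\,d\sigma(\xi)=\frac{K(q,n)}{n}|\nabla f(x)|^q,
\end{align*}
where the last equality uses the rotation invariance of surface measure on $\mathbb{S}^{n-1}$. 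Raising to the $p/q$ power, multiplying by $\omega(x)$, and passing to the limit under the integral via the dominated convergence theorem (justified by the upper bound below as the dominating function) gives the limit formula. The supremum in $\lambda$ is at least the $\liminf$, which yields the lower bound with the optimal constant $[K(q,n)/n]^{p/q}$.

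For the upper bound, the rotation-and-Vitali argument of Brezis--Van Schaftingen--Yung fails because $\omega(x)\,dx$ is neither rotation nor translation invariant and the integral in \eqref{1-1e} is asymmetric in $(x,y)$. Instead, following the strategy announced in the introduction, I would replace the Vitali covering lemma by a finite family of adjacent dyadic systems $\{\mathscr{D}^1,\ldots,\mathscr{D}^N\}$ in $\rn$ (Lemma \ref{lem-HK} below), with the defining property that every ball $B\subset\rn$ is contained in some $Q\in\mathscr{D}^j$ of comparable diameter. Given $(x,y)\in E_f(\lambda,q)$, I would select such a $Q$ with $\ell(Q)\sim|x-y|$ containing both points and bound $|f(x)-f(y)|$ by an average of $|\nabla f|$ over $Q$ via the Poincar\'e inequality. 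The $A_1(\rn)$ property, in the form $\omega(E)\leq[\omega]_{A_1(\rn)}|E|\inf_{Q}\omega$ for $E\subset Q$, then permits the passage from Lebesgue to $\omega$-weighted integrals with a constant linear in $[\omega]_{A_1(\rn)}$, so that contributions from different dyadic scales can be summed. The hypothesis $n(1/p-1/q)<1$ enters here to guarantee convergence of the resulting geometric series over dyadic scales.

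The main obstacle is the endpoint $p=1$: for $p\in(1,\infty)$, the upper bound follows quickly from the Lusin--Lipschitz pointwise inequality \eqref{Lusin} combined with the strong-type weighted boundedness of $\cm$ on $A_p(\rn)$ weights (and $A_1(\rn)\subset A_p(\rn)$), but for $p=1$ the operator $\cm$ is only of weak type on $L^1_\omega(\rn)$, so this shortcut breaks down. The resolution requires processing $E_f(\lambda,q)$ directly via the adjacent dyadic systems, exploiting the endpoint $A_1(\rn)$ property (which allows swapping Lebesgue averages for $\omega$-averages with no loss of integrability) and the fine dependence of the Poincar\'e constant on the dyadic scale. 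The asymmetry of the integral in $(x,y)$ and the presence of the exponent $p/q$ inside only one of the integrals add further technical complications, and the sharpness condition $n(1/p-1/q)<1$ arises naturally from the dyadic summation at the $p=1$ endpoint.
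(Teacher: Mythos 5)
Your overall architecture mirrors the paper's: density reduction via Corollary \ref{density2}, a pointwise Taylor/rotation argument for the lower bound and the limiting identity, and adjacent dyadic grids with the $A_1$ property for the upper bound. The lower-bound half is essentially identical to Theorem \ref{theorem4.88} and Corollary \ref{corollary3.99}, up to the fact that the paper phrases the passage to the limit via the monotone convergence property of a ball quasi-Banach function space rather than dominated convergence.

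However, the upper-bound sketch has two genuine gaps that would prevent it from closing, especially at the crucial endpoint $p=1$ you rightly flag as the hard case. First, your plan to ``select a $Q$ with $\ell(Q)\sim|x-y|$ containing both points and bound $|f(x)-f(y)|$ by an average of $|\nabla f|$ over $Q$ via the Poincar\'e inequality'' is, in effect, the Lusin--Lipschitz estimate in disguise, since the Poincar\'e inequality on a single cube of scale $|x-y|$ controls $|f(x)-f(y)|$ only after taking a maximal function; this is precisely the route you correctly observe fails when $p=1$. The paper instead uses a multi-scale pointwise inequality (Lemma \ref{lem-1-2a}): $|f(x)-f_{B_1}|\lesssim r\sum_{j\ge 0}2^{-j}\fint_{2^{-j}B}|\nabla f|$, splits $E_f(1,q)$ into two pieces $E_f^{(1)}$ and $E_f^{(2)}$ according to whether $|f(x)-f_{B_{x,y}}|$ or $|f(y)-f_{B_{x,y}}|$ is large, and for each $(x,y)$ extracts a dominant dyadic scale $j_{x,y}$ on which the gradient average is large; the maximal cubes at each scale in each adjacent grid are then disjoint and summable, with the condition $n(1/p-1/q)<1$ making the geometric series in $j$ converge. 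This multi-scale selection is what makes the argument work at $p=1$, and it is not achievable with a single cube of the fixed scale $|x-y|$.

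Second, the piece $E_f^{(2)}$ is asymmetric: $(x,y)\in 2^jQ\times Q$, so the weight lives on $2^jQ$ while the gradient is controlled on $Q$. For $q\ge p$ this is handled by the doubling bound $\omega(2^jQ)\lesssim 2^{jnp}\omega(Q)$, but for $q<p$ one must first dualize: the paper uses the Rubio de Francia iteration operator $R$ and the identification $[L^r_\omega]'=L^{r'}_\mu$ with $\mu=\omega^{1-r'}$, thereby reducing the problem to the already-proved case $p=q$ with the artificial $A_1$ weight $Rg$. This extrapolation step is entirely absent from your outline, and without it the range $q<p$ is not covered.
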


\begin{remark}\label{trie}
\begin{itemize}
\item[{\rm(i)}] As a consequence of Theorem
\ref{thm-4-1}, we obtain Theorem \ref{thm-1-3}.

\item[{\rm(ii)}]
For any given $p\in(1,\infty)$,
the upper estimate of \eqref{1-1e}
can be easily deduced from
both the Lusin--Lipschitz inequality \eqref{Lusin}
and the boundedness of the Hardy--Littlewood maximal
operator on $L^p(\rn)$. Thus, since the Hardy--Littlewood maximal
operator is not bounded on $L^1(\rn)$,
the \emph{hard core} of the proof
of \eqref{1-1e} is to include the case $p=1.$

\item[{\rm(iii)}]
Let $p,q\in[1,\infty)$ satisfy   $n\max\{0,\f 1p-\f1q\}<s<1$.
By \cite[Theorem 1.3]{Hov21},
we find that, if
$f\in
L_{\loc}^{\min\{p,q\}}(\rn)$,
then $f\in F_{p,q}^s(\rn)$
if and only if
\begin{align*}
\mathrm{I}:=\|f\|_{L^p(\rn)}+\left\|\left[\int_{\rn}\frac{|f(\cdot)-
f(y)|^q}{|\cdot-y|^{n+sq}}\,dy\right]
^{\frac{1}{q}}\right\|
_{L^p(\mathbb{R}^n)}
<\infty
\end{align*}
and, moreover, in this case,
\begin{align}\label{smp}
\mathrm{I}\sim\|f\|_{F_{p,q}^s(\rn)}
\end{align}
with the positive equivalence constants independent of $f$,
where $F_{p,q}^s(\rn)$ denotes
the classical Triebel--Lizorkin space
(see \cite[Section 2.3]{T83} for the precise definition).
Using \eqref{smp}, we find that,
for any given $p\in[1,\infty)$
and $s\in(0,1)$, $F^s_{p,p}(\rn)=W^{s,p}(\rn)$
with equivalent norms.
Also, by \cite[Theorem 1.5]{Hov21},  we conclude that,
under the assumption that $p,q\in[1,\infty)$,
\eqref{smp} is valid \emph{only if} $n\max\{0,\f 1p-\f1q\}\leq s<1$.
Moreover, using both (i) and (ii)
of Theorem \ref{ss1} below,
we conclude that, when $s=1$,
$p\in[1,\infty),$ and $q\in[1,p],$
$$
\left\|\left[\int_{\rn}\frac{|f(\cdot)-
	f(y)|^q}{|\cdot-y|^{n+q}}\,dy\right]
^{\frac{1}{q}}\right\|
_{L^p(\mathbb{R}^n)}
=\infty
$$
unless $f$
is a constant.
Thus, the Gagliardo quasi-semi-norm in \eqref{smp}
can not
recover the Triebel--Lizorkin quasi-semi-norm
$\|\cdot\|_{F_{p,q}^1(\rn)}$ when $s=1$ and,
in this sense, the assumption $n(\f1p-\f1q)<1$
in Theorem \ref{thm-4-1} seems to be \emph{sharp}.
Replacing the strong type quasi-norm in \eqref{smp} by the weak type
quasi-norm, and using Theorem \ref{thm-4-1}
with $\omega=1$,
we find that, for any
$f\in \dot{W}^{1,p}(\rn)$,
\begin{align*}
	\|f\|_{L^p(\rn)}+\sup_{\ld\in(0,\infty)} \ld \lf\{\int_
	{\RR^n}\left[\int_{ \RR^n} \mathbf{1}_
	{E_f(\ld,q)}(x, y)\, dy \right]^{\f pq}
	\, dx \r\}^{\f1p}
	\sim\|f\|_{F_{p,2}^1(\rn)}
\end{align*}
with the positive equivalence constants independent of $f$.
This indicates that  Theorem \ref{thm-4-1} is a \emph{perfect
replacement} of \eqref{smp} in
the critical case $s=1$.

\item[{\rm(iv)}]
Let $p\in[1,\infty)$
and $q\in(0,\infty)$ satisfy $n(\f 1p-\f1q) <1$.
Let $\mathbf{W\dot{F}}^1_{L^p_{\omega}(\rn),q}(\rn)$
be the
weak-type space $\mathbf{W\dot{F}}^1_{X,q}(\rn)$
in Definition \ref{np} below
with $X:=L^p_{\omega}(\rn)$.
Assume that $q_1,q_2\in(0,\infty)$
satisfy $n(\f 1p-\f1{q_1}) <1$
and $n(\f 1p-\f1{q_2}) <1$.
From Theorem \ref{thm-4-1},
it follows that
$$\mathbf{W\dot{F}}^1_{L^p_{\omega}(\rn),q_1}(\rn)\cap
\dot{W}^{1,p}_\omega(\rn)
=\mathbf{W\dot{F}}^1_{L^p_{\omega}(\rn),q_2}(\rn)
\cap
\dot{W}^{1,p}_\omega(\rn)$$
with equivalent quasi-norms. Thus, when $q\in(0,\infty)$
satisfies $n(\f 1p-\f1q) <1,$ the space  $\mathbf{W\dot{F}}^1_{L^p_{\omega}(\rn),q}(\rn)\cap
\dot{W}^{1,p}_\omega(\rn)$ is independent of $q.$	
\item[{\rm(v)}] For the purpose of our
		applications later, we only consider $A_1(\rn)$-weights
		here. However, our proof actually works equally well for
		more general $A_p(\rn)$-weights. For instance,
		a slight modification of the proofs in this
		section shows that \eqref{1-1e} holds
		true for any given
		$1\leq p=q<\infty$ and $\og\in
		A_{\min\{p, 1+\f pn\}}(\rn)$.		
	\end{itemize}
\end{remark}

\begin{theorem}\label{thm-4-2}
Let $p\in[1,\infty)$ and $\og$ be a non-negative
function on $\RR$. Assume that there exists a positive
constant $C_1$ such that, for any	$f\in C^1(\RR)$
satisfying that $f'$ has compact support, 		
	\begin{equation*}
	\sup_{\ld\in(0,\infty)} \ld^p \int_{\RR^2}
	\mathbf{1}_{E_f(\ld,p)} (x, y) \og(x)\, dx\,dy\leq C_1	
	\int_{\RR} | f'(x)|^p \og(x) \, dx,
	\end{equation*}
	where $E_f(\ld,p)$ is the same as  in \eqref{E} for
	any $\ld\in(0,\infty)$.
	Then $\og\in A_p(\RR)$.
\end{theorem}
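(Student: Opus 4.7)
The plan is to derive the $A_p(\RR)$ condition by testing the hypothesis against two explicit families of piecewise affine test functions (mollified so as to belong to $C^1(\RR)$) and then combining the resulting estimates with a doubling property extracted from the first family. The argument naturally splits into the cases $p=1$ and $p>1$, but the initial reduction is common to both.

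\emph{Adjacent-interval estimate and doubling.} For an arbitrary interval $I=(a,b)$ and any measurable $E\subset I$, I would take $f\in C^1(\RR)$ obtained by mollifying the Lipschitz function with derivative $\mathbf{1}_E$, so that $f\equiv0$ on $(-\fz,a]$ and $f\equiv|E|$ on $[b,\fz)$. Writing $I_L:=(2a-b,a)$ and $I_R:=(b,2b-a)$, every $x\in I_L$ and $y\in I_R$ satisfies $|f(x)-f(y)|=|E|$ and $|x-y|<3|I|$, so the choice $\ld:=|E|/(3|I|)^{1/p+1}$ places every such pair in $E_f(\ld,p)$. After a routine limit $\ve\to0$ in the mollification, inserting this into the hypothesis yields
$$
\left(\f{|E|}{|I|}\right)^p\omega(I_L)\ls\omega(E),
$$
and a symmetric choice of $(x,y)$ gives the analogous bound with $I_L$ replaced by $I_R$. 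Taking $E=I$ produces $\omega(I_L),\omega(I_R)\ls\omega(I)$; applying the same inequalities with base interval $I_L$, and noting that $(I_L)_R=I$, gives the reverse comparisons. Hence the doubling $\omega(I)\sim\omega(I_L)\sim\omega(I_R)$ holds for every interval $I\subset\RR$.

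\emph{Case $p=1$.} Shrinking $E$ around a Lebesgue point $x\in I$ in $(|E|/|I|)\omega(I_L)\ls\omega(E)$ gives $\omega(I_L)/|I_L|\le C\omega(x)$ for almost every $x\in I$, hence $\omega(I_L)/|I_L|\le C\essinf_I\omega$. Taking $I:=J$ yields $\omega(J_L)/|J|\le C\essinf_J\omega$, and invoking the doubling $\omega(J_L)\sim\omega(J)$ upgrades this to $\omega(J)/|J|\ls\essinf_J\omega$, which is precisely the $A_1(\RR)$ condition of Definition \ref{weight}.

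\emph{Case $p>1$.} Fix $J=(a,b)$ and, for $N\in\nn$, apply the hypothesis to a $C^1$ mollification $f$ of the primitive of $g_N:=\min\{\omega^{-1/(p-1)},N\}\mathbf{1}_J$; write $A_N:=\int_J g_N$. Since $f\equiv0$ for $x\le a$ and $f\equiv A_N$ for $x\ge b$, the same geometric argument with $\ld:=A_N/(3|J|)^{1/p+1}$, together with the pointwise bound $g_N^p\omega\le g_N$ (immediate from $g_N\le\omega^{-1/(p-1)}$), produces
$$
\f{A_N^p\,\omega(J_L)}{3^{p+1}|J|^p}\le C_1\int_J g_N^p\omega\le C_1 A_N,
$$
so $A_N^{p-1}\omega(J_L)\ls|J|^p$. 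Monotone convergence in $N$, followed by the doubling $\omega(J_L)\sim\omega(J)$, then gives
$$
\left(\fint_J\omega^{-1/(p-1)}\right)^{p-1}\fint_J\omega\ls1,
$$
i.e., $[\omega]_{A_p(\RR)}<\fz$. The two technical hurdles I would expect to be the main work are (i) making the mollification step rigorous, where one must verify that $E_{f_\ve}(\ld,p)$ captures $E_f(\ld,p)$ in the limit $\ve\to0$ so as to transfer the inequality back to the non-smooth $f$, and (ii) handling the degenerate possibility $\int_J\omega^{-1/(p-1)}=\fz$, in which the displayed estimate forces $\omega(J_L)=0$ and, via the doubling, $\omega\equiv0$ almost everywhere, leaving $A_p(\RR)$ trivially satisfied.
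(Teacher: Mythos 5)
Your argument is correct and built on the same geometric engine as the paper's: in both, the key step is to take $f$ to be a mollified primitive of a nonnegative function supported in an interval $I$, to note that the product of the two flanking intervals sits inside $E_f(\lambda,p)$ for $\lambda$ comparable to $(\int f')/|I|^{1/p+1}$, and to transfer the weight of a flanking interval onto $I$ itself via a doubling property extracted by testing with $g=\mathbf{1}_I$. You part ways from the paper only in the final step. The paper invokes the dilation and translation invariance of the hypothesis together with Lemma~\ref{Lemma2.1}(ii) to reduce everything to proving a single normalized inequality~\eqref{4-2} valid for every nonnegative $g$; you bypass Lemma~\ref{Lemma2.1}(ii) and extract $A_1$ resp.\ $A_p$ directly, via Lebesgue differentiation of $\mathbf{1}_E$ when $p=1$ and by inserting the near-extremizer $g_N=\min\{\omega^{-1/(p-1)},N\}\mathbf{1}_J$ into the hypothesis when $p>1$. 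Both routes are of comparable length. Two small caveats: the mollified primitive does not vanish identically outside $I$, only outside an $\varepsilon$-neighbourhood, so the limit step you flag in~(i) must genuinely be carried out (the paper faces the same issue passing from~\eqref{4-3} to~\eqref{4-4}); and the dismissal of the degenerate case $\int_J\omega^{1-p'}=\infty$ via ``$\omega\equiv0$ is trivially in $A_p$'' is unsafe, since a.e.\ vanishing weights are customarily excluded from $A_p(\RR)$ --- this is a minor disadvantage of forming $\omega^{1-p'}$ directly rather than going through the test-function characterization of Lemma~\ref{Lemma2.1}(ii), which never evaluates that integral.
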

\begin{remark}
In one dimension, Theorem \ref{thm-4-2} implies
that the $A_1(\mathbb{R})$
condition in Theorem \ref{thm-1-3} with $p=1$ is sharp.
\end{remark}
The proof of Theorem \ref{thm-4-1} is given in
Subsections \ref{subsection-2-1} and
\ref{sec-4-2}. In Subsection \ref{sec:4-3}, we
prove Theorem \ref{thm-4-2}.

\subsection{Proof of Theorem \ref{thm-4-1}: Upper Estimate }\label{sec-4-2}

We begin with recalling some conclusions about
Muckenhoupt weights $A_p(\rn)$.
The following lemma is a part of \cite[Theorem 7.1.9]{ref1}.
\begin{lemma}\label{jidahanshu}
	Let $p\in(1,\infty)$ and $\omega\in A_{p}(\rn)$. Then
	there exists a positive constant $C$, independent of $\omega,$ such that,
	for any $f\in L^p_\omega(\rn),$
	$$\|\mathcal{M}(f)\|_{L^p_\omega(\rn)}\leq C
	[\omega]_{A_p(\rn)}^{\frac{1}{p-1}}\|f\|
	_{L^p_\omega(\rn)},$$
	where $\mathcal{M}$ is the same as  in \eqref{2-4-c}.
\end{lemma}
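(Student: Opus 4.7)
The plan is to follow the standard proof of Buckley's sharp weighted maximal inequality, which yields precisely the exponent $1/(p-1)$ on $[\omega]_{A_p(\rn)}$ claimed in the statement. First, I would reduce to a dyadic setting: since the centered Hardy--Littlewood maximal operator admits the pointwise control $\cm(f)(x)\ls\sum_{k=1}^{N_n}M^{\cd^{(k)}}(|f|)(x)$ for a finite family $\{\cd^{(k)}\}$ of adjacent shifted dyadic grids on $\rn$ (where $M^{\cd}$ denotes the dyadic maximal operator relative to $\cd$), it suffices to establish the same weighted bound for each $M^{\cd}$ separately. This reduction is lossless in the dependence on $[\omega]_{A_p(\rn)}$.

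Fix a dyadic grid $\cd$ and a nonnegative $f\in L^p_\omega(\rn)$. For each $k\in\zz$, I would form the Calder\'on--Zygmund stopping family $\{Q_j^k\}_j$ consisting of the maximal cubes of $\cd$ that satisfy $\fint_{Q_j^k}f>2^k$; then $\{M^{\cd}f>2^k\}=\bigsqcup_j Q_j^k$ and dyadic maximality forces $\fint_{Q_j^k}f\le 2^{n+k}$. These cubes form a sparse collection, i.e., there exist disjoint subsets $E_j^k\subset Q_j^k$ with $|E_j^k|\ge\frac12|Q_j^k|$. Applying Lemma \ref{Lemma2.1}(ii) cube by cube gives
\begin{equation*}
2^{kp}\omega(Q_j^k)\le\lf(\fint_{Q_j^k}f\r)^{p}\omega(Q_j^k)\le [\omega]_{A_p(\rn)}\int_{Q_j^k}f^p\omega\,dx,
\end{equation*}
and the layer-cake formula combined with summing in $k$ and $j$ controls $\int_{\rn}(M^{\cd}f)^p\omega\,dx$ by $C[\omega]_{A_p(\rn)}\|f\|_{L^p_\omega(\rn)}^p$, at least with exponent $1$ on $[\omega]_{A_p(\rn)}$.

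The main obstacle is sharpening the exponent on $[\omega]_{A_p(\rn)}$ from $1$ to $1/(p-1)$. To do this, I would introduce the dual weight $\sigma:=\omega^{1-p'}$, which is an $A_{p'}(\rn)$-weight with $[\sigma]_{A_{p'}(\rn)}=[\omega]_{A_p(\rn)}^{1/(p-1)}$, and rewrite $M^{\cd}(f)(x)=M^{\cd}(f\sigma^{-1}\cdot\sigma)(x)$. The problem then reduces to a two-weight dyadic maximal inequality, whose testing condition $\|\mathbf{1}_Q M^{\cd}(\sigma\mathbf{1}_Q)\|_{L^{p'}(\omega)}\ls[\omega]_{A_p(\rn)}^{1/(p-1)}\sigma(Q)^{1/p'}$ for every $Q\in\cd$ is a direct consequence of the $A_p$ definition together with the sparseness observed above. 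Sawyer's two-weight characterization (or, equivalently, a sparse-domination argument) then upgrades the cube-wise testing bound to the global inequality with the correct constant $C[\omega]_{A_p(\rn)}^{1/(p-1)}$. Since this is precisely the argument recorded in \cite[Theorem 7.1.9]{ref1}, the lemma follows by invoking that reference.
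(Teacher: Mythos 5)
The paper does not supply a proof of Lemma \ref{jidahanshu}: it quotes the result directly as part of \cite[Theorem 7.1.9]{ref1}, and your final sentence also defers to that reference, so in terms of the ultimate appeal to authority your proposal agrees with the paper. The outline you wrap around the citation, however, has a concrete gap precisely at the sharp-constant step. After the sparse domination $\int_Q M^{\cd}(\sigma\mathbf{1}_Q)^p\omega\,dx\ls\sum_{P\in\mathcal{S}}\langle\sigma\rangle_P^p\omega(P)$ and the pointwise $A_p$ estimate $\langle\sigma\rangle_P^p\omega(P)\le[\omega]_{A_p(\rn)}\sigma(P)$, you are left with $[\omega]_{A_p(\rn)}\sum_{P\in\mathcal{S}}\sigma(P)$, and the needed inequality $\sum_{P\in\mathcal{S}}\sigma(P)\ls[\omega]_{A_p(\rn)}^{1/(p-1)}\sigma(Q)$ is \emph{not} ``a direct consequence of the $A_p$ definition together with the sparseness observed above'': Lebesgue sparseness ($|E_P|\ge 2^{-1}|P|$, $E_P$ disjoint) makes $\mathcal{S}$ a Carleson family for $dx$, not for $\sigma\,dx$. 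The standard way to close this step is the Fujii--Wilson characterization of $A_\infty$, which gives $\sum_{P\in\mathcal{S}}\sigma(P)\ls\sum_P\langle\sigma\rangle_P|E_P|\le\int_Q M^{\cd}(\sigma\mathbf{1}_Q)\,dx\ls[\sigma]_{A_\infty(\rn)}\sigma(Q)$, and since $[\sigma]_{A_\infty(\rn)}\ls[\sigma]_{A_{p'}(\rn)}=[\omega]_{A_p(\rn)}^{1/(p-1)}$ this is exactly where the remaining piece of the exponent $1/(p-1)$ comes from, so it cannot be waved away.

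Two smaller points. First, the exponents in your displayed testing condition should both be $p$, not $p'$: Sawyer's test for $M(\sigma\,\cdot):L^p(\sigma)\to L^p(\omega)$ reads $\|\mathbf{1}_Q M^{\cd}(\sigma\mathbf{1}_Q)\|_{L^p(\omega)}\ls T\,\sigma(Q)^{1/p}$. Second, Grafakos's proof of \cite[Theorem 7.1.9]{ref1} does not in fact go through Sawyer's two-weight testing theory, so the assertion that your outline ``is precisely the argument recorded'' there should be weakened to ``yields the same conclusion''.
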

For the proof of the upper bound in \eqref{1-1e},
we need to use several adjacent systems of dyadic
cubes, which can be found, for instance, in \cite[Section 2.2]{LSU12}.

\begin{lemma} \label{lem-HK}
	For any $\alpha\in\{0,\frac{1}{3},\frac{2}{3}\}^n,$
	let
	$$\mathcal{D}^\al:=\{2^j(k+[1,0)^n+(-1)^j\al):
	\ j\in\zz,\,k\in\zz^n\}.$$
	Then
	\begin{itemize}
		\item[{\rm(i)}] for any $Q,Q'\in\mathcal{D}^\al$ with the same
		$\alpha\in\{0,\frac{1}{3},\frac{2}{3}\}^n,$
		$Q\bigcap Q'\in\{\emptyset,Q,Q'\};$
		\item[{\rm(ii)}] for any ball $B\subset\rn,$ there
		exists an $\alpha\in\{0,\frac{1}{3},\frac{2}{3}
		\}^n$ and a $Q\in\mathcal{D}^\al$ such that $B\subset
		Q\subset CB,$ where the positive constant $C$
		depends only on $n.$
	\end{itemize}
\end{lemma}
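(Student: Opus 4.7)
\medskip

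The plan is to verify the two properties separately, using the structure of the translation parameters $\alpha\in\{0,\frac13,\frac23\}^n$ with the sign alternation $(-1)^j$.

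For part (i), I would first observe that within a fixed generation $j$, the collection $\{2^j(k+[0,1)^n+(-1)^j\alpha):k\in\mathbb{Z}^n\}$ is a translate of the standard dyadic grid $2^j\mathbb{Z}^n+2^j[0,1)^n$ and hence tiles $\mathbb{R}^n$ by pairwise disjoint cubes. The key point is then the \emph{tower property}: each cube at generation $j+1$ is a disjoint union of $2^n$ cubes at generation $j$ inside the same $\mathcal{D}^\alpha$. Writing
\[
2^j(k+[0,1)^n+(-1)^j\alpha)\subset 2^{j+1}(m+[0,1)^n+(-1)^{j+1}\alpha)
\]
reduces, after expanding, to requiring $k=2m+\varepsilon-(-1)^j\cdot 3\alpha$ for some $\varepsilon\in\{0,1\}^n$. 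Since $\alpha\in\{0,\tfrac13,\tfrac23\}^n$, the vector $3\alpha$ lies in $\mathbb{Z}^n$, so $k\in\mathbb{Z}^n$, which is exactly where the choice of the triadic shifts combined with the sign $(-1)^j$ pays off. Once this refinement is in hand, the claim $Q\cap Q'\in\{\emptyset,Q,Q'\}$ for $Q,Q'\in\mathcal{D}^\alpha$ follows by comparing generations.

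For part (ii), given a ball $B=B(x_0,r)$, I would choose the scale $j\in\mathbb{Z}$ so that $2^j$ is comparable to $r$, say $3r\le 2^j<6r$. The goal is to find $\alpha\in\{0,\tfrac13,\tfrac23\}^n$ such that $x_0$ lies at distance at least $r$ from every face of its enclosing cube in $\mathcal{D}^\alpha$ at generation $j$. I would argue coordinatewise: fixing a coordinate $i$, the three choices $\alpha_i\in\{0,\tfrac13,\tfrac23\}$ shift the relevant one-dimensional grid $2^j\mathbb{Z}+2^j(-1)^j\alpha_i$ by steps of $2^j/3$. Among these three shifts, at least one places $x_0^{(i)}$ at distance at least $2^j/3$ from the nearest grid point, hence at distance at least $2^j/3\ge r$ from every face of its enclosing interval at generation $j$. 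Choosing $\alpha$ coordinatewise produces a cube $Q\in\mathcal{D}^\alpha$ with $B\subset Q$. Since $Q$ has side length $2^j<6r$, it is contained in a ball of radius $\sqrt{n}\cdot 2^j<6\sqrt{n}\,r$ concentric with $B$ enlarged by a factor depending only on $n$, so $Q\subset CB$ with $C:=12\sqrt{n}$ (or a similar dimensional constant).

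The main obstacle is the bookkeeping in the tower property of part (i): one must keep track of the alternating sign $(-1)^j$ carefully to see that the triadic offsets absorb into the integer lattice when refining. Once that identity is isolated, the rest of (i) is standard, and (ii) is an elementary pigeonhole in each coordinate followed by a trivial radius comparison.
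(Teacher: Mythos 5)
The paper does not actually prove this lemma; it cites \cite[Section 2.2]{LSU12} for it. Your self-contained argument is correct and coincides with the standard ``one-third trick'' proof one finds in the literature for shifted dyadic grids. Two small remarks for tightening it up: the statement's $[1,0)^n$ is of course the typo $[0,1)^n$, which you silently and correctly repair; and in part (ii) the constant can be taken as $6\sqrt n$ directly, since $Q$ contains $x_0$ and has diameter $\sqrt n\,2^j < 6\sqrt n\,r$, so $Q\subset B(x_0,6\sqrt n\,r)$ — the extra factor $2$ in your $12\sqrt n$ is unnecessary. The key computations are both right: the refinement identity $k=2m+\varepsilon-(-1)^j3\alpha\in\mathbb Z^n$ is exactly what the alternating sign $(-1)^j$ buys (otherwise the triadic shift would not absorb into the integer lattice when passing from generation $j$ to $j+1$), and the coordinatewise pigeonhole — for each $t$ at least one $\beta\in\{0,\tfrac13,\tfrac23\}$ has $\operatorname{dist}(t,\mathbb Z+\beta)\ge\tfrac13$ — is easily verified by splitting $[0,1)$ into the thirds $[0,\tfrac13]$, $[\tfrac13,\tfrac23]$, $[\tfrac23,1)$ and taking $\beta=\tfrac23,0,\tfrac13$ respectively.
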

Applying Lemma \ref{jubu} with $X:=L^p_{\omega}(\rn)$, we obtain the following corollary.
\begin{corollary}\label{jiaquan}
	Let $p\in[1,\infty)$ and $\omega\in A_p(\rn)$. Then
	\begin{align*}
		\dot{W}^{1,p}_{\omega}(\rn)\subset
		W^{1,1}_{\rm{loc}}(\rn).
	\end{align*}
\end{corollary}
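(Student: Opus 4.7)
The plan is to deduce Corollary~\ref{jiaquan} directly from Lemma~\ref{jubu} by verifying that $L^p_\omega(\rn)$ is a ball Banach function space whenever $p\in[1,\infty)$ and $\omega\in A_p(\rn)$. Once this structural fact is in hand, the definition of $\dot W^{1,p}_\omega(\rn)$ in Definition~\ref{202276} coincides with the definition of $\dot W^{1,X}(\rn)$ with $X:=L^p_\omega(\rn)$, so Lemma~\ref{jubu} yields the desired inclusion immediately.

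To check that $X:=L^p_\omega(\rn)$ is a ball Banach function space, I would note that conditions (i), (ii), and (iii) of Definition~\ref{Debqfs} are standard: they follow from treating $L^p_\omega(\rn)$ as $L^p$ with respect to the measure $\omega(x)\,dx$, which is a well-defined sigma-finite measure because $\omega\in A_p(\rn)\subset L^1_{\loc}(\rn)$. The triangle inequality in $X$ is exactly Minkowski's inequality in this $L^p$ setting. For condition (iv), for any ball $B\in\mathbb{B}$ one has $\|\mathbf{1}_B\|_X=[\omega(B)]^{1/p}<\infty$, again by the local integrability of $\omega$.

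The remaining point is the local integrability estimate $\int_B|f(x)|\,dx\le C_{(B)}\|f\|_X$. When $p\in(1,\infty)$ I would apply H\"older's inequality:
\begin{align*}
\int_B|f(x)|\,dx
=\int_B|f(x)|\omega(x)^{1/p}\omega(x)^{-1/p}\,dx
\le\|f\|_{L^p_\omega(\rn)}\left[\int_B\omega(x)^{1-p'}\,dx\right]^{1/p'},
\end{align*}
and the second factor is finite since $\omega\in A_p(\rn)$ forces $\omega^{1-p'}\in L^1_{\loc}(\rn)$. When $p=1$, the $A_1(\rn)$ condition gives $\|\omega^{-1}\|_{L^\infty(B)}<\infty$, so
\begin{align*}
\int_B|f(x)|\,dx\le\|\omega^{-1}\|_{L^\infty(B)}\int_B|f(x)|\omega(x)\,dx\le\|\omega^{-1}\|_{L^\infty(B)}\|f\|_{L^1_\omega(\rn)}.
\end{align*}

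With $L^p_\omega(\rn)$ confirmed to be a ball Banach function space, Lemma~\ref{jubu} yields $\dot W^{1,L^p_\omega(\rn)}(\rn)\subset W^{1,1}_{\loc}(\rn)$, which is exactly the statement of Corollary~\ref{jiaquan}. No real obstacle arises; the entire content is bookkeeping to match $L^p_\omega(\rn)$ against the abstract framework of ball Banach function spaces.
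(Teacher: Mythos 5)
Your proposal is correct and matches the paper's approach exactly: the paper states this corollary follows by ``applying Lemma~\ref{jubu} with $X:=L^p_\omega(\rn)$.'' The extra verification you provide that $L^p_\omega(\rn)$ is a ball Banach function space for $\omega\in A_p(\rn)$ (via Minkowski for the norm, local integrability of $\omega$ for axiom (iv), and H\"older against $\omega^{1-p'}\in L^1_{\loc}(\rn)$ for $p>1$, or the $A_1$ bound on $\omega^{-1}$ for $p=1$) is a standard fact the paper leaves implicit, and your treatment of it is accurate.
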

We also need  the following
conclusion.
\begin{lemma}\label{lem-1-2a}
	Let $p\in[1,\infty),$ $\omega\in A_p(\rn)$, and $f\in \dot{W}^{1,p}_{\omega}(\rn)$.
Then there exists an $A\subset \rn$ with $|A|=0$ and
	a positive constant $C_{(n)}$, depending only on $n$, such that,
	for any $x\in \rn\setminus A$, $r\in(0,\infty)$, and any ball $B_1\subset B:=B(x,r)\subset 3B_1$,
	\begin{align}\label{guize}
		|f(x)-f_{B_1}|\leq C_{(n)} r\sum_{j=0}
		^\infty 2^{- j} \fint_{2^{-j} B} |\nabla f(y)|\, dy.
	\end{align}
\end{lemma}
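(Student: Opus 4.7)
The plan is to derive \eqref{guize} via a standard dyadic telescoping argument that combines the Lebesgue differentiation theorem with the classical $L^1$-Poincar\'e inequality on balls, together with a simple volume comparison to pass from $B$ to the sub-ball $B_1$.

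First I would set up the null set $A$: by Corollary \ref{jiaquan}, $f\in W^{1,1}_{\mathrm{loc}}(\rn)$, so in particular $f\in L^1_{\mathrm{loc}}(\rn)$, and the Lebesgue differentiation theorem produces a null set $A\subset \rn$ such that, for every $x\in\rn\setminus A$ and every $r\in(0,\infty)$, $f(x)=\lim_{j\to\infty} f_{2^{-j}B(x,r)}$. Fix such an $x$, an $r\in(0,\infty)$, and balls $B_1\subset B:=B(x,r)\subset 3B_1$, and split
$$
|f(x)-f_{B_1}|\le |f(x)-f_B|+|f_B-f_{B_1}|.
$$

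For the first piece, the choice of $A$ legitimises the telescoping identity $f(x)-f_B=\sum_{j=0}^\infty (f_{2^{-(j+1)}B}-f_{2^{-j}B})$. Applying the elementary estimate
$$
\left|f_{2^{-(j+1)}B}-f_{2^{-j}B}\right|\le \frac{|2^{-j}B|}{|2^{-(j+1)}B|}\fint_{2^{-j}B}|f(y)-f_{2^{-j}B}|\,dy = 2^n\fint_{2^{-j}B}|f(y)-f_{2^{-j}B}|\,dy
$$
and the classical $L^1$-Poincar\'e inequality on each ball $2^{-j}B$ (already invoked in the proof of Proposition \ref{zidong} via \cite[Theorem 8.2.4(b)]{DHHR}) dominates each summand by $C_{(n)}\, 2^{-j}r\fint_{2^{-j}B}|\nabla f(y)|\,dy$. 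Summing in $j$ yields precisely the right-hand side of \eqref{guize}, up to a dimensional factor. For the second piece, the containment $B\subset 3B_1$ gives $|B|\le 3^n|B_1|$, so
$$
|f_B-f_{B_1}|\le \fint_{B_1}|f(y)-f_B|\,dy\le 3^n\fint_B|f(y)-f_B|\,dy\lesssim r\fint_B|\nabla f(y)|\,dy,
$$
again by Poincar\'e on $B$, and this is absorbed into the $j=0$ term of the sum in \eqref{guize}. Combining both pieces yields the stated inequality with a constant depending only on $n$.

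There is no substantial obstacle: the whole argument is a textbook Poincar\'e-based chaining already implicit in Section \ref{sec-density}. The only point requiring care is that the telescoping series must converge at $x$ with limit $f(x)$, which is guaranteed precisely by removing the Lebesgue-null set $A$ arising from the Lebesgue differentiation theorem applied to the locally integrable function $f$.
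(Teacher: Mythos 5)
Your proof is correct and takes essentially the same route as the paper: both rely on Corollary \ref{jiaquan} to place $f$ in $W^{1,1}_{\mathrm{loc}}(\rn)$, invoke the Lebesgue differentiation theorem to define the null set $A$ and justify the dyadic telescoping $f(x)-f_B=\sum_{j}(f_{2^{-(j+1)}B}-f_{2^{-j}B})$, bound each increment by the classical $L^1$-Poincar\'e inequality, and handle the shift from $B$ to $B_1$ by the volume comparison $|B|\le 3^n|B_1|$ together with one more application of Poincar\'e on $B$. The only cosmetic difference is the order of presentation; the substance is identical.
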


\begin{proof}
By Corollary \ref{jiaquan}, we find that $f\in W^{1,1}_{\rm{loc}}(\rn).$
This, together with  the classical Poincar\'e inequality (see, for instance,
\cite[Theorem 8.2.4(b)]{DHHR}), implies that,
for any ball $B_0\subset\rn$,
\begin{align}\label{yiban}
\int_{B_0}|f(y)-f_{B_0}|\,dy
\leq
Cr_{B_0}\int_{B_0}|\nabla f(y)|\,dy,
\end{align}
where the positive constant $C$  depends only on $n.$
By the Lebesgue differentiation theorem,
we find that there exists an $A\subset\rn$ with $|A|=0$ such that,
for any $x\in\rn\setminus A,$
\begin{align*}
\lim_{j\to\infty}\frac{1}{|2^{-j}B|}\int_{2^{-j}B}|f(x)-f(y)|\,dy=0
\end{align*}
and hence
\begin{align*}
f(x)=\lim_{j\to\infty}\frac{1}{|2^{-j}B|}
\int_{2^{-j}B} f(y)\,dy=\lim_{j\to\infty} f_{2^{-j}B},
\end{align*}
where $B:=B(x,r)$ with $r\in(0,\infty).$ This, together with
\eqref{yiban}, implies that
\begin{align}\label{yueguang}
|f(x)-f_B|
&=
\lim_{j\to\infty}|f_{2^{-j}B}-f_B|
\leq
\sum_{j=0}^\infty|f_{2^{-j-1}B}-f_{2^{-j}B}|\\\noz
&\leq
\sum_{j=0}^\infty\frac{1}{|2^{-j-1}B|}
\int_{2^{-j-1}B}|f(y)-f_{2^{-j}B}|\,dy\\\noz
&\lesssim
r\sum_{j=0}
^\infty 2^{- j} \fint_{2^{-j} B} |\nabla f(y)|\, dy.
\end{align}
On the other hand, using \eqref{yiban} again, we have
\begin{align*}
|f_B-f_{B_1}|
&\leq
\frac{1}{|B_1|}\int_{B_1}|f(y)-f_B|\,dy
\lesssim
\frac{1}{|B|}\int_{B}|f(y)-f_B|\,dy\\\noz
&\lesssim
r\fint_{B}|\nabla f(y)|\,dy.
\end{align*}
From this and  \eqref{yueguang}, we deduce that \eqref{guize} holds true.
This finishes the proof of Lemma \ref{lem-1-2a}.
\end{proof}
We next state the upper estimate of Theorem \ref{thm-4-1} in the following
separate theorem.
\begin{theorem}\label{thm41}
Let $p\in[1,\infty)$
and $q\in(0,\infty)$ satisfy $n(\f 1p-\f1q) <1$.
Assume that $\og\in A_1(\rn)$. Then there exist
positive constants $C_1$ and $C_{([\omega]
_{A_1(\rn)})}$ such that, for any $f\in \dot{W}^{1,p}_\omega(\rn)$,		
	\begin{align}\label{zy}
		&\sup_{\ld\in(0,\infty)} \ld^p \int_
		{\RR^n}\left[\int_{ \RR^n} \mathbf{1}_
		{E_f(\ld,q)}(x, y)\, dy \right]^{\f pq}
		\og(x) \,dx\\\noz
		&\quad\leq C_1C_{([\omega]_{A_1(\rn)})}	
		\int_{\RR^n} | \nabla f(x)|^p \og(x) \, dx,
	\end{align}
	where $E_f(\ld,q)$ for any $\ld\in(0,\infty)$
	 is the same as  in
	\eqref{E},
	the positive constant  $C_1$ are independent of
	$\omega$,  the positive constant
	$C_{([\omega]_{A_1(\rn)})}$ increases as $[\omega]_
	{A_1(\rn)}$ increases, and
	$C_{(\cdot)}$ is
	continuous on $(0,\infty)$.
\end{theorem}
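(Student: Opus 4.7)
By the density result (Corollary \ref{density2}), which applies here since $L^p_\omega(\rn)$ for $\omega\in A_1(\rn)\subset A_p(\rn)$ has an absolutely continuous norm and satisfies the maximal operator hypothesis, it suffices to prove \eqref{zy} for $f\in C^\infty(\rn)$ with $|\nabla f|\in C_{\mathrm{c}}(\rn)$. Fix such an $f$ and $\lambda\in(0,\infty)$, and write
\[
A_x(\lambda):=\left\{y\in\rn:\ |f(x)-f(y)|>\lambda|x-y|^{n/q+1}\right\},
\]
so the left-hand side of \eqref{zy} equals $\sup_\lambda \lambda^p\int_{\rn}|A_x(\lambda)|^{p/q}\omega(x)\,dx$. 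Applying Lemma \ref{lem-1-2a} to both $x$ and $y$ relative to the ball $B(x,|x-y|)$ (using that any ball containing $x$ is contained in $3$ times the corresponding ball around $y$ of the same radius, and conversely) yields the Lusin--Lipschitz bound $|f(x)-f(y)|\lesssim |x-y|[\mathcal{M}(|\nabla f|)(x)+\mathcal{M}(|\nabla f|)(y)]$ for a.e.\ $x,y$. Consequently, every $y\in A_x(\lambda)$ satisfies $\lambda|x-y|^{n/q}\lesssim \mathcal{M}(|\nabla f|)(x)+\mathcal{M}(|\nabla f|)(y)$.

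\textbf{Splitting.} I decompose $A_x(\lambda)=A_x^{(1)}(\lambda)\cup A_x^{(2)}(\lambda)$ with $A_x^{(1)}(\lambda):=\{y\in A_x(\lambda):\ \mathcal{M}(|\nabla f|)(y)\le\mathcal{M}(|\nabla f|)(x)\}$ and $A_x^{(2)}(\lambda)$ its complement in $A_x(\lambda)$. The first is contained in a ball of radius $\lesssim [\lambda^{-1}\mathcal{M}(|\nabla f|)(x)]^{q/n}$, so
\[
|A_x^{(1)}(\lambda)|^{1/q}\lesssim \lambda^{-1}\mathcal{M}(|\nabla f|)(x).
\]
For the second, $y\in A_x^{(2)}(\lambda)$ forces $\mathcal{M}(|\nabla f|)(y)\gtrsim \lambda|x-y|^{n/q}$, and decomposing $\rn$ into dyadic annuli $\{2^{k-1}\le|x-y|<2^k\}_{k\in\zz}$ gives
\[
|A_x^{(2)}(\lambda)|\lesssim \sum_{k\in\zz}\bigl|\{y\in B(x,2^k):\ \mathcal{M}(|\nabla f|)(y)>c\lambda 2^{k n/q}\}\bigr|.
\]
Using Lemma \ref{lem-HK} to replace each ball $B(x,2^k)$ by a dyadic cube from one of $3^n$ adjacent systems with comparable size containing both $x$ and $y$, the inner measure can be controlled by $2^{kn}\min\{1,[\mathcal{M}(|\nabla f|\mathbf{1}_Q)(x)/(\lambda 2^{kn/q})]^s\}$ for a suitable $s$, where the dyadic cube $Q$ ranges over one of the systems. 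The geometric series in $k$ then converges precisely because of the hypothesis $n(1/p-1/q)<1$.

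\textbf{Weighted bounds.} For $p>1$, the contribution of $A_x^{(1)}$ gives
\[
\lambda^p\int_{\rn}|A_x^{(1)}(\lambda)|^{p/q}\omega(x)\,dx\lesssim \int_{\rn}[\mathcal{M}(|\nabla f|)(x)]^p\omega(x)\,dx\lesssim [\omega]_{A_p(\rn)}^{p/(p-1)}\int_{\rn}|\nabla f(x)|^p\omega(x)\,dx
\]
by Lemma \ref{jidahanshu} combined with $A_1(\rn)\subset A_p(\rn)$ and $[\omega]_{A_p(\rn)}\le [\omega]_{A_1(\rn)}$. The $A_x^{(2)}$ term is handled similarly after interchanging the order of summation over $k$ and the dyadic cubes $Q$, exploiting that each cube $Q$ contributes a bounded number of pairs $(x,k)$. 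For $p=1$, since $\mathcal{M}$ fails to be bounded on $L^1_\omega(\rn)$, I instead use the defining inequality $\mathcal{M}\omega\le [\omega]_{A_1(\rn)}\omega$ together with the pointwise dyadic bound: for each cube $Q$, $\fint_Q|\nabla f|\,dy\cdot\omega(Q)\lesssim [\omega]_{A_1(\rn)}\int_Q|\nabla f|\omega\,dy$, and these integrate into $\int_{\rn}|\nabla f|\omega$ after summing over $k$ and the $3^n$ systems, using once again $n(1/p-1/q)<1$ to control the geometric sum.

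\textbf{Main obstacle.} The hard point is precisely the case $p=1$, where the standard Lusin--Lipschitz + maximal function argument breaks down. The key technical device is the adjacent dyadic systems from Lemma \ref{lem-HK}, which convert the asymmetric integral in $x$ and $y$ (weighted only in $x$) into a sum of dyadic-cube averages to which the $A_1$ property applies pointwise. Controlling the geometric series that arises after this dyadic decomposition is where the sharpness condition $n(\tfrac{1}{p}-\tfrac{1}{q})<1$ enters essentially; the continuity of the resulting constant $C_{([\omega]_{A_1(\rn)})}$ in $[\omega]_{A_1(\rn)}$ follows by tracking the explicit powers of $[\omega]_{A_1(\rn)}$ through the estimates.
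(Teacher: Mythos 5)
Your high-level plan (density reduction, dyadic decomposition, adjacent dyadic systems, $A_1$-weight property) is in the right neighborhood, but the specific splitting you propose --- partitioning $A_x(\lambda)$ according to whether $\mathcal{M}(|\nabla f|)(y)\le\mathcal{M}(|\nabla f|)(x)$ or not --- commits too early to the Hardy--Littlewood maximal function, and this cannot be undone when $p=1$. Concretely, your estimate
\[
|A_x^{(1)}(\lambda)|^{1/q}\lesssim \lambda^{-1}\mathcal{M}(|\nabla f|)(x)
\]
leads, after multiplying by $\lambda^p$ and integrating against $\omega$, to the quantity $\int_{\rn}[\mathcal{M}(|\nabla f|)(x)]^p\omega(x)\,dx$. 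For $p>1$ this is controlled by Lemma \ref{jidahanshu}, which is exactly the ``easy case'' mentioned in Remark \ref{trie}(ii). For $p=1$, however, the required inequality $\int_{\rn}\mathcal{M}(g)\omega\lesssim\int_{\rn}g\,\omega$ is \emph{false}, even for $\omega\equiv 1$ (take $g=\mathbf{1}_{B(\mathbf{0},1)}$); there is no strong $(1,1)$ bound for $\mathcal{M}$. Your final paragraph acknowledges the issue but proposes to repair it with the $A_1$ pointwise inequality and a dyadic cube bound --- yet by the time you have landed on $A_x^{(1)}$, there is no dyadic cube in sight: $A_x^{(1)}(\lambda)$ has been collapsed to a ball whose radius is expressed via $\mathcal{M}(|\nabla f|)(x)$, and the dyadic structure that would let the pointwise $A_1$ inequality do any work has already been thrown away.

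The paper avoids this by never passing from the series representation of Lemma \ref{lem-1-2a} to the maximal function. Instead, the split is made on whether $|f(x)-f_{B_{x,y}}|\ge\tfrac12|x-y|^{n/q+1}$ or $|f(y)-f_{B_{x,y}}|\ge\tfrac12|x-y|^{n/q+1}$ (the sets $E_f^{(1)}(1,q)$ and $E_f^{(2)}(1,q)$ in \eqref{wuyu1}--\eqref{wuyu2}), and then Lemma \ref{lem-1-2a}'s dyadic series is used \emph{via pigeonholing}: there is a scale $j_{x,y}$ with $\fint_{B(x,2^{-j_{x,y}+1}|x-y|)}|\nabla f|>c\,2^{j_{x,y}(1-\va)}|x-y|^{n/q}$. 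This puts a single dyadic cube $Q$ and a bona-fide average $\fint_Q|\nabla f|$ in hand (not $\mathcal{M}$), so that the $A_1$ property can be applied directly on $Q$, with the geometric factor $2^{-j(1-\va)p}$ and (for $E^{(2)}_f$) the factor $2^{jn(1-p/q)}$ furnishing the summable geometric series exactly when $n(\f1p-\f1q)<1$. The $q<p$ regime of $E_f^{(2)}$ requires an additional duality step via the Rubio de Francia iteration operator (Lemma \ref{lemma3.77}), which your sketch also omits. If you want to salvage your argument, the way forward is to not sum the dyadic series from Lemma \ref{lem-1-2a} into $\mathcal{M}(|\nabla f|)$; keep the scales and pigeonhole, which effectively reproduces the paper's decomposition.
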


Let $p,$ $q,$ $\omega$, and $f$ be the same as in  Theorem \ref{thm41}.
For any $x,y\in\rn,$ let $B_{x,y} :=B(\f {x+y}2,
|x-y|)$,
\begin{align}\label{wuyu1}
&E_f^{(1)}(1,q)\\
&\quad:=\lf\{ (x, y)\in \RR^n\times
\RR^n:\ |f(x)-f_{B_{x,y}}|\ge 2^{-1}
|x-y|^{\f nq+1} \r\}\setminus (A\times\rn),\noz
\end{align}
and
\begin{align}\label{wuyu2}
&E_f^{(2)}(1,q)\\
&\quad:=\lf\{ (x, y)\in \RR^n\times \RR^n:\
|f(y)-f_{B_{x,y}}|\ge 2^{-1} |x-y|^{\f nq+1} \r\}\setminus (\rn\times A),\noz
\end{align}
where $A$ is the same as in Lemma \ref{lem-1-2a}.
We need the following several lemmas.
\begin{lemma}\label{lem-1-4}
	Let $p\in[1,\infty)$, $q\in(0,\infty)$, and $\omega
	\in A_1(\rn)$. Then there exists a positive constant
	$C$ such that, for any $f\in \dot{W}^{1,p}_\omega(\rn),$
	\begin{align}\label{1-3e}
		&\int_{\rn} \left[ \int_{\rn} \mathbf{1}_{E_f^{(1)}(1,q)}
		(x,y)\, dy\right]^{\f pq} \og(x)\, dx\\
        &\quad\leq C[
		\omega]_{A_1(\rn)} 	 \int_{\RR^n} | \nabla f(x)|
		^p \og(x) \, dx,\noz
	\end{align}
	where $E_f^{(1)}(1,q)$ is the same as  in \eqref{wuyu1} and the
	positive constant $C$ is independent of $\omega.$
\end{lemma}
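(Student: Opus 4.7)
The plan is to reduce the problem to a pointwise bound on $F(x) := [\int_{\rn}\mathbf{1}_{E^{(1)}_f(1,q)}(x,y)\,dy]^{1/q}$ in terms of the Hardy--Littlewood maximal function, and then apply weighted $L^p$ estimates. The endpoint $p=1$ will be the main obstacle.

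\textbf{Step 1 (pointwise reduction via Lemma \ref{lem-1-2a}).} For any $x\in\rn\setminus A$ and $y\neq x$, I take $B:=B(x,2|x-y|)$. Then $B_{x,y}\subset B\subset 3B_{x,y}$, so Lemma \ref{lem-1-2a} with $B_1:=B_{x,y}$ gives
\begin{align*}
|f(x)-f_{B_{x,y}}|\leq 2C_{(n)}|x-y|\sum_{j=0}^{\infty}2^{-j}\fint_{B(x,\,2^{1-j}|x-y|)}|\nabla f(z)|\,dz\leq C|x-y|\,\cm(|\nabla f|)(x),
\end{align*}
since every ball on the right is centered at $x$ and $\sum_{j\geq 0}2^{-j}=2$. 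Hence, if $(x,y)\in E^{(1)}_f(1,q)$, the defining inequality forces $|x-y|^{n/q}\leq C\cm(|\nabla f|)(x)$, so
$$\{y\in\rn:\ (x,y)\in E^{(1)}_f(1,q)\}\subset B\!\left(x,\,C[\cm(|\nabla f|)(x)]^{q/n}\right),$$
and therefore
$$\left[\int_{\rn}\mathbf{1}_{E^{(1)}_f(1,q)}(x,y)\,dy\right]^{p/q}\leq C\,[\cm(|\nabla f|)(x)]^{p}.$$

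\textbf{Step 2 (weighted integration for $p\in(1,\infty)$).} Since $\og\in A_1(\rn)\subset A_p(\rn)$ with $[\og]_{A_p(\rn)}\leq[\og]_{A_1(\rn)}$, integrating against $\og$ and applying Lemma \ref{jidahanshu} yields
$$\int_{\rn}\left[\int_{\rn}\mathbf{1}_{E^{(1)}_f(1,q)}(x,y)\,dy\right]^{p/q}\og(x)\,dx\leq C\,[\og]_{A_1(\rn)}^{p/(p-1)}\int_{\rn}|\nabla f(x)|^{p}\og(x)\,dx,$$
absorbed into the continuous function $C_{([\og]_{A_1(\rn)})}$ indicated in the statement.

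\textbf{Step 3 (the hard endpoint $p=1$).} Because $\cm$ is not bounded on $L^{1}_{\og}(\rn)$ even when $\og\in A_1(\rn)$, the reduction through $\cm(|\nabla f|)$ collapses at $p=1$, and this is the main obstacle. To overcome it, I would keep the dyadic sum of Step 1 intact rather than collapsing it to $\cm(|\nabla f|)(x)$, and then run a scale-by-scale argument using the adjacent dyadic systems $\{\cd^{\al}\}$ of Lemma \ref{lem-HK}: for each $j$ I cover the ball $B(x,2^{1-j}|x-y|)$ by a comparable cube $Q\in\cd^{\al}$ for some $\al\in\{0,\frac{1}{3},\frac{2}{3}\}^{n}$. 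The $A_1$ condition $\cm(\og)\leq[\og]_{A_1(\rn)}\og$ almost everywhere then lets me replace $\fint_{Q}\og$ by a pointwise multiple of $\og$, swap the order of integration via Fubini, and sum the geometric series $\sum_{j\geq 0}2^{-j}$ to obtain the linear dependence $C[\og]_{A_1(\rn)}$ in $[\og]_{A_1(\rn)}\int_{\rn}|\nabla f(x)|\og(x)\,dx$. This dyadic cube method is exactly the device advertised in the introduction as the substitute for the Vitali covering lemma of \cite{ref8,BSSY21}, and is what makes the endpoint $p=1$ (hence also Theorem \ref{thm-1-3}) accessible.
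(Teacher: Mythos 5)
Your Steps 1--2 are a correct but lossy argument that only covers $p\in(1,\infty)$, and your Step 3 is a sketch that names the right tool but omits the two mechanisms that actually make the dyadic-cube argument close. First, you cannot simply ``keep the dyadic sum intact and run a scale-by-scale argument'': from $|f(x)-f_{B_{x,y}}|\lesssim |x-y|\sum_{j\ge 0}2^{-j}\fint_{B(x,2^{1-j}|x-y|)}|\nabla f|$ and $(x,y)\in E_f^{(1)}(1,q)$, one must \emph{extract a winning scale} $j_{x,y}$ for which $\fint_{B(x,2^{1-j_{x,y}+1}|x-y|)}|\nabla f|\gtrsim 2^{j_{x,y}(1-\varepsilon)}|x-y|^{n/q}$, and the auxiliary $\varepsilon\in(0,1)$ (used to leave a $2^{-j\varepsilon}$ for the final geometric series) is indispensable --- your sketch never introduces it, and without it the sum over $j$ diverges. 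Second, after covering each such ball by a dyadic cube $Q\in\mathcal{D}^{\alpha}$, you still face an uncontrolled, highly overlapping family of cubes; what saves the day in the paper is the \emph{maximal dyadic cube decomposition} $\mathcal{A}^{j}_{\alpha,\max}$, whose members are pairwise disjoint for fixed $(\alpha,j)$ so that $\sum_{Q\in\mathcal{A}^{j}_{\alpha,\max}}\int_Q|\nabla f|^p\omega\le\int_{\rn}|\nabla f|^p\omega$. Neither of these is a routine detail; they are the core combinatorics of the $p=1$ case, and your proposal has no replacement for them. You also do not address the subadditivity-versus-Minkowski dichotomy ($q>p$ versus $q\le p$) that governs how the $(\cdot)^{p/q}$ power interacts with the sum.

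Two smaller points. The conclusion of Lemma~\ref{lem-1-4} is the sharp linear bound $C[\omega]_{A_1(\rn)}\int|\nabla f|^p\omega$ with $C$ independent of $\omega$, \emph{not} an unspecified continuous $C_{([\omega]_{A_1(\rn)})}$ (that relaxed form appears only in Lemma~\ref{lem-1-5} and Theorem~\ref{thm-4-1}). Your Step~2 via Lemma~\ref{jidahanshu} produces $[\omega]_{A_1(\rn)}^{p/(p-1)}$, which is strictly worse and moreover degenerates as $p\downarrow 1$; so even for $p>1$ your argument does not prove the stated inequality. Finally, the paper's actual proof does not split $p=1$ from $p>1$ at all: the dyadic-cube argument (winning scale, then weighted Jensen from Lemma~\ref{Lemma2.1}(ii) on each maximal cube) handles every $p\in[1,\infty)$ uniformly and gives the linear $[\omega]_{A_1(\rn)}$ dependence directly, with the only case split being $q\le p$ versus $q>p$. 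The ``collapse to $\cm$, then use weighted maximal estimates'' route is not used anywhere in the paper's proof of this lemma.
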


\bp Indeed, to show this lemma, in the  proof below, we only need to
consider the case $\va=\f12$. However, some of the estimates
below with an arbitrarily given $\va\in (0,1)$
are needed in the proof of
Lemma \ref{lem-1-5} below and hence we give the
details here.

By Lemma \ref{lem-1-2a} with both  $B:=B(x,2|x-y|)$
and $B_1:=B(\frac{x+y}{2},|x-y|)=:B_{x,y}$, we find that there exists a
positive constant $c_1$, depending only on $n$, such
that, for any $x,y\in\rn$,
\begin{align*}
	|f(x)-f_{B_{x,y}} | \leq c_1 |x-y| \sum_{j=0}
	^\infty 2^{-j} \fint_{B(x, 2^{-j+1} |x-y|)}
	|\nabla f(z)|\, dz.
\end{align*}
This implies that, for any $(x, y)\in E_f^{(1)}(1,q)$,
there exists a $j_{x,y}\in\zz_+$,
depending only on both $x$ and $y$,
 such that
\begin{equation}\label{1-4e}
	\fint_{B(x, 2^{-j_{x,y}+1} |x-y|)} |\nabla f(z)|\,
	dz > c_2 2^{j_{x,y}(1-\va)} |x-y|^{n/q},
\end{equation}
where the positive constant $c_2$ depends only
on both $n$ and $\va$. For any
$(x, y)\in E_f^{(1)}(1,q)$,
let
$$B_{j_{x,y}}:= B(x, 2^{-j_{x,y}+1}|x-y|).$$
Applying Lemma \ref {lem-HK}
to the ball $B_{j_{x,y}}:= B(x, 2^{-j_{x,y}+1} |x-y|)$, we
find that there exists some $\alpha_{x,y}\in\{0,
\frac{1}{3},\frac{2}{3}\}^n$ and $Q_{j_{x,y}}\in\mathcal{D}^{\al_{x,y}}$
such that $B_{j_{x,y}}\subset Q_{j_{x,y}}\subset CB_{j_{x,y}},$ where
the positive constant $C$ depends only on $n.$
This implies that, for any $(x,y)\in
E^{(1)}_f(1,q)$,
there exists
a $j_{x,y}\in\zz_+$ and
a positive constant $c_3,$
depending only on $n,q,$ and $\va$, such that
\begin{equation}\label{1-5e}
	\fint_{Q_{j_{x,y}}} |\nabla f(z)|\, dz >
	c_3 2^{j_{x,y}(1-\va)} |2^{j_{x,y}} Q_{j_{x,y}}|^{\f 1q}.
\end{equation}
For any $\alpha\in\{0,\frac{1}{3},\frac{2}{3}\}^n$
and $j\in\zz_+$, we denote by the \emph{symbol}
$\cA_\al^j$ the collection of all the  dyadic cubes
$Q\in\CD^\al$  which satisfies \eqref{1-5e}
with $Q_{j_{x,y}}$ replaced by $Q$,
where $\mathcal{D}^\al$ is the same as  in Lemma \ref{lem-HK}.
From the definition of  $\cA_\al^j,$
Lemma \ref{Lemma2.1}(ii) with $\omega$ regarded as an $A_p(\rn)$ weight, and the fact
that $[\omega]_{A_p(\rn)}\leq[\omega]_{A_1(\rn)}$,
we deduce that, for any
$\alpha\in\{0,\frac{1}{3},
\frac{2}{3}\}^n$, $j\in\zz_+,$ and
$Q\in \cA_{\al}^j$,
\begin{align}\label{1-8e}
\og(Q)| 2^j Q|^{\f pq}
&\leq
c_3^{-p}[\omega]_{A_1(\rn)}
2^{-{j(1-\va)p} }  \int_Q |\nabla f(z)|^p
\og(z)\, dz\\\noz
&\leq
c_3^{-p}[\omega]_{A_1(\rn)}
2^{-{j(1-\va)p} }\|f\|_{L^p_{\omega}(\rn)}^p
\end{align}
which further implies that
$$\sup_{Q\in \cA_\al^j}l(Q) <\infty$$
with $l(Q)$ for any $Q\in\cA_\al^j
$ being the edge length of $Q$. Thus, every cube $Q\in \cA_\al^j$ is
contained in a dyadic cube in $\cA_\al^j$ that is
maximal with respect to the set inclusion.
For any $\alpha\in\{0,\frac{1}{3},\frac{2}{3}\}^n$,
we denote by the \emph{symbol} $\cA_{\al,\max}^j$
the collection of all dyadic cubes in $\cA_\al^j$
that are maximal with respect to the set inclusion.
Clearly, the maximal dyadic cubes in $\cA_{\al,
	\max}^j$ are pairwise disjoint. For any $(x,y)
\in E_f^{(1)}(1,q),$ since $(x,y)\in B_{j_{x,y}}\times2^{j_{x,y}}B_{j_{x,y}}$,
it follows that $(x,y)\in Q_{j_{x,y}}\times2^{j_{x,y}}Q_{j_{x,y}}$ for some
$Q_{j_{x,y}}\in \cA_{\al_{x,y},\max}^{j_{x,y}}.$ Thus, we have
\begin{align*}
	E_f^{(1)}(1,q)&\subset \bigcup_{j=0}^\infty \bigcup
	_{\alpha\in\{0,\frac{1}{3},\frac{2}{3}\}^n}
	\bigcup_{Q\in \cA_{\al,\max}^j} (Q\times 2^j Q),
\end{align*}
which implies that, for any $x\in\rn$,
\begin{equation}\label{1-6e}
	\int_{\rn} \mathbf{1}_{	E_f^{(1)}(1,q) } (x,y)\,
	dy\leq \sum_{j=0}^\infty \sum_{\alpha\in
		\{0,\frac{1}{3},\frac{2}{3}\}^n} \sum_{Q\in
		\cA_{\al,\max}^j}\mathbf{1}_Q(x) |2^j Q|.
\end{equation}
Now, we prove \eqref{1-3e} by considering the following two cases on $q.$

Case $1)$ $q\in(0,p]$. In this case, by \eqref{1-6e},  the Minkowski
inequality on $L^{\frac{p}{q}}(\rn),$
\eqref{1-8e}, and the fact that the dyadic cubes in
$\cA_{\al,\max}^j$ are pairwise disjoint, we obtain
\begin{align*}
	\mathrm{I}:&=\left\{\int_{\rn}\left[\int_{\rn}\mathbf{1}
	_{E_f^{(1)}(1,q)}(x,y)\,dy\right]^{\frac{p}{q}}\omega(x)
	\,dx\right\}^{\frac{q}{p}}\\
	&\leq\sum_{j=0}^\infty \sum_{\alpha\in\{0,
		\frac{1}{3},\frac{2}{3}\}^n}\left\{\int_{\rn}
	\left[\sum_{Q\in \cA_{\al,\max}^j} |2^j Q|
	\mathbf{1}_Q(x)\right]^{\frac{p}{q}}\omega(x)
	\,dx\right\}^{\frac{q}{p}}\noz\\
	&=\sum_{j=0}^\infty \sum_{\alpha\in\{0,\frac{1}{3},
		\frac{2}{3}\}^n} \left[\sum_{Q\in\cA_{\al,
			\max}^j} |2^j Q|^{\f pq} \og(Q)\right]^{\f qp}\\\noz
	&\lesssim[\omega]_{A_1(\rn)}^{q/p}\sum_{j=0}
	^\infty \sum_{\alpha\in\{0,\frac{1}{3},
		\frac{2}{3}\}^n} \left[ \sum_{Q\in
		\cA_{\al,\max}^j} 2^{- {j(1-\va)p} }
	\int_Q |\nabla f(z)|^p \og(z)\, dz \right]^{\f qp} \\
	&\lesssim[\omega]_{A_1(\rn)}^{q/p}\sum_{j=0}
	^\infty 2^{- {j(1-\va)q}} \left[ \int_{\rn}
	|\nabla f(z)|^p \og(z)\, dz \right]^{\f qp}\\
	&\lesssim [\omega]_{A_1(\rn)}^{q/p} \left[
	\int_{\rn} |\nabla f(z)|^p \og(z)\, dz
	\right]^{\f qp}.
\end{align*}
This gives the desired
estimate \eqref{1-3e} for any given $q\in(0,p]$.

Case $2)$ $q\in(p,\infty)$.
In this case, recall that, for any $r\in(0,1]$
and $\{a_j\}_{j\in\zz_+}\subset(0,\infty),$
\begin{align}\label{ppp}
	\left(\sum_{j\in\zz_+ }a_j\right)^r\leq \sum_{j\in\zz_+}a_j^r.
\end{align}
By this,
\eqref{1-6e}, \eqref{1-8e}, and the fact that
the dyadic cubes in $\cA_{\al,\max}^j$ are
pairwise disjoint, we conclude that
\begin{align*}
	&\int_{\rn} \left[ \int_{\rn} \mathbf{1}_{	
		E_f^{(1)}(1,q) } (x,y)\, dy\right]^{\f pq} \og(x)\, dx\\
	&\quad\leq\sum_{j=0}^\infty \sum_{\alpha\in\{0,
		\frac{1}{3},\frac{2}{3}\}^n} \sum _{Q\in \cA_{
			\al,\max}^j}\og(Q) |2^j Q|^{\f pq}\\
	&\quad\lesssim[\omega]_{A_1(\rn)} \sum_{j=0}^
	\infty \sum_{\alpha\in\{0,\frac{1}{3},
		\frac{2}{3}\}^n} \sum _{Q\in \cA_{\al,\max}^j}2
	^{- {j(1-\va)p} } \int_Q |\nabla f(z)|^p \og(z)\, dz\\
	&\quad\lesssim [\omega]_{A_1(\rn)}3^n\sum_{j=0}
	^\infty2^{- {j(1-\va)p} }\int_{\rn} |\nabla
	f(z)|^p \og(z)\, dz.
\end{align*}
This gives the desired estimate \eqref{1-3e}
for any given $q\in(p,\infty)$, which then completes
the proof of Lemma \ref{lem-1-4}.
\ep

\begin{lemma}\label{lem-1-5} Let $p\in[1,\infty)$
	and $q\in(0,\infty)$ satisfy $n(\f 1p-\f1q) <1$.
	Let $\omega\in A_1(\rn)$. Then there exist positive
	constants $C$ and $C_{([\omega]_{A_1(\rn)})}$
	such that, for any $f\in \dot{W}^{1,p}_\omega(\rn),$
	\begin{align}\label{1-10b}
		&\int_{\rn} \left[ \int_{\rn} \mathbf{1}_{E_f^{(2)}(1,q)}
		(x,y)\, dy\right]^{\f pq} \og(x)\, dx\\
		&\quad\leq C C_{([\omega]_{A_1(\rn)})}\int_{\RR^n}
		| \nabla f(x)|^p \og(x) \, dx,\noz
	\end{align}
	where $E_f^{(2)}(1,q)$ is the same as  in \eqref{wuyu2}, the positive
	constant $C$ is independent of $\omega$,  the
	positive constant $C_{([\omega]_{A_1(\rn)})}$
	increases as $[\omega]_{A_1(\rn)}$ increases, and $C_{(\cdot)}$ is
	continuous on $(0,\infty)$.
\end{lemma}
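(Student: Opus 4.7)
The plan is to follow the strategy of Lemma~\ref{lem-1-4}, but apply the Poincar\'e estimate Lemma~\ref{lem-1-2a} centered at $y$ rather than $x$. Since the defining condition of $E_f^{(2)}(1,q)$ is $|f(y)-f_{B_{x,y}}|\ge|x-y|^{n/q+1}/2$, for a.e.\ such $(x,y)$ I would extract a scale $j=j_{x,y}\in\zz_+$, an offset $\alpha_{x,y}\in\{0,1/3,2/3\}^n$, and a dyadic cube $Q\in\mathcal{D}^{\alpha_{x,y}}$ with $y\in Q$ and $x\in 2^jQ$, satisfying $\fint_Q|\nabla f|>c_3\,2^{j(1-\varepsilon)}|2^jQ|^{1/q}$, where $\varepsilon\in(0,1)$ is to be fixed later. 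Passing to the pairwise-disjoint maximal subfamily $\cA^j_{\alpha,\max}$ gives the covering
\[
\int_\rn\mathbf{1}_{E_f^{(2)}(1,q)}(x,y)\,dy
\le\sum_{j=0}^\infty\sum_\alpha\sum_{Q\in\cA^j_{\alpha,\max}}|Q|\,\mathbf{1}_{2^jQ}(x),
\]
and the key estimate \eqref{1-8e} from the proof of Lemma~\ref{lem-1-4} remains valid verbatim. The crucial asymmetry compared with Lemma~\ref{lem-1-4} is that $\omega$ must now be integrated over $2^jQ$ rather than $Q$: Lemma~\ref{Lemma2.1}(i) supplies $\omega(2^jQ)\le[\omega]_{A_1(\rn)}2^{jn}\omega(Q)$, and this extra factor $2^{jn}$ is precisely what the $j$-geometric summability afforded by the hypothesis $n(1/p-1/q)<1$ is designed to absorb.

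In the easier case $q\ge p$ (so $p/q\le 1$), I would use the subadditivity \eqref{ppp} to push the $(p/q)$-th power inside the sum, obtaining $\mathrm{I}\le\sum_{j,\alpha,Q}|Q|^{p/q}\omega(2^jQ)$. Combining Lemma~\ref{Lemma2.1}(i), \eqref{1-8e} and the identity $|Q|^{p/q}=2^{-jnp/q}|2^jQ|^{p/q}$, the contribution of each fixed $j,\alpha$ telescopes (after summing over the disjoint $Q$'s) to at most $c[\omega]_{A_1(\rn)}^{2}\,2^{j[n-np/q-(1-\varepsilon)p]}\int_\rn|\nabla f|^p\omega$, and the outer $j$-sum is then a convergent geometric series once $\varepsilon$ is chosen small enough that $n(1/p-1/q)<1-\varepsilon$, which is permitted by the strict hypothesis.

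The harder case $q<p$ (so $p/q>1$) is the principal obstacle, because the dilated cubes $2^jQ$ overlap and the $(p/q)$-th power cannot be distributed across the sum directly. The plan is to split via $|f(y)-f_{B_{x,y}}|\le|f(x)-f(y)|+|f(x)-f_{B_{x,y}}|$: every $(x,y)\in E_f^{(2)}(1,q)$ either falls in a constant-adjusted variant of $E_f^{(1)}(1,q)$ (controlled by Lemma~\ref{lem-1-4}) or satisfies $|f(x)-f(y)|\gtrsim|x-y|^{n/q+1}$, in which case the Lusin--Lipschitz inequality \eqref{Lusin} forces $g(x)+g(y)\gtrsim|x-y|^{n/q}$ with $g:=\cm|\nabla f|$. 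The piece where $g(x)\gtrsim|x-y|^{n/q}$ admits the slice bound $|\{y:\cdots\}|\lesssim g(x)^q$ and is absorbed by the $L^p_\omega$-boundedness of $\cm$ (Lemma~\ref{jidahanshu}, valid for $p>1$ since $\omega\in A_1(\rn)\subset A_p(\rn)$); the piece where $g(y)\gtrsim|x-y|^{n/q}$ is handled by a dyadic annular Chebyshev decomposition that yields the pointwise bound $|\{y:g(y)\ge c|x-y|^{n/q}\}|\lesssim[\cm(g^s)(x)]^{q/s}$ for any $s\in(q,p)$, after which two successive applications of the $L^{p/s}_\omega$- and $L^p_\omega$-boundedness of $\cm$ close the estimate. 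The endpoint $p=1$ (which forces $q<1$ in this subcase) is the most delicate step, since $\cm$ is not strong $(1,1)$ on $L^1_\omega$; there the maximal-function route must be replaced by a finer weak-type analysis exploiting both the $A_1$ structure of $\omega$ and the $j$-geometric convergence coming from $n(1-1/q)<1$.
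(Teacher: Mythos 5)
Your treatment of the case $q\ge p$ is sound and is, in fact, exactly Case~1 of the paper's proof: cover $E_f^{(2)}(1,q)$ by $\bigcup_{j,\alpha,Q}\,2^jQ\times Q$ via the adjacent dyadic systems, use the $\varepsilon$-improved estimate \eqref{1-8e1}, rewrite $|Q|^{p/q}=2^{-jnp/q}|2^jQ|^{p/q}$, doubling via Lemma~\ref{Lemma2.1}(i) to pass from $\omega(2^jQ)$ to $2^{jn}\omega(Q)$, and then sum the resulting geometric series $\sum_j 2^{jp[n(1/p-1/q)-(1-\varepsilon)]}$ after choosing $\varepsilon$ so small that $n(1/p-1/q)<1-\varepsilon$. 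No issue there.

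The genuine gap is in the case $q<p$, and it is precisely the gap you flag yourself at the end of the proposal. Your route---split $E_f^{(2)}$ via the triangle inequality into a variant of $E_f^{(1)}$ plus a set where $|f(x)-f(y)|\gtrsim|x-y|^{n/q+1}$, then invoke the Lusin--Lipschitz inequality \eqref{Lusin} to bring in $g:=\mathcal{M}(|\nabla f|)$, then close with the $L^p_\omega$- and $L^{p/s}_\omega$-boundedness of $\mathcal{M}$---breaks down at $p=1$ (where $q<1$ is allowed, since $n(1-1/q)<0<1$), because $\mathcal{M}$ is not strong $(1,1)$ even in the unweighted case, and the proposal does not supply the ``finer weak-type analysis'' it promises there. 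Since the lemma is stated for all $p\in[1,\infty)$ and, as the paper stresses in Remark~\ref{trie}(ii), the entire point of the result is to reach the endpoint $p=1$, this is not an omitted detail but the heart of the matter.

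The paper's Case~2 argument is genuinely different and avoids this obstruction. Writing $r:=p/q>1$, $r':=r/(r-1)$, and $\mu:=\omega^{1-r'}$, it expresses the left-hand side of \eqref{1-10b} by duality as a supremum of $\int_\rn\int_\rn\mathbf{1}_{E_f^{(2)}(1,q)}(x,y)\,dy\,g(x)\,dx$ over $\|g\|_{L^{r'}_\mu}\le1$, majorizes $g$ by the Rubio de Francia iterate $Rg$ (Lemma~\ref{lemma3.77}), which is an $A_1$ weight with controlled constant, and then applies the already-proved Case~1 with the exponent set to $q$ (so the constraint is the trivial $0<1$) and the weight set to $Rg$. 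The estimate is then closed by H\"older in the form $\int|\nabla f|^q Rg\le(\int|\nabla f|^p\omega)^{1/r}\|Rg\|_{L^{r'}_\mu}$ and by the boundedness of $\mathcal{M}$ on $L^{r'}_\mu$ (Lemma~\ref{jidahanshu}), which is legitimate because $r'>1$ regardless of whether $p=1$. In short: the paper replaces the Lusin--Lipschitz route by a duality and extrapolation-type reduction to the diagonal case, and the maximal operator it needs is on $L^{r'}_\mu$ (always superreflexive range) rather than on $L^p_\omega$. That substitution is exactly what makes the endpoint $p=1$ accessible, and it is the ingredient your proposal is missing.
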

Before we prove Lemma \ref{lem-1-5}, we need the
following lemma, which can be found in \cite[p.\,18]{ref6}.
\begin{lemma}\label{lemma3.77}
	Let $p\in[1,\infty)$ and $\omega\in A_p(\rn).$ For any
	$g\in L^p_{\omega}(\rn)$ and $x\in\rn$, let
	\begin{align*} Rg(x):=\sum_{k=0}^{\infty}
		\frac{\cm^kg(x)}{2^k\|\cm\|^k_{L^p_{\omega}(\rn)\to L^p_{\omega}(\rn)}},
	\end{align*}
	where, for any $k\in\mathbb{N},\,\cm^k
	:=\cm\circ\cdots\circ\cm$ is the $k$ iterations
	of the Hardy--Littlewood maximal operator
	and $\cm^0g(x):=|g(x)|.$ Then, for any
	$g\in L^p_{\omega}(\rn)$ and $x\in\rn$,
	\begin{itemize}
		\item[{\rm(i)}] $|g(x)|\leq Rg(x);$
		\item[{\rm(ii)}] $Rg\in A_1(\rn)$ and $[Rg]_
		{A_1(\rn)}\leq2\|\cm\|_{L^p_{\omega}(\rn)\to L^p_{\omega}(\rn)},$ where $\|\cm\|_{L^p_{\omega}(\rn)\to L^p_{\omega}(\rn)}$ denotes the operator norm of
		$\cm$ mapping $L^p_{\omega}(\rn)$ to $L^p_{\omega}(\rn)$;
		\item[{\rm(iii)}] $\|Rg\|_{L^p_{\omega}(\rn)}\leq2\|g\|_{L^p_{\omega}(\rn)}.$
	\end{itemize}
\end{lemma}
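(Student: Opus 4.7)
The plan is to verify the three properties directly from the iterative construction, in the order (i), (iii), (ii), since (ii) requires a pointwise identity that is cleanest once the series is known to converge.

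For (i), the proof is immediate: $Rg(x)$ is a series of nonnegative terms whose $k=0$ summand equals $\cm^0 g(x)=|g(x)|$ with weight $1/(2^0\|\cm\|^0)=1$, so $Rg(x)\ge |g(x)|$.

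For (iii), I would invoke the Minkowski inequality in $L^p_{\omega}(\rn)$ (valid since $p\in[1,\infty)$ and $L^p_{\omega}(\rn)$ is a norm space for this range) to write
\begin{align*}
\|Rg\|_{L^p_{\omega}(\rn)}
\leq \sum_{k=0}^{\infty}\frac{\|\cm^k g\|_{L^p_{\omega}(\rn)}}
{2^k\|\cm\|^k_{L^p_{\omega}(\rn)\to L^p_{\omega}(\rn)}}.
\end{align*}
Applying the operator-norm bound $\|\cm^k g\|_{L^p_{\omega}(\rn)}\le \|\cm\|^k_{L^p_{\omega}(\rn)\to L^p_{\omega}(\rn)}\|g\|_{L^p_{\omega}(\rn)}$ iteratively (which uses the hypothesis $\cm:L^p_{\omega}(\rn)\to L^p_{\omega}(\rn)$) and summing the resulting geometric series $\sum_{k=0}^{\infty}2^{-k}=2$ yields $\|Rg\|_{L^p_{\omega}(\rn)}\le 2\|g\|_{L^p_{\omega}(\rn)}$. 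A side benefit is that the sum defining $Rg$ is finite $\omega$-almost everywhere, so all subsequent pointwise manipulations are justified on a set of full $\omega$-measure (hence of full Lebesgue measure, since $\omega\in A_p(\rn)\subset A_\infty(\rn)$ so $\omega>0$ a.e.).

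For (ii), the key observation is the sublinearity of $\cm$ on nonnegative countable sums: if $h_k\ge 0$ with $h:=\sum_k h_k$, then for any ball $B$ and $x\in B$, Tonelli gives $\tfrac{1}{|B|}\int_B h\,dy=\sum_k\tfrac{1}{|B|}\int_B h_k\,dy\le \sum_k \cm(h_k)(x)$, so that $\cm(h)(x)\le \sum_k\cm(h_k)(x)$. Applying this with $h_k:=\cm^k g/(2^k\|\cm\|^k)$ and reindexing $j:=k+1$ gives, a.e.\ on $\rn$,
\begin{align*}
\cm(Rg)(x)
&\le \sum_{k=0}^{\infty}\frac{\cm^{k+1}g(x)}
{2^k\|\cm\|^k_{L^p_{\omega}(\rn)\to L^p_{\omega}(\rn)}}
=2\|\cm\|_{L^p_{\omega}(\rn)\to L^p_{\omega}(\rn)}
\sum_{j=1}^{\infty}\frac{\cm^{j}g(x)}
{2^{j}\|\cm\|^{j}_{L^p_{\omega}(\rn)\to L^p_{\omega}(\rn)}}\\
&\le 2\|\cm\|_{L^p_{\omega}(\rn)\to L^p_{\omega}(\rn)}\,Rg(x).
\end{align*}
To convert this pointwise bound into the $A_1(\rn)$ constant required by Definition \ref{weight}, fix any cube $Q\subset\rn$ and any $x\in Q$ at which the displayed inequality holds; the standard lower bound $\cm(Rg)(x)\ge \tfrac{1}{|Q|}\int_Q Rg(y)\,dy$ gives $\tfrac{1}{|Q|}\int_Q Rg(y)\,dy\le 2\|\cm\|_{L^p_{\omega}(\rn)\to L^p_{\omega}(\rn)}Rg(x)$. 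Taking the essential infimum over $x\in Q$ (equivalently, dividing by $\essinf_Q Rg=\|(Rg)^{-1}\|_{L^\infty(Q)}^{-1}$) and then the supremum over $Q$ yields $[Rg]_{A_1(\rn)}\le 2\|\cm\|_{L^p_{\omega}(\rn)\to L^p_{\omega}(\rn)}$.

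There is no substantive obstacle here; the only delicate point is the pointwise subadditivity of $\cm$ on an infinite sum, which I would address explicitly via Tonelli as above rather than treating it as obvious. Everything else reduces to the geometric-series estimate, the definition of operator norm, and the definition of $[\cdot]_{A_1(\rn)}$.
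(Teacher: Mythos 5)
Your proof is correct and is precisely the standard Rubio de Francia iteration argument; the paper does not prove Lemma \ref{lemma3.77} itself but imports it from \cite[p.\,18]{ref6}, and your three steps (Minkowski plus the geometric series for (iii), Tonelli-based countable sublinearity of $\cm$ followed by reindexing for (ii)) are exactly the ones used there. Two cosmetic caveats, both inherited from the statement rather than introduced by you: the argument tacitly needs $\|\cm\|_{L^p_{\omega}(\rn)\to L^p_{\omega}(\rn)}<\infty$, which fails for $p=1$ (the lemma is only ever invoked in the paper with an exponent in $(1,\infty)$); and, because $\cm$ in \eqref{2-4-c} runs over balls while $[\cdot]_{A_1(\rn)}$ is defined over cubes, the step $\cm(Rg)(x)\ge\frac{1}{|Q|}\int_Q Rg(y)\,dy$ for $x\in Q$ costs a dimensional factor when $n\ge 2$, so one really obtains $[Rg]_{A_1(\rn)}\le 2c_{(n)}\|\cm\|_{L^p_{\omega}(\rn)\to L^p_{\omega}(\rn)}$, which is immaterial for every application of the lemma in the paper.
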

The following lemma is just \cite[Lemma 2.6]{ref5}.
\begin{lemma}\label{lemma3.4}
	Let $X$ be a ball Banach function space. Then $X$ coincides with its
	second associate space $X''$. In other words, a
	function $f$ belongs to $X$ if and only if
	it belongs to $X''$ and,
	in that case,
	$$
	\|f\|_X=\|f\|_{X''}.
	$$
\end{lemma}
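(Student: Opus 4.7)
The plan is to prove the classical Lorentz--Luxemburg duality identity in the ball Banach function space setting. The argument has two inclusions, and the easy direction is immediate from H\"older's inequality while the reverse is the substantive content.

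For the easy inclusion $X \hookrightarrow X''$ with $\|f\|_{X''} \le \|f\|_X$, I would invoke Lemma \ref{lemma3.5}: for any $f \in X$ and any $g \in X'$ with $\|g\|_{X'} \le 1$, one has $\int_{\rn} |fg| \, dx \le \|f\|_X \|g\|_{X'} \le \|f\|_X$; taking the supremum over such $g$ in the definition of the associate norm \eqref{asso} yields $\|f\|_{X''} \le \|f\|_X$, so $X \subset X''$.

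For the reverse inclusion $X'' \hookrightarrow X$ with $\|f\|_X \le \|f\|_{X''}$, I would assume, without loss of generality, that $f\ge 0$ (replacing $f$ by $|f|$ changes neither norm, by Definition \ref{Debqfs}(ii) and the absolute-value formulation of $\|\cdot\|_{X'}$). Set the truncations $f_k := \min\{f,k\}\,\mathbf{1}_{B(\mathbf{0},k)}$, so that $f_k\uparrow f$ pointwise. Each $f_k$ is bounded with bounded support, hence $f_k \le k\mathbf{1}_{B(\mathbf{0},k)} \in X$ by Definition \ref{Debqfs}(ii) and (iv). Property (iii) (the Fatou-type lattice property) gives $\|f_k\|_X \uparrow \|f\|_X$, while for the associate norm $\|f_k\|_{X''}\le \|f\|_{X''}$ is immediate from $0\le f_k\le f$. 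Consequently it suffices to prove, for every bounded, boundedly supported $h \ge 0$ in $X$, the norming identity $\|h\|_X = \sup_{\|g\|_{X'}\le 1}\int_{\rn} hg\,dx$, since then $\|f_k\|_X \le \|f\|_{X''}$ for each $k$ and letting $k\to\infty$ finishes the argument.

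The main obstacle is precisely this norming identity for bounded, compactly supported test functions, which is the core of the classical Lorentz--Luxemburg theorem in Bennett--Sharpley \cite{ref4}. To establish it in the ball Banach framework, I would apply the Hahn--Banach theorem in the linear space $L_b:=\{h\in\mathscr M(\rn):\ h$ bounded with bounded support$\}$, which is contained in both $X$ and $X'$ (by the ball property), and on which the bilinear pairing $(h,g)\mapsto \int hg\,dx$ is well defined. Given $h \in L_b$ with $h\ge 0$ and $\|h\|_X = \alpha$, I would construct a positive linear functional $\Lambda$ on $L_b$ (viewed as a subspace of $X$) with $\Lambda(h)=\alpha$ and $|\Lambda(u)|\le \|u\|_X$, then extend $\Lambda$ to $X$ by Hahn--Banach and represent the restriction of $\Lambda$ to $L_b$ as integration against some $g\ge 0$ with $\|g\|_{X'}\le 1$, via a standard Radon--Nikod\'ym/monotone convergence argument exploiting properties (ii)--(iv) of Definition \ref{Debqfs}. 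This exactly parallels the original proof of \cite[Chapter~1, Theorem~2.7]{ref4}, with ``sets of finite measure'' systematically replaced by ``balls'', using Remark \ref{bbf} (which ensures $X'$ is itself a ball Banach function space so that the dual construction is symmetric); the argument is carried out in detail in \cite[Lemma~2.6]{ref5}, which is the source cited for this lemma.
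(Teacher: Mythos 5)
Your overall architecture is sound and matches the classical Lorentz--Luxemburg argument: H\"older (Lemma \ref{lemma3.5}) gives $\|f\|_{X''}\le\|f\|_X$, and the truncations $f_k:=\min\{f,k\}\mathbf{1}_{B(\mathbf{0},k)}$ together with Definition \ref{Debqfs}(ii)--(iv) correctly reduce the reverse inequality to a norming identity for bounded, boundedly supported $h\ge 0$. Note also that the paper itself offers no proof of this lemma --- it simply cites \cite[Lemma 2.6]{ref5} --- so your closing deferral to that reference is consistent with the paper's treatment; the issue below concerns only the self-contained sketch you give of the core step.

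The gap is in your mechanism for the norming identity. A Hahn--Banach extension $\Lambda$ to $X$ of a norm-attaining functional on your space $L_b$, satisfying $|\Lambda(u)|\le\|u\|_X$, need not be representable on $L_b$ by integration against any function: the set function $A\mapsto\Lambda(\mathbf{1}_A)$ is only finitely additive, and the Radon--Nikod\'ym step requires $\sigma$-additivity (equivalently, order continuity of $\Lambda$), which Hahn--Banach does not supply. Concretely, take $X:=L^\infty(\rn)$ (a ball Banach function space with $X'=L^1(\rn)$ and $X''=L^\infty(\rn)$, so the lemma is true there), $h:=\mathbf{1}_{B(\mathbf{0},1)}$, a decreasing sequence $\{A_j\}_{j\in\nn}$ of subsets of $B(\mathbf{0},1)$ of positive measure with $|A_j|\to 0$, and $\Lambda(u):=\mathrm{LIM}_j\,|A_j|^{-1}\int_{A_j}u(x)\,dx$ with $\mathrm{LIM}$ a Banach limit. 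Then $\Lambda$ is positive, $|\Lambda(u)|\le\|u\|_{L^\infty(\rn)}$, and $\Lambda(h)=1=\|h\|_{L^\infty(\rn)}$, yet $\Lambda(\mathbf{1}_{A_k})=1$ for every $k$, whereas any representation $\Lambda(u)=\int_{\rn}ug$ with $g\in L^1(\rn)$ would force $\Lambda(\mathbf{1}_{A_k})\to 0$. (Moreover, the supremum defining $\|h\|_{X''}$ need not be attained, so insisting on a $g$ with $\int_{\rn}hg\,dx=\|h\|_X$ exactly is already too strong.) The way Bennett--Sharpley actually close this step is a Hahn--Banach \emph{separation} carried out in $L^1(B)$ for a ball $B\supset\supp(h)$: the convex set $\{u\in L^1(B):\ \|u\|_X\le 1\}$ is norm-closed in $L^1(B)$ by Definition \ref{Debqfs}(ii) and (iii) (pass to an a.e.\ convergent subsequence and use the resulting Fatou property of $\|\cdot\|_X$), so if $\|h\|_X>1$ one separates $h$ from this set by some $g\in L^\infty(B)=(L^1(B))^*$; since the set is solid, one may replace $g$ by $|g|$ and conclude $\|g\|_{X'}\le 1$ while $\int_{\rn}h|g|\,dx>1$, whence $\|h\|_{X''}>1$. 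Working in $L^1(B)$ is precisely what guarantees that the separating functional is a genuine function; this is the point your outline skips over.
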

\begin{proof}[Proof of Lemma \ref{lem-1-5}]
	Let $\va\in (0,1)$ be a sufficiently small
	absolute constant. By an argument similar to that
	used in the proof of Lemma \ref{lem-1-4}, we know that there
	exist positive constants $C_1$ and $C_2$,
	depending only on $n,q,$ and $\va$, such that,
	for any $j\in\zz_+,$ $\alpha\in\{0,\frac{1}{3},
	\frac{2}{3}\}^n,$ and $Q\in\mathcal{A}^j_{\al,\max},$
	\begin{align*}
		C_1 2^{j(1-\va)} |2^j Q|^{\f 1q}<
		\fint_{Q} |\nabla f(z)|\, dz,
	\end{align*}
	\begin{equation}\label{1-8e1}
		|2^j Q|^{\f pq} \leq C_2[\omega]_{A_1(\rn)}
		2^{-{j(1-\va)p} } \f 1 {\og(Q)} \int_Q |
		\nabla f(z)|^p \og(z)\, dz,
	\end{equation}
	and
	\begin{align}\label{1-11e}
		E_f^{(2)}(1,q) \subset \bigcup_{j=0}^\infty \bigcup
		_{\alpha\in\{0,\frac{1}{3},\frac{2}{3}\}^n}
		\bigcup_{Q\in \cA_{\al,\max}^j}\lf( 2^j Q \times Q\r),
	\end{align}
	where $\cA_{\al,\max}^j$ is the same as in
	the proof of Lemma \ref{lem-1-4}.
	Using \eqref{1-11e}, we conclude that, for any $x\in\rn,$
	\begin{equation}\label{1-12e}
		\int_{\rn} \mathbf{1}_{E_f^{(2)}(1,q) }
		(x,y)\, dy\leq \sum_{j=0}^\infty
		\sum_{\alpha\in\{0,\frac{1}{3},\frac{2}{3}\}^n}
		\sum_{Q\in \cA_{\al,\max}^j}|Q|\mathbf{1}_{2^j Q} (x).
	\end{equation}
	Now, we prove \eqref{1-10b} by
	considering the following two cases on $q.$
	
	Case $1)$ $ q\in[p,\infty)$ and $n(\f 1p-\f1q) <1$.
	In this case, by \eqref{ppp}, \eqref{1-12e}, \eqref{1-8e1}, and
	Lemma \ref{Lemma2.1}(i), we find that
	\begin{align}\label{4566}
		&\int_{\rn} \left| \int_{\rn} \mathbf{1}_{E_f^{(2)}(1,q) }
		(x,y)\, dy\right|^{\f pq} \og(x)\, dx\\\noz
		&\quad\leq\sum_{j=0}^\infty \sum_{\alpha\in\{0,
			\frac{1}{3},\frac{2}{3}\}^n} \sum_{Q\in
			\cA_{\al,\max}^j}|Q|^{\f pq} \og (2^j Q)\\\noz
		&\quad\leq [\omega]_{A_1(\rn)}\sum_{j=0}^\infty
		\sum_{\alpha\in\{0,\frac{1}{3},\frac{2}{3}\}^n}
		\sum_{Q\in \cA_{\al,\max}^j}|2^j Q|^{\f pq}
		2^{j n( 1-\f pq)} \og ( Q)\\\noz
		&\quad\lesssim[\omega]_{A_1(\rn)}^2\sum_{j=0}^\infty
		\sum_{\alpha\in\{0,\frac{1}{3},\frac{2}{3}\}^n}
		\sum_{Q\in \cA_{\al,\max}^j} 2^{jp[n(\f1p-\f1q)-(1-\va)]} \int_Q |\nabla f(z)|^p \og(z)\, dz\\\noz
		&\quad\lesssim[\omega]_{A_1(\rn)}^23^n\sum_{j=0}
		^\infty 2^{jp[n(\f1p-\f1q)-(1-\va
			)]} \int_{\rn}
		|\nabla f(z)|^p \og(z)\, dz\\\noz
		&\quad\lesssim[\omega]_{A_1(\rn)}^2\int_{\rn}
		|\nabla f(z)|^p \og(z)\, dz,
	\end{align}
	where, in the fourth step, we took
	an $\va\in (0,1)$
	sufficiently small so that
	$n(\f1p-\f 1q) <1-\va $. This can be done because
	$n(\f 1p-\f1q) <1$. This finishes the proof of
	\eqref{1-10b} in this case.
	
	Case $2)$ $q\in(0,p)$. In this case, let $r:=\f pq$
	, $r':= \f r{r-1}$, and $\mu(x):=[\omega(x)]^{1-r'}$ for any $x\in\rn.$
	Since $\omega\in A_1(\rn)\subset A_r(\rn),$ it follows that $\mu\in A_{r'}(\rn)$ and
	\begin{align}\label{quan}
		[\omega^{1-r'}]_{A_{r'}(\rn)}^{\frac{1}{r'-1}}=[\omega]_{A_r(\rn)}\leq[\omega]_{A_1(\rn)}
	\end{align}
	(see, for instance, both (4) and (6) of \cite[ Proposition 7.1.5]{ref1}).
	It is known that $[L_\omega^r(\rn)]'=L^{r'}_{\mu}(\rn),$
	where $[L_\omega^r(\rn)]'$ denotes the
	associated space of $L_\omega^r(\rn)$   in
	Definition \ref{def-X'} (see \cite[Theorem 2.7.4]{DHHR}). From this,
	Lemma \ref{lemma3.4} with $X:=L^r_\omega(\rn)$, Definition \ref{def-X'} with $X:=L^{r'}_\mu(\rn)$, and
	Lemma \ref{lemma3.77}(i), we deduce that
	\begin{align}\label{icu1}
		&\left\{\int_{\rn}\left[\int_{\rn}\mathbf{1}_{E_f^{(2)}(1,q)}(x,y)\,dy\right]^r\omega(x)\,dx\right\}^{\f1r}\\\noz
		&\quad=\left\|\int_{\rn}\mathbf{1}_{E_f^{(2)}(1,q)}(\cdot,y)\,dy\right\|_{[L^r_\omega(\rn)]''}
		=\left\|\int_{\rn}\mathbf{1}_{E_f^{(2)}(1,q)}(\cdot,y)\,dy\right\|_{[L^{r'}_\mu(\rn)]'}\\\noz
		&\quad=\sup_{\|g\|_{L^{r'}_\mu(\rn)}=1}\int_{\rn}
		\int_{\rn} \mathbf{1}_{E_f^{(2)}(1,q) }
		(x,y)\, dyg(x)\,dx\\\noz
		&\quad\leq \sup_{\|g\|_{L^{r'}_\mu(\rn)}=1}\int_{\rn}
		\int_{\rn} \mathbf{1}_{E_f^{(2)}(1,q) }
		(x,y)\, dyRg(x)\,dx\\\noz
		&\quad\lesssim\sup_{\|g\|_{L^{r'}_\mu(\rn)}
			=1}
		[Rg]_{A_1(\rn)}^2\int_{\rn}|\nabla f(x)|^q Rg(x)\,dx,
	\end{align}
	where, in the last step, we used \eqref{4566} with $p:=q$ and $\omega:=Rg.$
	On the other hand, by the H\"older inequality, both (ii) and (iii) of Lemma \ref{lemma3.77},
	Lemma \ref{jidahanshu}, and \eqref{quan}, we find that, for any $g\in L^{r'}_{\mu}(\rn)$
	with $\|g\|_{L^{r'}_{\mu}(\rn)}=1$,
	\begin{align*}
		&[Rg]_{A_1(\rn)}^2\int_{\rn}|\nabla f(x)|^q Rg(x)\,dx\\
		&\quad\lesssim[Rg]_{A_1(\rn)}^2\|g\|_{L^{r'}_{\mu}(\rn)}
		\left\{\int_{\rn}|\nabla f(x)|^p\omega(x)\,dx\right\}^{\f1r}\\\noz
		&\quad\lesssim\|\cm\|_{L^{r'}_\mu(\rn)\to L^{r'}_\mu(\rn)}^2
		\left\{\int_{\rn}|\nabla f(x)|^p\omega(x)\,dx\right\}^{\f1r}\\\noz
		&\quad\lesssim [\mu]_{A_{r'}(\rn)}^{\frac{2}{r'-1}}
		\left\{\int_{\rn}|\nabla f(x)|^p\omega(x)\,dx\right\}^{\f1r}\\\noz
		&\quad\lesssim [\omega]_{A_1(\rn)}^2
		\left\{\int_{\rn}|\nabla f(x)|^p\omega(x)\,dx\right\}^{\f1r}.
	\end{align*}
	This, combined with \eqref{icu1}, implies that
	\begin{align}\label{icu}
		&\left\{\int_{\rn}\left[\int_{\rn}\mathbf{1}_{E_f^{(2)}(1,q)}(x,y)\,dy\right]^r\omega(x)\,dx\right\}^{\f1r}\\\noz
		&\quad\lesssim [\omega]_{A_1(\rn)}^2
		\left\{\int_{\rn}|\nabla f(x)|^p\omega(x)\,dx\right\}^{\f1r}.	
	\end{align}
	This finishes the proof of
	\eqref{1-10b} in this case.
	
	Let
	$$
	C_{([\omega]_{A_1(\rn)})}:=
	\begin{cases}
		[\omega]_{A_1(\rn)}^2, &q\in[p,\infty)\,\text{and}
		\displaystyle\,n\left(\f 1p-\f1q\right) <1,\\
		[\omega]_{A_1(\rn)}^{(2p)/q},&q\in(0,p).
	\end{cases}$$
	It is easy to see that $C_{([\omega]_{A_1(\rn)})}$
	increases as $[\omega]_{A_1(\rn)}$ increases. From both
	\eqref{4566} and \eqref{icu}, we deduce that
	$$\int_{\rn} \left[ \int_{\rn} \mathbf{1}_{	E_f^{(2)}(1,q) }
	(x,y)\, dy\right]^{\f pq} \og(x)\, dx
	\lesssim C_{([\omega]_{A_1(\rn)})}	\int_{\RR^n} |
	\nabla f(x)|^p \og(x) \, dx.$$
	This finishes the proof of Lemma \ref{lem-1-5}.
\end{proof}
Finally, we show Theorem \ref{thm41}.
\begin{proof}[Proof of Theorem \ref{thm41}]
	Without loss of generality, we may assume that
	$\ld=1$ because, otherwise, we can replace $f$ by $f/\ld$ for any $\ld\in(0,\infty)$. Let $A$ be the same as in Lemma \ref{lem-1-2a}.
	Since $|A|=0$, it follows that,
	to show Theorem \ref{thm41},
	it suffices to prove \eqref{zy} with
	$E_f(1,q)$ replaced by  $E_f(1,q)\setminus[(A\times\rn)\cup(\rn\times A)]$.
	On the other hand, it is easy to show that $$E_f(1,q)\setminus[(A\times\rn)
\cup(\rn\times A)] \subset E_f^{(1)}(1,q) \cup E_f^{(2)}(1,q),$$
	where $E_f^{(1)}(1,q)$ and $E_f^{(2)}(1,q)$ are  the same, respectively, as in \eqref{wuyu1} and \eqref{wuyu2}.
	Thus, it suffices to prove the corresponding upper
	estimates for both the sets $E_f^{(1)}(1,q)$ and $E_f^{(2)}(1,q)$, which are done, respectively, in
	Lemmas \ref{lem-1-4} and \ref{lem-1-5}. This finishes the
	proof of Theorem \ref{thm41}.
\end{proof}

\subsection {Proof of Theorem \ref{thm-4-1}: Lower Estimate}\label{subsection-2-1}

In this subsection, we show a generalization of
the lower estimate of Theorem \ref{thm-4-1} on
ball quasi-Banach function spaces, which plays
an essential role in the proof of
Theorem \ref{theorem3.9} below.
In what follows, we use the symbol
$C^2(\rn)$ to denote  the set
of all twice continuously
differentiable functions on $\rn.$
\begin{theorem}\label{theorem4.88}
Let $X$ be a ball quasi-Banach function space
and $q\in(0,\infty)$. Then, for any $f\in C^2(\rn)$ with $|\nabla f|\in
C_{\mathrm{c}}(\rn)$,
\begin{align}\label{8002}
	\liminf_{\ld\to\infty}\ld^q\left\|\int_{\rn}
	\mathbf{1}_{\{y\in\rn:\ (\cdot,y)\in E_f(\ld,q)\}}
	(y)\,dy\right\|_{X^{\f 1 q}}\geq\frac{K(q,n)}{n}
	\|\,|\nabla f|\,\|_{X}^q,
\end{align}
where $E_f(\ld,q)$ for any $\ld\in(0,\infty)$ is the same as  in \eqref{E}, and $K(q,n)$ is the same as in \eqref{kqn}. Moreover,
if $X$ is a ball Banach function space,
then, for any $f\in C^2(\rn)$ with $|\nabla f|\in
C_{\mathrm{c}}(\rn)$,
\begin{align}\label{yzr}
	\lim_{\ld\to\infty}\ld^q\left\|\int_{\rn}
	\mathbf{1}_{\{y\in\rn:\ (\cdot,y)\in E_f(\ld,q)\}}
	(y)\,dy\right\|_{X^{\f 1 q}}=\frac{K(q,n)}{n}
	\|\,|\nabla f|\,\|_{X}^q.
\end{align}
\end{theorem}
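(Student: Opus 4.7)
Let $h_\ld(x):=|\{y\in\rn:\,(x,y)\in E_f(\ld,q)\}|$, with $E_f(\ld,q)$ as in \eqref{E}. The plan is to first establish the pointwise (indeed, locally uniform) asymptotic
$$\lim_{\ld\to\infty}\ld^q h_\ld(x)=\frac{K(q,n)}{n}|\nabla f(x)|^q,$$
and then to transfer it to the $X^{1/q}$-quasi-norm: a Fatou-type inequality in $X$ extracted from Definition \ref{Debqfs} will give the $\liminf$ bound \eqref{8002} for any ball quasi-Banach function space $X$, while in the Banach setting an additional uniform-convergence argument on a fixed bounded support upgrades this to the full limit \eqref{yzr}. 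The hypotheses $f\in C^2(\rn)$ and $|\nabla f|\in C_{\mathrm{c}}(\rn)$ ensure $\|\nabla f\|_{L^\infty(\rn)},\,\|\nabla^2 f\|_{L^\infty(\rn)}<\infty$ and that $f$ is constant outside $\supp(\nabla f)$ (with the obvious two-component variant when $n=1$).

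For the pointwise asymptotic, fix $x\in\rn$ and parametrize $y=x+r\xi$ with $r>0$ and $\xi\in\sph$, so $dy=r^{n-1}\,dr\,d\sigma(\xi)$. Taylor's theorem yields
$$\lf|f(x+r\xi)-f(x)-r\nabla f(x)\cdot\xi\r|\leq\tfrac{1}{2}\|\nabla^2 f\|_{L^\infty(\rn)}r^2,$$
so the condition $|f(x)-f(y)|>\ld r^{n/q+1}$ reduces, up to an $O(r)$ correction uniform in $x$ on compact sets, to $|\nabla f(x)\cdot\xi|>\ld r^{n/q}$, i.e., $r<(|\nabla f(x)\cdot\xi|/\ld)^{q/n}(1+o(1))$ as $\ld\to\infty$. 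Polar integration then gives
$$\ld^q h_\ld(x)\longrightarrow\f1n\int_{\sph}|\nabla f(x)\cdot\xi|^q\,d\sigma(\xi)=\f{K(q,n)}{n}|\nabla f(x)|^q,$$
the last identity by rotation invariance of $d\sigma$, aligning $e$ with $\nabla f(x)/|\nabla f(x)|$. At points where $\nabla f(x)=0$ the same expansion (with a simpler error control) yields $\ld^q h_\ld(x)\to 0$, and tracking the $C^2$ remainder makes this convergence locally uniform in $x$.

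Now set $g_\ld:=\ld\, h_\ld^{1/q}$ and $g:=(K(q,n)/n)^{1/q}|\nabla f|$, so $g_\ld\to g$ a.e. For any sequence $\ld_k\to\infty$, the monotone envelope $G_m:=\inf_{k\geq m}g_{\ld_k}$ satisfies $0\leq G_m\uparrow g$, so Definition \ref{Debqfs}(iii) gives $\|G_m\|_X\uparrow\|g\|_X$; combined with $G_m\leq g_{\ld_m}$ and Definition \ref{Debqfs}(ii), this produces the Fatou-type inequality $\|g\|_X\leq\liminf_{\ld\to\infty}\|g_\ld\|_X$. Raising to the $q$-th power and invoking $\ld^q\|h_\ld\|_{X^{1/q}}=\|g_\ld\|_X^q$ from Definition \ref{Debf} proves \eqref{8002}. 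For the Banach case, the crude Lipschitz estimate $|f(x)-f(y)|\leq\|\nabla f\|_{L^\infty(\rn)}|x-y|$ forces the relevant $y$'s into $B(x,C\ld^{-q/n})$, giving $\ld^q h_\ld\leq C\|\nabla f\|_{L^\infty(\rn)}^q$ uniformly; moreover the constancy of $f$ outside $\supp(\nabla f)$ confines $\supp(\ld^q h_\ld)$ to a fixed ball $B(\mathbf{0},R_0)$ for all large $\ld$. The locally uniform convergence of the previous paragraph therefore upgrades to uniform convergence on $B(\mathbf{0},R_0)$, whence $\|g_\ld-g\|_X\leq\varepsilon_\ld\|\mathbf{1}_{B(\mathbf{0},R_0)}\|_X\to 0$; the triangle inequality in the Banach space $X$ then yields $\|g_\ld\|_X\to\|g\|_X$, which is exactly \eqref{yzr}. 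The principal obstacle is this final step: controlling both the Taylor remainder and the support of $\ld^q h_\ld$ uniformly in $\ld$, with a small case-analysis tweak for $n=1$.
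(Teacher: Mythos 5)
Your proof is correct and rests on the same core mechanism as the paper's: polar integration combined with the quadratic Taylor sandwich $|\,|f(x+t\xi)-f(x)|/t - |\nabla f(x)\cdot\xi|\,|\le L_2 t$, which yields the exact asymptotic $\ld^q h_\ld(x)\to\tfrac{K(q,n)}{n}|\nabla f(x)|^q$ and confines $\supp(\ld^q h_\ld)$ to a fixed ball for large $\ld$. The places where you diverge from the paper are organizational rather than substantive. For the lower bound, the paper never passes through an explicit pointwise limit: it observes that $A^-_f(x,\xi,\ld,q)$ is \emph{monotone increasing in $\ld$}, so the lower envelope $\tfrac{1}{n}\int_{\sph}A^-_f\,d\sigma$ is already a monotone increasing family and Definition \ref{Debqfs}(iii) applies directly. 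You instead manufacture monotonicity via the generic $G_m:=\inf_{k\ge m}g_{\ld_k}$ device; this is a valid Fatou-type lemma for ball quasi-Banach function spaces and is marginally more general (it requires only a.e.\ pointwise convergence, not a built-in monotone minorant), at the cost of running through an auxiliary sequence. For the upper bound, the paper keeps the sandwich inside the norm, applying the elementary inequality $(a+b)^q\le(1+\theta)a^q+C_\theta b^q$ to $A^+_f$ and using that the resulting additive error $C_\theta(L_1/\ld)^{q^2/n}L^q_2\mathbf{1}_K$ tends to $0$ in $X$; your route — uniform convergence of $g_\ld\to g$ on the fixed ball plus $\|g_\ld-g\|_X\le\vez_\ld\|\mathbf{1}_{B(\mathbf{0},R_0)}\|_X\to0$ followed by the triangle inequality — is the same estimate stated at the level of functions rather than norms. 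Either way the Banach hypothesis enters only through the triangle inequality, exactly as in the paper. One small point worth making explicit in a polished write-up: the uniform (not merely locally uniform) Taylor control is what makes $\vez_\ld$ independent of $x$, and the $n=1$ support argument does require the two-component remark you flagged, since $\rr\setminus\supp(\nabla f)$ need not be connected.
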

\begin{proof}
Let $q\in(0,\infty),$ $f\in C^2(\rn)$ with $|\nabla f|\in
C_{\mathrm{c}}(\rn)$,
and $M\in(0,\infty)$ be such that $\supp\, (|\nabla f|)\subseteq B(\mathbf{0},M).$
Let $K:=B(\mathbf{0},M+1).$ In order to show
\eqref{yzr}, we first prove \eqref{8002}.
For any $\lambda\in(0,\infty),\ x\in\rn,$
and $\xi\in\mathbb{S}^{n-1},$ let
$$F_f(x,\xi,\ld,q):=\left\{t\in(0,\infty):\
\frac{|f(x+t\xi)-f(x)|^q}{t^q}>\lambda^qt^n\right\}.$$
Then, by the Fubini Theorem, we have, for any
$\lambda\in(0,\infty)$ and $x\in\rn,$
	\begin{align}\label{8001}
	\int_{\rn}\mathbf{1}_{\{y\in\rn:\ (x,
	y)\in E_f(\ld,q)\}}(y)\,dy=\int_{\mathbb{S}^{n-1}}
	\int_0^{\infty}\mathbf{1}_{F_f(x,\xi,\ld,q)}(t)t^{n-1}\,dt\,d\sigma(\xi).
	\end{align}
Since $f\in C^2(\rn)$ with $|\nabla f|\in
C_{\mathrm{c}}(\rn)$, it follows
that there exist constants $L_1\in(\|\,|\nabla f|\,\|_{L^{\infty}(\rn)},\infty)$ and $L_2\in(0,
\infty)$ such that, for any $t\in(0,\infty),$ $x\in\rn,$ and $\xi\in\mathbb{S}^{n-1},$
\begin{align}\label{8003}
	|f(x+t\xi) -f(x)|&\leq L_1 t
\end{align}
and
\begin{align}\label{8004}
	|f(x+t\xi) -f(x)-t \nabla f(x) \cdot \xi|
	&\leq L_2t^2.
\end{align}
Let $\lambda\in(L_1,\infty).$ By \eqref{8003},
we conclude that, for any $t\in(0,\infty),$ $x\in\rn,$
and $\xi\in\mathbb{S}^{n-1},$
\begin{align}\label{8005}
	\frac{|f(x+t\xi)-f(x)|^q}{|t|^q}\leq L_1^q
\end{align}
and hence
\begin{align}\label{8006}
	F_f(x,\xi,\ld,q)=\left\{t\in(0,\(L_1/\lambda\)^{\frac{q}{n}}):\
	\frac{|f(x+t\xi)-f(x)|^q}{t^q}>\lambda^qt^n\right\}.
\end{align}
From \eqref{8004}, we deduce that, for any $t\in(0,
\infty),$ $x\in\rn,$ and $\xi\in\mathbb{S}^{n-1},$
$$|\xi\cdot\nabla f(x)|-tL_2\leq \frac{|f(x+t\xi)
-f(x)|}{t}\leq|\xi\cdot\nabla f(x)|+tL_2,$$
which, combined with both \eqref{8005} and \eqref{8006}, further
implies that, for any $t\in(0,(L_1/\lambda)
^{\frac{q}{n}}) ,$
$$A^-_f(x,\xi,\lambda,q)\leq\frac{|f(x+t\xi)-f(x)|
^q}{t^q}\leq A^+_f(x,\xi,\lambda,q),$$
where
$$A^-_f(x,\xi,\lambda,q):=\left[\max\left\{|\xi\cdot\nabla
f(x)|-(L_1/\lambda)^{\frac{q}{n}}L_2,0\right\}\right]^q$$
and
$$A^+_f(x,\xi,\lambda,q):=\min\left\{\left[|\xi\cdot\nabla
f(x)|+(L_1/\lambda)^{\frac{q}{n}}L_2\right]^q,L_1^q\right\}.$$
Using this, we conclude that, for any $x\in\rn$
and $\xi\in\mathbb{S}^{n-1},$
\begin{align}\label{8007}
	F_f^-(x,\xi,\ld,q)\subset F_f
	(x,\xi,\ld,q)\subset F_f^+(x,\xi,\ld,q),
\end{align}
where
$$F_f^{\pm}(x,\xi,\ld,q):=\{t\in(0,\infty):
\ A^{\pm}_f(x,\xi,\lambda,q)>\lambda^qt^n\}.$$
\par
We now show that, for any $\lambda\in(L_1,\infty),$
$\xi\in \mathbb{S}^{n-1},$ and $x\in K^{\complement},$
$$F_f(x,\xi,\ld,q)=\emptyset$$
and hence, for any $x\in\rn$,
\begin{align}\label{8008}
	&\int_{\mathbb{S}^{n-1}}\int_0^{\infty}\mathbf{1}
	_{F_f(x,\xi,\ld,q)}(t)t^{n-1}\,dt\,d\sigma(\xi)\\\noz
	&\quad=
	\int_{\mathbb{S}^{n-1}}\int_0^{\infty}\mathbf{1}
	_{F_f(x,\xi,\ld,q)}(t)t^{n-1}\,dt\,d\sigma(\xi)
	\mathbf{1}_{K}(x).
\end{align}
Indeed, by \eqref{8006}, for any $\lambda\in(L_1,
\infty),$ we have $F_f(x,\xi,\ld,q)\subset (0,1)
.$ Thus, for any $\lambda\in(L_1,\infty),$
$x\in K^{\complement},$ $\xi\in \mathbb{S}^{n-1},$
and $t\in F_f(x,\xi,\ld,q)\subset (0,1),$ we obtain
$x+t\xi\in [B(\mathbf{0},M)]^{\complement}$
and hence $f(x+t\xi)=f(x).$
This implies that, for any $\lambda\in(L_1,\infty),$
$\xi\in \mathbb{S}^{n-1},$ and $x\in K^{\complement},$
$$F_f(x,\xi,\ld,q)=\emptyset,$$
which completes the proof of \eqref{8008}.
By \eqref{8001}, \eqref{8007}, and Definition \ref{Debqfs}(ii) together
with $X$ being a ball quasi-Banach function space,  we have
\begin{align}\label{8025}
&\ld^q\left\|\int_{\rn}\mathbf{1}_{\{y\in\rn:
\ (\cdot,y)\in E_f(\ld,q)\}}(y)\,dy\right\|_{X^{\f 1 q}}\\
&\quad\geq\ld^q\left\|\int_{\mathbb{S}^{n-1}}\int_
0^{\infty}\mathbf{1}_{F^-_f(\cdot,\xi,\ld,q)
}(t)t^{n-1}\,dt\,d\sigma(\xi)\right\|_{X^{\frac{1}{q}}}\noz\\\noz
&\quad=\frac{1}{n}\left\|\int_{\mathbb{S}^{n-1}}A^-_f
(\cdot,\xi,\ld,q)\,d\sigma(\xi)\right\|_{X^{\frac{1}{q}}}.
\end{align}
Notice that the function $\lambda \mapsto A^{-}(x,
\xi,\lambda)$ is increasing on $(0,\infty)$ and
$$\lim_{\lambda\to\infty}A^{-}_f(x,\xi,\lambda,q)=|
\xi\cdot\nabla f(x)|^q,$$
and that $K(q,n)$ is independent of $e.$ Using this and  Definition \ref{Debqfs}(iii), and
letting $\ld\to\infty$ in
\eqref{8025},
we then conclude
\begin{align}\label{11223}
	&\liminf_{\ld\to\infty}\ld^q\left\|\int_{\rn}
	\mathbf{1}_{\{y\in\rn:\ (\cdot,y)\in E_f(\ld,q)
	\}}(y)\,dy\right\|_{X^{\f 1 q}}\\
    &\quad\geq
	\frac{1}{n}\left\|\int_{\mathbb{S}^{n-1}}|
	\xi\cdot\nabla f|^q\,d\sigma(\xi)
	\right\|_{X^{\frac{1}{q}}}\noz\\\noz
	&\quad=	\frac{1}{n}\left\|\int_{\mathbb{S}^{n-1}}\left|
	\xi\cdot\frac{\nabla f}{|\nabla f|}\right|^q\,d\sigma(\xi)|\nabla f|^q
	\right\|_{X^{\frac{1}{q}}}=\frac{K(q,n)}{n}\|\,|\nabla f|\,\|_{X}^q.
\end{align}
This finishes the proof of \eqref{8002}.

Next, we prove that, for any $f\in C^2(\rn)$ with $|\nabla f|\in
C_{\mathrm{c}}(\rn)$,
\begin{align}\label{8066}
	\limsup_{\ld\to\infty}\ld^q\left\|
	\int_{\rn}\mathbf{1}_{\{y\in\rn:\ (\cdot,y)
	\in E_f(\ld,q)\}}(y)\,dy\right\|_{X^{
	\f 1 q}}\leq\frac{K(q,n)}{n}\|\,|\nabla f|\,\|_{X}^q.
\end{align}
To this end,
recall that, for any $\theta\in(0,1)$,
there exists a positive constant $C_{\theta}$
such that, for any $a,b\in(0,\infty)$,
\begin{align}\label{key11}
	(a+b)^q\leq (1+\theta) a^q+C_{\theta }b^q
\end{align}
(see, for instance, \cite[p.\,699]{ref7}).
Obviously, we have, for any $x\in\rn$ and
$\xi\in\mathbb{S}^{n-1},$
$$
|\xi\cdot\nabla f(x)|\leq \|\,|\nabla f|\,\|_{L^\infty(\rn)}<L_1.
$$
From this and \eqref{key11}, we deduce that, for any
$\lambda\in(L_1,\infty)$ sufficiently large,
\begin{align*}
A^+_f(x,\xi,\lambda,q)&=[|\xi\cdot\nabla f(x)|
+(L_1/\lambda)^{\frac{q}{n}}L_2]^q\\
&\leq(1+\theta)
|\xi\cdot\nabla f(x)|^q+C_{\theta}(L_1/
\lambda)^{\frac{q^2}{n}}L_2^q,
\end{align*}
Where $C_\theta$ is a positive constant depending only on $\theta.$
This, together with \eqref{8001}, \eqref{8007},
\eqref{8008}, and \eqref{key11} with $q$ replaced by $\frac{1}{q}$,
implies that, for any $\ld\in(L_1,\infty)$, $x\in\rn,$
and $\theta,\eta\in(0,1)$,
\begin{align*}
&\ld\left[\int_{\rn}\mathbf{1}_{\{y\in\rn:\
(x,y)\in E_f(\ld,q)\}}(y)\,dy\right]^{\f 1 q}\\
&\quad\leq
\ld\left[\int_{\mathbb{S}^{n-1}}\int_0^
{\infty}\mathbf{1}_{F^+_f(x,\xi,\ld,q)}(t)
t^{n-1}\,dt\,d\sigma(\xi)\right]^{\f 1 q}\mathbf{1}_{K}(x)\\\noz
&\quad=
n^{-\f 1 q} \left\{\int_{\mathbb{S}^{n-1}}\left[|\xi
\cdot\nabla f|+(L_1/\lambda)^{\frac{q}{n}}
L_2\right]^qd\sigma(\xi)\right\}^{\frac{1}{q}}\mathbf{1}_{K}(x)\\\noz
&\quad\leq
n^{-\f 1 q}
\left[\int_{\mathbb{S}^{n-1}}
(1+\theta)|\xi\cdot \nabla f|^q
+C_\theta(L_1/\ld)^{\frac{q^2}{n}}L^q_2
\,d\sigma(\xi)\right]^{\frac{1}{q}}\mathbf{1}_{K}(x)\\
&\quad\leq
n^{-\f 1 q}(1+\eta)(1+\theta)^{\frac{1}{q}}
\left[\int_{\mathbb{S}^{n-1}}
|\xi\cdot  \nabla f|^q\,d\sigma(\xi)\right]^{\frac{1}{q}}\mathbf{1}_{K}(x)\\\noz
&\quad\quad+
n^{-\f 1 q}C_{\eta}(L_1/\ld)^{\frac{q}{n}}
\left[C_\theta L^q_2\omega_n\right]^{\frac{1}{q}}\mathbf{1}_{K}(x)\\\noz
&\quad=
n^{-\f 1 q}(1+\eta)(1+\theta)^{\frac{1}{q}}[K(q,n)]^{\frac{1}{q}}
|\nabla f(x)|\mathbf{1}_K(x)\\\noz
&\quad\quad+
n^{-\f 1 q}C_{\eta}(L_1/\ld)^{\frac{q}{n}}
\left[C_\theta L^q_2\omega_n\right]^{\frac{1}{q}}\mathbf{1}_{K}(x),
\end{align*}
which, combined with
the assumption that $X$ is
a ball Banach function space, further implies that
\begin{align*}
&\ld\left\|\left[\int_{\rn}\mathbf{1}_{\{y\in\rn:\
(\cdot,y)\in E_f(\ld,q)\}}(y)\,dy\right]^{\f 1 q}\right\|_{X}\\
&\quad\leq
n^{-\f 1 q}(1+\eta)(1+\theta)^{\frac{1}{q}}[K(q,n)]^{\frac{1}{q}}
\|\,|\nabla f|\,\|_{X}\\\noz
&\qquad+
n^{-\f 1 q}C_{\eta}(L_1/\ld)^{\frac{q}{n}}
\left[C_\theta L^q_2\omega_n\right]^{\frac{1}{q}}\|\mathbf{1}_K\|_{X},
\end{align*}
where $C_\theta$ is a positive constant depending only on $\theta$
and $C_\eta$ is a positive constant depending only on $\eta$.
Letting $\ld\to\infty$ and $\theta,\eta \to 0,$  we then find that
\begin{align*}
\limsup_{\ld\to\infty}\ld\left\|\int_{\rn}\mathbf{1}_{\{y\in\rn:\
(\cdot,y)\in E_f(\ld,q)\}}(y)\,dy\right\|_{X^{\f 1 q}}^{\frac{1}{q}}\leq
\left[\frac{K(q,n)}{n}\right]^{\frac{1}{q}}
\|\,|\nabla f|\,\|_{X}.
\end{align*}
This proves \eqref{8066}, which,  combined with \eqref{11223}, then
completes the proof of Theorem \ref{theorem4.88}.
\end{proof}
Applying
Corollary \ref{density2} with $X:=L^p_{\omega}(\rn)$, we obtain the following corollary; we omit the details here.
\begin{corollary}\label{jiaquan2}
	Let $p\in[1,\infty)$ and $\omega\in A_p(\rn)$. Then, for any $f\in\dot{W}^{1,p}_{\omega}(\rn)$,
	there exists  $\{f_{k}\}_{k\in\nn}\subset C^\infty(\rn)$ with
$|\nabla f_k|\in C_{\rm{c}}(\rn)$ for any $k\in\nn$ such that
	\begin{align*}
		\lim_{k\to\infty}\|f-f_k\|_{\dot{W}^{1,p}_{\omega}(\rn)}
		=
		\lim_{k\to\infty}\|\,|\nabla f-\nabla f_k|\,\|_{L^p_{\omega}(\rn)}=0.
	\end{align*}
\end{corollary}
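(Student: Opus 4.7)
The plan is to obtain Corollary \ref{jiaquan2} as an immediate consequence of Corollary \ref{density2} applied to the ball Banach function space $X:=L^p_\omega(\rn)$, with the auxiliary exponent that appears in the statement of Corollary \ref{density2} chosen to be $1$. With this choice $X^{1/1}=X$, so the three hypotheses of Corollary \ref{density2} reduce to: (a) $L^p_\omega(\rn)$ is a ball Banach function space; (b) the Hardy--Littlewood maximal operator $\cm$ is bounded on $(L^p_\omega(\rn))'$; (c) $L^p_\omega(\rn)$ has an absolutely continuous norm. Granting these, Corollary \ref{density2} furnishes, for every $f\in \dot{W}^{1,L^p_\omega(\rn)}(\rn)=\dot{W}^{1,p}_\omega(\rn)$, a sequence $\{f_k\}_{k\in\nn}\subset C^\infty(\rn)$ with $|\nabla f_k|\in C_{\mathrm{c}}(\rn)$ such that $\||\nabla(f-f_k)|\|_{L^p_\omega(\rn)}\to 0$, which is exactly the desired conclusion.

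Items (a) and (c) are routine. The axioms (i)--(iv) of Definition \ref{Debqfs} and the triangle inequality for $L^p_\omega(\rn)$ are classical; the local integrability bound $\int_B|f|\,dx\leq C_{(B)}\|f\|_{L^p_\omega(\rn)}$ follows, when $p\in(1,\infty)$, by H\"older's inequality applied to $|f|\omega^{1/p}\cdot\omega^{-1/p}$ together with the local integrability of $\omega^{1-p'}$ coming from the $A_p(\rn)$ condition, and, when $p=1$, from the a.e.\ pointwise lower bound $\omega(x)\geq\omega(B)/([\omega]_{A_1(\rn)}|B|)$ for $x\in B$. Absolute continuity of the norm is the dominated convergence theorem: if $\mathbf{1}_{E_j}\to 0$ almost everywhere and $f\in L^p_\omega(\rn)$, then $|f\mathbf{1}_{E_j}|^p\omega\to 0$ almost everywhere with integrable majorant $|f|^p\omega$.

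Item (b) is the substantive step. When $p\in(1,\infty)$, the standard identification $(L^p_\omega(\rn))'=L^{p'}_{\omega^{1-p'}}(\rn)$ combined with $\omega\in A_p(\rn)\Leftrightarrow\omega^{1-p'}\in A_{p'}(\rn)$ reduces matters to Lemma \ref{jidahanshu} applied to the exponent $p'$ and the weight $\omega^{1-p'}$. When $p=1$, a direct unpacking of Definition \ref{def-X'} identifies $(L^1_\omega(\rn))'$ with the space of measurable functions $f$ with finite norm $\esssup_{x\in\rn}|f(x)|/\omega(x)$; since $\omega\in A_1(\rn)$ gives the pointwise majorization $\cm\omega\leq [\omega]_{A_1(\rn)}\omega$ almost everywhere, one has
\begin{align*}
\cm(|f|)(x)\leq\esssup_{y\in\rn}\frac{|f(y)|}{\omega(y)}\cdot\cm(\omega)(x)\leq[\omega]_{A_1(\rn)}\,\omega(x)\esssup_{y\in\rn}\frac{|f(y)|}{\omega(y)}
\end{align*}
for almost every $x\in\rn$, whence $\cm$ is bounded on $(L^1_\omega(\rn))'$ with operator norm at most $[\omega]_{A_1(\rn)}$. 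The mildly delicate point is precisely this $p=1$ endpoint, where the associate space is a weighted $L^\infty$ rather than a weighted Lebesgue space and Muckenhoupt's theorem cannot be invoked directly; one must instead exploit the defining $A_1$ inequality $\cm\omega\lesssim\omega$. Assembling (a)--(c) and applying Corollary \ref{density2} completes the proof.
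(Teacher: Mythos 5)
Your proof is correct and follows the same route as the paper, which explicitly says to apply Corollary~\ref{density2} with $X:=L^p_\omega(\rn)$ and omits the details. You supply exactly the omitted verification: the auxiliary convexification exponent must be taken equal to $1$ (since $\omega\in A_p(\rn)$ does not improve to $A_{p/\text{aux}}(\rn)$ for any $\text{aux}>1$), so the substantive check is that $\cm$ is bounded on $(L^p_\omega(\rn))'$. For $p>1$ your reduction to Lemma~\ref{jidahanshu} via $(L^p_\omega)'=L^{p'}_{\omega^{1-p'}}$ and $\omega^{1-p'}\in A_{p'}(\rn)$ is what the paper itself does in an analogous spot (proof of Theorem~\ref{thm-6-12}); for $p=1$ your direct pointwise argument $\cm(|f|)\le\esssup(|f|/\omega)\,\cm\omega\lesssim[\omega]_{A_1(\rn)}\,\omega\,\esssup(|f|/\omega)$ is a clean, elementary alternative to the paper's citations of Andersen--John and~\cite{ins} for the corresponding $L^\infty_{\omega^{-1}}(\rn)$ endpoint. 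No gaps.
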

As a  consequence of Theorem
\ref{theorem4.88} with $X$ replaced by $L^p
_{\omega}(\rn)$, we have the following conclusion.
\begin{corollary}\label{corollary3.99}
Let $p\in[1,\infty),$ $q\in(0,\infty)$ with $n(\frac{1}{p}-\frac{1}{q})<1$, and $
\omega\in A_{1}(\rn)$. Then, for any $f\in
\dot{W}^{1,p}_\omega(\rn),$
\begin{align}\label{ruci}
&\lim_{\ld\to\infty}\ld^p \int_{\RR^n}\left
[\int_{ \RR^n} \mathbf{1}_{E_f(\ld,q)}(x, y)\,
dy \right]^{\f pq} \og(x) \,dx\\\noz
&\quad=
\left[\frac
{K(q,n)}{n}\right]^{\frac{p}{q}}	\int_{\RR^n}
| \nabla f(x)|^p \og(x) \, dx.
\end{align}
\end{corollary}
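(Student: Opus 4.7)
The plan is to establish \eqref{ruci} first for a dense class of smooth functions via Theorem~\ref{theorem4.88}, and then pass to arbitrary $f\in\dot{W}^{1,p}_\omega(\rn)$ by approximation, using Corollary~\ref{jiaquan2} together with the upper bound of Theorem~\ref{thm41} to control the error. Since $\og\in A_1(\rn)$ and $p\in[1,\infty)$, the space $X:=L^p_\og(\rn)$ is a ball Banach function space. Its $(1/q)$-convexification satisfies
\begin{align*}
\|g\|_{X^{1/q}}=\left\|\,|g|^{1/q}\,\right\|_X^q=\left[\int_\rn|g(x)|^{p/q}\og(x)\,dx\right]^{q/p}.
\end{align*}
Raising both sides of \eqref{yzr} of Theorem~\ref{theorem4.88} to the power $p/q$ therefore yields \eqref{ruci} for every $f\in C^2(\rn)$ with $|\nabla f|\in C_{\mathrm c}(\rn)$.

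For general $f\in\dot{W}^{1,p}_\og(\rn)$, Corollary~\ref{jiaquan2} supplies a sequence $\{f_k\}_{k\in\nn}\subset C^\infty(\rn)$ with $|\nabla f_k|\in C_{\mathrm c}(\rn)$ and $\|\,|\nabla (f-f_k)|\,\|_{L^p_\og(\rn)}\to 0$. Define the functional
\begin{align*}
\Theta(g,\ld):=\ld\left\{\int_\rn\left[\int_\rn\mathbf 1_{E_g(\ld,q)}(x,y)\,dy\right]^{p/q}\og(x)\,dx\right\}^{1/p},
\end{align*}
so that the quantity whose limit appears in \eqref{ruci} equals $[\Theta(f,\ld)]^p$. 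The smooth case already established gives $\lim_{\ld\to\infty}\Theta(f_k,\ld)=[K(q,n)/n]^{1/q}\|\,|\nabla f_k|\,\|_{L^p_\og(\rn)}$ for each $k$, while Theorem~\ref{thm41} furnishes the uniform a priori bound $\sup_{\mu\in(0,\infty)}\Theta(g,\mu)\le C_{([\og]_{A_1(\rn)})}\|\,|\nabla g|\,\|_{L^p_\og(\rn)}$ valid for every $g\in\dot{W}^{1,p}_\og(\rn)$.

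The transfer of the limit from $f_k$ to $f$ rests on a sub-additivity property of $\Theta$. For any decomposition $f=g+h$ and any $s\in(0,1)$, the triangle inequality gives the set inclusion $E_f(\ld,q)\subset E_g(s\ld,q)\cup E_h((1-s)\ld,q)$. Combining this with the refined power inequality $(a+b)^{1/q}\le(1+\theta)a^{1/q}+C_\theta b^{1/q}$ (the analogue of \eqref{key11} for exponent $1/q$, valid for all $\theta\in(0,1)$) and the triangle inequality of $L^p_\og(\rn)$ produces
\begin{align*}
\Theta(f,\ld)\le\frac{1+\theta}{s}\,\Theta(g,s\ld)+\frac{C_\theta}{1-s}\,\Theta(h,(1-s)\ld).
\end{align*}
Applying this with $g:=f_k$ and $h:=f-f_k$, taking $\limsup_{\ld\to\infty}$, then sending $k\to\infty$ (the second term vanishes by the uniform bound), and finally $s\to 1^-$ and $\theta\to 0^+$ produces $\limsup_{\ld\to\infty}\Theta(f,\ld)\le[K(q,n)/n]^{1/q}\|\,|\nabla f|\,\|_{L^p_\og(\rn)}$. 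Applying the same inequality instead to the decomposition $f_k=f+(f_k-f)$, and using $N_{-h}=N_h$ in the relevant set inclusion, yields the matching lower bound for $\liminf_{\ld\to\infty}\Theta(f,\ld)$. Raising the resulting equality to the $p$-th power gives \eqref{ruci}. The delicate point is that the constants $(1+\theta)/s$ and $C_\theta/(1-s)$ must be arranged so that the first collapses to $1$ while the coefficient of the error term stays finite; this is precisely what the freedom of the two independent parameters $s$ and $\theta$ allows, and without this refinement the sharp constant $[K(q,n)/n]^{p/q}$ would not survive the density passage.
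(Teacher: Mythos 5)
Your proposal is correct and follows essentially the same path as the paper: smooth case via Theorem~\ref{theorem4.88} applied to $X=L^p_\omega(\rn)$, density via Corollary~\ref{jiaquan2}, and error control via the upper bound of Theorem~\ref{thm41} combined with the set inclusion $E_f(\ld,q)\subset E_g(s\ld,q)\cup E_h((1-s)\ld,q)$ and the power inequality \eqref{key11}. You merely package the argument more cleanly through the functional $\Theta$ and prove the $\limsup$ bound before the $\liminf$ bound (the paper does the reverse), but the decomposition, the two free parameters, and the limiting order are all the same.
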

\begin{proof} Using Theorem \ref{theorem4.88} with $X$ replaced by $L^p_{\omega}(\rn)$,
we find that \eqref{ruci} holds true for any $f\in C^2(\rn)$ with $|\nabla f|\in
C_{\mathrm{c}}(\rn)$.
Next, we show that \eqref{ruci} hold
true for any $f\in
\dot{W}^{1,p}_\omega(\rn)$.

Let $f\in
\dot{W}^{1,p}_\omega(\rn)$ and $C_{(q,n)}:=[\frac
{K(q,n)}{n}]^{\frac{1}{q}}.$
From  Corollary \ref{jiaquan2}, we infer  that there exists a sequence $\{f_k\}_{k\in\nn}\subset\ C^\infty(\rn)$ with $
|\nabla f_k |\in C_{\mathrm{c}}(\rn)$ for any $k\in\nn$ such that
\begin{align*}
\lim_{k\to\infty}\|\,|\nabla f-\nabla f_k|\,\|_{L^p_\omega(\rn)}=0.
\end{align*}
By the H\"older inequality, \eqref{key11} with $q$ replaced by $\frac{1}{q},$
Theorem \ref{theorem4.88} with $f$ replaced by $f_k$, and
Theorem \ref{thm41} with $f$ replaced by $f-f_k$, we obtain
\begin{align*}
&\|\,|\nabla f|\,\|_{L^p_\omega(\rn)}
-\|\,|\nabla f-\nabla f_k|\,\|_{L^p_\omega(\rn)}\\
&\quad\leq
\|\,|\nabla f_k|\, \|_{L^p_\omega(\rn)}
=C_{(q,n)}^{-1}\liminf_{\ld\to\infty}\ld\left\{\int_{\RR^n}\left
[\int_{ \RR^n} \mathbf{1}_{E_{f_k}(\ld,q)}(x, y)\,
dy \right]^{\f pq} \og(x) \,dx\right\}^{\frac{1}{p}}\\
&\quad\leq
C_{(q,n)}^{-1}\liminf_{\ld\to\infty}\ld\left\{\int_{\RR^n}\left
[\int_{ \RR^n} \mathbf{1}_{E_{f_k-f}(\delta\ld,q)}(x, y)+\mathbf{1}_{E_{f}((1-\delta)\ld,q)}(x, y)\,
dy \right]^{\f pq} \og(x)\, dx\right\}^{\frac{1}{p}}\\
&\quad\leq
C_{(q,n)}^{-1}C_\theta\liminf_{\ld\to\infty}\ld\left\{\int_{\RR^n}\left
[\int_{ \RR^n} \mathbf{1}_{E_{f_k-f}(\delta\ld,q)}(x, y)\,
dy \right]^{\f pq} \og(x)\, dx\right\}^{\frac{1}{p}}\\
&\quad\quad+
C_{(q,n)}^{-1}(1+\theta)\liminf_{\ld\to\infty}\ld\left\{\int_{\RR^n}\left
[\int_{ \RR^n} \mathbf{1}_{E_{f}((1-\delta)\ld,q)}(x, y)\,
dy \right]^{\f pq} \og(x)\, dx\right\}^{\frac{1}{p}}\\
&\quad\le \left\{C_1C_{([\omega]_{A_1(\rn)})}\right\}^{1/p}
C_{(q,n)}^{-1}C_\theta\delta^{-1}\|\,|\nabla f_k-\nabla f|\,\|_{L^p_\omega(\rn)}\\
&\quad\quad+C_{(q,n)}^{-1}(1+\theta)(1-\delta)^{-1}\liminf_{\ld\to\infty}\ld\left\{\int_{\RR^n}\left
[\int_{ \RR^n} \mathbf{1}_{E_{f}(\ld,q)}(x, y)\,
dy \right]^{\f pq} \og(x)\, dx\right\}^{\frac{1}{p}},
\end{align*}
where $C_1$ is the same as in Theorem \ref{thm41}.
Let $k\to\infty$, $\theta\to0$,
and $\delta\to0.$  Then we obtain
\begin{align}\label{20221954}
&\liminf_{\ld\to\infty}\ld^p \int_{\RR^n}\left
[\int_{ \RR^n} \mathbf{1}_{E_f(\ld,q)}(x, y)\,
dy \right]^{\f pq} \og(x) \,dx\\\noz
&\quad\geq \left[\frac
{K(q,n)}{n}\right]^{\frac{p}{q}}\int_{\RR^n}
| \nabla f(x)|^p \og(x) \, dx.
\end{align}
Similarly, we also have
\begin{align*}
&\limsup_{\ld\to\infty}\ld^p \int_{\RR^n}\left
[\int_{ \RR^n} \mathbf{1}_{E_f(\ld,q)}(x, y)\,
dy \right]^{\f pq} \og(x)
\, dx\\\noz
&\quad\leq
\left[\frac
{K(q,n)}{n}\right]^{\frac{p}{q}}	\int_{\RR^n}
| \nabla f(x)|^p \og(x) \, dx,
\end{align*}
which, together with \eqref{20221954},
further implies that \eqref{ruci} holds true.
This then finishes the proof of Corollary \ref{corollary3.99}.
\end{proof}
As a consequence of Corollary \ref{corollary3.99},
we obtain the lower estimate of Theorem \ref{thm-4-1}.
Now, we can complete the proof of Theorem \ref{thm-4-1}.

\begin{proof}[Proof of Theorem \ref{thm-4-1}]
As a consequence of Theorem \ref{thm41} and Corollary \ref{corollary3.99},
we immediately obtain the desired conclusions of this theorem,
which completes the proof of Theorem \ref{thm-4-1}.	
\end{proof}

\subsection{Proof of Theorem \ref{thm-4-2} }\label{sec:4-3}

\begin{proof}[Proof of Theorem \ref{thm-4-2}]
Assume that there exists a positive constant $C_1$
such that, for any $f\in C^1(\RR)$ satisfying
that $f'$ has compact support, 		
\begin{equation}\label{4-1}
	\sup_{\ld\in(0,\infty)} 	\ld^p \int_{\RR^2}
	\mathbf{1}_{E_f(\ld,p)} (x, y) \og(x)\, dx\,dy\leq C_1
	\int_{\RR}| f'(x)|^p \og(x) \, dx,
\end{equation}
where, for any $\ld\in(0,\infty)$, $p\in[1,\infty)$, and any measurable function $f$,
$E_f(\ld,p)$ is the same as  in \eqref{E} and the constant $C_1$ is independent of $f.$
We now show that $\omega\in A_p(\mathbb{R})$ with $p\in[1,\infty)$.
Observe that the inequality \eqref{4-1} is both
dilation and translation invariance; that is,
for any $\da\in(0,\infty)$ and $x_0\in \RR$, both the
weights $\og(\da x)$ and $\og(x-x_0)$ satisfy
\eqref{4-1} with the same constant $C_1$.
This, combined with Lemma \ref{Lemma2.1}(ii), further
implies that, to prove $\omega\in A_p(\rr)$, it
suffices to show that there exists a positive
constant $C$, depending only on $C_1$, such that,
for any nonnegative function $g\in L_{\text{loc}}^1(\RR)$,
\begin{equation}\label{4-2}
	\left[ \int_{-1}^1 g(x)\, dx \right]^p \leq \f
	C {\og([-1,1])} \int_{-1}^1 |g(x)|^p \og(x) \,dx.
\end{equation}

To show \eqref{4-2}, we first prove that, for any
$0\leq g\in C^\infty (\RR)$,
\begin{equation}\label{4-3}
	\left[ \int_{-1}^1 g(x)\, dx \right]^p \leq
	\frac{C_16^{p+1}}{4{\og(I_0)}}
	\int_{-4}^4 |g(x)|^p \og(x) \,dx,
\end{equation}
where $I_0:=[-3,-1]\cup[1,3]$.
Let $\eta\in C^\infty(\RR)$ be such that $\eta(x)\in[0,1]$
for any $x\in\rr,$
$\eta(x)=1$
for any $x\in[-3,3]$, and $\eta(x)=0$ for any
$x\in\mathbb{R}$ with $|x|\in[4,\infty)$. For any
$x\in\mathbb{R},$ let
$$f(x):= \int_{-\infty}^x g(t) \eta(t)\, dt.$$
Clearly, $f\in C^\infty(\RR)$ and $\supp
\,(f') \subset [-4,4]$.
Let $\ld:=6^{-1-\f 1p} \int_{-1}^{1} g(t)\, dt.$
Then, for any $x\in[-3,-1]$ and $ y\in[1,3]$, we have
\begin{align*}
	|f(y)-f(x)|&=\int_x^y g(t)\, dt \ge \int_{-1}
	^{1} g(t)\, dt=6^{1+\f 1p} \ld\ge \ld|x-y|^{\f1p+1}.
\end{align*}
This, together with the symmetry further implies that
\[\lf( [1,3] \times [-3,-1])\cup( [-3,-1]\times
[1,3]\r)\subset E_f(\ld,p). \]
Thus, using this and \eqref{4-1}, we conclude that
\begin{align*}
	4\ld^p \int_{I_0} \og(x)\, dx
	&\leq \ld^p\int_{\RR^2} \og(x) \mathbf{1}_{E_f(\ld,p)} (x,y)
	\, dx\,dy\\
	&\leq C_1 \int_{\RR} |f'(x)|^p \og(x)\,
	dx\leq C_1 \int_{-4}^4 |g(x)|^p \og(x)\, dx.
\end{align*}
This proves \eqref{4-3}.

Second, we show that, for any nonnegative locally
integrable function $g$,
\begin{equation}\label{4-4}
	\left[ \int_{-1}^1 g(x)\, dx \right]^p
	\leq \frac{C_16^{p+1}}{4{\og(I_0)}}
	\int_{-1}^1 |g(x)|^p \og(x) \,dx.
\end{equation}
Without loss of generality, we may assume that
$g$ is bounded because, otherwise, one may
replace $g$ by $\min\{g, n\}$ for any
$n\in\mathbb{N},$ and then apply
the monotone convergence theorem
to obtain the desired inequality.
Let $\vi\in C^\infty(\RR)$ be such that
$\vi(t)\ge 0$ for any $t\in \RR$, $\vi(t)=0$
for any $t\in\mathbb{R}$ with $|t|\geq1$, and
$\int_{\RR} \vi(t)\, dt =1$. For any $\va\in(0,\infty)$
and $t\in\mathbb{R}$, let
$\vi_\va(t):=\va^{-1} \vi(t/\va)$ and
\[ g_\va (t):=(g\mathbf{1}_{[-1,1]})\ast \vi_\va(t)
=\int_{-1}^1 g(u) \vi_\va (t-u)\, du.\]
Then $0\leq g_\va\in C_{\mathrm{c}}^\infty(\RR)$
and, using \eqref{4-3}, we obtain
\begin{align}\label{2.41x}
	\left[ \int_{-1}^1 g_\va(x)\, dx \right]^p \lesssim
	\f 1 {\og(I_0)} \int_{-4}^4 |g_\va(x)|^p \og(x) \,dx.
\end{align}
Since, for almost every $t\in\mathbb{R},$
$$\lim_{\va\to 0} g_\va (t)=g(t)\mathbf{1}_{[-1,1]}(t)$$
and
$$\sup_{\va\in(0,\infty)} \|g_\va\|_{L^\infty(\rr)}\leq
\sup_{\va\in(0,\infty)} \|\vi_\va
\|_{L^1(\rr)}\|g\|_{L^\infty(\rr)}<\infty,$$
from these, \eqref{2.41x}, and the
Lebesgue dominated convergence theorem, we deduce \eqref{4-4}.

Finally, we prove \eqref{4-2}. Let $g:=1$ and
$I:=[-1,1]$. By \eqref{4-4}, we find that
$\f {\og(I)} {\og(I_0)}\ge c_1$,
where $c_1:=\frac{2}{C_1 3^{p+1}}.$
Thus,
\[ \og(2I) \leq \og(I_0) +\og(I) \leq (1+1/c_1) \og(I).\]
Since \eqref{4-1} is both dilation and translation
invariance for the weight $\og$, it follows
that the inequality $\og(2I) \leq (1+1/c_1)
\og(I)$ holds true for any compact interval $I\subset\RR$.
By this, we find that
\begin{align*}
\omega([-1,1])
&\leq\omega([-4,0])+\omega([0,4])\\
&\leq (1+1/{c_1})\{\omega([-3,-1])+\omega([1,3])\}\\
&=(1+1/{c_1})\omega(I_0).
\end{align*}
This, combined with \eqref{4-4},
further implies that \eqref{4-2} holds true. This then finishes
the proof of Theorem \ref{thm-4-2}.
\end{proof}

\section{Estimates in Ball Banach Function Spaces}\label{sec:5}

In this section, we establish the Brezis--Van
Schaftingen--Yung formulae in ball Banach
function spaces (see Theorem \ref{theorem3.9} below).
As applications, we also obtain some fractional Sobolev and
Gagliardo--Nirenberg type inequalities in ball Banach
function spaces.

We begin with introducing the following concepts of homogeneous
(weak) Triebel--Lizorkin-type spaces.
\begin{definition}\label{np}
	Let $q\in(0,\infty),$ $s\in[0,\infty),$ and $X$
	be a ball quasi-Banach function space.
	\begin{itemize}
		\item[{\rm(i)}] The
		\emph{homogeneous Triebel--Lizorkin-type space} $\dot{\mathbf{F}}^s_{X,q}(\rn)$ is
		defined to be the set of all the measurable
		functions $f$ on $\rn$
		such that
		$$\|f\|_{\dot{\mathbf{F}}^s_{X,q}(\rn)}
		:=\left\|\left[\int_{\rn}\frac{|f(\cdot)-
			f(y)|^q}{|\cdot-y|^{n+sq}}\,dy\right]^{\frac{1}{q}}\right\|_X<\infty.$$
		
		\item[{\rm(ii)}] The \emph{homogeneous weak Triebel--Lizorkin-type space}
$\mathbf{W\dot{F}}^s_{X,q}(\rn)$ is
		defined to be the set of all the measurable functions $f$ on $\rn$
		such that
		$$\|f\|_{\mathbf{W\dot{F}}^s_{X,q}(\rn)}
		:=\sup_{\lambda\in(0,\infty)}\lambda\left\|\left[
		\int_{\rn}\mathbf{1}_{E_f(\ld,q,s)}(
		\cdot,y)\,dy\right]^{\frac{1}{q}}\right\|_X<\infty,$$
		where, for any $\ld\in(0,\infty),$
		\begin{align}\label{keydef}
			E_{f}(\ld,q,s):=\left\{
			(x,y)\in\rn\times\rn:\ |f(x)-f(y)|>\ld|x-y|^{\f nq+s}\right\}.
		\end{align}
	\end{itemize}
\end{definition}
Similarly to Brezis et al. \cite[(1.2)]{ref8} (see also \cite{bbm,ref7}),
we have the following conclusions on the
``drawback" of $\dot{\mathbf{F}}^s_{X,q}(\rn)$.

\begin{theorem}\label{ss1}
Let $X$ be a ball quasi-Banach function space and
$s,q\in(0,\infty)$. Assume
that $X^{\frac{1}{q}}$ is a ball Banach
function space.
If $s\min\{1,q\}\in[1,\infty)$ and
$f\in\dot{\mathbf{F}}^s_{X,q}(\rn)$, then $f$ is a constant function.
\end{theorem}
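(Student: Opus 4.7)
My plan is to localize the hypothesis $f\in\dot{\mathbf{F}}^s_{X,q}(\rn)$ to an ordinary Gagliardo-type integrability on any ball $B\subset\rn$, and then invoke a classical constancy theorem for such integrals when $s\min\{1,q\}\geq 1$.

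I would first exploit the convexification $X^{1/q}$. Setting
$$g(x):=\left[\int_{\rn}\frac{|f(x)-f(y)|^{q}}{|x-y|^{n+sq}}\,dy\right]^{1/q},$$
so that $g\in X$ by hypothesis, Definition \ref{Debf} (applied with exponent $1/q$) yields $g^{q}\in X^{1/q}$ with $\|g^{q}\|_{X^{1/q}}=\|g\|_{X}^{q}<\infty$. Since $X^{1/q}$ is a ball Banach function space by hypothesis, the last displayed inequality in Definition \ref{Debqfs} gives $\int_{B}g^{q}(x)\,dx\leq C_{(B)}\|g^{q}\|_{X^{1/q}}<\infty$ for every ball $B\subset\rn$, so that $g^{q}\in L^{1}_{\mathrm{loc}}(\rn)$. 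Restricting the inner integral defining $g^{q}$ to $y\in B$ then produces the localized Gagliardo integrability
$$\iint_{B\times B}\frac{|f(x)-f(y)|^{q}}{|x-y|^{n+sq}}\,dx\,dy<\infty$$
for every ball $B\subset\rn$.

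I would then invoke the classical constancy statement: if $f$ is measurable on $B$, the above double integral is finite, and $s\min\{1,q\}\geq 1$, then $f$ is a.e.\ constant on $B$. For $q\geq 1$ and $s=1$ this is exactly the Brezis-type result from \cite{ref7,bbm}. For $q\geq 1$ and $s>1$, choosing $B$ with diameter less than $1$ ensures $|x-y|^{n+q}\leq|x-y|^{n+sq}$ on $B\times B$, which reduces the finiteness to the $s=1$ case. For $0<q<1$ the hypothesis forces $sq\geq 1$, hence $s>1$; the analogous constancy theorem in this range can be derived by adapting the finite-difference telescoping argument of \cite{ref7} to the $L^{q}$ quasi-norm. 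Once $f$ is constant on every ball $B\subset\rn$, the connectedness of $\rn$ yields $f$ globally constant.

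The main technical obstacle will be the last ingredient in the $0<q<1$ regime, since the cited references directly treat only $q\geq 1$; the adaptation there requires handling the lack of subadditivity of $\|\cdot\|_{L^{q}}$ when $q<1$, but goes through once the finite-difference inequality $\|f(\cdot+h)-f(\cdot)\|_{L^{q}(B)}^{q}\leq N^{q-1}\sum_{k=0}^{N-1}\|f(\cdot+(k+1)h/N)-f(\cdot+kh/N)\|_{L^{q}(B')}^{q}$ (or its $q<1$ analogue) is paired with the $sq\geq 1$ scaling inherent in the Gagliardo exponent.
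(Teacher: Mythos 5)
Your proposal is correct, and it takes a route that is essentially equivalent to the paper's but packaged differently. Your localization step is actually cleaner than the paper's: you use the last defining inequality of a ball Banach function space applied to $X^{1/q}$, namely $\int_B g^q(x)\,dx \le C_{(B)}\|g^q\|_{X^{1/q}}$, to obtain $g^q\in L^1_{\loc}(\rn)$ directly, whereas the paper reaches the same localized Gagliardo finiteness by invoking $X^{1/q}=(X^{1/q})''$ and testing against $g:=\mathbf{1}_{B(\mathbf{0},N)}/\|\mathbf{1}_{B(\mathbf{0},N)}\|_{(X^{1/q})'}$. Your shortcut avoids the dual-space machinery entirely. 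The real divergence comes next: the paper re-proves the constancy statement from scratch by decomposing the $h$-integral into dyadic shells $2^{-(j+1)}r\le|h|<2^{-j}r$ and running a telescoping estimate (display \eqref{jiechu}, paired with \eqref{lsholder} for $q\ge1$ and \eqref{ppp} for $q<1$), which forces each shell integral to vanish; you instead invoke the classical Brezis constancy theorem from \cite{ref7} on each ball (valid for measurable $f$ on an open connected set), reducing $s>1$ to $s=1$ by shrinking the ball so that $|x-y|^{n+sq}\le|x-y|^{n+q}$, and glue up via connectedness of $\rn$. For $q\ge1$ your route is shorter since it delegates the hard analysis to the cited reference; for $q<1$ you openly acknowledge you must adapt the telescoping argument yourself, which is precisely Case 2 of the paper's proof (using $(\sum a_j)^q\le\sum a_j^q$ in place of the discrete H\"older inequality, so the factor $N^{q-1}$ in your displayed estimate indeed drops out). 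One small point of care: when appealing to Brezis's theorem you need $f$ measurable on $B$, which holds since $f\in\dot{\mathbf F}^s_{X,q}(\rn)\subset\mathscr M(\rn)$ by construction; you do not need $f\in L^1_\loc(B)$, so your omission of any such verification is fine. Both approaches buy the same theorem; yours is more modular where a reference applies, the paper's is fully self-contained and treats $q\ge1$ and $q<1$ uniformly by the same dyadic mechanism.
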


\begin{proof}
Let  $s,q\in(0,\infty)$
and $f\in\dot{\mathbf{F}}^s_{X,q}(\rn).$
By Lemma \ref{lemma3.4} and Definition \ref{def-X'}, we have
\begin{align}\label{mq}
\infty&>\left\|\int_{\rn}\frac{|f(\cdot)-f(y)|^q}
{|\cdot-y|^{n+sq}}\,dy\right
\|_{X^{1/q}}=\left\|
\int_{\rn}\frac{|f(\cdot)-f(y)|^q}
{|\cdot-y|^{n+sq}}\,dy\right\|_{(X^{1/q})''}\\\noz
&=\sup_{\|g\|_{(X^{1/q})'}=1}\int_{\rn}\int_{\rn}
\frac{|f(x)-f(y)|^q}{|x-y|^{n+sq}}g(x)\,dy\,dx\\\noz
&=\sup_{\|g\|_{(X^{1/q})'}=1}\int_{\rn}\int_{\rn}
\frac{|f(x)-f(x-h)|^q}{|h|^{n+sq}}g(x)\,dh\,dx.
\end{align}
For any $N\in(0,\fz)$,  let $g:=
\mathbf{1}_{B(\mathbf{0}, N)}/\|\mathbf{1}_{B(\mathbf{0}, N)}\|_{(X^{1/q})'}.$
Using \eqref{mq},
we conclude that, for any  $N\in(0,\infty)$,
\begin{align*}
\int_{|x|< N}\int_{\rn}\frac{|f(x)-f(x-h)|^q}{|h|^{n+sq}}
\,dh\,dx<\infty.
\end{align*}
From this, we deduce that,
for any $N\in(0,\infty)$
and $r\in(0,N)$,
\begin{align}\label{hgf22}
\infty&>\int_{|x|<
N}\int_{|h|< r}\frac{|f(x)-f(x-h)|^q}{|h|^{n+sq}}
\,dh\,dx\\\noz
&\geq\sum_{j=0}^{\infty}2^{j(n+sq)}r^{-(n+sq)}
\int_{2^{-(j+1)}r\le|h|<2^{-j}
r}\int_{|x|< N}
|f(x)-f(x-h)|^q\,dx\,dh.
\end{align}
We prove the present theorem by considering the following
two cases on both $s$ and $q$.

Case 1) $q\in[1,\infty)$ and $s\in[1,\infty)$.
Recall the discrete H\"older inequality that,
for any $m\in\zz_+$ and $\{a_j\}_{j=1}^m\subset(0,\infty),$
\begin{align}\label{lsholder}
	\left(\sum_{j=1}^{m}a_j\right)^q\leq m^{q-1}\left(\sum_{j=1}^ma_j^q\right).
\end{align}
By this, we obtain, for any $j\in\zz_+$,
\begin{align}\label{jiechu}
&\int_{2^{-1}r\le|h|<
r}\int_{|x|<N-r}
|f(x)-f(x-h)|^q
\,dx\,dh\\\noz
&\quad=2^{jn}
\int_{2^{-(j+1)}r\le|h|<2^{-j}
r}\int_{|x|<N-r}
|f(x)-f(x-2^jh)|^q
\,dx\,dh\\\noz
&\quad=2^{jn}
\int_{2^{-(j+1)}r\le|h|<2^{-j}
r}\int_{|x|<N-r}
\left|\sum_{i=0}^{2^j-1}[f(x-ih)-f(x-(i+1)h)]\right|^q
\,dx\,dh\\\noz
&\quad\lesssim2^{jn+jq-j}
\sum_{i=0}^{2^j-1}
\int_{2^{-(j+1)}r\le|h|<2^{-j}
r}\int_{|x|<N-r}
|f(x-ih)-f(x-(i+1)h)|^q
\,dx\,dh\\\noz
&\quad\lesssim2^{jn+jq-j}
\sum_{i=0}^{2^j-1}
\int_{2^{-(j+1)}r\le|h|<2^{-j}
r}\int_{|x|<N-r+i2^{-j}r}
|f(x)-f(x-h)|^q
\,dx\,dh\\\noz
&\quad\lesssim2^{jn+jq}
\int_{2^{-(j+1)}r\le|h|<2^{-j}
r}\int_{|x|< N}
|f(x)-f(x-h)|^q
\,dx\,dh,
\end{align}
which, combined with \eqref{hgf22},
further implies that, for any $N\in(0,\infty)$
and $r\in(0,N)$,
\begin{align*}
\sum_{j=0}^{\infty}2^{j(s-1)q}
\int_{2^{-1}r\le|h|<
r}\int_{|x|< N-r}
|f(x)-f(x-h)|^q
\,dx\,dh<\infty.
\end{align*}
From this and $(s-1)q\in[0,\infty)$,
we deduce that,
for any $N\in(0,\infty)$
and $r\in(0,N)$,
\begin{align*}
\int_{2^{-1}r\le|h|<r}\int_{|x|<N-r}
|f(x)-f(x-h)|^q
\,dx\,dh=0.
\end{align*}
Using this and letting $N\to\infty,$ we then obtain, for any $r\in(0,\infty),$
$$\int_{2^{-1}r\le|h|<r}\int_{\rn}
|f(x)-f(x-h)|^q
\,dx\,dh=0.$$
By this, we further conclude that
$$\int_{\rn}\int_{\rn}
|f(x)-f(x-h)|^q
\,dx\,dh=0,$$
which  implies that
$f$ is a constant
function on $\rn$. This finishes
the proof of the present theorem in this case.

Case 2) $q\in(0,1)$
and $sq\in[1,\infty)$. By an argument
similar to that used in the proof of \eqref{jiechu} with \eqref{lsholder}
replaced by \eqref{ppp}, we find that, for any $j\in\zz_+$,
\begin{align*}
	&\int_{2^{-1}r\le|h|<
		r}\int_{|x|<N-r}
	|f(x)-f(x-h)|^q
	\,dx\,dh\\\noz
	&\quad\lesssim2^{jn+j}
	\int_{2^{-(j+1)}r\le|h|<2^{-j}
		r}\int_{|x|< N}
	|f(x)-f(x-h)|^q
	\,dx\,dh,
\end{align*}
which, combined with \eqref{hgf22},
implies that, for any $N\in(0,\infty)$
and $r\in(0,N)$,
\begin{align*}
	\sum_{j=0}^{\infty}2^{j(sq-1)}
	\int_{2^{-1}r\le|h|<
		r}\int_{|x|<N-r}
	|f(x)-f(x-h)|^q
	\,dx\,dh<\infty.
\end{align*}
From this and $sq-1\in[0,\infty)$,
we deduce that,
for any $r\in(0,\infty)$,
\begin{align*}
	\int_{2^{-1}r\le|h|<
		r}\int_{\rn}
	|f(x)-f(x-h)|^q
	\,dx\,dh=0.
\end{align*}
By this and  the arbitrariness of $r$, we conclude that
$$\int_{\rn}\int_{\rn}
|f(x)-f(x-h)|^q
\,dx\,dh=0,$$
which further implies that
$f$ is a constant
function on $\rn$. This then finishes
the proof of   Theorem \ref{ss1}.
\end{proof}

\begin{remark} In Theorem \ref{ss1}, if $X:=L^p(\rn)$, $0<q\leq p<\infty$,
and $s\min\{1,q\}\in[1,\infty)$, then the conclusions of 
Theorem \ref{ss1} hold true with $X$ replaced by $L^p(\rn)$,
which, when $p=q\in[1,\infty)$ and $s=1$,
coincide with \cite[(1.2)]{ref8} (see also \cite{bbm,ref7}).
\end{remark}

One of the main targets in this section is
to prove the equivalence \eqref{1-13} in a
ball Banach function space $X$ under
some mild assumptions on both $X$ and  $p$.
Theorem \ref{ss1} justifies the use of the
semi-norm $ \|f\|_{\mathbf{W\dot{F}}^1_{X,q}(\rn)}$
instead of $ \|f\|_{\dot{\mathbf{F}}^1_{X,q}(\rn)}$
in the equivalence \eqref{1-13} as follows.

\begin{theorem}\label{theorem3.9}
Let $p\in[1,\infty)$ and $q\in(0,\infty)$
satisfy $n(\f 1p-\f1q) <1$. Assume that
$X$ is a ball Banach function space,
$X^{\f 1 p}$  a ball Banach function space,
and $\cm$  in \eqref{2-4-c}  bounded
on its associate space $(X^{1/p})'.$
\begin{enumerate}
\item[{\rm(i)}]
Then there exist positive constants $C_2$ and $C_{(\|\cm\|_{(X^{1/p})'
\to(X^{1/p})'})}$ such that, for
any $f\in C^{2}(\rn)$ with $|\nabla f|\in C_{\mathrm{c}}(\rn)$,
\begin{align}\label{1011b}
\left[\frac{K(q,n)}{n}\right]^{\f 1 q}\|\,|\nabla f|\,\|_X
\leq
\|f\|_{\mathbf{W\dot{F}}^1_{X,q}(\rn)}\leq C_2C_{(\|\cm\|_{(X^{1/p})'
\to(X^{1/p})'})}\|\,|\nabla f|\,\|_X,
\end{align}
where $K(q,n)$ is the same as in \eqref{kqn},
the positive constant $C_2$ depends only on $p,q$
and $n$,  and the positive constant $C_{(\|\cm\|_{(X^{1/p})'\to(X^{1/p})'})}$,
depending only on $\|\cm\|_{(X^{1/p})'\to(X^{1/p})'},p$, and $q$, increases as
$\|\cm\|_{(X^{1/p})'\to(X^{1/p})'}$ increases, and $C_{(\cdot)}$ is continuous on $(0,\infty)$. Moreover,
for any $f\in C^{2}(\rn)$ with $|\nabla f|\in C_{\mathrm{c}}(\rn)$,
\begin{align}\label{leiliu00}
	\lim_{\ld\to\infty}\ld^q\left\|\int_{\rn}
	\mathbf{1}_{E_f(\ld,q)}
	(\cdot,y)\,dy\right\|_{X^{\f 1 q}}=\frac{K(q,n)}{n}
	\|\,|\nabla f|\,\|_{X}^q,
\end{align}
where $E_f(\ld,q)$ for any $\lz\in (0,\fz)$ is the same as  in \eqref{E}.
\item[{\rm(ii)}]
Assume that $X$ has an absolutely
continuous norm. Then both \eqref{1011b}
and \eqref{leiliu00}
hold true for any $f\in \dot{W}^{1,X}(\rn
).$
\end{enumerate}
\end{theorem}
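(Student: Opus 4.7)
The plan is to establish Part (i) first for $f \in C^2(\rn)$ with $|\nabla f| \in C_{\mathrm{c}}(\rn)$, and then to bootstrap to Part (ii) via the density result Corollary \ref{density2}. For Part (i), the lower bound of \eqref{1011b} and the limit identity \eqref{leiliu00} are already furnished by Theorem \ref{theorem4.88} applied to the ball Banach function space $X$; so the only new content in Part (i) is the upper bound of \eqref{1011b}.

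To prove that upper bound, I will run a Rubio de Francia style extrapolation anchored on the weighted estimate Theorem \ref{thm41}. Writing $F_\ld(x) := \int_{\rn} \mathbf{1}_{E_f(\ld,q)}(x,y)\,dy$ and using Definition \ref{Debf}, the quantity $\ld^p \|F_\ld^{1/q}\|_X^p$ coincides with $\ld^p \|F_\ld^{p/q}\|_{X^{1/p}}$. Since $X^{1/p}$ is a ball Banach function space, Lemma \ref{lemma3.4} combined with Definition \ref{def-X'} permits me to write
\begin{align*}
\ld^p \|F_\ld^{p/q}\|_{X^{1/p}} = \sup_{\|g\|_{(X^{1/p})'} \leq 1} \ld^p \int_{\rn} F_\ld^{p/q}(x) |g(x)|\,dx.
\end{align*}
For each admissible $g$, the Rubio de Francia iteration
\begin{align*}
Rg := \sum_{k=0}^\infty \frac{\cm^k g}{(2\|\cm\|_{(X^{1/p})' \to (X^{1/p})'})^k}
\end{align*}
(as in Lemma \ref{lemma3.77}, whose proof is formal in any space on which $\cm$ is bounded) yields $|g| \leq Rg$, $[Rg]_{A_1(\rn)} \leq 2\|\cm\|_{(X^{1/p})' \to (X^{1/p})'}$, and $\|Rg\|_{(X^{1/p})'} \leq 2\|g\|_{(X^{1/p})'}$. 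Inserting $Rg$ in place of $|g|$, applying Theorem \ref{thm41} with $\omega := Rg$, and then pairing via the H\"older inequality (Lemma \ref{lemma3.5}) gives
\begin{align*}
\ld^p \int_{\rn} F_\ld^{p/q} |g| \, dx
&\leq C_1 C_{([Rg]_{A_1(\rn)})} \int_{\rn} |\nabla f|^p Rg \, dx \\
&\leq 2 C_1 C_{([Rg]_{A_1(\rn)})} \|\,|\nabla f|\,\|_X^p.
\end{align*}
Since $C_{(\cdot)}$ is monotone and continuous, $C_{([Rg]_{A_1(\rn)})}$ may be replaced by $C_{(2\|\cm\|_{(X^{1/p})' \to (X^{1/p})'})}$; taking the supremum over $g$, then over $\ld$, and finally extracting a $p$-th root completes the upper bound in \eqref{1011b} with the quantitative constant as stated.

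For Part (ii), Corollary \ref{density2} produces a sequence $\{f_k\} \subset C^\infty(\rn)$ with $|\nabla f_k| \in C_{\mathrm{c}}(\rn)$ and $\|\,|\nabla(f-f_k)|\,\|_X \to 0$. The upper bound in \eqref{1011b} applied to $f-f_k$ forces $\|f-f_k\|_{\mathbf{W\dot{F}}^1_{X,q}(\rn)} \to 0$, so the two-sided estimate \eqref{1011b} for $f$ propagates from that for $f_k$ via the triangle inequality of $X$. For the limit identity \eqref{leiliu00} I will exploit the two inclusions
\begin{align*}
E_f(\ld,q) &\subset E_{f_k}((1-\delta)\ld, q) \cup E_{f-f_k}(\delta\ld, q), \\
E_{f_k}((1+\delta)\ld, q) &\subset E_f(\ld, q) \cup E_{f-f_k}(\delta\ld, q),
\end{align*}
valid for any $\delta \in (0,1)$, combined with \eqref{key11} applied with exponent $1/q$ to separate the two contributions before taking $X$-norms. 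Passing to $\ld \to \infty$ using the limit formula for $f_k$ from Part (i) and the uniform upper bound for $f-f_k$, and then letting $k \to \infty$, $\delta \to 0$, and the auxiliary $\theta$-parameter in \eqref{key11} tend to $0$ in turn, will yield matching $\limsup$ and $\liminf$ estimates and hence \eqref{leiliu00}.

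The main obstacle is the upper bound in Part (i): the extrapolation genuinely takes place at the $p$-convexification level $(X^{1/p}, (X^{1/p})')$ rather than at $X$ itself, which explains both why the hypothesis on $\cm$ is imposed on $(X^{1/p})'$ and why the continuity of $C_{([\omega]_{A_1(\rn)})}$ established in Theorem \ref{thm41} is essential for tracking the quantitative dependence on $\|\cm\|_{(X^{1/p})' \to (X^{1/p})'}$.
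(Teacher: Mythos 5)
Your proposal is correct and follows essentially the same route as the paper: the Rubio de Francia extrapolation on $(X^{1/p})'$ (which the paper packages as Lemma \ref{biaoda}) feeding into the weighted estimate of Theorem \ref{thm41}, the lower bound and limit from Theorem \ref{theorem4.88}, and the density-plus-inclusion perturbation for Part (ii). One minor precision worth stating explicitly: when you invoke ``the upper bound in \eqref{1011b} applied to $f-f_k$,'' you should observe that your extrapolation argument for the upper bound in fact applies to every $f\in\dot W^{1,X}(\rn)$ (not only to $f\in C^2$ with $|\nabla f|\in C_{\mathrm{c}}(\rn)$), which is exactly how the paper justifies applying it to the difference $f-f_k$.
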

To prove Theorem \ref{theorem3.9}, we need the following conclusion whose proof is
a slight modification of \cite[p.\,18]{ref6}
via replacing $L^p_\omega(\rn)$ in \cite[p.\,18]{ref6} by $X$;
we omit the details.

\begin{lemma}\label{lemma3.7}
	Let $X$ be a ball Banach function space.
	Assume that the Hardy--Littlewood maximal
	operator $\cm$ is bounded on $X.$ For any
	$g\in X$ and $x\in\rn$, let
	\begin{align*} R_Xg(x):=\sum_{k=0}^{\infty}
		\frac{\cm^kg(x)}{2^k\|\cm\|^k_{X\to X}},
	\end{align*}
	where, for any $k\in\mathbb{N},\,\cm^k
	:=\cm\circ\cdots\circ\cm$ is $k$
	iterations
	of the Hardy--Littlewood maximal operator,
	and $\cm^0g(x):=|g(x)|.$ Then, for any
	$g\in X$ and $x\in\rn$,
	\begin{itemize}
		\item[{\rm(i)}] $|g(x)|\leq R_Xg(x);$
		\item[{\rm(ii)}] $R_Xg\in A_1(\rn)$ and $[R_Xg]_
		{A_1(\rn)}\leq2\|\cm\|_{X\to X},$ where $\|\cm\|_{X\to X}$ denotes the operator norm of
		$\cm$ mapping $X$ to $X$;
		\item[{\rm(iii)}] $\|R_Xg\|_X\leq2\|g\|_X.$
	\end{itemize}
\end{lemma}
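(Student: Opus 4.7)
The plan is to establish the three claims by direct inspection of the defining series, mirroring the classical Rubio de Francia iteration/extrapolation algorithm that was originally formulated for weighted Lebesgue spaces. The only structural feature of $X$ that should enter beyond the definition of a ball Banach function space is the monotone convergence property (Definition \ref{Debqfs}(iii)), which lets us exchange the $X$-norm with suprema of increasing sequences of nonnegative functions; apart from that, the argument is essentially algebraic manipulation of a geometric series with ratio $1/2$.

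Claim (i) is immediate: the $k=0$ summand of $R_X g$ is exactly $|g|$, and every other summand is nonnegative, so $|g(x)| \le R_X g(x)$ pointwise. For (ii), I would apply $\cm$ term-by-term using its sublinearity (and monotone convergence for the resulting nonnegative series of iterates), then reindex:
\begin{align*}
\cm(R_X g)(x)
\le \sum_{k=0}^{\infty}\frac{\cm^{k+1}g(x)}{2^k\|\cm\|_{X\to X}^k}
= 2\|\cm\|_{X\to X}\sum_{j=1}^{\infty}\frac{\cm^{j}g(x)}{2^j\|\cm\|_{X\to X}^j}
\le 2\|\cm\|_{X\to X}\, R_X g(x),
\end{align*}
which is exactly the $A_1(\rn)$-condition with constant $[R_X g]_{A_1(\rn)}\le 2\|\cm\|_{X\to X}$.

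For (iii), the idea is to bound the norm of the partial sums $S_N := \sum_{k=0}^{N} \cm^k g/(2^k\|\cm\|_{X\to X}^k)$ by the triangle inequality in $X$ and the iterated $X\to X$ boundedness $\|\cm^k g\|_X\le \|\cm\|_{X\to X}^k\|g\|_X$, yielding
\begin{align*}
\|S_N\|_X \le \sum_{k=0}^{N}\frac{\|\cm^k g\|_X}{2^k\|\cm\|_{X\to X}^k}
\le \sum_{k=0}^{N} 2^{-k}\|g\|_X \le 2\|g\|_X.
\end{align*}
Since $S_N$ is nonnegative and increases pointwise to $R_X g$, Definition \ref{Debqfs}(iii) then gives $\|R_X g\|_X = \lim_{N\to\infty}\|S_N\|_X \le 2\|g\|_X$.

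The only delicate point, and the reason the authors only sketch the proof, is justifying this passage from partial sums to the full series in the ball Banach function space setting; in the weighted Lebesgue model of \cite{ref6} this is done via the monotone convergence theorem for the underlying measure, whereas here it follows at once from Definition \ref{Debqfs}(iii). There is no substantive obstacle beyond that bookkeeping, since neither the boundedness of $\cm$ on $X$ nor the algebraic identities used in (ii) are sensitive to the specific structure of $X$.
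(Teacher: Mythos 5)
Your proof is correct and is precisely the Rubio de Francia iteration argument that the paper itself invokes by citing \cite[p.\,18]{ref6} and remarking that Lemma \ref{lemma3.7} follows by "replacing $L^p_\omega(\rn)$ by $X$"; you have simply filled in the omitted details. The one point worth isolating, which you do flag, is that the passage from partial sums to the full series in (iii) is where Definition \ref{Debqfs}(iii) is genuinely used, and the finiteness of $\|R_Xg\|_X$ together with the local-integrability axiom of ball Banach function spaces is what guarantees $R_Xg$ is finite a.e., so that the pointwise $A_1$ bound in (ii) is meaningful.
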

As a consequence of Lemma \ref{lemma3.7}, we have the following conclusion.
\begin{lemma}\label{biaoda}
	Let $X$ be a ball Banach function space,
	$p\in[1,\infty),$ and $Y:=X^{\frac{1}{p}}$.
	Assume that
	$Y$ is a ball Banach function space
	and $\cm$  in \eqref{2-4-c}  bounded
	on its associate space $Y'.$
	Then, for any $f\in X$,
	\begin{align*}
		\|f\|_{X}
		\leq
		\sup_{\|g\|_{Y'}\leq 1}
		\left[\int_{\rn}|f(x)|^pR_{Y'}g(x)\,dx\right]^{\frac{1}{p}}
		\leq
		2^{\frac{1}{p}}\|f\|_{X}.
	\end{align*}
\end{lemma}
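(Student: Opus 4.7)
The plan is to recognize that the lemma is essentially a reformulation of the $X$-norm using associate space duality combined with the Rubio de Francia iteration algorithm $R_{Y'}$ supplied by Lemma \ref{lemma3.7}. The hard part is not really hard here; it is just a matter of carefully tracking the convexification $Y = X^{1/p}$ and applying the duality $Y = Y''$ (Lemma \ref{lemma3.4}), which is available because $Y$ is assumed to be a ball Banach function space.

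First, I would observe that by the definition of the $p$-convexification (Definition \ref{Debf}), one has $\|f\|_X^p = \bigl\| |f|^p \bigr\|_Y$. Since $Y$ is a ball Banach function space, Lemma \ref{lemma3.4} gives $\bigl\||f|^p\bigr\|_Y = \bigl\||f|^p\bigr\|_{Y''}$, and then the definition of the associate norm (Definition \ref{def-X'}) yields the representation
\begin{align*}
\|f\|_X^p = \sup_{\|g\|_{Y'}=1}\int_{\rn}|f(x)|^p\,|g(x)|\,dx.
\end{align*}
Here $Y'$ is itself a ball Banach function space by Remark \ref{bbf}, and the standing hypothesis that $\cm$ is bounded on $Y'$ permits applying Lemma \ref{lemma3.7} with $X$ there replaced by $Y'$, producing the operator $R_{Y'}$ with the three listed properties.

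For the lower bound, I would fix $g$ with $\|g\|_{Y'}=1$ and use Lemma \ref{lemma3.7}(i), namely $|g| \leq R_{Y'}g$, to obtain $\int_\rn |f|^p|g|\,dx \leq \int_\rn |f|^p R_{Y'}g\,dx$; taking the supremum and enlarging the feasible set from $\|g\|_{Y'}=1$ to $\|g\|_{Y'}\leq 1$ gives
\begin{align*}
\|f\|_X^p \leq \sup_{\|g\|_{Y'}\leq 1}\int_{\rn}|f(x)|^p R_{Y'}g(x)\,dx.
\end{align*}
For the upper bound, I would take any $g$ with $\|g\|_{Y'}\leq 1$; then Lemma \ref{lemma3.7}(iii) gives $\|R_{Y'}g\|_{Y'}\leq 2\|g\|_{Y'}\leq 2$, and applying the H\"older inequality (Lemma \ref{lemma3.5}) on the pair $(Y,Y')$ to the functions $|f|^p \in Y$ and $R_{Y'}g\in Y'$ yields
\begin{align*}
\int_{\rn}|f(x)|^p R_{Y'}g(x)\,dx \leq \bigl\||f|^p\bigr\|_Y \bigl\|R_{Y'}g\bigr\|_{Y'} \leq 2\|f\|_X^p.
\end{align*}
Taking the supremum over $\|g\|_{Y'}\leq 1$ and then the $p$-th root gives the stated upper estimate $2^{1/p}\|f\|_X$. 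There is no substantial obstacle; the only delicate point is keeping the convexification bookkeeping straight, in particular verifying that $Y = X^{1/p}$ is indeed a ball Banach function space (supplied as a hypothesis) so that Lemma \ref{lemma3.4} and the H\"older inequality of Lemma \ref{lemma3.5} are legitimately available on it.
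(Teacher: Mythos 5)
Your proof is correct and follows essentially the same route as the paper: express $\|f\|_X^p = \||f|^p\|_Y = \||f|^p\|_{Y''}$ via Lemma \ref{lemma3.4}, unwind the associate norm, and then sandwich using Lemma \ref{lemma3.7}(i) for the lower bound and Lemma \ref{lemma3.5} together with Lemma \ref{lemma3.7}(iii) for the upper bound. The only difference is presentational — you separate the two directions and spell out the bookkeeping that the paper compresses into a single chain of inequalities.
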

\begin{proof}
By Lemmas \ref{lemma3.4}, \ref{lemma3.5}, and \ref{lemma3.7}, we find that, for any
	$f\in X,$
	\begin{align*}
		\|f\|_{X}^p
		&=\|\,|f|^p\|_{Y}
		=\sup_{\|g\|_{Y'}\leq 1}\int_{\rn}|f(x)|^p|g(x)|\,dx\\
		&\leq
		\sup_{\|g\|_{Y'}\leq 1}\int_{\rn}|f(x)|^pR_{Y'}g(x)\,dx\\
		&\leq
		\sup_{\|g\|_{Y'}\leq 1}
		\|\,|f|^p\|_{Y}\|R_{Y'}g\|_{Y'}
		\leq
		2\|f\|_{X}^p.
	\end{align*}
	This finishes the proof of Lemma \ref{biaoda}.
\end{proof}
\begin{proof}[Proof of Theorem \ref{theorem3.9}]
We first prove that, for any $f\in \dot{W}^{1,X}(\rn)$,
\begin{align}\label{duicuo}
\sup_{\lambda\in(0,\infty)}\lambda\left\|\left[
\int_{\rn}\mathbf{1}_{E_f(\ld,q)}(
\cdot,y)\,dy\right]^{\frac{1}{q}}\right\|_X\leq C_2C_{(\|\cm\|_{(X^{1/p})'
\to(X^{1/p})'})}\|\,|\nabla f|\,\|_X,
\end{align}
where the positive constant
$C_2$ depends only on $p,q$
and $n$,   the positive constant $C_{(\|\cm\|_{(X^{1/p})'\to(X^{1/p})'})}$,
depending only on $\|\cm\|_{(X^{1/p})'\to(X^{1/p})'},p$, and $q$, increases as
$\|\cm\|_{(X^{1/p})'\to(X^{1/p})'}$ increases, and $C_{(\cdot)}$ is continuous on $(0,\infty)$.

Let $Y:=X^{\f 1 p}$.
Then both $Y$ and $Y'$ are ball Banach function spaces.
By Lemma \ref{biaoda}, we have, for any $\ld\in(0,\infty)$,
\begin{align}\label{1012}
&\left\|\left[\int_{\rn}\mathbf{1}_
{E_f(\ld,q)}(\cdot,y)\,dy\right]^{\frac{1}{q}}\right\|_X\\\noz
&\quad\leq
\sup_{\|g\|_{Y'}\leq 1}\left\{\int_{\rn}
\left[\int_{\rn} \mathbf{1}_{E_f(\ld,q)} (x,y)
\, dy\right]^{\f pq} R_{Y'}g(x)\,dx\right\}^{\f 1 p}.
\end{align}
Let $g\in Y'$ with $\|g\|_{Y'}\leq 1$.
Using both Lemma \ref{lemma3.7}(ii) and
Theorem \ref{thm-4-1} with $\og$ replaced by
$R_{Y'}g$, we find that there exist positive
constants $C$ and $C_{([R_{Y'}g]_{A_1(\rn)})}$
such that, for any $\ld\in(0,\infty)$,
\begin{align*}
&\lambda\left
\{\int_{\rn} \left[ \int_{\rn} \mathbf
{1}_{E_f(\ld,q)} (x,y)\, dy\right]^{\f pq}
R_{Y'}g(x)\,dx\right\}^{\f 1 p}\\\noz
&\quad \leq
CC_{([R_{Y'}g]_{A_1(\rn)})}
\left[\int_{\rn}|\nabla f(x)|^p
R_{Y'}g(x)\,dx\right]^{\f 1 p}\\\noz
&\quad\leq
CC_{(2\|\cm\|_{Y'\to Y'})}
\left[\int_{\rn}|\nabla f(x)|^p
R_{Y'}g(x)\,dx\right]^{\f 1 p},
\end{align*}
where $E_f(\ld,q)$  is the same as  in \eqref{E},
the positive constant $C_{(2\|\cm\|_{Y'\to Y'})}$
increases as $\|\cm\|_{Y'\to Y'}$ increases, $C_{(\cdot)}$ is continuous on $(0,\infty),$ and the
positive constant $C$ depends only on $p,q$,
and $n$.
From this, \eqref{1012}, and Lemma \ref{biaoda} again, we deduce that,
for any $\ld\in(0,\infty)$,
\begin{align*}
&\ld\left\|\left[\int_{\rn}\mathbf{1}_
{E_f(\ld,q)}(\cdot,y)\,dy\right]^{\frac{1}{q}}\right\|_X\\\noz
&\quad\leq
CC_{(2\|\cm\|_{Y'\to Y'})}
\sup_{\|g\|_{Y'}\leq 1}
\left[\int_{\rn}|\nabla f(x)|^p
R_{Y'}g(x)\,dx\right]^{\f 1 p}\\\noz
&\quad\leq
CC_{(2\|\cm\|_{Y'\to Y'})}2^{\f 1 p}
\|\,|\nabla f|\,\|_{X}.
\end{align*}
This proves \eqref{duicuo}.

By both \eqref{8002} and \eqref{duicuo}, we conclude that both \eqref{1011b} and
\eqref{leiliu00} hold true for any
$f\in C^2(\rn)$ with $|\nabla f|\in C_{\mathrm
{c}}(\rn)$. This proves (i).

Next, we prove (ii). Assume that
$X$ has an absolutely continuous norm.
By \eqref{duicuo}, we find that, to
show (ii),
it suffices to prove that \eqref{leiliu00} holds true for any $f\in\dot{W}^{1,X}(\rn)$. We first show that,
for any
$f\in \dot{W}^{1,X}(\rn)$,
\begin{align}\label{dui}
\liminf_{\ld\to\infty}\ld^q\left\|\int_{\rn}
\mathbf{1}_{E_f(\ld,q)}
(\cdot,y)\,dy\right\|_{X^{\f 1 q}}
\geq
\frac{K(q,n)}{n}
\|\,|\nabla f|\,\|_{X}^q.
\end{align}

Let $f\in \dot{W}^{1,X}(\rn)$. Using
Corollary \ref{density2}, we find that
there exists a sequence $\{f_k\}_{k\in\nn}\subset
C^\infty(\rn)$ with $|\nabla f_k|\in C_{\mathrm{c}}(\rn)$ for any $k\in\nn$ such that
\begin{align}\label{bijin}
\lim_{k\to\infty}\|\,|\nabla f-\nabla f_k|\,\|_X=0.
\end{align}
By  the assumption that $X$
is a ball Banach function space, we have
\begin{align}\label{dong}
\|\,|\nabla f|\,\|_{X}
\leq
\|\,|\nabla f_k|\,\|_{X}+\|\,|\nabla f-\nabla f_k|\,\|_{X}.
\end{align}
Let $C_{(q,n)}:=[\frac{K(q,n)}{n}]^{\frac{1}{q}}.$
From \eqref{yzr},
\eqref{key11} with $q$ replaced by $\f 1 q$,  the assumption that $X$
is a ball Banach function space,
and \eqref{duicuo},
we deduce that,
for any $\eta,\delta\in(0,1)$,
\begin{align*}
\|\,|\nabla f_k|\,\|_X^q
&=
C_{(q,n)}^{-1}\liminf_{\ld\to\infty}
\ld\left\|\left[\int_{\rn}
\mathbf{1}_{E_{f_k}(\ld,q)}
(\cdot,y)\,dy\right]^{\frac{1}{q}}\right\|_{X}\\\noz
&\leq
C_{(q,n)}^{-1}
\liminf_{\ld\to\infty}
\ld\left\|\left[\int_{\rn}
\mathbf{1}_{E_{f_k-f}((1-\delta)\ld,q)}
(\cdot,y)+
\mathbf{1}_{E_{f}(\delta\ld,q)}
(\cdot,y)
\,dy\right]^{\frac{1}{q}}\right\|_{X}\\\noz
&\leq
C_\eta C_{(q,n)}^{-1}\liminf_{\ld\to\infty}\ld
\left\|\left[\int_{\rn}
\mathbf{1}_{E_{f_k-f}((1-\delta)\ld,q)}
(\cdot,y)\,dy\right]^{\frac{1}{q}}\right\|_{X}\\\noz
&\quad+
(1+\eta) C_{(q,n)}^{-1}\liminf_{\ld\to\infty}\ld
\left\|\left[\int_{\rn}
\mathbf{1}_{E_{f}(\delta\ld,q)}
(\cdot,y)\,dy\right]^{\frac{1}{q}}\right\|_{X}\\\noz
&\leq
C_\eta(1-\delta)^{-1}C_{(q,n)}^{-1}\sup_{\ld\in(0,\infty)}\ld
\left\|\left[\int_{\rn}
\mathbf{1}_{E_{f_k-f}(\ld,q)}
(\cdot,y)\,dy\right]^{\frac{1}{q}}\right\|_{X}\\\noz
&\quad+
(1+\eta)\delta^{-1} C_{(q,n)}^{-1}\liminf_{\ld\to\infty}\ld
\left\|\left[\int_{\rn}
\mathbf{1}_{E_{f}(\ld,q)}
(\cdot,y)\,dy\right]^{\frac{1}{q}}\right\|_{X}\\\noz
&\leq
c_0
C_\eta(1-\delta)^{-1}C_{(q,n)}^{-1}
\|\,|\nabla f-\nabla f_k|\,\|_X\\\noz
&\quad
+(1+\eta)\delta^{-1} C_{(q,n)}^{-1}\liminf_{\ld\to\infty}\ld
\left\|\left[\int_{\rn}
\mathbf{1}_{E_{f}(\ld,q)}
(\cdot,y)\,dy\right]^{\frac{1}{q}}\right\|_{X},
\end{align*}
where $c_0:=C_2C_{(\|\cm\|_{(X^{1/p})'\to(X^{1/p})'})}$ and
$C_\eta$ is a positive constant
depending only on $\eta.$
Using this and \eqref{dong}, and letting $k\to\infty$, we conclude that
\begin{align*}
\|\,|\nabla f|\,\|_{X}
\leq
(1+\eta)\delta C_{(q,n)}^{-1}\liminf_{\ld\to\infty}\ld
\left\|\left[\int_{\rn}
\mathbf{1}_{E_{f}(\ld,q)}
(\cdot,y)\,dy\right]^{\frac{1}{q}}\right\|_{X}.
\end{align*}
Let  $\eta\to0$ and $\delta\to1.$  We then  prove \eqref{dui}.

Finally, we
show that
\begin{align}\label{zuihou}
\limsup_{\ld\to\infty}\ld^q\left\|\int_{\rn}
\mathbf{1}_{E_f(\ld,q)}
(\cdot,y)\,dy\right\|_{X^{\f 1 q}}
\leq
\frac{K(q,n)}{n}
\|\,|\nabla f|\,\|_{X}^q.
\end{align}
By \eqref{yzr}, we find that
\eqref{zuihou} holds true for any $f\in C^2(\rn)$ with $|\nabla f|\in C_{\mathrm{c}}(\rn)$.
Using \eqref{zuihou} with $f:=f_k$, \eqref{duicuo} with $f:=f-f_k$,
 \eqref{key11}, and the assumption
that $X$ is a ball Banach function space,
we have, for any $k\in\nn$,
\begin{align*}
&\limsup_{\ld\to\infty}\ld\left\|
\left[\int_{\rn}\mathbf{1}_{E_f(\ld,q)}
(\cdot,y)\,dy\right]^{\frac{1}{q}}\right\|_{X}\\\noz
&\quad\leq
C_\theta\limsup_{\ld\to\infty}\ld\left\|
\left[\int_{\rn}\mathbf{1}_{E_{f-f_k}((1-\delta)\ld,q)}
(\cdot,y)\,dy\right]^{\frac{1}{q}}\right\|_{X}\\\noz
&\quad\quad+
(1+\theta)\limsup_{\ld\to\infty}\ld\left\|
\left[\int_{\rn}\mathbf{1}_{E_{f_k}(\delta\ld,q)}
(\cdot,y)\,dy\right]^{\frac{1}{q}}\right\|_{X}\\\noz
&\quad\leq
C_\theta(1-\delta)^{-1}\sup_{\ld\in(0,\infty)}\ld\left\|
\left[\int_{\rn}\mathbf{1}_{E_{f-f_k}(\ld,q)}
(\cdot,y)\,dy\right]^{\frac{1}{q}}\right\|_{X}\\\noz
&\quad\quad+
(1+\theta)\delta^{-1}\limsup_{\ld\to\infty}\ld\left\|
\left[\int_{\rn}\mathbf{1}_{E_{f_k}(\ld,q)}
(\cdot,y)\,dy\right]^{\frac{1}{q}}\right\|_{X}\\\noz
&\quad\leq
C_\theta(1-\delta)^{-1}c_0
\|\,|\nabla f-\nabla f_k|\,\|_{X}+
(1+\theta)\delta^{-1}C_{(q,n)}
\|\,|\nabla f_k|\,\|_{X},
\end{align*}
where $c_0:=C_2C_{(\|\cm\|_{(X^{1/p})'\to(X^{1/p})'})}$ and $C_\theta$ is a positive
constant depending only on $\theta.$
Letting $k\to\infty,$ $\theta\to0$,
and $\delta\to1$, and using \eqref{bijin},
we conclude that
\begin{align*}
\limsup_{\ld\to\infty}\ld\left\|
\left[\int_{\rn}\mathbf{1}_{E_f(\ld,q)}
(\cdot,y)\,dy\right]^{\frac{1}{q}}\right\|_{X}
\leq
C_{(q,n)}
\|\,|\nabla f|\,\|_{X}.
\end{align*}
Thus, \eqref{zuihou} holds true,
which, combined with \eqref{dui}, further implies that \eqref{leiliu00} holds true for any $f\in\dot{W}^{1,X}(\rn)$.
This finishes the proof of Theorem
\ref{theorem3.9}.
\end{proof}

\begin{remark}
\begin{itemize}
\item[{\rm(i)}] In Theorem \ref{theorem3.9}(ii), if $X:=L^p(\rn)$ with
$p\in[1,\infty)$ and if $q\in(0,\infty)$ satisfies $n(\frac{1}{p}-\frac{1}{q})<1$,
then the conclusions of Theorem \ref{theorem3.9}(ii)
hold true with $X$ replaced by
$L^p(\rn)$, which, when $p=q\in [1,\fz)$,
is just \eqref{bsy} in
\cite{ref8,BSSY21}.

\item[{\rm(ii)}]
Let $p\in[1,\infty)$ and
$X$ be the same as in Theorem \ref{theorem3.9}(ii).
Assume that $q_1,q_2\in(0,\infty)$ satisfy $n(\f1p-\f1{q_1})<1$ and $n(\f1p-\f1{q_2})<1.$
From Theorem \ref{theorem3.9}(ii), it follows that
$$\mathbf{W\dot{F}}^1_{X,q_1}(\rn)\cap  \dot{W}^{1,X}(\rn
)=\mathbf{W\dot{F}}^1_{X,q_1}(\rn)\cap  \dot{W}^{1,X}(\rn
)$$
with equivalent quasi-norms. Thus, when $q\in(0,\infty)$
satisfies $n(\f 1p-\f1q) <1,$ the space  $\mathbf{W\dot{F}}^1_{X,q}(\rn)\cap  \dot{W}^{1,X}(\rn
)$ is independent of $q.$
\end{itemize}
\end{remark}
When $p=1$ and the Hardy--Littlewood maximal operator $\cm$ in \eqref{2-4-c} is not known to be bounded
on its associate space $X',$ Theorem \ref{theorem3.9} does not work anymore in this case;
instead of this, we have the following conclusion.

\begin{theorem}\label{theorem3}
Let	$X$ be a ball Banach function space and $q\in(0,\infty)$ with $n(1-\f1q) <1$.
Assume that
there exists a sequence $\{\theta_m\}_{m\in\nn}\subset(0,1)$
such that $\lim_{m\to\infty}\theta_m=1$ and, for any $m\in\nn$,
$X^{\f 1 {\theta_m}}$ is a ball Banach function space, $\cm$  in \eqref{2-4-c}  bounded
on its associate space $(X^{1/{\theta_m}})',$
 and
\begin{align}\label{sjx}
\lim_{m\to\infty}
\|\cm\|_{(X^{1/{\theta_m}})'\to(X^{1/{\theta_m}})'}
<\infty.
\end{align}
\begin{enumerate}
\item[{\rm(i)}]
Then, for any $f\in C^2(\rn)$ with $|\nabla f|\in C_{\mathrm{c}}(\rn)$,
\begin{align}\label{sjx2}
	\|f\|_{\mathbf{W\dot{F}}^1_{X,q}(\rn)}\sim\|\,|\nabla f|\,\|_X,
\end{align}
where the positive equivalence constants
are independent of $f$.
Moreover,
for any  $f\in C^2(\rn)$ with $|\nabla f|\in C_{\mathrm{c}}(\rn)$,
\begin{align}\label{leiliu}
	\lim_{\ld\to\infty}\ld^q\left\|\int_{\rn}
	\mathbf{1}_{E_f(\ld,q)}
	(\cdot,y)\,dy\right\|_{X^{\f 1 q}}=\frac{K(q,n)}{n}
	\|\,|\nabla f|\,\|_{X}^q,
\end{align}
where $E_f(\ld,q)$ for any $\lz\in (0,\fz)$ is the same as  in \eqref{E}.
\item [{\rm(ii)}]
Assume that
the centered ball average operators $\{B_r\}_{r\in(0,\infty)}$ are uniformly bounded on $X$ and
$X$ has an absolutely continuous norm.
Then, for any $f\in \dot{W}^{1,X}(\rn)$, both \eqref{sjx2} and
\eqref{leiliu} hold true.
\end{enumerate}
\end{theorem}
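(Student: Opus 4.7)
The plan is to apply Theorem \ref{theorem3.9}(i) to each ball Banach function space $Y_m := X^{1/\theta_m}$ with the exponent $p = 1$, and then transfer the resulting estimates from $Y_m$ to $X$ by letting $m \to \infty$. The crucial ingredient for the transfer will be the observation that, for any nonnegative bounded measurable function $F$ with compact support,
\[
\|F\|_{Y_m} = \bigl\|\,|F|^{1/\theta_m}\bigr\|_X^{\theta_m} \longrightarrow \|F\|_X \quad\text{as }m\to\infty,
\]
because $t \mapsto t^{1/\theta_m}$ converges uniformly to the identity on the compact interval $[0, \|F\|_{L^\infty(\rn)}]$, one has $\|\mathbf{1}_{\supp F}\|_X < \infty$ by Definition \ref{Debqfs}(iv), and $\theta_m \to 1$.

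For Part (i), I would first verify that the hypotheses of Theorem \ref{theorem3.9}(i) with $X$ replaced by $Y_m$ and $p = 1$ are exactly those assumed here: $Y_m$ is a ball Banach function space, $Y_m^{1/1} = Y_m$ is ball Banach, $\cm$ is bounded on $Y_m' = (X^{1/\theta_m})'$, and $n(1 - 1/q) < 1$. Consequently, for every $f \in C^2(\rn)$ with $|\nabla f| \in C_{\mathrm{c}}(\rn)$ and every $\lambda \in (0, \infty)$,
\[
\lambda \left\|\left[\int_{\rn} \mathbf{1}_{E_f(\lambda, q)}(\cdot, y)\,dy\right]^{1/q}\right\|_{Y_m} \le C_2\,C_{(\|\cm\|_{Y_m' \to Y_m'})}\,\bigl\|\,|\nabla f|\,\bigr\|_{Y_m}.
\]
For such an $f$, both $|\nabla f|$ and, for each fixed $\lambda$, the integrand on the left-hand side define bounded functions of compact support (the latter because $|f(x) - f(y)| \le \|\,|\nabla f|\,\|_{L^\infty(\rn)}|x - y|$ forces $|x - y| < (\|\,|\nabla f|\,\|_{L^\infty(\rn)}/\lambda)^{q/n}$ on $E_f(\lambda, q)$, while $f$ is constant outside $\supp|\nabla f|$). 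Combining the above norm convergence with the uniform boundedness of $C_{(\|\cm\|_{Y_m' \to Y_m'})}$ guaranteed by \eqref{sjx} and the continuity of $C_{(\cdot)}$, letting $m \to \infty$ in the displayed inequality and then taking $\sup_\lambda$ would yield the upper bound in \eqref{sjx2}. The lower bound in \eqref{sjx2} and the limit \eqref{leiliu} would follow immediately from Theorem \ref{theorem4.88} applied directly to $X$.

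For Part (ii), the additional assumptions permit invoking Theorem \ref{density} to obtain $\{f_k\}_{k\in\nn} \subset C^\infty(\rn)$ with $|\nabla f_k| \in C_{\mathrm{c}}(\rn)$ such that $\|\,|\nabla(f - f_k)|\,\|_X \to 0$ and $\|(f - f_k)\mathbf{1}_{B(\mathbf{0}, R)}\|_X \to 0$ for every $R \in (0, \infty)$. The latter, combined with the local $L^1$-domination $\|g\mathbf{1}_{B(\mathbf{0}, R)}\|_{L^1(\rn)} \lesssim \|g\|_X$ from the ball Banach property, allows extracting a subsequence (still denoted $\{f_k\}$) with $f_k \to f$ almost everywhere. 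This pointwise convergence gives $\mathbf{1}_{E_f(\lambda, q)} \le \liminf_{k \to \infty} \mathbf{1}_{E_{f_k}(\lambda, q)}$ almost everywhere, so Fatou's lemma in $y$, the Fatou property of $X^{1/q}$ (Definition \ref{Debqfs}(iii)), and Part (i) applied to each $f_k$ would then yield the upper bound in \eqref{sjx2} for $f$. With this upper bound available for every $g \in \dot{W}^{1, X}(\rn)$, the lower bound in \eqref{sjx2} and the limit \eqref{leiliu} would follow via the standard approximation argument based on the inclusion $E_f(\lambda, q) \subset E_{f_k}(\delta\lambda, q) \cup E_{f - f_k}((1 - \delta)\lambda, q)$ and the $q$-inequality \eqref{key11}, mirroring the proof of Theorem \ref{theorem3.9}(ii).

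The main obstacle is the transfer from the $Y_m$-estimates to the $X$-estimate: without the uniform boundedness of $\|\cm\|_{Y_m' \to Y_m'}$ encoded in \eqref{sjx}, the constants coming from Theorem \ref{theorem3.9}(i) could blow up as $m \to \infty$, and without the boundedness and compact support of $|\nabla f|$ in Part (i), the uniform convergence of $|F|^{1/\theta_m}$ to $F$ on $\supp F$ would fail. Together with the Fatou-type machinery required to pass to general $f \in \dot{W}^{1, X}(\rn)$ in Part (ii), this is the heart of the argument.
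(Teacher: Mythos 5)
Your proposal is correct, and while it follows the paper's overall architecture (apply Theorem \ref{theorem3.9} to $Y_m := X^{1/\theta_m}$ with $p=1$, then take $m\to\infty$; then a density argument via Theorem \ref{density}), the mechanism you use to pass to the limit is genuinely different in both parts, and in Part (ii) arguably cleaner.

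For Part (i), the paper never claims $\|F\|_{Y_m}\to\|F\|_X$. Instead it raises the $Y_m$-inequality to the power $1/\theta_m$ so that everything lives in the single space $X$, then applies Definition \ref{Debqfs}(iii) to the monotone sequence $\inf_{j\geq m}|\{\cdot\}|^{1/(q\theta_j)}\uparrow|\{\cdot\}|^{1/q}$ to control the left side, and handles the right side by first normalizing $|\nabla f|$ by $\|\,|\nabla f|\,\|_{L^\infty(\rn)}+1$ so that the powers $|\cdot|^{1/\theta_m}$ become monotone and the $X$-norm can only decrease. Your route instead isolates the observation that $\|F\|_{Y_m}\to\|F\|_X$ for bounded, compactly supported $F$ and applies it to both $|\nabla f|$ and $F_\lambda:=[\int\mathbf{1}_{E_f(\lambda,q)}(\cdot,y)\,dy]^{1/q}$; this requires the extra geometric remark (which you make, and which is correct) that $F_\lambda$ is itself bounded and compactly supported because $E_f(\lambda,q)\subset\{|x-y|<(\|\,|\nabla f|\,\|_{L^\infty(\rn)}/\lambda)^{q/n}\}$ and $\nabla f$ vanishes outside a ball. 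Both arguments exploit the boundedness of $|\nabla f|$; yours additionally exploits its compact support, but both pieces of information are available by hypothesis, so this is a fair trade. For Part (ii) the divergence is larger: the paper restricts to the truncated sets $E_N:=\{(x,y)\in B(\mathbf{0},N)\times B(\mathbf{0},N):|x-y|\geq 1/N\}$, decomposes $E_f(\lambda,q)\subset E_{f_k}((1-\delta)\lambda,q)\cup E_k^{(1)}(\ldots)\cup E_k^{(2)}(\ldots)$, and estimates the last two pieces explicitly using the $L^1$-domination and $\|(f-f_k)\mathbf{1}_{B(\mathbf{0},N)}\|_X\to0$; whereas you extract an a.e.-convergent subsequence from that same convergence, observe $\mathbf{1}_{E_f(\lambda,q)}\leq\liminf_k\mathbf{1}_{E_{f_k}(\lambda,q)}$ a.e., and combine Fatou in $y$ with the Fatou property of $X^{1/q}$. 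This sidesteps the truncation entirely and is the more economical of the two arguments, at the cost of invoking the qualitative a.e.-convergence rather than quantitative norm bounds. Both routes close identically via the standard $E_f(\lambda,q)\subset E_{f_k}(\delta\lambda,q)\cup E_{f-f_k}((1-\delta)\lambda,q)$ inclusion for the lower bound and limit \eqref{leiliu}.
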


\begin{proof}
	We first prove (i).
From Theorem \ref{theorem4.88}, it follows that, for any
$f\in C^2(\rn)$ with $|\nabla f|\in C_{\mathrm{c}}(\rn)$, \eqref{leiliu}
holds true.
Thus, to show \eqref{sjx2},
it remains to prove that, for any $f\in C^2(\rn)$ with $|\nabla f|\in C_{\mathrm{c}}(\rn)$,
\begin{align}\label{013}
\|f\|_{\mathbf{W\dot{F}}^1_{X,q}(\rn)}
\lesssim\|\,|\nabla f|\,\|_X.
\end{align}
To this end, it suffices to show that,
for any  $f\in C^2(\rn)$ with $|\nabla f|\in C_{\mathrm{c}}(\rn)$ and for any $\ld\in(0,\infty),$
\begin{align}\label{010}
\lambda\left\|\lf|\lf\{y\in\rn:\ (\cdot,y)\in E_f(\ld,q)\r\}\r|
^{\frac{1}{q}}\right\|_{X}\lesssim
\|\,|\nabla f|\,\|_{X},
\end{align}
where $E_f(\ld,q)$ for any $\lz\in (0,\fz)$ is the same as  in \eqref{E}.
Let  $f\in C^2(\rn)$ with $|\nabla f|\in C_{\mathrm{c}}(\rn)$.
By Theorem \ref{theorem3.9}
and the facts that, for any
$m\in\nn$, $X^{\f 1 {\theta_m}}$ is a ball Banach
function space and that $\cm$ as in \eqref{2-4-c}
is bounded on its associate
space $(X^{1/{\theta_m}})'$,
we conclude that, for any $m\in\nn$,
\begin{align*}
\sup_{\lambda\in(0,\infty)}\lambda\left
\|\lf|\lf\{y\in\rn:\ (\cdot,y)\in E_f(\ld,q)\r\}\r|
^{\frac{1}{q}}\right\|_{X^{\f 1 {\theta_m}}}\lesssim
C_{(\|\cm\|_{(X^{1/{\theta_m}})'\to(X^{1/{\theta_m}})'})}\|\,|\nabla f|\,\|_{X^{\f 1 {\theta_m}}},
\end{align*}
which further implies that, for any
$\lambda\in(0,\infty)$,
\begin{align}\label{hg}
\lambda^{1/{\theta_m}}\left
\|\lf|\lf\{y\in\rn:\ (\cdot,y)\in E_f(\ld,q)\r\}\r|
^{\frac{1}{ q\theta_m}}\right\|_{X}\lesssim
C_{(\|\cm\|_{(X^{1/{\theta_m}})'
\to(X^{1/{\theta_m}})'})}^{1/{\theta_m}}
\lf\|\,|\nabla f|^{\f 1 {\theta_m}}\r\|_{X}.
\end{align}
From this, \eqref{hg}, Definition \ref{Debqfs}(iii), \eqref{sjx}, and
the fact that $C_{(\cdot)}$ is continuous
on $(0,\infty),$ together with $X$ being a
quasi-Banach space,
we deduce that, for any $\lambda\in(0,\infty)$,
\begin{align*}
&\left
\|\lf|\lf\{y\in\rn:\ (\cdot,y)\in E_f(\ld,q)\r\}\r|
^{\frac{1}{q}}\right\|_X\\
&\quad=
\left
\|\lim_{m\to\infty}\inf_{j\geq m}\lf|
\lf\{y\in\rn:\ (\cdot,y)\in E_f(\ld,q)\r\}\r|
^{\frac{1}{ q\theta_j}}\right\|_{X}\\
&\quad=
\lim_{m\to\infty}\left
\|\inf_{j\geq m}\lf|\lf\{y\in\rn:\ (\cdot,y)\in E_f(\ld,q)\r\}\r|
^{\frac{1}{ q\theta_j}}\right\|_{X}\\
&\quad\lesssim
\liminf_{m\to\infty}\left
\|\lf|\lf\{y\in\rn:\ (\cdot,y)\in E_f(\ld,q)\r\}\r|
^{\frac{1}{q\theta_m}}\right\|_{X}\\
&\quad
\lesssim\liminf_{m\to\infty}\lambda^{-
\f 1 {\theta_{m}}}
C_{(\|\cm\|_{(X^{1/{\theta_m}})'
\to(X^{1/\theta_m})'})}^{\f 1 {\theta_m}}
\lf\|\,|\nabla f|^{\f 1 {\theta_m}}\r\|_{X}\\
&\quad
\lesssim\lambda^{-1}
C_{(\lim_{m\to\infty}\|\cm\|_{(X^{1/\theta_m})'\to(X^{1/\theta_m})'})}
\limsup_{m\to\infty}
\lf\|\,|\nabla f|^{\f 1 {\theta_{m}}}\r\|_{X}\\
&\quad
\lesssim\lambda^{-1}\limsup_{m\to\infty}
\lf\|\,|\nabla f|^{\f 1 {\theta_{m}}}\r\|_{X}\\\noz
&\quad\sim
\lambda^{-1}\limsup_{m\to\infty}
\lf\|\lf[\frac{|\nabla f|}{\|\,|\nabla f|\,\|_
{L^{\infty}(\rn)}+1}\r]^{\f 1 {\theta_{m}}}\r\|_{X}
[\|\,|\nabla f|\,\|_{L^{\infty}(\rn)}+1]^{\f 1 {\theta_{m}}}\\
&\quad
\lesssim\lambda^{-1}\limsup_{m\to\infty}
\lf\|\frac{|\nabla f|}{\|\,|\nabla f|\,\|_{L^{\infty}(\rn)}+1}\r\|_{X}
[\|\,|\nabla f|\,\|_{L^{\infty}(\rn)}+1]^{\f 1 {\theta_{m}}}
\sim
\lambda^{-1}\lf\|\,|\nabla f|\,\r\|_{X},
\end{align*}
which  implies \eqref{010} and hence \eqref{013} hold true.
This finishes the proof of (i).

Next, we prove (ii). We first show that,
for any $f\in \dot{W}^{1,X}(\rn)$,
\begin{align}\label{luyang}
\sup_{\ld\in(0,\infty)}\ld
\left\|\left[\int_{\rn}\mathbf{1}_{E_f(\ld,q)}(\cdot,y)\,dy\right]^{\f 1 q}\right\|_X
\lesssim
\|\,|\nabla f|\,\|_X.
\end{align}
By Theorem \ref{density}, we conclude that there exists a sequence
$\{f_k\}_{k\in\nn}\subset C^\infty(\rn)$ with
$\{|\nabla f_k|\}_{k\in\nn}\subset C_{\mathrm{c}}(\rn)$  such that
\begin{align}\label{1912}
\lim_{k\to\infty}
\|\,|\nabla f-\nabla f_k|\,\|_{X}=0
\ \ \text{and}\ \
\lim_{k\to\infty}\|(f-f_k)\mathbf{1}_{B(\mathbf{0},R)}\|_{X}=0
\end{align}
for any $R\in(0,\infty)$.
For any $N\in\nn,$ let $$E_N:=\left\{(x,y)\in B(\mathbf{0},N)\times
B(\mathbf{0},N):\ |x-y|\geq \frac{1}{N}\right\}.$$
It is easy to show that, for any $k\in\nn$, $\ld\in(0,\infty)$, and
$\delta\in(0,1)$,
\begin{align*}
E_f(\ld,q)\subset E_{f_k}((1-\delta) \ld,q)\cup E_k^{(1)}(2^{-1}\delta\ld,q)\cup E_k^{(2)}(2^{-1}\delta\ld,q),
\end{align*}
where
\begin{align*}
E_k^{(1)}(2^{-1}\delta\ld,q):=\left\{(x,y)\in\rn\times\rn:\
|f(x)-f_k(x)|>\frac{\delta\ld}{2}|x-y|^{1+\frac{n}{q}}\right\}
\end{align*}
and
\begin{align*}
E_k^{(2)}(2^{-1}\delta\ld,q):=\left\{(x,y)\in\rn\times\rn:\
|f(y)-f_k(y)|>\frac{\delta\ld}{2}|x-y|^{1+\frac{n}{q}}\right\}.
\end{align*}
Using this and applying \eqref{key11} with $q$ replaced by $\frac{1}{q}$,
we obtain, for any $\ld\in(0,\infty),$ $N\in\nn$,
$k\in\nn,$ and $\delta,\theta\in(0,1)$,
\begin{align}\label{wuyu}
&\ld\left\|
\left[\int_{\rn}\mathbf{1}_{E_f(\ld,q)\cap E_N}(\cdot,y)\,dy\right]^{\f 1 q}\right\|_{X}\\\noz
&\quad\leq
\ld(1+\theta)\left\|\left[\int_{\rn}\mathbf{1}_{E_{f_k}((1-\delta)\ld,q)\cap E_N}
(\cdot,y)\,dy\right]^{\f 1 q}\right\|_{X}\\\noz
&\quad\quad+
\ld C_{\theta}\left\|\left[\int_{\rn}
\mathbf{1}_{E_{k}^{(1)}(2^{-1}\delta\ld,q)\cap E_N}(\cdot,y)+\mathbf{1}_{E_{k}^{(2)}(2^{-1}\delta\ld,q)\cap E_N}(\cdot,y)\,dy\right]^{\f 1 q}\right\|_{X}\\\noz
&\quad=:\mathrm{I}+\mathrm{II},
\end{align}
where $C_\theta$ is a positive constant depending only on $\theta.$
Applying \eqref{013} with $f:=f_k$, we have, for any $\ld\in(0,\infty)$, $k\in\nn,$ and $\delta,\theta\in(0,1)$,
\begin{align}\label{wei}
\mathrm{I}
&\leq
\ld(1+\theta)
\left\|
\left[\int_{\rn}\mathbf{1}_{E_{f_k}((1-\delta)\ld,q)}(\cdot,y)\,dy\right]^{\f 1 q}
\right\|_{X}\\\noz
&\lesssim
(1-\delta)^{-1}(1+\theta)\|\,|\nabla f_k|\,\|_{X}\\\noz
&\lesssim
(1-\delta)^{-1}(1+\theta)
(\|\,|\nabla f|\,\|_{X}+\|\,|\nabla f-\nabla f_k|\,\|_{X}).
\end{align}
Next, we estimate $\mathrm{II}$.
It is obvious that,  for any $x\in\rn,$
\begin{align}\label{fuzhuang}
&\int_{\rn}\mathbf{1}_{E_{k}^{(1)}(2^{-1}\delta\ld,q)\cap E_N}(x,y)\,dy\\\noz
&\quad\leq
(2\ld^{-1}\delta^{-1})^q
\int_{\rn}
\frac{|f(x)-f_k(x)|^q}{|x-y|^{q+n}}\mathbf{1}_{E_N}(x,y)\,dy\\\noz
&\quad\leq
(2\ld^{-1}\delta^{-1})^qN^{q+N}|B(\mathbf{0},N)||f(x)-f_k(x)|^q\mathbf{1}_{B(\mathbf{0},N)}(x).
\end{align}
On the other hand, by Lemma \ref{lemma3.5}, we find that, for any
$x\in\rn$,
\begin{align*}
&\int_{\rn}\mathbf{1}_{E_{k}^{(2)}(2^{-1}\delta\ld,q)\cap E_N}(x,y)\,dy\\\noz
&\quad\leq
\frac{2}{\delta\ld}\int_{\rn}
\frac{|f(y)-f_k(y)|}{|x-y|^{1+\frac{n}{q}}}\mathbf{1}_{E_N}(x,y)\,dy\\\noz
&\quad\leq
\frac{2N^{1+\frac{n}{q}}}{\delta\ld}\int_{B(\mathbf{0},N)}|f(y)-f_k(y)|\,dy
\mathbf{1}_{B(\mathbf{0},N)}(x)\\\noz
&\quad\leq
\frac{2N^{1+\frac{n}{q}}}{\delta\ld}
\|(f-f_k)\mathbf{1}_{B(\mathbf{0},N)}\|_{X}\|\mathbf{1}_{B(\mathbf{0},N)}\|_{X'}\mathbf{1}_{B(\mathbf{0},N)}(x).
\end{align*}
This, together with \eqref{fuzhuang},  further implies that,
for any $\ld\in(0,\infty),$ $N\in\nn$,
$k\in\nn,$ and $\delta,\theta\in(0,1)$,
\begin{align*}
\mathrm{II}
&\leq
\ld C_{\theta}2^{\f 1 q}
\left\|\left[\int_{\rn}\mathbf{1}_{E_{k}^{(1)}(2^{-1}\delta\ld,q)\cap E_N}(\cdot,y)\,dy\right]^{\f 1 q}\right\|_{X}\\\noz
&\quad+
\ld C_{\theta}2^{\f 1 q}
\left\|\left[\int_{\rn}\mathbf{1}_{E_{k}^{(2)}(2^{-1}\delta\ld,q)\cap E_N}(\cdot,y)\,dy\right]^{\f 1 q}\right\|_{X}\\\noz
&\leq
C_{\theta}2^{\f 1 q}
2\delta^{-1}N^{1+\frac{n}{q}}|B({\mathbf 0},N)|^{\frac{1}{q}}\|(f-f_k)\mathbf{1}_{B(\mathbf{0},N)}\|_{X}\\\noz
&\quad+
\ld C_{\theta}2^{\f 1 q}\left(\frac{2N^{1+\frac{n}{q}}}{\delta\ld}\right)^{\frac{1}{q}}\|(f-f_k)\mathbf{1}_{B(\mathbf{0},N)}\|_{X}^{\f 1 q}\|\mathbf{1}_{B(\mathbf{0},N)}\|_{X'}^{\f 1 q}\|\mathbf{1}_{B(\mathbf{0},N)}\|_{X}.
\end{align*}
Using this, \eqref{wei}, \eqref{wuyu},
and \eqref{1912}, and
letting $k\to\infty$, we conclude that,
for any $\ld\in(0,\infty)$, $N\in\nn$,  and $\delta, \theta\in(0,1)$,
\begin{align*}
\ld\left\|
\left[\int_{\rn}\mathbf{1}_{E_f(\ld,q)\cap E_N}(\cdot,y)\,dy\right]^{\f 1 q}\right\|_{X}
\lesssim
(1-\delta)^{-1}(1+\theta)\|\,|\nabla f|\,\|_{X}.
\end{align*}
Let $\delta\to0$,  $\theta\to0,$
and $N\to\infty$.
Then we find that \eqref{luyang} holds true.

By \eqref{luyang} and a density argument
similar to that used in the proof
of \eqref{dui}, we conclude that
\eqref{leiliu} holds true for any
$f\in \dot{W}
^{1,X}(\rn).$ This finishes the proof
of (ii) and hence Theorem \ref{theorem3}.
\end{proof}

\begin{remark}
\begin{itemize}
\item[{\rm(i)}] In Theorem \ref{theorem3}(ii), if $X:=L^p(\rn)$ with $p\in[1,\infty)$
and if $q\in(0,\infty)$ satisfies $n(1-\frac{1}{q})<1$, then the conclusions of
Theorem \ref{theorem3}(ii) hold true with $X$ replaced by $L^p(\rn)$.
Moreover, when $q:=1$ and $X:=L^1(\rn)$, \eqref{sjx2} is just \eqref{bsy}  with $p=1$
in \cite{ref8,BSSY21}.
\item[{\rm(ii)}]
Theorem \ref{theorem3} is used to solve the endpoint case
of concrete examples of ball Banach function spaces in Section \ref{sec:6}.
For example, when $X:=L^{r(\cdot)}(\rn)$
with $\widetilde{r}_-=1$ (see Subsection \ref{s6.3} below 
for the precise definitions of both $L^{r(\cdot)}(\rn)$ and $\widetilde{r}_-$),
since it is still unclear whether or not the Hardy--Littlewood maximal operator
$\cm$ is  bounded on $[L^{r(\cdot)}(\rn)]'$, it follows that
Theorem \ref{theorem3.9} does not work anymore in this case,
while Theorem \ref{theorem3} is applicable in this case.
\item[{\rm(iii)}]
We should point out that
we do not need the assumption \eqref{sjx} in the proof of
the lower estimate of \eqref{sjx2}.
\end{itemize}
\end{remark}
As a consequence of Theorem \ref{theorem3.9},
we obtain the following fractional Sobolev type inequality on ball Banach function spaces.
\begin{corollary}\label{corollary3.111}
Let $q_1\in[1,\infty],$
$\ta\in (0,1)$, and
$q\in [1, q_1]$ satisfy
$ \frac{1}{q}=\frac{1-\theta}{q_1}+\theta$.
Let $X$ be a ball Banach function space. Assume that
there exists a sequence $\{\al_m\}\in(0,1)$ such that, for any $m\in\nn$, $\lim_{m\to\infty}\al_m=1$,
$X^{\f 1 {\al_m}}$ is a ball Banach function space, $\cm$  in \eqref{2-4-c}  bounded
on its associate space $(X^{1/{\al_m}})',$
and
\begin{align}\label{72121}
	\lim_{m\to\infty}
	\|\cm\|_{(X^{1/{\al_m}})'\to(X^{1/{\al_m}})'}
	<\infty.
\end{align}
\begin{itemize}
\item[{\rm(i)}]
Let $q_1\in[1,\infty).$ Then there exists a positive constant $C$ such
that, for any $f\in C^2(\rn)$ with $|\nabla f|\in C_{\mathrm{c}}(\rn)$,
\begin{align}\label{jjdjr}
	\|f\|_{\mathbf{W\dot{F}}^{\theta}_{X^q,q}(\rn)}\leq C\|f
	\|_{X^{q_1}}^{1-\ta}\|\,|\nabla f|\,\|_X^{\ta}.
\end{align}
Assume further that  $X$ has an absolutely continuous norm,
$X^{q_1}$ is a ball Banach function space, and
the centered ball average operators $\{B_r\}_{r\in(0,\infty)}$ are uniformly bounded  on $X$.
Then \eqref{jjdjr} holds true for any $f\in \dot{W}^{1,X}(\rn)$.
\item[{\rm(ii)}]
Let $q_1=\infty.$ Then there exists a positive constant $C$ such
that, for any $f\in C^2(\rn)$ with $|\nabla f|\in C_{\mathrm{c}}(\rn)$,
\begin{align}\label{henghengheng}
	\|f\|_{\mathbf{W\dot{F}}^{\ta}_{X^q,q}(\rn)}\leq C\|f
	\|_{L^\infty(\rn)}^{1-\ta}\|\,|\nabla f|\,\|_X^{\ta}.
\end{align}
Assume further that
the centered ball average operators $\{B_r\}_{r\in(0,\infty)}$ are uniformly bounded on $X$ and
$X$ has an absolutely continuous norm.
Then \eqref{henghengheng} holds true for any $f\in \dot{W}^{1,X}(\rn)$.
\end{itemize}
\end{corollary}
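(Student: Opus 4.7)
The plan is to argue by a Gagliardo--Nirenberg-style cut-off interpolation that uses Theorem \ref{theorem3} to handle the gradient term and either $\|f\|_{L^\infty(\rn)}$ (case (ii)) or $\|f\|_{X^{q_1}}$ (case (i)) to handle the remainder. By Definition \ref{Debf},
\begin{align*}
\lambda\left\|\left[\int_{\rn}\mathbf{1}_{E_f(\lambda,q,\theta)}(\cdot,y)\,dy\right]^{1/q}\right\|_{X^q}
=\lambda\left\|\int_{\rn}\mathbf{1}_{E_f(\lambda,q,\theta)}(\cdot,y)\,dy\right\|_X^{1/q},
\end{align*}
so, fixing $\lambda\in(0,\infty)$, it suffices to estimate the $X$-norm of $x\mapsto|\{y\in\rn:\ (x,y)\in E_f(\lambda,q,\theta)\}|$. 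Introducing a threshold $r=r_\lambda>0$ to be chosen, we split $E_f(\lambda,q,\theta)$ into $A_r:=E_f(\lambda,q,\theta)\cap\{|x-y|\le r\}$ and $B_r:=E_f(\lambda,q,\theta)\cap\{|x-y|>r\}$. The key geometric inclusion is $A_r\subset E_f(\mu,q)$ with $\mu:=\lambda r^{\theta-1}$, which holds because $|x-y|^{\theta-1}\ge r^{\theta-1}$ on $A_r$ (since $\theta<1$).

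For the near-diagonal part, writing $h(x):=|\{y:(x,y)\in A_r\}|$ and $g_\mu(x):=|\{y:(x,y)\in E_f(\mu,q)\}|$, the double bound $h(x)\le\min\{g_\mu(x),c_nr^n\}$ combined with the geometric-mean inequality (valid because $q\ge 1$) yields $h(x)\le g_\mu(x)^{1/q}(c_nr^n)^{1-1/q}$. Applying $\|\cdot\|_X$ and invoking Theorem \ref{theorem3}, whose hypotheses on $\{\alpha_m\}$ are exactly those present here, we obtain $\|g_\mu^{1/q}\|_X\lesssim\mu^{-1}\|\,|\nabla f|\,\|_X$ and hence
\begin{align*}
\|h\|_X\lesssim\lambda^{-1}r^{n(1-1/q)+1-\theta}\|\,|\nabla f|\,\|_X.
\end{align*}
In case (ii), choosing $r_\lambda:=(2\|f\|_{L^\infty(\rn)}/\lambda)^{q/(n+q\theta)}$ forces $B_{r_\lambda}=\emptyset$ (since $|f(x)-f(y)|\le 2\|f\|_{L^\infty(\rn)}\le\lambda r_\lambda^{n/q+\theta}$), and substituting this $r_\lambda$ into the diagonal bound together with $q\theta=1$ collapses the algebra directly to $\lambda\|h\|_X^{1/q}\lesssim\|f\|_{L^\infty(\rn)}^{1-\theta}\|\,|\nabla f|\,\|_X^\theta$.

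In case (i), the off-diagonal term is estimated via Chebyshev and the triangle inequality:
\begin{align*}
k(x):=|\{y:(x,y)\in B_r\}|\le\lambda^{-q_1}\int_{|x-y|>r}(|f(x)|+|f(y)|)^{q_1}|x-y|^{-q_1(n/q+\theta)}\,dy.
\end{align*}
The condition $1/q>1/q_1$ (implicit in $\theta>0$) ensures $q_1(n/q+\theta)>n$, so the integral converges: the $|f(x)|^{q_1}$-piece equals a constant times $r^{n-q_1(n/q+\theta)}|f(x)|^{q_1}$, while the $|f(y)|^{q_1}$-piece is a convolution whose dyadic decomposition, together with the uniform boundedness of $\{B_R\}_{R\in(0,\infty)}$ on $X$---extracted from the hypotheses on $\{X^{1/\alpha_m}\}$ via Lemma \ref{yongwu} (first establishing the bound on each $X^{1/\alpha_m}$ and then passing $\alpha_m\uparrow 1$ in the style of the proof of Theorem \ref{theorem3})---gives $\|k\|_X\lesssim\lambda^{-q_1}r^{n-q_1(n/q+\theta)}\|f\|_{X^{q_1}}^{q_1}$.

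With both pieces in hand, we optimize $r$ by balancing $\|h\|_X\sim\|k\|_X$. A direct computation using the exponent identity $1/q=(1-\theta)/q_1+\theta$, equivalently $q(1-\theta)/q_1+q\theta=1$, verifies that the common value of the balanced bound equals $\lambda^{-q}\|f\|_{X^{q_1}}^{q(1-\theta)}\|\,|\nabla f|\,\|_X^{q\theta}$, so, after taking the $1/q$-th root and multiplying by $\lambda$, the desired inequality drops out. The extension from $C^2(\rn)$-functions with $|\nabla f|\in C_{\mathrm{c}}(\rn)$ to general $f\in\dot{W}^{1,X}(\rn)$ under the stated absolutely-continuous-norm hypothesis proceeds via Theorem \ref{density}, combined with \eqref{key11} and the truncation argument already used in the proof of Theorem \ref{theorem3}(ii). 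The principal obstacle will be the arithmetic juggling in case (i): both exponent identities must be invoked in concert to produce the exact powers $1-\theta$ and $\theta$ on the right-hand side; a subsidiary subtlety is the convolution estimate, which depends on the uniform $\{B_R\}$-bound on $X$, directly available under the density-extension hypotheses and obtainable through approximation on $\{X^{1/\alpha_m}\}$ otherwise.
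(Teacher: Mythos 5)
There is a genuine gap in your near-diagonal estimate. You invoke Theorem \ref{theorem3} for $g_\mu(x)=|\{y:\ (x,y)\in E_f(\mu,q)\}|$, i.e.\ with the exponent $q$ from the statement of Corollary \ref{corollary3.111}. But Theorem \ref{theorem3} requires $n(1-\frac1q)<1$, a condition that is \emph{not} among the hypotheses of Corollary \ref{corollary3.111} and can fail: e.g.\ $n=2$, $q_1=10$, $\theta=\frac14$ gives $q=\frac{40}{13}$ and $n(1-\frac1q)=\frac{27}{20}>1$. The paper sidesteps this by invoking Theorem \ref{theorem3} only with $q:=1$, for which $n(1-1)=0$ is always admissible. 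Your step is repairable in exactly that spirit and the repair even simplifies your argument: on $A_r$ one has the stronger inclusion $A_r\subset E_f(\nu,1,1)$ with $\nu:=\lambda r^{-(n(1-1/q)+1-\theta)}$ (since $n(1-\frac1q)+1-\theta>0$ and $|x-y|\le r$), hence $\|h\|_X\le\|g_\nu\|_X\lesssim\nu^{-1}\|\,|\nabla f|\,\|_X=\lambda^{-1}r^{n(1-1/q)+1-\theta}\|\,|\nabla f|\,\|_X$. This is precisely the bound you recorded, so the optimization in $r$ and the subsequent exponent bookkeeping (which I checked and is correct) go through unchanged; the geometric-mean interpolation $h\le g_\mu^{1/q}(c_nr^n)^{1-1/q}$ is then unnecessary.

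Beyond that, your route genuinely differs from the paper's. The paper uses the pointwise factorization $\frac{|f(x)-f(y)|}{|x-y|^{n/q+\theta}}=\bigl[\frac{|f(x)-f(y)|}{|x-y|^{n/q_1}}\bigr]^{1-\theta}\bigl[\frac{|f(x)-f(y)|}{|x-y|^{n+1}}\bigr]^{\theta}$ to get $E_f(\lambda,q,\theta)\subset E_f(A^{-\theta}\lambda,q_1,0)\cup E_f(A^{1-\theta}\lambda,1,1)$, optimizes the scale $A$, and controls the $E_f(\cdot,q_1,0)$-term by duality with the Rubio de Francia operator $R_{(X^{1/\alpha_m})'}g$ and its $A_1$ bound. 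You instead cut at $|x-y|=r$ and handle the off-diagonal tail by Chebyshev plus a dyadic convolution estimate, which requires the uniform boundedness of $\{B_R\}_{R\in(0,\infty)}$ on $X$. That is given as an extra hypothesis in the second half of each item, but for the first half (the $C^2$ case, where the corollary is stated under only the $\{\alpha_m\}$ assumptions) it is not: extracting it via Lemma \ref{yongwu} applied to $Y:=X^{1/\alpha_m}$ with $p=1$ and then passing $\alpha_m\to1$ as in the final display of the proof of Theorem \ref{theorem3}(i) is plausible but needs to be carried out, because the paper's Rubio de Francia route bypasses the issue entirely. The density extension to $\dot{W}^{1,X}(\rn)$ you gesture at also requires separating the $X^{q_1}$-approximation (density of $C_{\mathrm{c}}^\infty(\rn)$ in $X^{q_1}$, using that $X^{q_1}$ is assumed to be a ball Banach function space) from the gradient approximation, as the paper does via the $E_N$-truncation; with those points written out, the proposal is sound.
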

\begin{proof}
We first prove $\rm(ii).$
Let $f\in\dot{W}^{1,X}(\rn)$.
If $\|f\|_{L^\infty(\rn)}=0$
or $\|f\|_{L^\infty(\rn)}=\infty$,
then \eqref{henghengheng} holds true automatically. Now, we assume that $\|f\|_{L^\infty(\rn)}\in (0,\infty)$.
Let $q_1:=\infty.$ Then $q\theta=1$.
For any $\ld,r,s\in(0,\infty)$, let
$E_{f}(\ld,r,s)$ be the same as  in \eqref{keydef}.
Since, for any $\ld\in(0,\infty),$
$$ E_f\left(\ld, \frac 1\ta, \ta\right) \subset E_f\left( \f {\ld^{1/\ta}
}{[2\|f\|_{L^\infty(\rn)}]^{(1-\ta)/\ta}}, 1,1\right),$$
from Definition \ref{Debqfs}(ii), we deduce that
\begin{align}\label{ha}
\|f\|_{\mathbf{W\dot{F}}^{\ta}_{X^q,q}(\rn)}^{1/\ta}	
&=\sup_{\ld\in(0,\infty)}\lambda^{1/\ta}\left\|
\int_{\rn}\mathbf{1}_{E_f(\ld,q,\ta)}
(\cdot,y)\,dy\right\|_X\\\noz
&\leq\sup_{\ld\in(0,\infty)}\lambda^{1/\ta}
\left\|\int_{\rn}\mathbf{1}
_{E_f\left(\f {\ld^{1/\ta}
}{[2\|f\|_{L^\infty(\rn)}]^{(1-\ta)/\ta}}, 1,1\right)}
(\cdot,y)\,dy\right\|_X\\\noz
&=
[2\|f\|_{L^\infty(\rn)}]^{(1-\ta)/\ta}
\sup_{\ld\in(0,\infty)}\lambda\left\|
\int_{\rn}\mathbf{1}_{E_f(\ld, 1,1)}
(\cdot,y)\,dy\right\|_X.
\end{align}

If $f\in C^2(\rn)$ with $|\nabla f|\in C_\mathrm{c}(\rn)$, then,
applying Theorem \ref{theorem3}(i) with $q:=1$,
we find that
\begin{align}\label{haa}
\sup_{\ld\in(0,\infty)}\lambda\left\|
\int_{\rn}\mathbf{1}_{E_f(\ld, 1,1)}
(\cdot,y)\,dy\right\|_X
\lesssim
\|\,|\nabla f|\,\|_{X}.
\end{align}
This, together with \eqref{ha}, further implies that \eqref{henghengheng}
holds true for any $f\in C^2(\rn)$ with $|\nabla f|\in C_\mathrm{c}(\rn)$.

If the centered ball average operators $\{B_r\}_{r\in(0,\infty)}$ are uniformly bounded on $X$ and
$X$ has an absolutely continuous norm,
then, by  Theorem \ref{theorem3}(ii) with $q:=1$, we conclude that \eqref{haa} holds true
for any $f\in \dot{W}^{1,X}(\rn)$.
This, combined with \eqref{ha}, further
implies that \eqref{henghengheng} holds true for any $f\in \dot{W}^{1,X}(\rn)$.
This finishes the proof of (ii) of this
corollary.

Next, we prove $\rm(i).$ Assume $q_1\in[1,\infty)$.
We first show that  \eqref{jjdjr}
holds true for any $f\in C^2(\rn)$
with $|\nabla f|\in C_{\mathrm{c}}(\rn)$.
Let $f\in C^2(\rn)$  with $|\nabla f|\in C_{\mathrm{c}}(\rn)$
and $A\in(0,\infty)$ be a constant
which is specified later. Since, for any $x, y\in\rn,$
$$\frac{|f(x)-f(y)|}{|x-y|^{\frac{n}{q}+\theta}}=
\left[\frac{|f(x)-f(y)|}{|x-y|^{\frac{n}{q_1}
		}}\right]^{1-\theta}\left[\frac{|f(x)-f(y)|}
{|x-y|^{n+1}}\right]^{\theta},$$
we deduce that, for any $\ld\in(0,\infty),$
\begin{align*}
	&E_f(\ld, q,\theta) \subset \left[E_f \left(A^{-\ta}\ld,
	q_1, 0\right) \cup E_f\left(A^{1-\ta}\ld, 1,1\right)\right].
\end{align*}
This, together with the fact that $\|\cdot\|_X$ is a quasi-norm, further implies that, for any $\ld\in(0,\infty)$,
\begin{align}\label{5-7dd}
&\left\|\int_{\rn}\mathbf{1}_{E_f(\ld,q,\theta)}
(\cdot,y)\,dy\right\|_{X}^{\f1q}\\\noz
&\quad\lesssim\left\|\int_{\rn}\mathbf{1}_{E_f\left(A^{-\ta} \ld, q_1, 0\right)}(\cdot,y)\,dy\right\|
_{X}^{\f1q}+ \left\|\int_{\rn}\mathbf{1}_{E_f\left(A^{1-\ta}\ld,
1,1\right)}(\cdot,y)\,dy\right\|_{X}^{\f1q}\\\noz
&\quad\lesssim \left(\frac{A^{\theta}\mathrm{G}}{\ld}\right)^{\frac{q_1}{q}}
+\left(\frac{\mathrm{H}}{A^{1-\theta}\ld}\right)^{\f1q},
\end{align}
where
\begin{align*}
	\mathrm{G}:&=\sup_{\ld\in(0,\infty)}\lambda\left\|
	\int_\rn\mathbf{1}_{E_f(\ld, q_1, 0)}
	(\cdot,y)\,dy\right\|
	_{X}^{\f1{q_1}}
\end{align*}
and
\begin{align*}
	\mathrm{H}:&=\sup_{\ld\in(0,\infty)}\lambda
	\left\|\int_{\rn}\mathbf{1}_{E_f(\ld,1,1)}
	(\cdot,y)\,dy\right\|_{X}.
\end{align*}
Choose an $A\in(0,\infty)$ such that
$$\left(\frac{A^{\theta}\mathrm{G}}{\ld}\right)^{\frac{q_1}{q}}
=\left(\frac{\mathrm{H}}{A^{1-\theta}\ld}\right)^{\f1q}.$$
This,  combined with \eqref{5-7dd}, then implies that
\begin{align}\label{youyong}
\left\|\int_{\rn}\mathbf{1}_{E_f(\ld,q,\theta)}
(\cdot,y)\,dy\right\|_{X}^{\f1q}
&\lesssim\left(\frac{A^{\theta}\mathrm{G}}{\ld}\right)^{\frac{q_1}{q}}
+\left(\frac{\mathrm{H}}{A^{1-\theta}\ld}\right)^{\f1q}\\\noz
&\sim\left(\frac{A^{\theta}\mathrm{G}}{\ld}\right)^{\frac{q_1}{q}}
\sim\ld^{-1}\mathrm{G}^{1-\theta}\mathrm{H}^{\theta}.
\end{align}
Using this and applying Theorem \ref{theorem3}(i) with $q:=1$, we conclude that, for any $f\in C^2(\rn)$
with $|\nabla f|\in C_{\mathrm{c}}(\rn)$,
\begin{align}\label{hahaha}
\sup_{\ld\in(0,\infty)}\lambda\left\|\left|
\lf\{y\in\rn:\ |f(x)-f(y)|>\lambda
|x-y|^{\f n {q}+\ta}\r\}\right|
^{\frac{1}{q}}\right\|_{X^{q}}
\lesssim\mathrm{G}^{1-\theta}
\|\,|\nabla f|\,\|_{X}^{\theta}.
\end{align}

Next, we prove that, for any $f\in C^2(\rn)$ with $|\nabla f|\in C_{\mathrm{c}}(\rn)$,
\begin{align}\label{wuwuwu}
\mathrm{G}\lesssim\|f\|_{X^{q_1}}.
\end{align}
To this end, by Lemma \ref{lemma3.4} and Lemma \ref{lemma3.7}(i), we have, for any $\ld\in(0,\infty)$ and $m\in\nn,$
\begin{align}\label{xixixi}
&\left\|\int_\rn\mathbf{1}_{E_f(\ld, q_1, 0)}
(\cdot,y)\,dy\right\|_{X^{1/{\al_m}}}\\\noz
&\quad=\sup_{\|g\|_{(X^{1/{\al_m}})'}=1}\int_{\rn}\int_\rn\mathbf{1}_{E_f(\ld, q_1, 0)}
(x,y)\,dyg(x)\,dx\\\noz
&\quad\leq\sup_{\|g\|_{(X^{1/{\al_m}})'}=1}\int_{\rn}\int_\rn\mathbf{1}_{E_f(\ld, q_1, 0)}
(x,y)\,dyR_{(X^{1/{\al_m}})'}g(x)\,dx.
\end{align}
For any $x,y\in\rn,$ let
$$D_f(\ld,q_1)_1:=\{(x,y)\in\rn\times\rn:\ |f(x)|>\ld|x-y|^{\f n{q_1}}/2\}$$
and
$$D_f(\ld,q_1)_2:=\{(x,y)\in\rn\times\rn:\ |f(y)|>\ld|x-y|^{\f n{q_1}}/2\}.$$
Observe that, for any $\ld\in(0,\infty),$
$E_f(\ld, q_1, 0)\subset D_f(\ld,q_1)_1\cup D_f(\ld,q_1)_2.$ From this, both (ii) and (iii) of Lemma \ref{lemma3.7}, and the definition
of $A_1(\rn),$ we deduce that, for any  $\ld\in(0,\infty),$ $m\in\nn,$ and $g\in (X^{1/{\al_m}})'$ with $\|g\|_{(X^{1/{\al_m}})'}=1,$
\begin{align*}
&\int_{\rn}\int_\rn\mathbf{1}_{E_f(\ld, q_1, 0)}
(x,y)\,dyR_{(X^{1/{\al_m}})'}g(x)\,dx\\
&\quad\leq\int_{\rn}\int_{\{y\in\rn:\ (x,y)\in D_f(\ld,q_1)_1\}}\,dyR_{(X^{1/{\al_m}})'}g(x)\,dx\\
&\qquad+\int_{\rn}\int_{\{x\in\rn:\ (x,y)\in D_f(\ld,q_1)_2\}}R_{(X^{1/{\al_m}})'}g(x)\,dx\,dy\\
&\quad\lesssim\ld^{-q_1}\int_\rn|f(x)|^{q_1}
R_{(X^{1/{\al_m}})'}g(x)\,dx\\\noz
&\quad\quad+[R_{(X^{1/{\al_m}})'}g]_{A_1(\rn)}\ld^{-q_1}\int_\rn|f(y)|^{q_1}
R_{(X^{1/{\al_m}})'}g(y)\,dy\\
&\quad\lesssim\left[1+\|\cm\|_{(X^{1/{\al_m}})'\to (X^{1/{\al_m}})'}\right]\ld^{-q_1}\|f\|
_{X^{q_1/{\al_m}}}^{q_1}
\|R_{(X^{1/{\al_m}})'}g\|_{(X^{1/{\al_m}})'}\\
&\quad\lesssim\left[1+\|\cm\|_{(X^{1/{\al_m}})'
\to(X^{1/{\al_m}})'}\right]\ld^{-q_1}\|f\|_{X^{q_1/{\al_m}}}^{q_1}.
\end{align*}
By this and \eqref{xixixi}, we find that
there exists a positive constant $C$ such
that,
for any   $m\in\nn$ and
$f\in C_{\mathrm{c}}^2(\rn),$
\begin{align}\label{6.21}
\ld\left\|\int_\rn\mathbf{1}_{E_f(\ld, q_1, 0)}
(\cdot,y)\,dy\right\|_{X^{1/{\al_m}}}^{1/{q_1}}
\leq C\left[1+\|\cm\|_{(X^{1/{\al_m}})'\to (X^{1/{\al_m}})'}\right]\|f\|_{X^{q_1/{\al_m}}}.
\end{align}
 Using this, Definition \ref{Debqfs}(iii), \eqref{6.21},
and \eqref{sjx}, together with $X$ being a
quasi-Banach space,
we conclude that, for any $\lambda\in(0,\infty)$,
\begin{align*}
&\left\|\int_\rn\mathbf{1}_{E_f(\ld, q_1, 0)}
(\cdot,y)\,dy\right\|_{X}\\
&\quad=\left\|\lim_{m\to\infty}\inf_{j\geq m}\left[\int_\rn\mathbf{1}_{E_f(\ld, q_1, 0)}
(\cdot,y)\,dy\right]^{1/\al_{j}}\right\|_{X}\\
&\quad=\lim_{m\to\infty}\left\|\inf_{j\geq m}\left[\int_\rn\mathbf{1}_{E_f(\ld, q_1, 0)}
(\cdot,y)\,dy\right]^{1/\al_{j}}\right\|_{X}\\
&\quad\leq\liminf_{m\to\infty}\left\|\left[\int_\rn\mathbf{1}_{E_f(\ld, q_1, 0)}
(\cdot,y)\,dy\right]^{1/\al_{m}}\right\|_{X}\\
&\quad\leq\liminf_{m\to\infty}\left[1+\|\cm\|_{(X^{1/\al_{m}})'\to(X^{1/\al_{m}})'}
\right]^{q_1/\al_m}\ld^{-q_1/\al_m}\left\||f|^{\f{q_1}{\al_m}}\right\|_X\\
&\quad\lesssim\ld^{-q_1}\limsup_{m\to\infty}
\left\||f|^{\f{q_1}{\al_m}}\right\|_X\\
&\quad\sim\ld^{-q_1}\limsup_{m\to\infty}
\left\|\,\left|\f{f}{\|f\|_{L^\infty(\rn)}}\right|^{\f{q_1}{\al_m}}\right\|_X\|f\|_{L^\infty(\rn)}^{\f{q_1}{\al_m}}\\
&\quad\lesssim\ld^{-q_1}\limsup_{m\to\infty}
\left\|\,\left|\f{f}{\|f\|_{L^\infty(\rn)}}\right|^{q_1}\right\|_X\|f\|_{L^\infty(\rn)}^{\f{q_1}{\al_m}}
\lesssim\ld^{-q_1}
\left\||f|^{q_1}\right\|_X.
\end{align*}
This implies that \eqref{wuwuwu} holds true. From both
\eqref{wuwuwu} and \eqref{hahaha}, it follows that \eqref{jjdjr} holds true
for  any $f\in C^2(\rn)$
with $|\nabla f|\in C_{\mathrm{c}}(\rn)$.

Assume that $X$ has an absolutely continuous norm,
$X^{q_1}$ is a ball Banach function space, and
the centered ball average operators $\{B_r\}_{r\in(0,\infty)}$ are uniformly bounded  on  $X$.
It remains to show that \eqref{jjdjr} holds true for any
$f\in \dot{W}^{1,X}(\rn)$ under these additional assumptions.

Indeed, by both \eqref{youyong}
and Theorem \ref{theorem3}(ii) with $q:=1$, we find that \eqref{hahaha}
holds true for any $f\in \dot{W}^{1,X}(\rn)$. This implies that,
to prove that \eqref{jjdjr} holds true for any $f\in \dot{W}^{1,X}(\rn)$,
it suffices to show that \eqref{wuwuwu}
holds true for any $f\in  \dot{W}^{1,X}(\rn)\cap
X^{q_1}$.
Let $f\in \dot{W}^{1,X}(\rn)\cap X^{q_1}.$
By the assumption that  the centered ball average operators $\{B_r\}_{r\in(0,\infty)}$ are uniformly bounded  on $X$,
it is easy to show that the centered ball average operators $\{B_r\}_{r\in(0,\infty)}$
are also uniformly bounded  on $X^{q_1}$.
Moreover, $X$ has an absolutely continuous norm implies that
it is obvious that $X^{q_1}$ also has an absolutely continuous norm.
From these and the assumption that
$X^{q_1}$ is a ball Banach function space,
it follows that $C^\infty_{\mathrm{c}}(\rn)$ is dense in $X^{q_1}$ (see, for instance,
\cite[Corollary 3.10]{dgpyyz}). Thus, there  exists
a sequence $\{f_k\}_{k\in\nn}\subset C^\infty_{\mathrm{c}}(\rn)$ such that
\begin{align}\label{youlai}
\lim_{k\to\infty}\|f-f_k\|_{X^{q_1}}=0.
\end{align}
For any $N\in\nn$, let
\begin{align*}
E_N:=\left\{(x,y)\in B(\mathbf{0},N)\times B(\mathbf{0},N):\
|x-y|\geq N^{-1}\right\}.
\end{align*}
Notice that, for any $k\in\nn$ and $\ld\in(0,\infty)$,
\begin{align*}
E_f(\ld,q_1,0)
\subset
E_{f_k}(3^{-1}\ld,q_1,0)
\cup
E_{k}^{(1)}(3^{-1}\ld,q_1,0)
\cup
E_{k}^{(2)}(3^{-1}\ld,q_1,0),
\end{align*}
where
\begin{align*}
E_{k}^{(1)}(3^{-1}\ld,q_1,0)
:=\left\{(x,y)\in\rn\times\rn:\
|f(x)-f_k(x)|>3^{-1}\ld|x-y|^{\frac{n}{q_1}}
\right\}
\end{align*}
and
\begin{align*}
E_{k}^{(2)}(3^{-1}\ld,q_1,0)
:=\left\{(x,y)\in\rn\times\rn:\
|f(y)-f_k(y
)|>3^{-1}\ld|x-y|^{\frac{n}{q_1}}
\right\}.
\end{align*}
This, together with the assumption that $X$ is a ball Banach function
space and the fact that $\frac{1}{q_1}\in (0,1]$, further implies that, for any $\ld\in(0,\infty)$ and $k,N\in\nn$,
\begin{align}\label{huxi}
&\ld\left\|\int_{\rn}\mathbf{1}_{E_f(\ld,q_1,0) \cap
E_N}(\cdot,y)\,dy\right\|_{X}^{\frac{1}{q_1}}\\\noz
&\quad\leq
\ld\left\|\int_{\rn}\mathbf{1}_{E_{f_k}(3^{-1}\ld,q_1,0) \cap
E_N}(\cdot,y)\,dy\right\|_{X}^{\frac{1}{q_1}}\\\noz
&\qquad+
\ld\left\|\int_{\rn}\mathbf{1}_{E_{k}^{(1)}(3^{-1}\ld,q_1,0) \cap
E_N}(\cdot,y)\,dy\right\|_{X}^{\frac{1}{q_1}}\\\noz
&\qquad+
\ld\left\|\int_{\rn}\mathbf{1}_{E_{k}^{(2)}(3^{-1}\ld,q_1,0) \cap
E_N}(\cdot,y)\,dy\right\|_{X}^{\frac{1}{q_1}}\\
&\quad=:\mathrm{I}+\mathrm{II}+\mathrm{III}.\noz
\end{align}
Applying \eqref{wuwuwu} with $f:=f_k$ and using
 the assumption that $X^{q_1}$ is a ball Banach function space,
we find that, for any $\ld\in(0,\infty)$ and $k,N\in\nn$,
\begin{align}\label{yueqiu1}
\mathrm{I}
\leq
\ld\left\|\int_{\rn}\mathbf{1}_{E_{f_k}(3^{-1}\ld,q_1,0)}(\cdot,y)\,dy\right\|_{X}^{\frac{1}{q_1}}
\lesssim
\|f_k\|_{X^{q_1}}
\lesssim
\|f_k-f\|_{X^{q_1}}+\|f\|_{X^{q_1}}.
\end{align}
Observe that, for any $\ld\in(0,\infty)$,  $k,N\in\nn$, and
$x\in\rn,$
\begin{align*}
\ld^{q_1}\int_{\rn}\mathbf{1}_{E_{k}^{(1)}(3^{-1}\ld,q_1,0)\cap E_N}(x,y)\,dy
&\leq
3^{q_1}\int_{\rn}\frac{|f(x)-f_k(x)|^{q_1}}{|x-y|^n}\mathbf{1}_{E_N}(x,y)\,dy\\\noz
&\leq
3^{q_1}N^n|B(\mathbf{0},N)||f(x)-f_k(x)|^{q_1}.
\end{align*}
Thus, for any $\ld\in(0,\infty)$ and $k,N\in\nn$,
\begin{align}\label{yueqiu3}
\mathrm{II}
\leq
3[N^n|B(\mathbf{0},N)|]^{\frac{1}{q_1}}\|f-f_k\|_{X^{q_1}}.
\end{align}
By Lemma  \ref{lemma3.5}, we find that,  for any $\ld\in(0,\infty)$,  $k,N\in\nn$, and
$x\in\rn,$
\begin{align*}
&\ld^{q_1}\int_{\rn}\mathbf{1}_{E_{k}^{(2)}(3^{-1}\ld,q_1,0)\cap E_N}(x,y)\,dy\\\noz
&\quad\leq
3^{q_1}\int_{\rn}\frac{|f(y)-f_k(y)|^{q_1}}{|x-y|^n}\mathbf{1}_{E_N}(x,y)\,dy\\\noz
&\quad\leq
3^{q_1}N^n\int_{B(\mathbf{0},N)}|f(y)-f_k(y)|^{q_1}\,dy\mathbf{1}_{B(\mathbf{0},N)}(x)\\\noz
&\quad\leq
3^{q_1}N^n\|f-f_k\|_{X^{q_1}}^{q_1}
\|\mathbf{1}_{B(\mathbf{0},N)}\|_{X'}
\mathbf{1}_{B(\mathbf{0},N)}(x)
\end{align*}
and hence
\begin{align*}
\mathrm{III}
\leq
3[N^n\|\mathbf{1}_{B(\mathbf{0},N)}\|_{X}
\|\mathbf{1}_{B(\mathbf{0},N)}\|_{X'}]^{\frac{1}{q_1}}\|f-f_k\|_{X^{q_1}}.
\end{align*}
From this, \eqref{yueqiu3}, \eqref{yueqiu1}, and \eqref{huxi},
it follows that, for any $\ld\in(0,\infty)$ and $k,N\in\nn$,
\begin{align*}
&\ld\left\|\int_{\rn}\mathbf{1}_{E_f(\ld,q_1,0) \cap
E_N}(\cdot,y)\,dy\right\|_{X}^{\frac{1}{q_1}}\\\noz
&\quad\lesssim
\left\{1+[N^n|B(\mathbf{0},N)|]^{\frac{1}{q_1}}+[N^n\|\mathbf{1}_{B(\mathbf{0},N)}\|_{X}
\|\mathbf{1}_{B(\mathbf{0},N)}\|_{X'}]^{\frac{1}{q_1}}\right\}\|f-f_k\|_{X^{q_1}}\\\noz
&\quad\quad+\|f\|_{X^{q_1}}.
\end{align*}
Using this and \eqref{youlai}, and
letting $k\to\infty$ and $N\to\infty$,
we find that \eqref{wuwuwu} holds true for any $f\in X^{q_1}$, which completes
the proof of (ii) and hence Corollary \ref{corollary3.111}.
\end{proof}
\begin{remark}\label{r3.10}

\begin{itemize}
\item[{\rm(i)}]
Let $X:=L^p(\rn)$ with $p\in[1,\infty).$
Then  Corollary \ref{corollary3.111}
holds true with $X$ replaced by $L^p(\rn)$.
In particular, if $q_1:=\infty$, $q\in[1,\fz]$ and $\theta:=1/q,$  $X:=L^1(\rn)$,
and $f\in C^\infty_{\mathrm{c}}(\rn)$,
then, in this case, Corollary \ref{corollary3.111}(ii)
is just \cite[Corollary 5.1]{ref8}; furthermore, in this case, if $n=1$, 
by Corollary \ref{corollary3.111}(ii), we find that,
for any $f\in C^\infty_{\mathrm{c}}(\rr)$,
\begin{align*}
\left\|\frac{|f(x)-f(y)|}{|x-y|^{\frac{2}{q}}}\right\|_{L^{q,\infty}(\rr\times\rr)}
\lesssim
\|f\|_{L^\infty(\rr)}^{1-\theta}
\|f'\|_{L^1(\rr)}^{\theta}
\lesssim
\|f'\|_{L^1(\rr)},
\end{align*}
which is just \cite[Corollary 4.1]{ref8}.
\item[{\rm(ii)}] Let $q_1:=\infty.$ In this case, when we replace
the weak type norm $\|\cdot\|_{\mathbf{W\dot{F}}^{\ta}_{X^q,q}(\rn)}$ in \eqref{henghengheng} by the strong
type norm $\|\cdot\|_{\dot{\mathbf{F}}^{\ta}_{X^q,q}(\rn)}$, \eqref{henghengheng} may not hold true
(see, for instance, \cite[(5.3)]{ref8}). In this sense, \eqref{henghengheng} with $q_1=\infty$
seems to be sharp.

\item[{\rm(iii)}] Let $q_1\in[1,\infty).$ We should point out that, if
the weak type norm $\|\cdot\|_{\mathbf{W\dot{F}}^{\ta}_{X^q,q}(\rn)}$ in \eqref{jjdjr} is replaced by the strong
type norm $\|\cdot\|_{\dot{\mathbf{F}}^{\ta}_{X^q,q}(\rn)},$   it is still unclear  whether or not Corollary
\ref{corollary3.111}(i) still holds true.
\item[{\rm(iv)}]
Let $X$ be the same as in Theorem \ref{theorem3.9}(ii).
Then both \eqref{jjdjr} and \eqref{henghengheng} hold true
 for any $f\in\dot{W}^{1,X}(\rn)$,
which can be proved by a slight modification of the proof of Corollary \ref{corollary3.111}
with Theorem \ref{theorem3} replaced by Theorem \ref{theorem3.9}(ii); we omit the details.
\end{itemize}
\end{remark}

Similarly, using Theorem \ref{theorem3.9}, we obtain the following fractional
Gagliardo--Nirenberg type inequality on ball Banach function spaces.
\begin{corollary}\label{corollary1001}
Let $s_1 \in[0,1)$, $q_1\in(1,\infty),$ and
$\ta\in (0,1)$. Let $s\in (s_1, 1)$ and
$q\in (1, q_1)$ satisfy $s=(1-\theta) s_1+\theta$
and $ \frac{1}{q}=\frac{1-\theta}{q_1}+\theta$.
Let $X$ be a ball Banach function space. Assume that
there exists a sequence $\{\al_m\}\in(0,1)$ such that, for any $m\in\nn$, $\lim_{m\to\infty}\al_m=1$,
$X^{\f 1 {\al_m}}$ is a ball Banach function space, $\cm$  in \eqref{2-4-c}  bounded
on its associate space $(X^{1/{\al_m}})',$
and \eqref{72121} holds true.
Then there exists a positive constant $C$ such
that, for	any $f\in C^2(\rn)$ with $|\nabla f|\in C_\mathrm{c}(\rn)$,
\begin{align}\label{zhongyang}
\|f\|_{\mathbf{W\dot{F}}^s_{X^q,q}(\rn)}
\leq
C\|f\|^{1-\theta}_{\mathbf{W\dot{F}}^{s_1}_{X^{q_1},q_1}(\rn)}
\|\,|\nabla f|\,\|_{X}^{\theta}
\leq
C\|f\|^{1-\theta}_{\dot{\mathbf{F}}^{s_1}_{X^{q_1},q_1}(\rn)}
\|\,|\nabla f|\,\|_{X}^{\theta}.
\end{align}
Assume further that
the centered ball average operators $\{B_r\}_{r\in(0,\infty)}$ are uniformly bounded on $X$ and
$X$ has an absolutely continuous norm.
Then \eqref{zhongyang} holds true for any $f\in \dot{W}^{1,X}(\rn)$.
\end{corollary}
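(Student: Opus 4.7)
The approach follows the balancing argument used in Corollary \ref{corollary3.111}(i), with the endpoint data $(0, q_1)$ replaced by the interior data $(s_1, q_1)$. From $s = (1-\theta)s_1 + \theta$ and $1/q = (1-\theta)/q_1 + \theta$, one has $n/q + s = (1-\theta)(n/q_1 + s_1) + \theta(n+1)$, so the pointwise factorization
\begin{align*}
\frac{|f(x) - f(y)|}{|x-y|^{n/q+s}} = \left[\frac{|f(x)-f(y)|}{|x-y|^{n/q_1+s_1}}\right]^{1-\theta}\left[\frac{|f(x)-f(y)|}{|x-y|^{n+1}}\right]^{\theta}
\end{align*}
is valid, yielding, for every $A, \lambda \in (0,\infty)$, the set inclusion
\begin{align*}
E_f(\lambda, q, s) \subset E_f(A^{-\theta}\lambda,\, q_1,\, s_1) \cup E_f(A^{1-\theta}\lambda,\, 1,\, 1).
\end{align*}

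Setting $G := \|f\|_{\mathbf{W\dot{F}}^{s_1}_{X^{q_1},q_1}(\rn)}$ and $H := \|f\|_{\mathbf{W\dot{F}}^1_{X,1}(\rn)}$, and using $\|g\|_{X^q} = \|\,|g|^q\,\|_X^{1/q}$ together with the subadditivity of $t \mapsto t^{1/q}$ on $[0,\infty)$ (which holds since $q \geq 1$), one would obtain
\begin{align*}
\lambda \left\|\int_{\rn} \mathbf{1}_{E_f(\lambda,q,s)}(\cdot, y)\, dy\right\|_X^{1/q} \lesssim \lambda \left(\frac{A^\theta G}{\lambda}\right)^{q_1/q} + \lambda\left(\frac{H}{A^{1-\theta}\lambda}\right)^{1/q}
\end{align*}
for every $A, \lambda \in (0,\infty)$. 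Balancing the two summands by choosing $A$ appropriately and exploiting the identity $q_1/q = 1 + \theta(q_1-1) = \theta q_1 + (1-\theta)$, a short computation collapses each term to $G^{1-\theta}H^\theta / \lambda$, so the supremum over $\lambda > 0$ yields $\|f\|_{\mathbf{W\dot{F}}^s_{X^q,q}(\rn)} \lesssim G^{1-\theta} H^\theta$. To bound $H$, I would invoke Theorem \ref{theorem3}(i) with $q := 1$: the condition $n(1-1/1) = 0 < 1$ is trivial and the hypothesis \eqref{72121} supplies the assumption \eqref{sjx}, so $H \lesssim \|\,|\nabla f|\,\|_X$ for any $f \in C^2(\rn)$ with $|\nabla f| \in C_{\mathrm{c}}(\rn)$. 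This establishes the first inequality in \eqref{zhongyang}; the second inequality is a one-line Chebyshev estimate giving $\|f\|_{\mathbf{W\dot{F}}^{s_1}_{X^{q_1},q_1}(\rn)} \leq \|f\|_{\dot{\mathbf{F}}^{s_1}_{X^{q_1},q_1}(\rn)}$.

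For the extension to $\dot{W}^{1,X}(\rn)$ under the additional hypotheses, note that the interpolation bound $\|f\|_{\mathbf{W\dot{F}}^s_{X^q,q}(\rn)} \lesssim G^{1-\theta} H^\theta$ uses only the pointwise set inclusion together with the triangle inequality of $\|\cdot\|_X$, and therefore holds for arbitrary measurable $f$ without any regularity input. Since the added assumptions permit Theorem \ref{theorem3}(ii) to apply with $q := 1$, the bound $H \lesssim \|\,|\nabla f|\,\|_X$ extends automatically to every $f \in \dot{W}^{1,X}(\rn)$, which closes the argument. I expect the main technical point to be the balancing computation: the identity $q_1/q - \theta q_1 = 1 - \theta$ is precisely what forces the exponents on $G$, $H$, and $\lambda$ to collapse to $1-\theta$, $\theta$, and $-1$ respectively, so that the supremum over $\lambda$ can be absorbed cleanly; apart from this algebraic step, the proof is a direct adaptation of the argument for Corollary \ref{corollary3.111}(i) to the two-parameter interpolation setting.
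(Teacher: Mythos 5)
Your proposal is correct and follows essentially the same route as the paper's proof: the paper reduces to the computation already done for Corollary~\ref{corollary3.111} (the factorization, set inclusion, and balancing in $A$, with $(q_1,0)$ replaced by $(q_1,s_1)$), then bounds $\|f\|_{\mathbf{W\dot{F}}^{1}_{X,1}(\rn)}$ via Theorem~\ref{theorem3}(i) for the $C^2$ case and Theorem~\ref{theorem3}(ii) for $\dot{W}^{1,X}(\rn)$, finishing with the elementary Chebyshev bound $\|f\|_{\mathbf{W\dot{F}}^{s_1}_{X^{q_1},q_1}(\rn)}\le\|f\|_{\dot{\mathbf{F}}^{s_1}_{X^{q_1},q_1}(\rn)}$. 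You have merely unpacked the ``by an argument similar to \eqref{youyong}'' reference explicitly, and your algebraic verification that $q_1/q-\theta q_1=1-\theta$ forces the exponents to collapse is exactly the balancing step the paper leaves implicit.
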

\begin{proof}
Let $f\in \mathbf{\dot{F}}^{s_1}_{X^{q_1},q_1}(\rn)\cap \mathbf{W\dot{F}}^{1}_{X,1}(\rn).$
For any
$\ld,r,s\in(0,\infty)$ and $x\in \rn$,
let $E_f(\ld, r, s)$ be the same as  in \eqref{keydef}. Let
\begin{align*}
\mathrm{G_1}:&=\sup_{\ld\in(0,\infty)}\lambda\left\|
\left[\int_\rn\mathbf{1}_{E_f(\ld, q_1, s_1)}
(\cdot,y)\,dy\right]^{1/q_1}\right\|
_{X^{q_1}}=\|f\|_{\mathbf{W\dot{F}}^{s_1}_{X^{q_1},q_1}(\rn)}
\end{align*}
and
\begin{align*}
\mathrm{H_1}:&=\sup_{\ld\in(0,\infty)}\lambda
\left\|\int_{\rn}\mathbf{1}_{E_f(\ld,1,1)}
(\cdot,y)\,dy\right\|_{X}
=\|f\|_{\mathbf{W\dot{F}}^{1}_{X,1}(\rn)}.
\end{align*}
By an argument similar to that used in the proof of \eqref{youyong}
with $E_f(\ld,q,\theta)$ and $E_f(\ld,q_1,0)$ replaced, respectively,
by $E_f(\ld,q,s)$ and $E_f(\ld,q_1,s_1)$, we conclude that
\begin{align*}
\left\|\int_{\rn}\mathbf{1}_{E_f(\ld, q,s)}
(\cdot,y)\,dy\right\|_{X}\lesssim\frac{1}{\lambda^q}\mathrm{G_1}^
{q(1-\theta)}\mathrm{H_1}^{q\theta}.
\end{align*}
From this and the fact that $\|f\|_{\mathbf{W\dot{F}}^{s_1}_{X^{q_1},q_1}(\rn)}\leq
\|f\|_{\dot{\mathbf{F}}^{s_1}_{X^{q_1},q_1}(\rn)}$, we deduce that
\begin{align}\label{qianniu}
\|f\|_{\mathbf{W\dot{F}}^s_{X^q,q}(\rn)}^q
&=\sup_{\ld
\in(0,\infty)}\lambda^q\left\|\int_{\rn}\mathbf{1}_{
E_f(\ld,q,s)}(\cdot,y)\,dy\right\|_{X}
\lesssim \mathrm{G}_1^{q(1-\theta)}\mathrm{H}_1^{q\theta}\\\noz
&\sim
\|f\|_{\mathbf{W\dot{F}}^{s_1}_{X^{q_1},q_1}(\rn)}^{q(1-\theta)}
\|f\|_{\mathbf{W\dot{F}}^{1}_{X,1}(\rn)}^{q\theta}
\lesssim
\|f\|_{\mathbf{\dot{F}}^{s_1}_{X^{q_1},q_1}(\rn)}^{q(1-\theta)}
\|f\|_{\mathbf{W\dot{F}}^{1}_{X,1}(\rn)}^{q\theta}\noz.
\end{align}

If $f\in C^2(\rn)$ with $|\nabla f|\in C_\mathrm{c}(\rn)$, then,
applying Theorem \ref{theorem3}(i) with $q:=1$,
we find that
\begin{align}\label{qiannianjiang}
\|f\|_{\mathbf{W\dot{F}}^{1}_{X,1}(\rn)}
\lesssim
\|\,|\nabla f|\,\|_{X}.
\end{align}
This, together with \eqref{qianniu}, further
implies that \eqref{zhongyang} holds true for any $f\in C^2(\rn)$ with $|\nabla f|\in C_\mathrm{c}(\rn)$.

If the centered ball average operators $\{B_r\}_{r\in(0,\infty)}$ are uniformly bounded on $X$ and
$X$ has an absolutely continuous norm,
then, by  Theorem \ref{theorem3}(ii) with $q:=1$, we find that \eqref{qiannianjiang} holds true
for any $f\in \dot{W}^{1,X}(\rn)$.
This, combined with \eqref{qianniu},
further implies that \eqref{zhongyang} holds true for any $f\in \dot{W}^{1,X}(\rn)$.
This finishes the proof of Corollary \ref{corollary1001}.
\end{proof}

\begin{remark}\label{2022721}
\begin{itemize}
\item[{\rm(i)}] In Corollary \ref{corollary1001}, if $X:=L^p(\rn)$ with $p\in[1,\infty)$,
then the conclusions of Corollary \ref{corollary1001} hold true with  $X$ 
replaced by $L^p(\rn)$. Moreover, when   $X:=L^1(\rn)$ and $f\in C^\infty_{\mathrm{c}}(\rn)$,
Corollary \ref{corollary1001} in this case is just \cite[Corollary 5.2]{ref8}.

\item[{\rm(ii)}]
Let $X$ be the same as in Theorem \ref{theorem3.9}(ii).
Then, similarly to Remark \ref{r3.10}(iv), we find that \eqref{zhongyang} holds true
for any $f\in\dot{W}^{1,X}(\rn)$,
which can be proved by a slight modification of the proof of Corollary \ref{corollary1001}
with Theorem \ref{theorem3} replaced by Theorem \ref{theorem3.9}(ii); we omit the details.
\end{itemize}
\end{remark}
\section{Applications}\label{sec:6}
In this section, we apply
Theorems \ref{theorem3.9} and \ref{theorem3},
Corollaries \ref{corollary3.111} and \ref{corollary1001},
respectively, to six concrete examples of ball
Banach function spaces, namely, Morrey spaces
(see Subsection \ref{s6.1} below), mixed-norm Lebesgue
spaces (see Subsection \ref{s6.2} below),
variable Lebesgue spaces (see Subsection
\ref{s6.3} below), weighted Lebesgue spaces
(see Subsection \ref{s6.4} below), Orlicz spaces
(see Subsection \ref{s6.5} below), and Orlicz-slice
spaces (see Subsection \ref{s6.6} below).
\subsection{Morrey Spaces\label{s6.1}}
For any $0<r\le \alpha<\infty$, the \emph
{Morrey space $M_r^\alpha(\rn)$} is defined to be the set of all the
measurable functions $f$ on $\rn$ with the finite semi-norm
$$\|f\|_{M_r^\alpha(\rn)}
:=\sup_{B\in\BB}|B|^{1/\alpha-1/r}\|f\|_{L^r(B)}.
$$
These spaces were introduced in 1938 by Morrey
\cite{Mo} in order to study the regularity of
solutions to partial differential equations. They
have important applications in the theory of
elliptic partial differential equations,
potential theory, and harmonic analysis (see,
for instance, \cite{a15,cf,HMS20,HMS17,HSS18,HS17,JW,sdh20,sdh20II,tyy19}).
As was indicated in \cite[p.\,86]{ref3},
the Morrey space $M_r^\alpha(\rn)$ for any $1\leq r\leq\al<\infty$
is a ball 	Banach function space, but is not
a Banach function space in the terminology of
Bennett and Sharpley \cite{ref4}.
Using Theorems \ref{theorem3.9}(i)
and \ref{theorem3}(i),  we obtain the following conclusions.
\begin{theorem}\label{bb}
Let $1\leq r\leq \al<\infty$ and $q\in(0,\infty)$
satisfy $n(\frac{1}{r}-\frac{1}{q})<1$.
Then, for any $f\in C^2(\rn)$ with
$|\nabla f|\in C_{\mathrm{c}}(\rn)$,
\begin{align}\label{new}
&\sup_{\sub{\ld\in(0,\infty)\\B\in\BB}} \lambda
|B|^{\f 1\al-\f1r} \left[\int_B \left|\left
\{y\in \rn:\ |f(x)-f(y)|>\lambda|x-y|^{
\frac{n}{q}+1}\right\}\right|^{\f rq} \, dx
\right]^{1/r}\\\noz
&\quad\sim\|\,|\nabla f|\,\|_{M_r^\al(\rn)},
\end{align}
where the positive equivalence constants are independent of $f.$ Moreover, for any $f\in C^2(\rn)$ with
$|\nabla f|\in C_{\mathrm{c}}(\rn)$,
\begin{align}\label{new20}
	&\lim_{\ld\to\infty}\sup_{B\in\BB} \lambda
	|B|^{\f 1\al-\f1r} \left[\int_B \left|\left
	\{y\in \rn:\ |f(x)-f(y)|>\lambda|x-y|^{
	\frac{n}{q}+1}\right\}\right|^{\f rq} \, dx
	\right]^{1/r}\\\noz
	&\quad=
	\left[\frac{K(q,n)}{n}\right]^{\f 1 q}\|\,|\nabla f|\,\|_{M_r^\al(\rn)}.
\end{align}
\end{theorem}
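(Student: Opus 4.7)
The plan is to deduce Theorem \ref{bb} from Theorems \ref{theorem3.9}(i) and \ref{theorem3}(i) by specializing to the ball Banach function space $X:=M_r^\alpha(\mathbb{R}^n)$. First, I would record that, for any $1\le r\le\alpha<\infty$, $M_r^\alpha(\mathbb{R}^n)$ is a ball Banach function space (as noted in \cite{ref3}), and that its $1/p$-convexification satisfies $(M_r^\alpha)^{1/p}=M_{r/p}^{\alpha/p}$ for any $p\in[1,r]$, which is again a ball Banach function space under this restriction. The associate space of $M_{r/p}^{\alpha/p}$ is the corresponding block space, and the Hardy--Littlewood maximal operator $\mathcal{M}$ is bounded on this associate space precisely when $r/p>1$. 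Unraveling the definitions of $X$ and its semi-norm, the two-parameter supremum on the left-hand side of \eqref{new} equals $\sup_{\lambda\in(0,\infty)}\lambda \|\,|\{y\in\mathbb{R}^n\colon (\cdot,y)\in E_f(\lambda,q)\}|^{1/q}\|_{M_r^\alpha(\mathbb{R}^n)}$, i.e.\ the $\mathbf{W\dot{F}}^1_{X,q}(\mathbb{R}^n)$ semi-norm of $f$, and similarly the left-hand side of \eqref{new20} is exactly $\lambda^q\|\int_{\mathbb{R}^n}\mathbf{1}_{E_f(\lambda,q)}(\cdot,y)\,dy\|_{X^{1/q}}$ raised to the $1/q$ power. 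Hence the theorem reduces to the $X=M_r^\alpha$ instance of the abstract estimates in Theorems \ref{theorem3.9} and \ref{theorem3}.

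For the main case $r\in(1,\infty)$, I would choose $p\in(1,r]$ close enough to $r$ so that $n(1/p-1/q)<1$; this is possible because our hypothesis gives $n(1/r-1/q)<1$. With this $p$, $X^{1/p}=M_{r/p}^{\alpha/p}$ is a ball Banach function space with $r/p>1$, so $\mathcal{M}$ is bounded on $(X^{1/p})'$. Theorem \ref{theorem3.9}(i) then yields both the equivalence \eqref{new} (after identifying semi-norms as above) and the $\lambda\to\infty$ limit \eqref{new20} for every $f\in C^2(\mathbb{R}^n)$ with $|\nabla f|\in C_{\mathrm{c}}(\mathbb{R}^n)$.

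For the endpoint case $r=1$, the choice $p=1$ is forced but then $X^{1/p}=M_1^\alpha$ and $\mathcal{M}$ is not bounded on its associate space, so Theorem \ref{theorem3.9} does not apply directly. Instead, I would verify the hypotheses of Theorem \ref{theorem3} by selecting any sequence $\{\theta_m\}_{m\in\mathbb{N}}\subset(0,1)$ with $\theta_m\uparrow 1$: for each $m$, $X^{1/\theta_m}=M_{1/\theta_m}^{\alpha/\theta_m}$ is a ball Banach function space with $1/\theta_m>1$, so $\mathcal{M}$ is bounded on $(X^{1/\theta_m})'$, and the operator norm depends continuously on $\theta_m$, giving a uniform bound so that \eqref{sjx} holds. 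Applying Theorem \ref{theorem3}(i) then establishes \eqref{new} and \eqref{leiliu}, which after translation yields \eqref{new20}, in this endpoint regime.

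The main technical obstacle is the verification of the boundedness, and in particular the uniform boundedness, of $\mathcal{M}$ on the associate spaces $(M_{r/p}^{\alpha/p})'$ and $(M_{1/\theta_m}^{\alpha/\theta_m})'$ together with the continuous dependence of the operator norm on the parameter; this requires invoking the known description of these associate spaces as block spaces and the corresponding maximal inequality, and carefully tracking how the operator norm behaves as $p\to r$ (or $\theta_m\to 1$) so that the continuity of $C_{(\cdot)}$ in Theorems \ref{theorem3.9} and \ref{theorem3} can be used. Once these ingredients are in place, the two cases combine to give Theorem \ref{bb} without any further analysis at the level of Sobolev or difference quotients.
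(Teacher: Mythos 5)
Your proposal matches the paper's proof: in both, the theorem is obtained by specializing $X=M_r^\alpha(\rn)$ in Theorem \ref{theorem3.9}(i) for $r>1$ (choosing an auxiliary $p<r$ so that $n(1/p-1/q)<1$ and $\cm$ is bounded on the block space $(M_{r/p}^{\alpha/p})'$) and in Theorem \ref{theorem3}(i) for the endpoint $r=1$ (using the uniform $\cm$-bound $\lesssim r/\theta$ on $(M_{r/\theta}^{\alpha/\theta})'$ from the Cheung--Ho argument). The only slip is cosmetic: you write $p\in(1,r]$ but then require $r/p>1$, so the interval should be $[1,r)$ as in the paper, and your phrase ``continuous dependence'' should really be the explicit uniform bound on $\|\cm\|_{(X^{1/\theta_m})'\to(X^{1/\theta_m})'}$ as $\theta_m\to1$ that \eqref{sjx} demands.
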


\begin{proof}
We prove \eqref{new} by considering
the following
two cases on $r.$

Case $1)$ $r\in(1,\infty).$ In this case,
since $n(\f1r-\f1q)<1,$ it follows that there exists
a
$p\in[1,r)$ satisfying $n(\f1p-\f1q)<1.$
It is known that, for any $1<r\le \alpha<\infty$, the
associate space $X'$ of the Morrey space
$X:=M_r^\alpha(\rn)$ is a block space, on which
the Hardy--Littlewood maximal operator $\cm$ is
bounded (see, for instance, \cite[Theorem 4.1]{ST},
\cite[Theorem 3.1]{ch14}, and \cite[Lemma 5.7]{H15}).
By this and \cite[p.\,86]{ref3},
we find that
$X^{1/p}=M_{r/p}^{\al/p}(\rn)$ is a ball Banach
function space and $\cm$  bounded on
its associate space $(X^{1/p})'$.
Thus, all the
assumptions of Theorem \ref{theorem3.9}(i)
are satisfied for $X:=M_r^\alpha(\rn)$ with $1<
r\leq \al<\infty,$ and  $p\in[1,r)$.
By Theorem \ref{theorem3.9}(i) with $X$ replaced by $M_r^\alpha(\rn)$, we find that both
\eqref{new} and \eqref{new20} hold true. This finishes the proof of
this theorem in this case.

Case $2)$ $r=1.$ In this case, we then have $n(1-\f1q)<1.$
By
the proof of \cite[Theorem 3.1]{ch14},
we find that, when $1\leq
r\leq \al<\infty$ and $\theta\in(0,1)$, for any $f\in (M_{r/\theta}^{\al/\theta}(\rn))',$
$$\|\cm f\|_{[M_{r/\theta}^{\al/\theta}(\rn)]'}
\lesssim\frac{r}{\theta}\|f\|_{[M_{r/\theta}^{\al/\theta}(\rn)]'},$$
where the implicit positive constant depends only
on $n$.
Thus, all the assumptions of Theorem \ref{theorem3}(i) are
satisfied for $X:=M_1^\alpha(\rn)$.
In this case,
by Theorem \ref{theorem3}(i) with $X:=M_1^\alpha(\rn)$, we find that both
\eqref{new} and \eqref{new20} hold true. This then finishes the proof of this case and hence Theorem \ref{bb}.
\end{proof}
\begin{remark}
\begin{itemize}
\item[{\rm(i)}]
Let $r\in[1,\infty)$ and $q=\al=r$. In this case,
Theorem \ref{bb} is just \cite[Theorem 1.1]{ref8}.
\item[{\rm(ii)}]
Let $1\leq r<\alpha<\infty.$
Since the Morrey space does not have an absolutely continuous norm,  it is still unclear  whether or not \eqref{new} holds true for any $f\in \dot{W}^{1,X}(\rn)$ with $X:=M^\al_r(\rn)$.
\end{itemize}
\end{remark}
Using Corollaries \ref{corollary3.111}
and \ref{corollary1001}, we obtain the following conclusions.
\begin{corollary}\label{bb2}	
Let $1\leq r\leq\alpha<\infty,$ $q_1\in[1,\infty],$ and
$\ta\in (0,1)$. Let
$q\in [1, q_1]$ satisfy
$ \frac{1}{q}=\frac{1-\theta}{q_1}+\theta$.
\begin{itemize}
\item[{\rm(i)}] If $q_1\in[1,\infty),$ then there exists a positive constant $C$ such
that, for any $f\in C^2(\rn)$ with $|\nabla f|\in C_{\mathrm{c}}(\rn)$,
\begin{align*}
	&\sup_{\sub{\ld\in(0,\infty)\\B\in\BB}}
	\lambda|B|^{\frac{1}{q\alpha}-\frac{1}{qr}}
	\left[\int_B \left|	\lf\{y\in \rn:\ |f(x)-f(y)|
	>\lambda|x-y|^{\f n{q} +\ta}\r\}\right|^{
		r}\, dx\right]^{1/qr}\\
	&\quad\leq
	C\|f\|_{M^{q_1\alpha}_{q_1r}}^{1-\ta}\|\,|\nabla f|\,\|
	_{M_{r}^{\alpha}(\rn)}^{\ta}.
\end{align*}
\item[{\rm(ii)}] If $q_1=\infty,$ then there exists a positive constant $C$ such
that, for any $f\in C^2(\rn)$ with $|\nabla f|\in C_{\mathrm{c}}(\rn)$,
\begin{align*}
	&\sup_{\sub{\ld\in(0,\infty)\\B\in\BB}}
	\lambda|B|^{\frac{1}{q\alpha}-\frac{1}{qr}}
	\left[\int_B \left|	\lf\{y\in \rn:\ |f(x)-f(y)|
	>\lambda|x-y|^{\f n{q} +\ta}\r\}\right|^{
		r}\, dx\right]^{1/qr}\\
	&\quad\leq
	C\|f\|_{L^\infty(\rn)}^{1-\ta}\|\,|\nabla f|\,\|
	_{M_{r}^{\alpha}(\rn)}^{\ta}.
\end{align*}
\end{itemize}
\end{corollary}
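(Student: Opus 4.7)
The plan is to derive Corollary \ref{bb2} as a direct application of Corollary \ref{corollary3.111} with $X:=M_r^\alpha(\rn)$. The main work is to verify the hypotheses of Corollary \ref{corollary3.111} for this choice of $X$ and then to unfold the resulting weak Triebel--Lizorkin semi-norm into the explicit Morrey-type expression that appears in the statement.

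First, I would recall from \cite[p.\,86]{ref3} that, for any $1\leq r\leq\alpha<\infty$, $X:=M_r^\alpha(\rn)$ is a ball Banach function space, and verify by a direct change-of-variables computation that its convexifications satisfy $X^p=M_{pr}^{p\alpha}(\rn)$ for every $p\in(0,\infty)$ with $pr\geq 1$. In particular, $X^q=M_{qr}^{q\alpha}(\rn)$ and, when $q_1\in[1,\infty)$, $X^{q_1}=M_{q_1 r}^{q_1\alpha}(\rn)$; moreover, for $\alpha_m:=m/(m+1)$ with $m\in\nn$, one has $\alpha_m\in(0,1)$, $\alpha_m\to 1$, and
\[
X^{1/\alpha_m}=M_{r/\alpha_m}^{\alpha/\alpha_m}(\rn),
\]
with $r/\alpha_m>1$, which is therefore a ball Banach function space.

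Next, as in Case~2 of the proof of Theorem \ref{bb}, the associate space $(X^{1/\alpha_m})'$ is a block space on which $\cm$ is bounded with operator norm bounded by a multiple of $r/\alpha_m$ (by \cite[Theorem 3.1]{ch14}), so
\[
\lim_{m\to\infty}\|\cm\|_{(X^{1/\alpha_m})'\to(X^{1/\alpha_m})'}\lesssim r<\infty,
\]
and hence the assumption \eqref{72121} of Corollary \ref{corollary3.111} is verified uniformly in $m$. Invoking Corollary \ref{corollary3.111}(i) when $q_1\in[1,\infty)$ and Corollary \ref{corollary3.111}(ii) when $q_1=\infty$ then yields, for any $f\in C^2(\rn)$ with $|\nabla f|\in C_{\mathrm{c}}(\rn)$,
\[
\|f\|_{\mathbf{W\dot{F}}^\theta_{X^q,q}(\rn)}\lesssim \|f\|_{X^{q_1}}^{1-\theta}\|\,|\nabla f|\,\|_X^\theta
\]
in case (i), and the analogous estimate with $\|f\|_{L^\infty(\rn)}$ in case (ii).

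The final step is bookkeeping: using $X^q=M_{qr}^{q\alpha}(\rn)$ together with the definitions of $\mathbf{W\dot{F}}^\theta_{X^q,q}(\rn)$ and of the Morrey norm, I would rewrite the left-hand side via
\[
\left\|\left[\int_\rn \mathbf{1}_{E_f(\lambda,q,\theta)}(\cdot,y)\,dy\right]^{1/q}\right\|_{M_{qr}^{q\alpha}(\rn)}
=\sup_{B\in\mathbb{B}}|B|^{\frac{1}{q\alpha}-\frac{1}{qr}}\left[\int_B\left(\int_\rn\mathbf{1}_{E_f(\lambda,q,\theta)}(x,y)\,dy\right)^{r}dx\right]^{1/(qr)},
\]
which, after taking the supremum in $\lambda$, coincides with the double supremum appearing in the statement of Corollary \ref{bb2}; the right-hand sides are already in the required form once one substitutes $X^{q_1}=M_{q_1r}^{q_1\alpha}(\rn)$ and $\|\,|\nabla f|\,\|_X=\|\,|\nabla f|\,\|_{M_r^\alpha(\rn)}$. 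The only real technical point is the uniform boundedness of $\cm$ on $(X^{1/\alpha_m})'$ as $\alpha_m\to 1$, which is already contained in the harmonic-analytic input used for Theorem \ref{bb}; everything else is algebraic identification of Morrey convexifications.
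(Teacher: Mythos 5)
Your proposal is correct and follows exactly the route the paper intends: the paper derives Corollary \ref{bb2} by citing Corollary \ref{corollary3.111} with $X:=M_r^\alpha(\rn)$, and the only substantive inputs needed are (a) the convexification identity $X^p=M_{pr}^{p\alpha}(\rn)$ and (b) the uniform boundedness of $\cm$ on $(X^{1/\alpha_m})'$ as $\alpha_m\to1$, which you correctly source from the quantitative estimate $\|\cm\|_{[M_{r/\theta}^{\alpha/\theta}(\rn)]'\to[M_{r/\theta}^{\alpha/\theta}(\rn)]'}\lesssim r/\theta$ from \cite[Theorem 3.1]{ch14} (the same input used in Case~2 of the proof of Theorem \ref{bb}, which in fact applies for all $1\le r\le\alpha<\infty$ and $\theta\in(0,1)$, not only $r=1$). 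Your bookkeeping step unwinding $\|\cdot\|_{\mathbf{W\dot{F}}^\theta_{X^q,q}(\rn)}$ into the explicit Morrey expression with exponent $|B|^{\frac{1}{q\alpha}-\frac{1}{qr}}$ and $L^{qr}(B)$-integral is also correct, and the identification of the right-hand side with $\|f\|_{M_{q_1r}^{q_1\alpha}(\rn)}^{1-\theta}\|\,|\nabla f|\,\|_{M_r^\alpha(\rn)}^\theta$ (or $\|f\|_{L^\infty(\rn)}^{1-\theta}$ when $q_1=\infty$) is immediate. No gaps.
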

\begin{corollary}\label{bb3}
Let $1\leq r\leq \al <\infty$, $s_1\in (0,1)$,
$q_1\in(1,\infty)$, and $\theta\in(0,1).$
Let $s\in(s_1,1)$ and $q\in(1,r)$ satisfy
$s=(1-\theta) s_1+\theta$ and $\frac{1}{q}=
\frac{1-\theta}{q_1}+\theta.$
Then, for any $f\in C^2(\rn)$ with $|\nabla f|\in C_{\mathrm{c}}(\rn)$,
\eqref{zhongyang} holds
true with $X$ replaced by $M_{r}^{\alpha}(\rn).$
\end{corollary}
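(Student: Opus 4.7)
The plan is to deduce Corollary \ref{bb3} as a direct consequence of Corollary \ref{corollary1001} with $X:=M_r^\alpha(\rn)$. Since \eqref{zhongyang} is already the abstract fractional Gagliardo--Nirenberg statement produced by Corollary \ref{corollary1001} for every admissible ball Banach function space, and since the exponent relations $s=(1-\theta)s_1+\theta$ and $1/q=(1-\theta)/q_1+\theta$ are assumed verbatim in both statements, the task reduces to verifying the structural hypotheses of that corollary for $M_r^\alpha(\rn)$.

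First recall that, for $1\le r\le\alpha<\infty$, the Morrey space $M_r^\alpha(\rn)$ is a ball Banach function space (see \cite[p.\,86]{ref3}). Next, I will pick any sequence $\{\alpha_m\}_{m\in\nn}\subset(0,1)$ with $\alpha_m\uparrow 1$, say $\alpha_m:=m/(m+1)$; the constraint $q\in(1,r)$ in the hypothesis forces $r>1$, so $r/\alpha_m>1$ for every $m$. For each $m\in\nn$ one has
$$
\lf[M_r^\alpha(\rn)\r]^{1/\alpha_m}=M_{r/\alpha_m}^{\alpha/\alpha_m}(\rn),
$$
and, because $1<r/\alpha_m\le \alpha/\alpha_m<\infty$, this is again a ball Banach function space (again by \cite[p.\,86]{ref3}). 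Moreover, the associate space $(M_{r/\alpha_m}^{\alpha/\alpha_m}(\rn))'$ is a block space on which $\cm$ is bounded, and the quantitative estimate already recorded in the proof of Theorem \ref{bb} (extracted from the proof of \cite[Theorem 3.1]{ch14}) yields
$$
\|\cm\|_{([M_r^\alpha(\rn)]^{1/\alpha_m})'\to([M_r^\alpha(\rn)]^{1/\alpha_m})'}\lesssim \frac{r}{\alpha_m},
$$
with an implicit positive constant depending only on $n$. Since $r/\alpha_m\to r$ as $m\to\infty$, this supplies the uniform-boundedness hypothesis \eqref{72121}.

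With every hypothesis of Corollary \ref{corollary1001} now verified for $X:=M_r^\alpha(\rn)$ and the same $(s_1,q_1,s,q,\theta)$, a direct application of that corollary yields \eqref{zhongyang} for every $f\in C^2(\rn)$ with $|\nabla f|\in C_{\mathrm{c}}(\rn)$, which is exactly the statement of Corollary \ref{bb3}; no extension to $\dot{W}^{1,X}(\rn)$ is claimed (nor expected, since $M_r^\alpha(\rn)$ with $r<\alpha$ fails to have an absolutely continuous norm). The only point demanding care is the \emph{quantitative} form of the maximal-function bound on the block-space associate: it must not blow up as $\alpha_m\to 1$, and the $r/\alpha_m$ bound guarantees precisely this; no further obstacle arises.
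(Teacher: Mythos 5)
Your proof is correct and follows precisely the route the paper takes (the paper simply states that Corollaries \ref{bb2} and \ref{bb3} follow from Corollaries \ref{corollary3.111} and \ref{corollary1001}, without writing out the verification): identify $[M_r^\alpha(\rn)]^{1/\alpha_m}=M_{r/\alpha_m}^{\alpha/\alpha_m}(\rn)$, note that the constraint $q\in(1,r)$ forces $r>1$ so that each rescaled Morrey space is a ball Banach function space, invoke the block-space bound $\|\cm\|_{([M_r^\alpha(\rn)]^{1/\alpha_m})'\to([M_r^\alpha(\rn)]^{1/\alpha_m})'}\lesssim r/\alpha_m$ already recorded in the proof of Theorem \ref{bb}, and observe that it stays bounded as $\alpha_m\uparrow 1$, which is exactly the uniformity \eqref{72121} that Corollary \ref{corollary1001} requires. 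You were also right to stop at $f\in C^2(\rn)$ with $|\nabla f|\in C_{\mathrm{c}}(\rn)$, since the absolute-continuity assumption needed to extend to all of $\dot W^{1,X}(\rn)$ fails for $M_r^\alpha(\rn)$ when $r<\alpha$.
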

\begin{remark}
\begin{itemize}
\item[{\rm(i)}]
We point out that the Gagliardo--Nirenberg
type inequality in the Sobolev--Morrey space
related to the Riesz potential was established
by Sawano et al. in \cite{sw13}.
The Gagliardo--Nirenberg type inequalities
in the Sobolev-Morrey spaces, as given in
Corollaries \ref{bb2} and \ref{bb3}, appear new.
\item[{\rm(ii)}]
Let $1\leq r<\alpha<\infty.$
Since the Morrey space does not have an absolutely continuous norm,  it is still unclear  whether or not Corollaries \ref{bb2} and \ref{bb3} hold true for any $f\in \dot{W}^{1,X}(\rn)$ with $X:=M^\al_r(\rn)$.
\end{itemize}
\end{remark}
\subsection{Mixed-norm Lebesgue Spaces\label{s6.2}}
For a given vector $\vec{r}:=(r_1,\ldots,r_n)
\in(0,\infty]^n$, the \emph{mixed-norm Lebesgue
space $L^{\vec{r}}(\rn)$} is defined to be the
set of all the measurable functions $f$ on
$\rn$ with the finite quasi-norm
$$
\|f\|_{L^{\vec{r}}(\rn)}:=\lf\{\int_{\rr}
\cdots\lf[\int_{\rr}|f(x_1,\ldots,
x_n)|^{r_1}\,dx_1\r]^{\frac{r_2}{r_1}}
\cdots\,dx_n\r\}^{\frac{1}{r_n}},
$$
where the usual modifications are made when $r_i=
\infty$ for some $i\in\{1,\ldots,n\}$.
In the remainder of this subsection,
let $r_-:=\min\{r_1, \ldots , r_n\}.$
The study of mixed-norm Lebesgue spaces
can be traced back to H\"ormander \cite{H1}
and Benedek and Panzone \cite{BP}.
For more studies
on  mixed-norm Lebesgue spaces,
we refer the reader to
\cite{CGN17,HLYY,hlyy19,hy}
for the Hardy space associated with mixed-norm Lebesgue spaces,
to \cite{cgn17bs, gjn17, GN16}
for the Triebel and Besov spaces
associated with mixed-norm Lebesgue spaces,
to \cite{CGN17,CGN19MS}
for the (anisotropic) mixed-norm Lebesgue space,
and to \cite{tn,noss20}
for the mixed Morrey space.

From the definition of $L^{\vec{r}}(\rn)$, we easily deduce that
$L^{\vec{r}}(\rn)$, where $\vec{r}\in(0,\infty)^n$,
is a ball quasi-Banach function space.
But, $L^{\vec{r}}(\rn)$ may not be a quasi-Banach function space
(see, for instance, \cite[Remark 7.20]{ref5}).
When $X:=L^{\vec{r}}(\rn)$, we denote
$\dot{W}^{1,X}(\rn)$ simply by
$\dot{W}^{1,\vec{r}}(\rn)$.
Using Theorem \ref{theorem3.9}(ii), we obtain the
following conclusions.
\begin{theorem}\label{thm-6-5}
Let $\vec{r}:=(r_1,\ldots,r_n)\in(1,\infty)^n$
and $q\in(0,\infty)$ satisfy $n(\frac{1}{
r_{-}}-\f1q)<1$.
Then, for any
$f\in \dot{W}^{1,\vec{r}}(\rn)$,
\begin{align}\label{new2}
\sup_{\ld\in(0,\infty)}\lambda\left
\|\lf|\lf\{y\in\rn:\ |f(\cdot)-f(y)|>
\lambda|\cdot-y|^{\frac{n}{q}+1}
\r\}\r|^{\frac{1}{q}}\right\|_{L^{\vec{r}}
(\rn)}\sim\|\,|\nabla f|\,\|_{L^{\vec{r}}(\rn)},
\end{align}
where the positive equivalence constants are independent of $f.$ Moreover, 	for any
$f\in \dot{W}^{1,\vec{r}}(\rn)$,
\begin{align}\label{dengdeng}
&\lim_{\ld\to \infty}
\lambda\left
\|\lf|\lf\{y\in\rn:\ |f(\cdot)-f(y)|>
\lambda|\cdot-y|^{\frac{n}{q}+1}
\r\}\r|^{\frac{1}{q}}\right\|_{L^{\vec{r}}
(\rn)}\\\noz
&\quad=
\left[\frac{K(q,n)}{q}\right]^{\frac{1}{q}}\|\,|\nabla f|\,\|_{L^{\vec{r}}(\rn)}.
\end{align}
\end{theorem}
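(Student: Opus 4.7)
The plan is to deduce Theorem \ref{thm-6-5} directly from Theorem \ref{theorem3.9}(ii) applied to $X:=L^{\vec{r}}(\rn)$; the task thus reduces to verifying its four hypotheses with a suitable choice of $p$.

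Under the assumption $\vec{r}\in(1,\infty)^n$, the Benedek--Panzone theory \cite{BP} yields that $L^{\vec{r}}(\rn)$ is a Banach function space, hence in particular a ball Banach function space. Since every $r_j$ is finite, iterating the dominated convergence theorem across the nested integrals defining $\|\cdot\|_{L^{\vec{r}}(\rn)}$ shows that this norm is absolutely continuous, so the density prerequisite underlying Theorem \ref{theorem3.9}(ii) is met.

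For the remaining two hypotheses, I would take $p:=r_-$. This is permissible because $r_->1$ gives $p\in[1,\infty)$, while the standing hypothesis $n(\f1{r_-}-\f1q)<1$ is exactly $n(\f1p-\f1q)<1$. With this choice, $X^{1/p}=L^{\vec{r}/p}(\rn)$ has exponent vector $\vec{r}/p\in[1,\infty)^n$ (equal to $1$ in those coordinates $j$ with $r_j=r_-$ and strictly greater than $1$ elsewhere), and is therefore again a Banach function space. Its associate space is the mixed-norm Lebesgue space $L^{(\vec{r}/p)'}(\rn)$, whose dual exponent vector $(\vec{r}/p)'$ lies componentwise in $(1,\infty]$: equal to $\infty$ exactly on the coordinates where $r_j=r_-$, and finite but greater than $1$ on the others. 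The boundedness of $\cm$ on this associate space then follows from the standard pointwise domination of $\cm$ by the composition of one-dimensional centered maximal operators in each coordinate direction, combined with the one-dimensional $L^s(\rr)$-bound for $s\in(1,\infty)$ and the trivial action of the one-dimensional maximal operator on $L^\infty(\rr)$.

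With all four hypotheses of Theorem \ref{theorem3.9}(ii) verified, both \eqref{new2} and \eqref{dengdeng} follow at once. The only genuine content of the argument is the selection $p=r_-$: the strict inequality $n(\f1{r_-}-\f1q)<1$ is precisely what allows this single choice to simultaneously satisfy the index constraint of Theorem \ref{theorem3.9}(ii) and to place $\vec{r}/p$ in the range $[1,\infty)^n$ where the iterated maximal argument delivers boundedness of $\cm$ on $(X^{1/p})'$. No obstacle beyond this bookkeeping is anticipated, since once the hypotheses are verified, the abstract machinery of Theorem \ref{theorem3.9}(ii) takes over.
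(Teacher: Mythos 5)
Your strategy of deducing the theorem from Theorem \ref{theorem3.9}(ii) applied to $X = L^{\vec{r}}(\rn)$ is the same as the paper's, and the verifications of the ball-Banach-function-space property and of absolute continuity of the norm are fine. The gap is in your choice $p := r_-$ together with the claimed boundedness of $\cm$ on $(X^{1/p})' = L^{(\vec{r}/p)'}(\rn)$. With $p = r_-$, some components of $(\vec{r}/p)'$ equal $\infty$ (precisely those $j$ with $r_j = r_-$), and the ``iterated one-dimensional maximal operator plus trivial action on $L^\infty$'' argument does not deliver the boundedness you need. Bounding a single directional maximal $M_j$ on a mixed-norm space $L^{\vec{s}}(\rn)$ is, once $j$ is not the innermost index, a vector-valued Fefferman--Stein estimate, and that estimate fails when $s_j=\infty$ while some inner exponent $s_i$ with $i<j$ is finite. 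A concrete counterexample in $\rr^2$: for $\vec{s}=(2,\infty)$ (inner norm $L^2_{x_1}$, outer norm $L^\infty_{x_2}$) and $f:=\sum_{j=1}^N\mathbf{1}_{[j,j+1]}(x_1)\mathbf{1}_{[2^j,2^{j+1}]}(x_2)$, one has $\|f\|_{L^{(2,\infty)}(\rr^2)}=1$, yet averaging in $x_2$ over $[-2^{j+1},2^{j+1}]$ gives $M_2 f(x_1,0)\ge 1/4$ for every $x_1\in[j,j+1]$, hence $\|M_2 f(\cdot,0)\|_{L^2_{x_1}}\ge \sqrt{N}/4\to\infty$. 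Your set-up does hit this bad configuration: for instance $\vec{r}=(2,3/2)$ with $p=r_-=3/2$ yields $(\vec{r}/p)'=(4,\infty)$, a finite inner exponent followed by an outer $\infty$. So the composition $M_n\circ\cdots\circ M_1$ is not bounded on $L^{(\vec{r}/p)'}(\rn)$ by the argument you sketch, and the hypothesis of Theorem \ref{theorem3.9}(ii) remains unverified.

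The paper avoids this by exploiting that $n(\f{1}{r_-}-\f1q)<1$ is a \emph{strict} inequality: one can choose $p\in[1,r_-)$ strictly smaller than $r_-$ so that $n(\f1p-\f1q)<1$ still holds. Then $\vec{r}/p\in(1,\infty)^n$, so $(\vec{r}/p)'\in(1,\infty)^n$ has no $\infty$ components, and the boundedness of $\cm$ on $L^{(\vec{r}/p)'}(\rn)$ is the cited result \cite[Lemma 3.5]{HLYY} (equivalently, the nested Fefferman--Stein estimate, which is valid because all exponents are finite and $>1$). Replacing $p=r_-$ by such a $p<r_-$ in your argument closes the gap; the remaining steps then go through essentially as you describe.
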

\begin{proof}
Since $n(\f1{r_-}-\f1q)<1,$ it follows that there
exists
a $p\in[1,r_-)$ satisfying $n(\f1p-\f1q)<1.$
Using \cite[Lemma 4.1]{gp65}, we find that
$L^{\vec{r}}(\rn)$ has an
absolutely continuous norm.
By the definition of $L^{\vec{r}}(\rn),$ we have
\begin{align*}
\left[L^{\vec{r}}(\rn)\right]^{\frac{1}{p}}=L^{\frac{\vec{r}}{p}}(\rn)
\end{align*}
and hence $[L^{\vec{r}}(\rn)]^{\frac{1}{p}}$ is a ball Banach  function space
(see, for instance, \cite[Remark 2.8(iii)]{HLYY}).
This, together with \cite[Theorems 1 and 2]{BP}, further implies  that
\begin{equation*}
\left(\left[L^{\vec{r}}(\rn)\right]^{\frac{1}{p}}\right)'=L^{\left(\frac{\vec{r}}{p}\right)'}(\rn),
\end{equation*}
where  $(\frac{\vec{r}}{p})':=(r_1^*,\ldots,r_n^*)$ with $\frac{p}{r_i}+\frac{1}{r_i^*}=1$
for any $i\in\{1,\ldots,n\}$.
Moreover,  from \cite[Lemma 3.5]{HLYY},
we infer that the Hardy--Littlewood maximal operator $\cm$ is
bounded on $([L^{\vec{r}}(\rn)]^{\frac{1}{p}})'$.
Thus, all the assumptions of Theorem
\ref{theorem3.9}(ii) are satisfied
for $X:=L^{\vec{r}}(\rn)$.
By Theorem \ref{theorem3.9}(ii) with $X:=L^{\vec{r}}(\rn)$,
we conclude that both \eqref{new2} and \eqref{dengdeng} hold true for any
$f\in \dot{W}^{1,\vec{r}}(\rn)$. This finishes the proof of Theorem \ref{thm-6-5}.
\end{proof}
Using
Corollaries \ref{corollary3.111} and
\ref{corollary1001} and Remarks \ref{r3.10}(iv) and \ref{2022721}(iii), we obtain the
following conclusions whose proofs are similar to
that of Theorem \ref{thm-6-5}; we omit the details here.
\begin{corollary}\label{gnsm1}
Let $\vec{r}:=(r_1,\ldots,r_n)\in(1,\infty)^n,$
 $q_1\in[1,\infty],$ and
$\ta\in (0,1)$. Let
$q\in [1, q_1]$ satisfy
$ \frac{1}{q}=\frac{1-\theta}{q_1}+\theta$.
\begin{itemize}
\item[{\rm(i)}] If $q_1\in[1,\infty),$ then there exists a positive constant
$C$ such that, for any $f\in \dot{W}^{1,\vec{r}}(\rn)$,
\begin{align*}
	&\sup_{\ld\in(0,\infty)}\lambda\left
	\|\left|\lf\{y\in\rn:\ |f(\cdot)-f(y)|>\lambda
	|\cdot-y|^{\f n {q}+\ta}\r\}\right|
	\right\|_{L^{\vec{r}}(\rn)}^{\f1q}\\
	&\quad\leq C
	\|f\|_{L^{q_1\vec{r}}(\rn)}^{1-\ta}\|\,|\nabla f|\,
	\|_{L^{\vec{r}}(\rn)}^{\ta}.
\end{align*}
\item[{\rm(ii)}] If $q_1=\infty,$ then there exists a positive constant
$C$ such that, for any $f\in \dot{W}^{1,\vec{r}}(\rn)$,
\begin{align*}
	&\sup_{\ld\in(0,\infty)}\lambda\left
	\|\left|\lf\{y\in\rn:\ |f(\cdot)-f(y)|>\lambda
	|\cdot-y|^{\f n {q}+\ta}\r\}\right|
	\right\|_{L^{\vec{r}}(\rn)}^{\f1q}\\
	&\quad\leq C
	\|f\|_{L^{\infty}(\rn)}^{1-\ta}\|\,|\nabla f|\,
	\|_{L^{\vec{r}}(\rn)}^{\ta}.
\end{align*}
\end{itemize}
\end{corollary}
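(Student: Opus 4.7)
The plan is to reduce Corollary~\ref{gnsm1} to Corollary~\ref{corollary3.111} via Remark~\ref{r3.10}(iv); it therefore suffices to verify that the hypotheses of Theorem~\ref{theorem3.9}(ii) hold for $X:=L^{\vec{r}}(\rn)$, together with the auxiliary ball Banach structure on $[L^{\vec{r}}(\rn)]^{q_1}$ needed in part~(i). Every verification required has already appeared in the proof of Theorem~\ref{thm-6-5}, so I would essentially repackage that argument.

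First, I would note that $L^{\vec{r}}(\rn)$ with $\vec{r}\in(1,\infty)^n$ is a ball Banach function space directly from its definition, and that it has an absolutely continuous norm by \cite[Lemma~4.1]{gp65}. Pick $p\in[1,r_-)$; then $[L^{\vec{r}}(\rn)]^{1/p}=L^{\vec{r}/p}(\rn)$ is itself a ball Banach function space, its associate space equals $L^{(\vec{r}/p)'}(\rn)$ by \cite[Theorems~1 and~2]{BP}, and $\cm$ is bounded on this associate space by \cite[Lemma~3.5]{HLYY}. Lemma~\ref{yongwu} then supplies the uniform boundedness of $\{B_r\}_{r\in(0,\infty)}$ on $L^{\vec{r}}(\rn)$. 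For part~(i), I would additionally observe that $[L^{\vec{r}}(\rn)]^{q_1}=L^{q_1\vec{r}}(\rn)$ is a ball Banach function space because $q_1\vec{r}\in(1,\infty)^n$.

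With these verifications in hand, Remark~\ref{r3.10}(iv) applies and yields both \eqref{jjdjr} and \eqref{henghengheng} with $X=L^{\vec{r}}(\rn)$ for every $f\in\dot{W}^{1,\vec{r}}(\rn)$. Reading off these inequalities with $X^q=L^{q\vec{r}}(\rn)$ converts them into precisely the weak-type mixed-norm estimates claimed in the statement. I do not expect any genuine obstacle, since the whole proof reduces to tracking how convexification and K\"othe duality act on mixed-norm exponents via the identities $[L^{\vec{r}}(\rn)]^{\sigma}=L^{\sigma\vec{r}}(\rn)$ and $[L^{\vec{s}}(\rn)]'=L^{\vec{s}'}(\rn)$; the only point meriting attention is that the exponent $p$ appearing in Theorem~\ref{theorem3.9}(ii) can be fixed to any value in $[1,r_-)$, which is possible thanks to the hypothesis $\vec{r}\in(1,\infty)^n$.
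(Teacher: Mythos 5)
Your reduction is exactly the paper's own route: the paper states that Corollary~\ref{gnsm1} follows from Corollary~\ref{corollary3.111} via Remark~\ref{r3.10}(iv), with verifications ``similar to that of Theorem~\ref{thm-6-5},'' and you have supplied precisely those verifications (ball Banach structure of $L^{\vec r}(\rn)$ and its $1/p$- and $q_1$-convexifications, absolute continuity of the norm from \cite[Lemma~4.1]{gp65}, the duality $[L^{\vec r/p}(\rn)]'=L^{(\vec r/p)'}(\rn)$ from \cite[Theorems~1 and~2]{BP}, boundedness of $\cm$ on this associate space from \cite[Lemma~3.5]{HLYY}, and uniform boundedness of $\{B_r\}_{r\in(0,\infty)}$ via Lemma~\ref{yongwu}), before translating $[L^{\vec r}(\rn)]^{\sigma}=L^{\sigma\vec r}(\rn)$ to match the stated inequalities. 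This is correct and is essentially the paper's argument.
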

\begin{corollary}\label{gnsm2}
Let $\vec{r}:=(r_1,\ldots,r_n)\in(1,\infty)^n$,
$s_1\in (0,1)$, $q_1\in(1,\infty)$,
and $\theta\in(0,1).$ Let $s\in(s_1,1)$
and $q\in(1,r_{-})$ satisfy
$s=(1-\theta) s_1+\theta$ and $ \frac{1}{q}=\frac{1-\theta}{q_1}
+\theta.$
Then, for any $f\in \dot{W}^{1,\vec{r}}(\rn)$, \eqref{zhongyang} holds
true with $X$ replaced by $L^{\vec{r}}(\rn).$
\end{corollary}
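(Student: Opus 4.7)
The plan is to apply Corollary \ref{corollary1001} to $X:=L^{\vec{r}}(\rn)$, so the task reduces to verifying its hypotheses in the mixed-norm setting. This follows the same pattern as the proof of Theorem \ref{thm-6-5}, with the only new ingredient being the construction of a suitable sequence $\{\alpha_m\}$ delivering the uniform maximal-operator bound \eqref{72121}.

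Since $\vec{r}\in(1,\infty)^n$, one has $r_->1$, so I would fix any sequence $\{\alpha_m\}_{m\in\nn}\subset(1/r_-,1)$ with $\alpha_m\uparrow 1$ as $m\to\infty$. For each $m$, the vector $\alpha_m\vec{r}$ has all components in $(1,\infty)$, so $[L^{\vec{r}}(\rn)]^{1/\alpha_m}=L^{\alpha_m\vec{r}}(\rn)$ is a ball Banach function space (indeed, a Banach function space); by the mixed-norm duality of Benedek--Panzone \cite[Theorems 1 and 2]{BP}, its associate space is $L^{(\alpha_m\vec{r})'}(\rn)$, where $(\alpha_m\vec{r})'$ denotes the componentwise H\"older conjugate. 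Applying \cite[Lemma 3.5]{HLYY} to the mixed-norm Hardy--Littlewood maximal operator, I would obtain the boundedness of $\cm$ on each $L^{(\alpha_m\vec{r})'}(\rn)$. Moreover, since every component of $(\alpha_m\vec{r})'$ stays bounded away from both $1$ and $\infty$ as $\alpha_m\to 1$, the operator norms remain uniformly bounded in $m$, which secures assumption \eqref{72121}.

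Next, I would invoke \cite[Lemma 4.1]{gp65} to note that $L^{\vec{r}}(\rn)$ has absolutely continuous norm, and apply Lemma \ref{yongwu} (with, for instance, any fixed $p\in[1,r_-)$, for which $[L^{\vec{r}}(\rn)]^{1/p}=L^{\vec{r}/p}(\rn)$ is a ball Banach function space whose associate space is a mixed-norm Lebesgue space on which $\cm$ is bounded) to conclude that the centered ball average operators $\{B_r\}_{r\in(0,\infty)}$ are uniformly bounded on $L^{\vec{r}}(\rn)$. With all hypotheses in place, Corollary \ref{corollary1001} immediately yields \eqref{zhongyang} for every $f\in\dot{W}^{1,\vec{r}}(\rn)$.

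The main obstacle, insofar as there is one, is the uniform-in-$m$ maximal-operator bound \eqref{72121} on the sequence of associate spaces $L^{(\alpha_m\vec{r})'}(\rn)$. This is however transparent, because the mixed-norm Hardy--Littlewood estimate of \cite[Lemma 3.5]{HLYY} is obtained by iterating the one-dimensional maximal theorem, whose constant on $L^{s}(\rr)$ depends only on $s$ and remains bounded as long as $s$ stays in a compact subinterval of $(1,\infty)$; this is exactly the regime in which each component of $(\alpha_m\vec{r})'$ lives when $\alpha_m$ is close to $1$. Everything else in the argument is a verbatim transcription of the proof of Theorem \ref{thm-6-5}, and no further technical difficulty is anticipated.
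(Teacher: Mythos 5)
Your route differs from the paper's. The paper invokes Remark \ref{2022721}(ii) rather than Corollary \ref{corollary1001} directly: since $\vec{r}\in(1,\infty)^n$ one can pick a single fixed $p\in[1,r_-)$ for which $[L^{\vec r}(\rn)]^{1/p}=L^{\vec r/p}(\rn)$ is a ball Banach function space and $\cm$ is bounded on $(L^{\vec r/p}(\rn))'$, so Theorem \ref{theorem3.9}(ii) applies with $q:=1$; Remark \ref{2022721}(ii) then says that the proof of Corollary \ref{corollary1001} goes through with Theorem \ref{theorem3} replaced by Theorem \ref{theorem3.9}(ii), and the sequence $\{\alpha_m\}$ and the uniform bound \eqref{72121} are simply never needed. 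Your approach instead verifies the hypotheses of Corollary \ref{corollary1001} head on, including constructing $\{\alpha_m\}$ and checking \eqref{72121}. Both are viable; the paper's route is lighter precisely because $\vec r\in(1,\infty)^n$ is an interior case where a single $p<r_-$ suffices, whereas the sequence construction you use is really what is required for the endpoint $r_-=1$ (as in the variable-exponent and Orlicz cases).

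There is also a concrete error in your computation: you write $[L^{\vec r}(\rn)]^{1/\alpha_m}=L^{\alpha_m\vec r}(\rn)$, but by Definition \ref{Debf} one has
\begin{equation*}
\left\|f\right\|_{[L^{\vec r}(\rn)]^{1/\alpha_m}}
=\left\|\,|f|^{1/\alpha_m}\right\|_{L^{\vec r}(\rn)}^{\alpha_m}
=\left\|f\right\|_{L^{\vec r/\alpha_m}(\rn)},
\end{equation*}
that is, $[L^{\vec r}(\rn)]^{1/\alpha_m}=L^{\vec r/\alpha_m}(\rn)$, exactly as in the paper's proof of Theorem \ref{thm-6-5} with $p$ in place of $\alpha_m$. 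The convexification raises, not lowers, the exponents. Fortunately the slip does not propagate: $\vec r/\alpha_m$ has all components in $(1,\infty)$ for any $\alpha_m\in(0,1)$ (so your restriction $\alpha_m>1/r_-$ is unnecessary), and as $\alpha_m\uparrow1$ the dual exponents $(\vec r/\alpha_m)'$ converge to $\vec r'\in(1,\infty)^n$, so your justification of the uniform bound \eqref{72121} — bounding the iterated one-dimensional maximal constants on a compact subinterval of $(1,\infty)$ — still applies, provided one accepts that the constants in the mixed-norm maximal inequality of \cite[Lemma 3.5]{HLYY} depend continuously on the exponents. The paper sidesteps this quantitative point entirely by routing through Theorem \ref{theorem3.9}(ii).
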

\begin{remark}
\begin{itemize}
\item[{\rm(i)}] To the best of our knowledge, the Gagliardo--Nirenberg
type inequalities of
Corollaries \ref{gnsm1} and \ref{gnsm2} on the mixed-norm Sobolev  space are totally new.

\item[{\rm(ii)}] In both Theorem \ref{thm-6-5} and Corollaries \ref{gnsm1} and \ref{gnsm2},
the reason why the case  $r_{-}=1$ was
excluded is that it is still unclear  whether or not \eqref{sjx}
holds true with $X$ replaced by  $L^{\vec{r}}(\rn)$ in this case.
\end{itemize}	
\end{remark}

\subsection{Variable Lebesgue Spaces\label{s6.3}}

Let $r:\ \rn\to(0,\infty)$ be a nonnegative
measurable function. Let
$$
\widetilde{r}_-:=\underset{x\in\rn}{\essinf}\,r(x)\ \text{and}\
\widetilde r_+:=\underset{x\in\rn}{\esssup}\,r(x).
$$
A function $r:\ \rn\to(0,\infty)$ is said to be \emph{globally
log-H\"older continuous} if there exists an
 $r_{\infty}\in\rr$
and a positive constant $C$ such that, for any
$x,y\in\rn$,
$$
|r(x)-r(y)|\le \frac{C}{\log(e+1/|x-y|)}\ \ \text{and}\ \
|r(x)-r_\infty|\le \frac{C}{\log(e+|x|)}.
$$
The \emph{variable Lebesgue space
$L^{r(\cdot)}(\rn)$} associated with the function
$r:\ \rn\to(0,\infty)$ is defined to be the set
of all the measurable functions $f$ on $\rn$ with
the finite quasi-norm
$$
\|f\|_{L^{r(\cdot)}(\rn)}:=\inf\lf\{\lambda
\in(0,\infty):\ \int_\rn\lf[\frac{|f(x)|}
{\lambda}\r]^{r(x)}\,dx\le1\r\}.
$$
It is known that $L^{r(\cdot)}(\rn)$ is a ball quasi-Banach function space
(see, for instance, \cite[Section 7.8]{ref3}). In particular,
if $1<\widetilde r_-\le \widetilde r_+<\infty$,
then $(L^{r(\cdot)}(\rn), \|\cdot\|_{
L^{r(\cdot)}(\rn)})$ is
a Banach function space in the terminology of
Bennett and Sharpley \cite{ref4} and hence
also a ball Banach function space.  For related results on
variable Lebesgue spaces, we refer the reader to
\cite{CUF, CUW, DHR,KR,N1,N2,NS12}.
We first show that the average operators \{$B_s\}_{s\in(0,\infty)}$
are uniformly on $L^{r(\cdot)}(\rn)$.
\begin{lemma}\label{piaoliang}
Let $r:\ \rn\to(0,\infty)$ be globally
log-H\"older continuous and
$1\leq\wt r_{-}\leq \wt r_{+}<\infty$.
Then there exists a positive constant
$C$ such that, for any   $s\in(0,\infty)$ and $f\in L^{r(\cdot)}(\rn)$,
\begin{align}\label{ku}
\|B_{s}(f)\|_{L^{r(\cdot)}(\rn)}
\leq
C\|f\|_{L^{r(\cdot)}(\rn)}.
\end{align}
\end{lemma}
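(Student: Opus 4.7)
The plan is to exploit the pointwise bound $B_s(f)(x)\le \cm(f)(x)$ for all $x\in\rn$ and $s\in(0,\infty)$, which reduces the desired uniform bound to the boundedness of a maximal-type operator on $L^{r(\cdot)}(\rn)$. When $\widetilde r_->1$, this is immediate from the classical Cruz-Uribe--Fiorenza--Neugebauer (and Diening) theorem: under log-H\"older continuity, $\cm$ is bounded on $L^{r(\cdot)}(\rn)$, and the operator norm gives a single constant $C$ valid simultaneously for every $s\in(0,\infty)$. So the first step is to verify this chain of implications and extract the constant.

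The genuine obstacle is the endpoint case $\widetilde r_-=1$, in which $\cm$ is generally \emph{not} bounded on $L^{r(\cdot)}(\rn)$, so the dominating argument above fails. For this case I would argue directly through the modular characterization of the Luxemburg norm: it suffices to find a constant $C$ such that, whenever $\int_\rn |f(x)|^{r(x)}\,dx\le 1$, one has $\int_\rn [B_s(f)(x)/C]^{r(x)}\,dx\le 1$, uniformly in $s$. Writing $f=f\mathbf{1}_{\{|f|\le 1\}}+f\mathbf{1}_{\{|f|>1\}}$ and using the log-H\"older continuity of $r$ through the standard key pointwise estimate (cf.\ the proofs in \cite{CUF,DHR})
\[
[B_s(f)(x)]^{r(x)}\lesssim B_s\bigl(|f|^{r(\cdot)}\bigr)(x)+(e+|x|)^{-Nr_-},
\]
valid for any $N>n/r_-$ with constants independent of $s$ and $x$, reduces matters to integrating the right-hand side in $x$. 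The second term contributes a finite constant since $Nr_->n$, while for the first term I would use the elementary identity $\|B_s g\|_{L^1(\rn)}= \|g\|_{L^1(\rn)}$ (a direct Fubini computation) applied with $g:=|f|^{r(\cdot)}$; this bound is uniform in $s$ and uses the assumption $\int |f|^{r(\cdot)}\le 1$.

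The hardest part is marshalling the log-H\"older ingredients cleanly in the $\widetilde r_-=1$ regime, because there one cannot simply ``lose a little exponent'' as in the $\widetilde r_->1$ Diening trick; instead one must exploit the decay hypothesis $|r(x)-r_\infty|\le C/\log(e+|x|)$ in a delicate way to produce the $(e+|x|)^{-Nr_-}$ correction. Once the two cases are combined, the modular estimate yields \eqref{ku} with a constant $C$ depending only on the log-H\"older data of $r$ and on $\widetilde r_\pm$, completing the proof.
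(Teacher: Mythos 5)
Your plan is correct, and the core of it — the modular estimate via the Diening--Harjulehto--H\"ast\"o--R\r{u}\v{z}i\v{c}ka key pointwise inequality — is exactly what the paper does; see its invocation of \cite[Theorem 4.2.4]{DHHR}, which gives a $\beta\in(0,1)$ with
$$\left|\beta B_s(f)(x)\right|^{r(x)} \le \fint_{B(x,s)}|f(y)|^{r(y)}\,dy + 2^{-1}(e+|x|)^{-m} + 2^{-1}\fint_{B(x,s)}(e+|y|)^{-m}\,dy,$$
followed by integration in $x$, Fubini for the averaged terms, and the normalization $\int|f|^{r(\cdot)}\le 1$.

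The one genuine (if minor) difference is organizational. You split into cases $\widetilde r_->1$ (handled via $B_s f\le\cm f$ and boundedness of $\cm$ on $L^{r(\cdot)}(\rn)$) and $\widetilde r_-=1$ (handled via the modular/key-estimate argument). The paper runs the modular argument uniformly over the entire range $1\le\widetilde r_-\le\widetilde r_+<\infty$, so no case split is needed: the key pointwise estimate in \cite{DHHR} is valid whenever $\widetilde r_-\ge 1$, not only at the endpoint. Your split is harmless and the $\widetilde r_->1$ branch is genuinely shorter, but it is not needed, and the framing that the key-estimate route is reserved for the endpoint slightly obscures that it subsumes the easier case as well. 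One small point of care in your version: after showing $\int_{\rn}[B_s(f)(x)]^{r(x)}\,dx\lesssim 1$, you still need to convert a modular bound $\le K$ (with $K$ possibly $>1$) into a Luxemburg-norm bound; this is standard when $r_-\ge 1$ (divide by $K$ and use $(t/K)^{r(x)}\le t^{r(x)}/K$), and it is why the paper instead chooses $m$ so that the right-hand side integrates to at most $1$ outright, absorbing the extra factor into $\beta$.
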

\begin{proof}
Let $m\in\nn$ satisfy that $\int_{\rn}(e+|y|)^{-m}\,dy\leq 1.$
Since $r(\cdot)$ is globally
log-H\"older continuous and
$1\leq\wt r_{-}\leq \wt r_{+}<\infty$,
it follows that $\frac{1}{r(\cdot)}$
is also  globally
log-H\"older continuous (see, for instance, \cite[Proposition 2.3(5)]{CUF}).
From this and \cite[Theorem 4.2.4]{DHHR}, we deduce that there exists a
$\beta\in(0,1)$, depending only on both $m$ and $r(\cdot)$, such that, for any
$s\in(0,\infty)$, $f\in L^{r(\cdot)}(\rn)$ with
$\|f\|_{L^{r(\cdot)}(\rn)}\leq 1,$ and  $x\in\rn,$
\begin{align}\label{chenmo}
\left|\beta B_s(f)(x)\right|^{r(x)}
&=\left[\beta \fint_{B(x,s)}|f(y)|\,dy\right]^{r(x)}\\\noz
&\leq \fint_{B(x,s)}|f(y)|^{r(y)}\,dy+
2^{-1}(e+|x|)^{-m}\\
&\quad+2^{-1}\fint_{B(x,s)}
(e+|y|)^{-m}\,dy.\noz
\end{align}
On the other hand, observe that,
for any $f\in L^{r(\cdot)}(\rn)$ with
$\|f\|_{L^{r(\cdot)}(\rn)}\leq 1,$
$\int_{\rn}|f(x)|^{r(x)}\,dx
\leq1.$
This, together with both the assumption that
$\wt r_{-}\in[1,\infty)$ and \eqref{chenmo},  further implies that,
for any $s\in(0,\infty)$ and $f\in L^{r(\cdot)}(\rn)$ with
$\|f\|_{L^{r(\cdot)}(\rn)}\leq 1,$
\begin{align*}
&\int_{\rn}\left|2^{-1}\beta B_s(f)(x)\right|^{r(x)}\,dx\\\noz
&\quad\leq
2^{-1}\int_{\rn}\left|\beta B_s(f)(x)\right|^{r(x)}\,dx
\leq
2^{-1}\int_{\rn}\fint_{B(x,s)}|f(y)|^{r(y)}\,dy\,dx\\\noz
&\quad\quad+
2^{-2}\int_{\rn}(e+|x|)^{-m}\,dx
+
2^{-2}\int_{\rn}\fint_{B(x,s)}
(e+|y|)^{-m}\,dy\,dx\\\noz
&\quad=
2^{-1}\int_{\rn}|f(x)|^{r(x)}\,dx
+
2^{-1}\int_{\rn}(e+|x|)^{-m}\,dx
\leq 1
\end{align*}
and hence  $\|B_s(f)\|_{L^{r(\cdot)}(\rn)}\leq2\beta^{-1}.$
By this and a scaling argument, we conclude that \eqref{ku} holds true with $C:=2\beta^{-1}$. This finishes
the proof of Lemma \ref{piaoliang}.
\end{proof}
When $X:=L^{\widetilde{r}(\cdot)}(\rn)$, we denote
$\dot{W}^{1,X}(\rn)$ simply by
$\dot{W}^{1,r(\cdot)}(\rn)$.
Using both
Theorems \ref{theorem3.9}(ii) and \ref{theorem3}(ii),
we obtain the following conclusions.
\begin{theorem}\label{1615}
Let $r:\ \rn\to(0,\infty)$ be globally
log-H\"older continuous.
Assume that $1\leq
\wt r_{-}\leq \wt r_{+}<\infty$ and $q\in(0,
\infty)$ satisfy $n(\frac{1}{\wt r_{-}}-
\frac{1}{q})<1.$ Then, for
any $f\in \dot{W}^{1,r(\cdot)}(\rn)$,
\begin{align}\label{xiatian0}
\sup_{\ld\in(0,\infty)}\lambda\left
\|\lf|\lf\{y\in\rn:\ |f(\cdot)-f(y)|>
\lambda|\cdot-y|^{\frac{n}{q}+1}\r\}\r|
^{\frac{1}{q}}\right\|_{L^{r(\cdot)}
(\rn)}\sim\|\,|\nabla f|\,\|_{L^{r(\cdot)}(\rn)},
\end{align}	
where the positive equivalence constants are independent of $f.$
Moreover,  for
any $f\in \dot{W}^{1,r(\cdot)}(\rn)$,
\begin{align}\label{xiatian}
&\lim_{\ld\to\infty}\lambda\left
\|\lf|\lf\{y\in\rn:\ |f(\cdot)-f(y)|>
\lambda|\cdot-y|^{\frac{n}{q}+1}\r\}\r|
^{\frac{1}{q}}\right\|_{L^{r(\cdot)}(\rn)}\\
&\quad=\left[\frac{K(q,n)}{n}\right]^{\frac{1}{q}}\|\,|\nabla f|\,\|_{L^{r(\cdot)}(\rn)}.\noz
\end{align}
\end{theorem}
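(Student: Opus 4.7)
The plan is to reduce Theorem \ref{1615} to the abstract statements Theorem \ref{theorem3.9}(ii) and Theorem \ref{theorem3}(ii) applied with $X:=L^{r(\cdot)}(\rn)$; each of these already yields both \eqref{xiatian0} and \eqref{xiatian} at once, so no separate argument for the limit identity is needed. Common to the whole argument, I would first record the structural facts on variable Lebesgue spaces under the standing hypothesis $1\leq\wt r_-\leq\wt r_+<\infty$: $L^{r(\cdot)}(\rn)$ is a ball Banach function space (indeed a Banach function space when $\wt r_->1$), it has an absolutely continuous norm because $\wt r_+<\infty$, and the centered ball average operators $\{B_s\}_{s\in(0,\infty)}$ are uniformly bounded on it by Lemma \ref{piaoliang}. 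This takes care of the ambient hypotheses shared by both abstract theorems.

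Case $\wt r_->1$: Since $n(\f1{\wt r_-}-\f1q)<1$, I can choose $p\in[1,\wt r_-)$ sufficiently close to $\wt r_-$ so that $n(\f1p-\f1q)<1$. The $(1/p)$-convexification $[L^{r(\cdot)}(\rn)]^{1/p}=L^{r(\cdot)/p}(\rn)$ is then a ball Banach function space since $(r(\cdot)/p)_-=\wt r_-/p>1$, its associate space is (equivalent to) $L^{(r(\cdot)/p)'}(\rn)$, and the exponent $(r(\cdot)/p)'$ inherits global log-H\"older continuity from $r(\cdot)$ with $((r(\cdot)/p)')_-=\wt r_+/(\wt r_+-p)>1$. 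Hence $\cm$ is bounded on this associate space by the Diening--Cruz-Uribe--Fiorenza--Neugebauer theorem. All hypotheses of Theorem \ref{theorem3.9}(ii) are in force, and both \eqref{xiatian0} and \eqref{xiatian} follow.

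Case $\wt r_-=1$: Here $n(1-\f1q)<1$. I would take $\theta_m:=1-1/m$ for $m\geq 2$ and verify the hypotheses of Theorem \ref{theorem3}(ii). For each $m$, $[L^{r(\cdot)}(\rn)]^{1/\theta_m}=L^{r(\cdot)/\theta_m}(\rn)$ has $(r(\cdot)/\theta_m)_-=1/\theta_m>1$, so it is a ball Banach function space; its associate space is $L^{(r(\cdot)/\theta_m)'}(\rn)$ with $((r(\cdot)/\theta_m)')_-=\wt r_+/(\wt r_+-\theta_m)$, which is bounded below by $\wt r_+/(\wt r_+-1)>1$ uniformly in $m$. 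The global log-H\"older constants of $(r(\cdot)/\theta_m)'$ depend continuously on $\theta_m\in(0,1)$ and are therefore uniformly controlled as $m\to\infty$. The crucial point is the uniform bound on $\|\cm\|_{(X^{1/\theta_m})'\to(X^{1/\theta_m})'}$ demanded by \eqref{sjx}: for this I would invoke a version of the variable-exponent maximal inequality whose constant depends only on the lower exponent, the log-H\"older constants, and the limit at infinity of the exponent, \emph{not} on the upper exponent. This is necessary because $((r(\cdot)/\theta_m)')_+=1/(1-\theta_m)\to\infty$. With \eqref{sjx} established, Theorem \ref{theorem3}(ii) closes the case.

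The main obstacle is therefore the endpoint case $\wt r_-=1$: one must carefully identify which parameters of the variable exponent genuinely control the operator norm of the Hardy--Littlewood maximal operator, so that the upper exponent of the associate space escaping to infinity does not spoil the uniform bound required by Theorem \ref{theorem3}. Once this is done, the abstract machinery developed in Sections \ref{sec:4} and \ref{sec:5} supplies both the equivalence \eqref{xiatian0} and the precise limit \eqref{xiatian} immediately.
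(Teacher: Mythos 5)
Your proposal is correct and follows essentially the same route as the paper: split into the cases $\wt r_->1$ and $\wt r_-=1$, apply Theorem~\ref{theorem3.9}(ii) with a convexification parameter $p$ close to (but below) $\wt r_-$ in the first case, and in the endpoint case apply Theorem~\ref{theorem3}(ii) with $\theta_m\uparrow 1$, using that the operator norm of $\cm$ on $L^{(r(\cdot)/\theta_m)'}(\rn)$ stays bounded because it is controlled by the lower exponent $\wt r_+/(\wt r_+-\theta_m)$ and the log-H\"older constants rather than by the upper exponent $1/(1-\theta_m)$. The paper realizes your ``version of the maximal inequality'' by extracting the quantitative bound $\|\cm\|_{[L^{r(\cdot)/\theta}(\rn)]'\to[L^{r(\cdot)/\theta}(\rn)]'}\lesssim \wt r_+/\theta$ from the proof of \cite[Theorem 4.3.8]{DHHR}, which is precisely what closes \eqref{sjx}.
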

\begin{proof}
We prove the present theorem by considering the following two cases on both $\widetilde{r}_1$ and $\widetilde{r}_+.$

Case 1) $1<
\wt r_{-}\leq \wt r_{+}<\infty$.
In this case,
since $n(\frac{1}{\wt r_{-}}-
\frac{1}{q})<1,$ it follows that there exists a $p\in(1,\widetilde{r}_-)$ such that $n(\frac{1}{p}-\frac{1}{q})<1$.
By \cite[p.\,73]{CUF}, we conclude that
$L^{r(\cdot)}(\rn)$
has an absolutely continuous norm.
By the definition of $L^{r(\cdot)}(\rn)$,
we find that
\begin{align*}
\left[L^{r(\cdot)}(\rn)\right]^{\frac{1}{p}}=L^{\frac{r(\cdot)}{p}}(\rn).
\end{align*}
Using this and the dual theorem on the variable Lebesgue space (see, for instance, \cite[Theorem 2.80]{CUF}),
we have
\begin{align*}
\left(\left[L^{r(\cdot)}(\rn)\right]^{\frac{1}{p}}\right)'=
L^{\left(\frac{r(\cdot)}{p}\right)'}(\rn),
\end{align*}
where $$\frac{1}{\left(\frac{r(x)}{p}\right)'}+\frac{1}{\frac{r(x)}{p}}=1$$
for any $x\in\rn.$ This, together with
\cite[Theorem 2.80]{CUF}, further implies that
the Hardy--Littlewood maximal operator $\cm$ is bounded
on $([L^{r(\cdot)}(\rn)]^{\frac{1}{p}})'$. Thus, all the assumptions
of Theorem \ref{theorem3.9}(ii) are
satisfied for $X:=L^{r(\cdot)}(\rn)$
and hence, by  Theorem \ref{theorem3.9}(ii), we find that  both $\eqref{xiatian0}$
and \eqref{xiatian} hold true for any
$f\in \dot{W}^{1,r(\cdot)}(\rn)$.

Case 2) $1=
\wt r_{-}\leq \wt r_{+}<\infty$.
In this case, for any given $\theta\in(\frac{1}{2},1),$
 by the proof of \cite[Theorem 4.3.8]{DHHR}, we find that,
 for any $f\in[L^{r(\cdot)/\theta}(\rn)]',$
 $$\|\cm f\|_{[L^{r(\cdot)/\theta}(\rn)]'}
 \lesssim \frac{r^+}{\theta}\|f\|_{[L^{r(\cdot)/\theta}(\rn)]'},$$
 where the implicit positive constant depends only
 on both $n$ and $r(\cdot)$.  Thus, \eqref{sjx} holds true for $X:=L^{r(\cdot)}(\rn)$.
 From this and Lemma \ref{piaoliang}, we deduce that all the assumptions of Theorem
 \ref{theorem3}(ii) are satisfied for
 $X:=L^{r(\cdot)}(\rn)$. Applying Theorem
 \ref{theorem3}(ii) with
 $X:=L^{r(\cdot)}(\rn)$, we conclude that both \eqref{xiatian0} and \eqref{xiatian}
 hold true for any $f\in W^{1,r(\cdot)}(\rn)$. This finishes the proof of Theorem \ref{1615}.
\end{proof}
Using Corollaries \ref{corollary3.111} and \ref{corollary1001},
we obtain the following conclusions  whose proofs are similar to
that of Theorem \ref{1615}; we omit the details here.
\begin{corollary}\label{vsgn1}
Let $r:\ \rn\to(0,\infty)$ be globally
log-H\"older continuous.
Let $1\leq\wt r_{-}\leq \wt r_{+}<\infty,$
$q_1\in[1,\infty],$ and
$\ta\in (0,1)$. Let
$q\in [1, q_1]$ satisfy
$ \frac{1}{q}=\frac{1-\theta}{q_1}+\theta$.
\begin{itemize}
\item[{\rm(i)}] If $q_1\in[1,\infty),$ then there exists a
positive constant $C$ such that,
for any $f\in W^{1,r(\cdot)}(\rn)$,
\begin{align*}
	&\sup_{\ld\in(0,\infty)}\lambda\left\|\,\left|
	\lf\{y\in\rn:\ |f(\cdot)-f(y)|>\lambda|\cdot-y|^{
		\f n {q}+\ta}\r\}\right|\,
	\right\|_{L^{r(\cdot)}(\rn)}^{\f1q}\\
	&\quad\leq C\|
	f\|_{L^{q_1r(\cdot)}(\rn)}^{1-\ta}\|\,|\nabla f|\,\|_{
		L^{r(\cdot)}(\rn)}^{\ta}.
\end{align*}
\item[{\rm(ii)}] If $q_1=\infty,$ then there exists a
positive constant $C$ such that,
for any $f\in W^{1,r(\cdot)}(\rn)$,
\begin{align*}
	&\sup_{\ld\in(0,\infty)}\lambda\left\|\,\left|
	\lf\{y\in\rn:\ |f(\cdot)-f(y)|>\lambda|\cdot-y|^{
		\f n {q}+\ta}\r\}\right|\,
	\right\|_{L^{r(\cdot)}(\rn)}^{\f1q}\\
	&\quad\leq C\|
	f\|_{L^{\infty}(\rn)}^{1-\ta}\|\,|\nabla f|\,\|_{
		L^{r(\cdot)}(\rn)}^{\ta}.
\end{align*}
\end{itemize}
\end{corollary}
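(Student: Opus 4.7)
The plan is to apply Corollary \ref{corollary3.111} directly with $X:=L^{r(\cdot)}(\rn)$, in the same spirit in which Theorem \ref{1615} was obtained by applying Theorems \ref{theorem3.9}(ii) and \ref{theorem3}(ii) to the variable Lebesgue setting. Most of the structural hypotheses have already been checked in the proof of Theorem \ref{1615}; the task here is chiefly to verify the assumption \eqref{72121} on a suitable sequence $\{\alpha_m\}_{m\in\nn}\subset (0,1)$ converging to $1$, and then to record the auxiliary facts needed by Corollary \ref{corollary3.111}.

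First, I would pick $\alpha_m:=1-\tfrac{1}{m+1}$ for every $m\in\nn$, so that $\alpha_m\in(0,1)$ and $\alpha_m\to 1$ as $m\to\infty$. Since $r(\cdot)/\alpha_m$ is again globally log-H\"older continuous with $\widetilde{(r/\alpha_m)}_-=\widetilde r_-/\alpha_m\geq 1/\alpha_m>1$ and $\widetilde{(r/\alpha_m)}_+=\widetilde r_+/\alpha_m<\infty$, the space $X^{1/\alpha_m}=L^{r(\cdot)/\alpha_m}(\rn)$ is a ball Banach function space, and the dual theorem for variable Lebesgue spaces (see, for instance, \cite[Theorem 2.80]{CUF}) gives $(X^{1/\alpha_m})'=L^{(r(\cdot)/\alpha_m)'}(\rn)$. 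By the quantitative form of \cite[Theorem 4.3.8]{DHHR} already used in the proof of Theorem \ref{1615}, the operator norm of $\cm$ on $(X^{1/\alpha_m})'$ is bounded by a constant multiple of $\widetilde r_+/\alpha_m$, which, since $\alpha_m\to 1$, yields
\begin{align*}
\limsup_{m\to\infty}\|\cm\|_{(X^{1/\alpha_m})'\to (X^{1/\alpha_m})'}\lesssim \widetilde r_+<\infty,
\end{align*}
so that \eqref{72121} is satisfied.

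To conclude part (i), I would additionally observe that $X^{q_1}=L^{q_1r(\cdot)}(\rn)$ is a ball Banach function space because $q_1\widetilde r_-\ge 1$ and $q_1\widetilde r_+<\infty$, that the centered ball average operators $\{B_s\}_{s\in(0,\infty)}$ are uniformly bounded on $L^{r(\cdot)}(\rn)$ by Lemma \ref{piaoliang}, and that $L^{r(\cdot)}(\rn)$ has an absolutely continuous norm, as recorded in \cite[p.\,73]{CUF}. With all hypotheses of Corollary \ref{corollary3.111}(i) verified for $X:=L^{r(\cdot)}(\rn)$, the stated fractional Sobolev inequality follows. For part (ii), I would invoke Corollary \ref{corollary3.111}(ii) under the same sequence $\{\alpha_m\}_{m\in\nn}$ and the same absolute continuity and uniform ball-averages assertions; the $X^{q_1}$ condition is no longer needed since $q_1=\infty$.

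The main obstacle is the verification of the uniform bound \eqref{72121}. Everything else is essentially a translation of properties of $L^{r(\cdot)}(\rn)$ already isolated in the earlier variable-exponent literature, so once the operator-norm estimate for $\cm$ on $(X^{1/\alpha_m})'$ is made quantitative in $\alpha_m$, both (i) and (ii) follow from the abstract corollary.
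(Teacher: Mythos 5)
Your proposal is correct and matches the paper's intended route: the paper omits the proof, indicating it is analogous to the argument for Theorem \ref{1615}, which in the endpoint case $\widetilde r_-=1$ rests on verifying \eqref{sjx} (the analogue of \eqref{72121}) via the quantitative operator-norm estimate for $\cm$ on $[L^{r(\cdot)/\theta}(\rn)]'$ from \cite[Theorem 4.3.8]{DHHR}, together with Lemma \ref{piaoliang} for uniform boundedness of the averaging operators and absolute continuity of the norm. Choosing a sequence $\alpha_m\uparrow 1$ and reading off the bound $\|\cm\|_{(X^{1/\alpha_m})'\to(X^{1/\alpha_m})'}\lesssim \widetilde r_+/\alpha_m$ to get \eqref{72121}, then checking $X^{q_1}=L^{q_1 r(\cdot)}(\rn)$ is a ball Banach function space, is precisely the verification the paper has in mind before invoking Corollary \ref{corollary3.111}.
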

\begin{corollary}\label{vsgn2}
Let $r:\ \rn\to(0,\infty)$ be globally log-H\"older
continuous. Let $1\leq \wt r_{-}\leq \wt r_{+}<\infty$,
$s_1\in (0,1)$, $q_1\in(1,\infty)$, and
$\theta\in(0,1).$ Let $s\in(s_1,1)$ and
$q\in(1,\wt r_{-})$ satisfy
$s=(1-\theta) s_1+\theta$ and $\frac{1}{q}
=\frac{1-\theta}{q_1}+\theta.$
Then, for
any $f\in W^{1,r(\cdot)}(\rn)$,
\eqref{zhongyang} holds
true with $X$ replaced by $L^{r(\cdot)}(\rn).$
\end{corollary}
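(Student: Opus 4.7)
The plan is to apply Corollary~\ref{corollary1001} directly with $X:=L^{r(\cdot)}(\rn)$, verifying each of its hypotheses exactly as in the proof of Theorem~\ref{1615}. First, since $1\le\widetilde r_-\le\widetilde r_+<\infty$, $X$ is a ball Banach function space; by \cite[p.\,73]{CUF} it has an absolutely continuous norm; and Lemma~\ref{piaoliang} furnishes the uniform boundedness of the centered ball average operators $\{B_s\}_{s\in(0,\infty)}$ on $X$.

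The essential step is the construction of a sequence $\{\alpha_m\}_{m\in\nn}\subset(0,1)$ with $\lim_{m\to\infty}\alpha_m=1$ verifying \eqref{72121}. I would pick any such $\alpha_m\uparrow 1$. Since $\alpha_m<1$ strictly, $X^{1/\alpha_m}=L^{r(\cdot)/\alpha_m}(\rn)$ satisfies $(r(\cdot)/\alpha_m)_-\ge 1/\alpha_m>1$, so it is a ball Banach function space, and its associate space is $L^{(r(\cdot)/\alpha_m)'}(\rn)$. Exactly as in the case $\widetilde r_-=1$ of the proof of Theorem~\ref{1615}, the quantitative form of Diening's theorem given in \cite[Theorem~4.3.8]{DHHR} supplies
\begin{align*}
\|\cm\|_{(X^{1/\alpha_m})'\to(X^{1/\alpha_m})'}\lesssim\frac{\widetilde r_+}{\alpha_m},
\end{align*}
with the implicit constant depending only on $n$ and on the log-H\"older data of $r(\cdot)$; letting $m\to\infty$ then yields \eqref{72121}. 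Note that the log-H\"older continuity of $r(\cdot)$ is inherited by $r(\cdot)/\alpha_m$ (and by its pointwise conjugate exponent) with constants controlled independently of $m$, so the estimate above indeed holds uniformly in $m$.

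With every hypothesis of Corollary~\ref{corollary1001} satisfied, the inequality \eqref{zhongyang} with $X:=L^{r(\cdot)}(\rn)$ follows for every $f\in \dot W^{1,r(\cdot)}(\rn)$. The main obstacle in this argument is the uniform control of $\|\cm\|$ on the associate exponents as $\alpha_m\to 1$; the subtlety is that when $\widetilde r_-=1$ the esssup of $(r(\cdot)/\alpha_m)'$ blows up like $1/(1-\alpha_m)$, so a naive application of the maximal inequality would fail. What saves the argument is that the quantitative bound in \cite[Theorem~4.3.8]{DHHR} depends on the essinf $\widetilde r_+/(\widetilde r_+-\alpha_m)$ of $(r(\cdot)/\alpha_m)'$, which remains bounded below by $\widetilde r_+/(\widetilde r_+-1)>1$, and scales as $\widetilde r_+/\alpha_m$ rather than with the esssup. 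The remaining algebraic manipulation of the indices according to $s=(1-\theta)s_1+\theta$ and $1/q=(1-\theta)/q_1+\theta$ is already carried out inside the proof of Corollary~\ref{corollary1001} and requires no further treatment specific to variable exponents.
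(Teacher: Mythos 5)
Your proof is correct and follows essentially the same strategy as the paper: verify the hypotheses of Corollary~\ref{corollary1001} for $X:=L^{r(\cdot)}(\rn)$ using the same ingredients that appear in the proof of Theorem~\ref{1615} (absolutely continuous norm via~\cite[p.\,73]{CUF}, uniform boundedness of $\{B_s\}$ via Lemma~\ref{piaoliang}, and the quantitative form of the Diening maximal bound from~\cite[Theorem~4.3.8]{DHHR} to secure~\eqref{72121}). One small remark: since the statement requires $q\in(1,\widetilde r_-)$, the hypothesis $\widetilde r_->1$ is automatic, so the extra care you devote to the endpoint case $\widetilde r_-=1$ (where the esssup of $(r(\cdot)/\alpha_m)'$ would blow up) is moot here — in the paper's scheme this corollary falls entirely into Case~1 of Theorem~\ref{1615}'s proof, for which one could alternatively invoke Remark~\ref{2022721}(ii) (the Theorem~\ref{theorem3.9}(ii) variant) and avoid verifying~\eqref{72121} altogether. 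Your unified route through Corollary~\ref{corollary1001} with the operator-norm estimate is nonetheless valid and slightly more uniform, and the observation that the bound scales with the esssup of the \emph{conjugate} $r(\cdot)/\alpha_m$ (namely $\widetilde r_+/\alpha_m$) rather than with the esssup of $(r(\cdot)/\alpha_m)'$ is the correct mechanism making the limit in~\eqref{72121} finite.
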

\begin{remark}
We point out that a different Gagliardo--Nirenberg
type inequality was established
in the variable Sobolev spaces related to the Riesz
potential in \cite{kc09,mnss13}. However,
Corollaries \ref{vsgn1} and \ref{vsgn2} appear new.
\end{remark}
\subsection{Weighted Lebesgue Spaces\label{s6.4}}
Recall that, for any given $1\leq r\leq \infty$ and any given
weight $\og$ on $\RR^n$, $L_\og^r(\rn)$ denotes
the weighted Lebesgue space with respect to
the measure $\og(x)\,dx$ on $\rn$ (see Definition  \ref{twl}) and,
when $X:=L^r_\omega(\rn)$, $\dot{W}^{1,X}(\rn)$ is denoted simply by
$\dot{W}^{1,r}_{\omega}(\rn)$ (see, Definition \ref{202276}). It is worth
pointing out that a weighted Lebesgue space with an $A_\infty(\rn)$-weight may
not be a Banach function space; see \cite[Section 7.1]{ref3}.
Using
Theorems \ref{theorem3.9}(ii), we obtain the following conclusions.
\begin{theorem} \label{thm-6-12}
Let $1\leq p\leq r <\infty$, $\omega\in A_{r/p}(\rn)$,
and $q\in(0,\infty)$ satisfy $n(\f 1p-\f1q) <1.$ Then,
for any $f\in \dot{W}^{1,r}_\omega(\rn)$,
\begin{align}\label{quanquan0}
&\sup_{\ld\in(0,\infty)}\lambda\left[\int_{\rn}
\left| \lf\{y\in \rn:\ |f(x)-f(y)|>\lambda|x-y|
^{\frac{n }{q}+1}\r\}\right|^{\f rq} \og(x) \,
dx\right]^{1/r}\\\noz
&\quad\sim\|\,|\nabla f|\,\|_{L^{r}_{\omega}(\rn)},
\end{align}
where the positive equivalence constants are independent of $f.$ Moreover, for any $f\in \dot{W}^{1,r}_\omega(\rn),$
\begin{align}\label{quanquan}
&\lim_{\ld\to\infty}\lambda\left[\int_{\rn}
\left| \lf\{y\in \rn:\ |f(x)-f(y)|>\lambda|x-y|
^{\frac{n }{q}+1}\r\}\right|^{\f rq} \og(x) \,
dx\right]^{1/r}\\\noz
&\quad=
\left[\frac{K(q,n)}{n}\right]^{\frac{1}{q}}\|\,|\nabla f|\,\|_{L^{r}_{\omega}(\rn)}.
\end{align}
\end{theorem}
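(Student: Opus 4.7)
The plan is to apply Theorem \ref{theorem3.9}(ii) to $X:=L^r_\omega(\rn)$, since the target equivalence rewrites as
$$
\|\,|\{y\in\rn:\,(\cdot,y)\in E_f(\lambda,q)\}|^{1/q}\|_{L^r_\omega(\rn)}^r=\int_{\rn}|\{y\in\rn:\,(\cdot,y)\in E_f(\lambda,q)\}|^{r/q}\omega(x)\,dx,
$$
which is exactly the quantity in \eqref{quanquan0} and \eqref{quanquan}. The bulk of the work therefore consists of verifying the four hypotheses of Theorem \ref{theorem3.9}(ii). First, $\omega\in A_{r/p}(\rn)\subset A_\infty(\rn)$ is locally integrable and positive almost everywhere, so $L^r_\omega(\rn)$ satisfies the axioms of Definition \ref{Debqfs} (the triangle inequality uses $r\geq 1$); the same reasoning, together with $r/p\geq 1$, shows that $X^{1/p}=L^{r/p}_\omega(\rn)$ is a ball Banach function space. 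The absolute continuity of the norm of $L^r_\omega(\rn)$ follows at once from $r<\infty$ and the Lebesgue dominated convergence theorem applied to the finite measure $|f|^r\omega\,dx$.

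The heart of the verification is the boundedness of $\cm$ on the associate space $(X^{1/p})'$. When $r>p$, a standard H\"older computation identifies
$$
(X^{1/p})'=L^{(r/p)'}_{\omega^{1-(r/p)'}}(\rn),
$$
and the classical duality for Muckenhoupt weights (see, e.g., Proposition 7.1.5 of \cite{ref1}) asserts that $\omega\in A_{r/p}(\rn)$ if and only if $\omega^{1-(r/p)'}\in A_{(r/p)'}(\rn)$, whence Lemma \ref{jidahanshu} gives the required boundedness. The main obstacle is the endpoint $r=p$ (in particular $r=p=1$), where $\omega\in A_1(\rn)$, $(X^{1/p})'=L^\infty_{1/\omega}(\rn)$, and the dual-weight duality degenerates. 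Here one instead uses the defining pointwise $A_1$ inequality $\cm(\omega)(x)\leq[\omega]_{A_1(\rn)}\omega(x)$ almost everywhere: for $g\in L^\infty_{1/\omega}(\rn)$ one has $|g|\leq\|g\|_{L^\infty_{1/\omega}(\rn)}\omega$ a.e., hence $\cm(g)\leq\|g\|_{L^\infty_{1/\omega}(\rn)}\cm(\omega)\leq[\omega]_{A_1(\rn)}\|g\|_{L^\infty_{1/\omega}(\rn)}\omega$ a.e., which is the desired bound.

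With all four hypotheses in place, Theorem \ref{theorem3.9}(ii) applied to $X:=L^r_\omega(\rn)$ yields, for every $f\in\dot{W}^{1,X}(\rn)=\dot{W}^{1,r}_\omega(\rn)$, both the equivalence
$$
\bigl[K(q,n)/n\bigr]^{1/q}\|\,|\nabla f|\,\|_{L^r_\omega(\rn)}\leq\|f\|_{\mathbf{W\dot F}^1_{L^r_\omega(\rn),q}(\rn)}\lesssim\|\,|\nabla f|\,\|_{L^r_\omega(\rn)}
$$
and the limit formula \eqref{leiliu00}, which after rewriting in the measure-theoretic form above become exactly \eqref{quanquan0} and \eqref{quanquan}. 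The only nontrivial step in executing the plan is the correct identification of $(X^{1/p})'$ together with the endpoint argument above; everything else is bookkeeping.
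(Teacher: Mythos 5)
Your proposal is correct and follows essentially the same route as the paper: identify $[L^r_\omega(\rn)]^{1/p}=L^{r/p}_\omega(\rn)$, verify absolute continuity of the norm, establish boundedness of $\cm$ on $(X^{1/p})'=L^{(r/p)'}_{\omega^{1-(r/p)'}}(\rn)$ for $p<r$ via $A_p$-weight duality and Lemma~\ref{jidahanshu} while handling the endpoint $r=p$ separately on $L^\infty_{1/\omega}(\rn)$, and then invoke Theorem~\ref{theorem3.9}(ii). The only cosmetic difference is that you give a short self-contained pointwise $A_1$ argument for the endpoint, whereas the paper cites \cite[p.\,9]{ins} and \cite[Theorem~3.1(b)]{AJ}.
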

\begin{proof}
By \cite[Theorem 1.34]{r87}, we find that $L^r_\omega(\rn)$ has an absolutely continuous norm.
Using the definition of $L^r_{\omega}(\rn),$ we know that
\begin{align}\label{1722}
\left[L^{r}_\omega(\rn)\right]^{\f 1 p}
=L^{\frac{r}{p}}_\omega(\rn).
\end{align}
We consider two cases based on the size of $p$. If $p\in[1,r)$, then, from both
\eqref{1722} and \cite[Lemma 4.2]{ss11}, we infer that
\begin{align}\label{1726}
\left(\left[L^{r}_\omega(\rn)\right]^{\f 1 p}\right)'=
L^{(r/p)'}_{\omega^{1-(r/p)'}}(\rn).
\end{align}
By both the assumption that $\omega\in A_{r/p}(\rn)$
and \cite[Proposition 7.1.5(4)]{ref1}, we conclude  that
\begin{equation*}
	\omega^{1-(r/p)'}\in A_{(r/p)'}(\rn).
\end{equation*}
Using this, \eqref{1726}, and Lemma \ref{jidahanshu},
we find that the Hardy--Littlewood maximal operator $\cm$ is bounded on
$([L^{r}_\omega(\rn)]^{\f 1 p})'$. Thus, in this case, all the assumptions
of Theorem \ref{theorem3.9}(ii) are satisfies for
$X:=L^r_\omega(\rn)$.

If $p=r$, applying the conclusion in \cite[p.\,9]{ins}
and using the assumption that $\omega\in A_{r/p}(\rn)$, we obtain
\begin{equation*}
	\left(\left[L^{r}_{\omega}(\rn)\right]^{1/p}\right)'=L^{\infty}_{\omega^{-1}}(\rn).
\end{equation*}
This, combined with both \cite[Theorem 3.1(b)]{AJ}
and \cite[p.\,9]{ins}, further implies  that
$\cm$ is bounded on $([L^{r}_{\omega}(\rn)]^{1/p})'$.
Thus, in this case, all the assumptions of Theorem \ref{theorem3.9}(ii) are satisfies for
$X:=L^r_\omega(\rn)$.

By Theorem
\ref{theorem3.9}(ii) with $X:=L^r_\omega(\rn)$, we conclude that
both \eqref{quanquan} and \eqref{quanquan0}
hold true. This then finishes the proof of Theorem \ref{thm-6-12}.
\end{proof}
By \cite[Lemma 3.15]{dgpyyz},
we find that the centered ball average operators are uniformly bounded on $L^p_{\omega}(\rn)$ with $p\in[1,\infty)$ and $\omega\in A_p(\rn)$. Using this,
Corollaries \ref{corollary3.111}
and \ref{corollary1001},
and Remarks \ref{r3.10}(iv) and \ref{2022721}(iii), we obtain the following conclusions   whose proofs are similar to
that of Theorem \ref{thm-6-12}; we omit the details here.
\begin{corollary}\label{wsgn1}
Assume that $p\in[1,\infty)$, $\omega\in A_{p}(\rn)$,
$q_1\in[1,\infty],$ and
$\ta\in (0,1)$. Let
$q\in [1, q_1]$ satisfy
$ \frac{1}{q}=\frac{1-\theta}{q_1}+\theta$.
\begin{itemize}
\item[{\rm(i)}] If $q_1\in[1,\infty),$ then there exists
a positive constant $C$ such that,
for any $f\in \dot{W}^{1,p}_\omega(\rn)$,
\begin{align*}
	&\sup_{\ld\in(0,\infty)}\lambda\left[\int_{\rn}
	\left|\lf\{y\in\rn:\ |f(x)-f(y)|>\lambda|x-y|
	^{\f n {q}+\ta}\r\}\right|^p
	\omega(x)\,dx\right]^{\f1{pq}}\\
	&\quad\leq C\|
	f\|_{L^{pq_1}_{\omega}(\rn)}^{1-\ta}\|\,|\nabla f|\,\|_{
		L^p_{\omega}(\rn)}^{\ta}.
\end{align*}
\item[{\rm(ii)}] If $q_1=\infty,$ then there exists
a positive constant $C$ such that,
for any $f\in \dot{W}^{1,p}_\omega(\rn)$,
\begin{align*}
	&\sup_{\ld\in(0,\infty)}\lambda\left[\int_{\rn}
	\left|\lf\{y\in\rn:\ |f(x)-f(y)|>\lambda|x-y|
	^{\f n {q}+\ta}\r\}\right|^p
	\omega(x)\,dx\right]^{\f1{pq}}\\
	&\quad\leq C\|
	f\|_{L^{\infty}(\rn)}^{1-\ta}\|\,|\nabla f|\,\|_{
		L^p_{\omega}(\rn)}^{\ta}.
\end{align*}
\end{itemize}
\end{corollary}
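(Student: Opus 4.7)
My plan is to follow the same route as the proof of Theorem \ref{thm-6-12}: verify the hypotheses of Theorem \ref{theorem3.9}(ii) for $X:=L^p_\omega(\rn)$ with $\omega\in A_p(\rn)$, and then invoke Remark \ref{r3.10}(iv), which propagates the conclusions of Corollary \ref{corollary3.111} from functions in $C^2(\rn)$ with compactly supported gradient to every $f\in\dot W^{1,X}(\rn)=\dot W^{1,p}_\omega(\rn)$. Once the abstract inequalities \eqref{jjdjr} (for $q_1<\infty$) and \eqref{henghengheng} (for $q_1=\infty$) are at our disposal, rewriting the quasi-norm on the $q$-convexification $X^q=L^{pq}_\omega(\rn)$ yields exactly the displayed estimate.

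First I would check the assumptions of Theorem \ref{theorem3.9}(ii), taking its index parameter equal to $1$ and its inner exponent $q=1$ (the value at which that theorem is ultimately invoked inside the modified proof of Corollary \ref{corollary3.111} flagged by Remark \ref{r3.10}(iv)). The space $L^p_\omega(\rn)$ is a ball Banach function space; it has an absolutely continuous norm by \cite[Theorem 1.34]{r87}; and the centered ball average operators $\{B_r\}_{r\in(0,\infty)}$ are uniformly bounded on it by the cited \cite[Lemma 3.15]{dgpyyz}. With the theorem index taken to be $1$, the convexification $X^{1/1}=L^p_\omega(\rn)$ is of course ball Banach, while the side inequality $n(1-1/q)<1$ is automatic at $q=1$. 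The genuine point is the boundedness of $\cm$ on the associate space $(L^p_\omega(\rn))'$, and this is dispatched exactly as in the two cases of Theorem \ref{thm-6-12}: when $p>1$, \cite[Lemma 4.2]{ss11} gives $(L^p_\omega)'=L^{p'}_{\omega^{1-p'}}$, the weight $\omega^{1-p'}$ lies in $A_{p'}(\rn)$ by \cite[Proposition 7.1.5(4)]{ref1}, and boundedness of $\cm$ then follows from Lemma \ref{jidahanshu}; when $p=1$, one has $(L^1_\omega)'=L^\infty_{\omega^{-1}}(\rn)$, on which $\cm$ is bounded by \cite[Theorem 3.1(b)]{AJ} combined with $\omega\in A_1(\rn)$. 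For part~(i) one additionally uses that $X^{q_1}=L^{pq_1}_\omega(\rn)$ is a ball Banach function space, which is immediate from $pq_1\geq 1$.

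With the hypotheses secured, Remark \ref{r3.10}(iv) delivers \eqref{jjdjr} and \eqref{henghengheng} for every $f\in\dot W^{1,p}_\omega(\rn)$. The final translation is computational: with $X=L^p_\omega(\rn)$, the $q$-convexification satisfies $\|g\|_{X^q}=(\int_\rn|g(x)|^{pq}\omega(x)\,dx)^{1/(pq)}$, so the identity
\[
\left\|\left[\int_\rn \mathbf{1}_{E_f(\lambda,q,\theta)}(\cdot,y)\,dy\right]^{1/q}\right\|_{X^q}
=\left[\int_\rn \left|\{y\in\rn:\,|f(x)-f(y)|>\lambda|x-y|^{n/q+\theta}\}\right|^{p}\omega(x)\,dx\right]^{\frac{1}{pq}}
\]
converts the left-hand side of \eqref{jjdjr}--\eqref{henghengheng} into the displayed expression of Corollary \ref{wsgn1}; the right-hand side turns into $\|f\|_{L^{pq_1}_\omega(\rn)}^{1-\theta}\|\,|\nabla f|\,\|_{L^p_\omega(\rn)}^{\theta}$ in case~(i) and into $\|f\|_{L^\infty(\rn)}^{1-\theta}\|\,|\nabla f|\,\|_{L^p_\omega(\rn)}^{\theta}$ in case~(ii).

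The main obstacle is, as always in this weighted context, the endpoint case $p=1$: the boundedness of $\cm$ on $L^\infty_{\omega^{-1}}(\rn)$ is not a direct Muckenhoupt consequence and must be imported from \cite[Theorem 3.1(b)]{AJ}, and care is needed to ensure that every step of the proof of Corollary \ref{corollary3.111} (in particular the density argument used to extend \eqref{wuwuwu} to $f\in X^{q_1}\cap \dot W^{1,X}(\rn)$) survives after Theorem \ref{theorem3} is replaced by Theorem \ref{theorem3.9}(ii); everything else is careful but routine bookkeeping identical in spirit to the argument executed for Theorem \ref{thm-6-12}.
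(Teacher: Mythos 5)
Your proposal is correct and follows essentially the same route as the paper's intended (but omitted) proof: verify the hypotheses of Theorem \ref{theorem3.9}(ii) for $X:=L^p_\omega(\rn)$ exactly as in the two-case analysis of Theorem \ref{thm-6-12} (taking the theorem's index equal to $1$, which is forced by the assumption $\omega\in A_p$), invoke Remark \ref{r3.10}(iv) to obtain \eqref{jjdjr} and \eqref{henghengheng} for all $f\in\dot{W}^{1,p}_\omega(\rn)$, and then translate the $X^q=L^{pq}_\omega(\rn)$ quasi-norm into the explicit weighted integral. The one place you flag as needing care — that the density and $\mathrm{G}$-estimate steps of the proof of Corollary \ref{corollary3.111} survive when Theorem \ref{theorem3} is swapped for Theorem \ref{theorem3.9}(ii) — does indeed go through, and in fact is slightly cleaner here since with the fixed index equal to $1$ one works directly on $X'$ and $R_{X'}g$ without the $\alpha_m\to 1$ limiting procedure.
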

\begin{corollary}\label{wsgn2}
Let  $s_1\in (0,1)$, $q_1
\in(1,\infty)$, and $\theta\in(0,1).$
Let $s\in(s_1,1)$ and $q\in(1,q_1)$ satisfy
$s=(1-\theta) s_1+\theta$ and $\frac{1}{q}=
\frac{1-\theta}{q_1}+\theta.$
Assume that $\omega\in A_{p}(\rn)$.
Then,
for any $f\in \dot{W}^{1,p}_\omega(\rn)$, \eqref{zhongyang} holds true with
$X$ replaced by $L^p_{\omega}(\rn).$
\end{corollary}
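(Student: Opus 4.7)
\textbf{Proof proposal for Corollary~\ref{wsgn2}.}
The plan is to invoke Corollary~\ref{corollary1001} directly with $X:=L^p_\omega(\rn)$; all four clusters of hypotheses can be read off from material already established above. It suffices to check (i) $X$ is a ball Banach function space with an absolutely continuous norm; (ii) the centered ball average operators $\{B_r\}_{r\in(0,\infty)}$ are uniformly bounded on $X$; and (iii) there exists a sequence $\{\alpha_m\}_{m\in\nn}\subset(0,1)$ with $\alpha_m\to 1$ such that each $X^{1/\alpha_m}$ is a ball Banach function space, $\cm$ is bounded on the associate space $(X^{1/\alpha_m})'$, and \eqref{72121} holds.

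Items (i) and (ii) are free: $L^p_\omega(\rn)$ is a ball Banach function space (see \cite[Section 7.1]{ref3}); it has an absolutely continuous norm by \cite[Theorem 1.34]{r87}, as already invoked in the proof of Theorem~\ref{thm-6-12}; and uniform boundedness of $\{B_r\}_{r>0}$ on $L^p_\omega(\rn)$ is \cite[Lemma 3.15]{dgpyyz}, which the text records explicitly just before Corollary~\ref{wsgn1}.

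For (iii), choose $\alpha_m:=1-\tfrac{1}{m+1}\in(0,1)$, so that $p/\alpha_m>\max\{1,p\}$ and $p/\alpha_m\to p$. Then $X^{1/\alpha_m}=L^{p/\alpha_m}_\omega(\rn)$ is a classical Banach function space and hence a ball Banach function space. Following the computation in the proof of Theorem~\ref{thm-6-12}, the associate space equals $L^{(p/\alpha_m)'}_{\omega^{1-(p/\alpha_m)'}}(\rn)$. Since $A_p(\rn)\subset A_{p/\alpha_m}(\rn)$ with $[\omega]_{A_{p/\alpha_m}(\rn)}\leq[\omega]_{A_p(\rn)}$, the duality identity $[\omega^{1-r'}]_{A_{r'}(\rn)}^{1/(r'-1)}=[\omega]_{A_r(\rn)}$ of \cite[Proposition 7.1.5(4)]{ref1}, applied with $r:=p/\alpha_m$, combined with Lemma~\ref{jidahanshu} yields
$$
\|\cm\|_{(X^{1/\alpha_m})'\to(X^{1/\alpha_m})'}\leq C\bigl[\omega^{1-(p/\alpha_m)'}\bigr]_{A_{(p/\alpha_m)'}(\rn)}^{1/((p/\alpha_m)'-1)}=C[\omega]_{A_{p/\alpha_m}(\rn)}\leq C[\omega]_{A_p(\rn)},
$$
uniformly in $m$, where $C$ depends only on $n$ and $p$. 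This establishes \eqref{72121}, and Corollary~\ref{corollary1001} (in its ``Assume further'' form) then furnishes \eqref{zhongyang} with $X$ replaced by $L^p_\omega(\rn)$ for every $f\in\dot{W}^{1,p}_\omega(\rn)$, which is the claim.

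The only mildly delicate step is the exponent bookkeeping in the displayed bound: the power $\tfrac{\alpha_m}{p-\alpha_m}$ supplied by the $A_p$ duality formula and the power $\tfrac{p-\alpha_m}{\alpha_m}$ supplied by Lemma~\ref{jidahanshu} (namely $1/((p/\alpha_m)'-1)$) are reciprocals, so they cancel cleanly and the final estimate depends only on $[\omega]_{A_p(\rn)}$. This cancellation is what yields the uniformity in $m$ required by \eqref{72121}, and no other step of the argument presents a genuine obstacle.
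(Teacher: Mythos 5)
Your proposal is correct and the key computation checks out: with $r:=p/\alpha_m$ one has $(p/\alpha_m)'-1=\alpha_m/(p-\alpha_m)$, so the exponent $1/((p/\alpha_m)'-1)=(p-\alpha_m)/\alpha_m$ from Lemma~\ref{jidahanshu} is exactly the reciprocal of the exponent $r'-1=\alpha_m/(p-\alpha_m)$ arising from the duality formula $[\omega^{1-r'}]_{A_{r'}}=[\omega]_{A_r}^{r'-1}$, and the composition indeed collapses to $[\omega]_{A_{p/\alpha_m}}\leq[\omega]_{A_p}$. (The implicit Buckley constant $C_n\cdot(p/\alpha_m)$ from \cite[Theorem 7.1.9]{ref1} stays bounded since $\alpha_m\to 1$, so \eqref{72121} holds; you might make that dependence explicit, as the statement of Lemma~\ref{jidahanshu} only asserts independence of $\omega$.) One small phrasing issue: $L^{p/\alpha_m}_\omega(\rn)$ is a ball Banach function space but in general is \emph{not} a classical Banach function space in the Bennett--Sharpley sense (the paper points this out for $A_\infty$ weights in Section 7.1 of \cite{ref3}); what you actually need and have is the ball version, so the argument is unaffected.

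Your route differs from the one the paper intended. The authors say to follow Remarks~\ref{r3.10}(iv) and \ref{2022721}(ii), i.e., to rerun the proof of Corollary~\ref{corollary1001} with Theorem~\ref{theorem3} replaced by Theorem~\ref{theorem3.9}(ii). That version requires only a single convexification exponent $p_0\in[1,\infty)$ with $X^{1/p_0}$ a ball Banach function space and $\cm$ bounded on $(X^{1/p_0})'$; taking $p_0:=1$ one needs $\cm$ bounded on $(L^p_\omega)'=L^{p'}_{\omega^{1-p'}}$ when $p>1$ (equivalently $\omega\in A_p$), or on $L^\infty_{\omega^{-1}}$ when $p=1$ (available from $\omega\in A_1$, as in the proof of Theorem~\ref{thm-6-12}), so no approximating sequence is needed. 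You instead verify the sequence hypothesis of Theorem~\ref{theorem3} directly via $\alpha_m=1-1/(m+1)$, which costs the exponent bookkeeping you carefully carry out but has the advantage of invoking Corollary~\ref{corollary1001} exactly as stated rather than a remarked variant. Both routes are legitimate; the paper's is marginally shorter, yours is marginally more self-contained.
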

\begin{remark}
We pointed out that the Gagliardo--Nirenberg
type inequality in the weighted Sobolev space
related to the Riesz potential was obtained
in \cite{dv07,n15}. However, to the best of
our knowledge, the Gagliardo--Nirenberg
type inequalities of both
Corollaries \ref{wsgn1} and \ref{wsgn2} on the weighted Sobolev space are totally new.
\end{remark}
\subsection{Orlicz Spaces\label{s6.5}}
First, we describe briefly some necessary
concepts and facts on  Orlicz spaces.
A non-decreasing function $\Phi:\ [0,\infty)
\ \to\ [0,\infty)$ is called an \emph{Orlicz function}
if $\Phi(0)= 0$, $\Phi(t)>0$ for any $t\in(0,\infty)$, and
$\lim_{t\to\infty}\Phi(t)=\infty$. An Orlicz
function $\Phi$ is said to be of \emph{lower}
(resp., \emph{upper}) \emph{type} $r$ for some
$r\in\rr$ if there exists a positive constant
$C_{(r)}$ such that,
for any $t\in[0,\infty)$ and
$s\in(0,1)$ (resp., $s\in[1,\infty)$),
 $$\Phi(st)\le C_{(r)} s^r
\Phi(t).$$

In the remainder of this subsection, we always assume that
$\Phi:\ [0,\infty)\ \to\ [0,\infty)$
is an Orlicz function with positive lower
type $r_{\Phi}^-$ and positive upper type
$r_{\Phi}^+$. The \emph{Orlicz norm} $\|f\|_{L^\Phi(\rn)}$ of a measurable
function $f$ on $\rn$ is then defined by setting
$$\|f\|_{L^\Phi(\rn)}:=\inf\lf\{\lambda\in
(0,\infty):\ \int_{\rn}\Phi\lf(\frac{|f(x)|}
{\lambda}\r)\,dx\le1\r\}.$$
Accordingly, the \emph{Orlicz space $L^\Phi(\rn)$}
is defined to be the set of all the measurable
functions $f$ on $\rn$ with finite norm $\|f
\|_{L^\Phi(\rn)}$.
It is easy to prove that the Orlicz space $L^\Phi(\rn)$ is a
quasi-Banach function space (see \cite[Section 7.6]{ref3}).
For related results on
Orlicz spaces, we refer the reader to
\cite{NS14,dfmn21,rr02}.
We first prove the following lemma.
\begin{lemma}\label{brorlicz}
Let $\Phi$ be an Orlicz function with positive lower
type $r_{\Phi}^-\in[1,\infty)$ and positive upper type $r_{\Phi}^+$.
Then there exists a positive constant $C$ such that, for any $r\in(0,\infty)$ and any $g\in L^\Phi(\rn),$
\begin{align}\label{jintui}
\|B_r(g)\|_{L^\Phi(\rn)}
\leq
C\|g\|_{L^\Phi(\rn)},
\end{align}
where $B_r$ is the same as in \eqref{pingjun}.
\end{lemma}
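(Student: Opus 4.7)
The approach is to exploit the fact that $B_r$ is essentially a convex average of translates, $B_r(g)(x) = \fint_{B(\mathbf{0},r)} |g(x+y)|\,dy$, combined with the observation that the hypothesis $r_\Phi^-\ge 1$ is exactly what allows us to treat the Orlicz (quasi-)norm as a genuine norm. The plan is to first reduce to the case in which the Orlicz function is convex, then bound the modular of $B_r(g)$ by Jensen's inequality, and finally transfer back through the equivalence.

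The first step is to replace $\Phi$ by a convex Orlicz function $\widetilde\Phi$ pointwise equivalent to it. A concrete choice is
$$\widetilde\Phi(t):=\int_0^t\frac{\Phi(s)}{s}\,ds,\qquad t\in[0,\infty).$$
The lower-type hypothesis $r_\Phi^-\ge 1$ implies that $\Phi(s)/s^{r_\Phi^-}$ is essentially increasing; multiplying by $s^{r_\Phi^--1}$ (non-decreasing since $r_\Phi^-\ge 1$) shows that $\Phi(s)/s$ itself is essentially increasing, so $\widetilde\Phi$ is convex. A routine computation using the lower type from below and the upper type from above yields $\widetilde\Phi(t)\sim \Phi(t)$ uniformly on $[0,\infty)$, and therefore $\|\cdot\|_{L^\Phi(\rn)}\sim\|\cdot\|_{L^{\widetilde\Phi}(\rn)}$.

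The second step handles the convex case. Fix $g$ with $\|g\|_{L^{\widetilde\Phi}(\rn)}\le 1$, so that $\int_{\rn}\widetilde\Phi(|g(y)|)\,dy\le 1$. By Jensen's inequality applied to the probability measure $|B(x,r)|^{-1}\mathbf{1}_{B(x,r)}\,dy$,
$$\widetilde\Phi(B_r(g)(x))\le \fint_{B(x,r)}\widetilde\Phi(|g(y)|)\,dy=B_r(\widetilde\Phi\circ|g|)(x).$$
Integrating in $x$ and applying Fubini's theorem (using $\int_\rn B_r(h)(x)\,dx=\int_\rn h(y)\,dy$ for non-negative $h$, which expresses the self-adjointness of $B_r$) gives
$$\int_{\rn}\widetilde\Phi(B_r(g)(x))\,dx\le \int_{\rn}\widetilde\Phi(|g(y)|)\,dy\le 1,$$
so $\|B_r(g)\|_{L^{\widetilde\Phi}(\rn)}\le 1=\|g\|_{L^{\widetilde\Phi}(\rn)}$. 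Homogeneity then yields $\|B_r(g)\|_{L^{\widetilde\Phi}(\rn)}\le\|g\|_{L^{\widetilde\Phi}(\rn)}$ for all $g$, with a constant independent of $r\in(0,\infty)$.

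Converting back via $\widetilde\Phi\sim\Phi$ produces a constant $C$, depending only on the type constants of $\Phi$, such that $\|B_r(g)\|_{L^{\Phi}(\rn)}\le C\|g\|_{L^{\Phi}(\rn)}$ uniformly in $r$, which is \eqref{jintui}. The only mild obstacle is the verification that $\widetilde\Phi\sim\Phi$ with explicit constants depending on $r_\Phi^-$ and $r_\Phi^+$; once this is in place, Jensen plus Fubini is completely mechanical. The upper-type condition $r_\Phi^+<\infty$ plays no role in the convexification itself but is used only to secure the comparability $\widetilde\Phi(t)\lesssim\Phi(t)$ from above.
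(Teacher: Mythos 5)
Your plan — convexify $\Phi$, apply Jensen, then Fubini — is the same mechanism the paper abbreviates by citing \cite[Lemma 1.1.1]{kk91}, so the route is right. But your convexification step has a genuine gap. The lower-type hypothesis only gives that $\Phi(s)/s$ is \emph{essentially} non-decreasing: $\Phi(t_1)/t_1\le C\,\Phi(t_2)/t_2$ for $t_1\le t_2$, with a constant $C$ that may strictly exceed $1$. This does \emph{not} prevent $\Phi(s)/s$ from oscillating, and $\widetilde\Phi(t)=\int_0^t\Phi(s)/s\,ds$ is convex only if the integrand is actually non-decreasing. A concrete counterexample with lower type $1$ and finite upper type: take $\Phi$ piecewise linear with $\Phi(t)=t$ on $[0,1]$, $\Phi(t)=1$ on $[1,2]$, $\Phi(t)=t-1$ on $[2,\infty)$. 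Here $\Phi(t)/t$ is strictly decreasing on $(1,2)$, so your $\widetilde\Phi$ fails to be convex there, and Jensen is not available for it.

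The fix is standard: set $\psi(t):=\sup_{0<s\le t}\Phi(s)/s$, which is genuinely non-decreasing, and put $\widehat\Phi(t):=\int_0^t\psi(s)\,ds$. Then $\widehat\Phi$ is convex; moreover $\widehat\Phi(t)\le t\,\psi(t)\le C\,\Phi(t)$ by the essentially-non-decreasing property, and $\widehat\Phi(t)\ge\int_{t/2}^t\Phi(s)/s\,ds\gtrsim\Phi(t)$ using the finite upper type (exactly as in your sketch, but now for $\widehat\Phi$ rather than $\widetilde\Phi$). With $\widehat\Phi$ in place of $\widetilde\Phi$, your Jensen-plus-Fubini argument is correct, including the self-adjointness identity $\int_{\rn}B_r(h)=\int_{\rn}h$. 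The paper's proof is shorter because it invokes \cite[Lemma 1.1.1]{kk91}, which packages exactly this quasi-convexification into the Jensen-type inequality $\Phi(\fint_B h)\lesssim\fint_B\Phi(h)$.
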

\begin{proof}
Let $r\in(0,\infty)$ and $g\in L^\Phi(\rn).$
Since $\Phi$ is of lower type $r_{\Phi}^{-}\in[1,\infty)$, it follows that, for any $0<t_1<t_2<\infty$,
\begin{align*}
\Phi(t_1)\leq
C_{(r_\Phi^-)}\left(\frac{t_1}{t_2}\right)^{r_\Phi^-}\Phi(t_2)\leq
C_{(r_\Phi^-)}\frac{t_1}{t_2}\Phi(t_2)
\end{align*}
and hence
\begin{align*}
\frac{\Phi(t_1)}{t_1}\leq
C_{(r_\Phi^-)}\frac{\Phi(t_2)}{t_2}.
\end{align*}
From this and \cite[Lemma 1.1.1]{kk91},
we deduce that,
for any   $\ld\in(0,\infty)$ and $x\in\rn,$
\begin{align*}
	\Phi\left(\frac{B_r(g)(x)}{\ld}\right)\lesssim \frac{1}{|B(x,r)|}\int_{B(x,r)}\Phi
	\left(\frac{|g(y)|}{\ld}\right)\,dy,
\end{align*}
where the implicit positive  constant
depends only on $\Phi.$
This, together with the Tonelli theorem, further implies that, for any $r\in(0,\infty)$ and $\ld\in(0,\infty),$
\begin{align*}
	\int_{\rn}\Phi\left(\frac{B_r(g)(x)}{\ld}\right)\,dx
	&\lesssim
	\int_{\rn}\frac{1}{|B(x,r)|}\int_{B(x,r)}\Phi
	\left(\frac{|g(y)|}{\ld}\right)\,dy\,dx\\\noz
	&\sim\int_{\rn}\Phi\left(\frac{|g(y)|}{\ld}\right)\,dx
\end{align*}
and hence
\begin{align*}
	\|B_r(g)\|_{L^\Phi(\rn)}\lesssim \|g\|_{L^\Phi(\rn)}.
\end{align*}
This finishes the proof of Lemma \ref{brorlicz}.
\end{proof}

When $X:=L^{\Phi}(\rn)$, we denote
$\dot{W}^{1,X}(\rn)$ simply by
$\dot{W}^{1,\Phi}(\rn)$.
Using both Theorems \ref{theorem3.9}(ii)
and \ref{theorem3}(ii),  we obtain the following conclusions.
\begin{theorem}\label{xiaole}
Let $\Phi$ be an Orlicz function with positive lower
type $r_{\Phi}^-$ and positive upper type $r_{\Phi}^+$.
Let $1\leq r_{\Phi}^-\leq r_{\Phi}^+<\infty$ and $q
\in(0,\infty)$ satisfy $n(\frac{1}{r_{\Phi}^-}-\f1q)<
1.$ Then, for any $f\in \dot{W}^{1,\Phi}(\rn)$,
\begin{align}\label{1752}
\sup_{\ld\in(0,\infty)}\ld\left\|
\left|\left\{y\in\rn:\ |f(\cdot)-f(y)|>\ld|\cdot-y|^{\f nq+1}\right\}\right|^{\f1q}
\right\|_{L^\Phi(\rn)}\sim \|\,|\nabla f|\,\|_{L^\Phi(\rn)},
\end{align}
where  the positive equivalence constants are independent of $f.$ Moreover, for any $f\in \dot{W}^{1,\Phi}(\rn)$,
\begin{align}\label{1751}
&\lim_{\ld\to\infty}\ld\left\|
\left|\left\{y\in\rn:\ |f(\cdot)-f(y)|>\ld|\cdot-y|^{\f nq+1}\right\}\right|^{\f1q}
\right\|_{L^\Phi(\rn)}\\\noz
&\quad=
\left[\frac{K(q,n)}{n}\right]^{\f 1 q}\|\,|\nabla f|\,\|_{L^\Phi(\rn)}.
\end{align}
\end{theorem}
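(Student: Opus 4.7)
The plan is to follow the case-based strategy used for the other concrete spaces treated in Section \ref{sec:6} (in particular, Theorems \ref{bb}, \ref{1615}, and \ref{thm-6-12}), splitting according to whether the lower type satisfies $r_\Phi^- > 1$ (where Theorem \ref{theorem3.9}(ii) applies directly) or $r_\Phi^- = 1$ (where the endpoint Theorem \ref{theorem3}(ii) is required). In both cases, the uniform bound on the centered ball averages $\{B_r\}_{r\in(0,\infty)}$ needed to invoke the density machinery is already provided by Lemma \ref{brorlicz}, and the absolute continuity of $\|\cdot\|_{L^\Phi(\rn)}$ is classical and follows from $r_\Phi^+ < \infty$.

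In the case $r_\Phi^- > 1$, I would use the assumption $n(\f1{r_\Phi^-} - \f1q) < 1$ together with continuity to pick $p \in (1, r_\Phi^-)$ close to $r_\Phi^-$ with $n(\f1p - \f1q) < 1$. The key algebraic observation is that $[L^\Phi(\rn)]^{1/p}$ coincides isometrically with the Orlicz space $L^{\Phi_p}(\rn)$, where $\Phi_p(t) := \Phi(t^{1/p})$. A direct computation with the type relations shows that $\Phi_p$ has lower type $r_\Phi^-/p > 1$ and upper type $r_\Phi^+/p < \infty$, so $[L^\Phi(\rn)]^{1/p}$ is a ball Banach function space. The associate $([L^\Phi(\rn)]^{1/p})'$ is identified with the Orlicz space $L^{\Phi_p^*}(\rn)$ built from the complementary Young function $\Phi_p^*$; since $\Phi_p$ has finite upper type, $\Phi_p^*$ has lower type $(r_\Phi^+/p)' > 1$, and the standard Kita–Cianchi $\nabla_2$-criterion gives boundedness of $\cm$ on $L^{\Phi_p^*}(\rn)$. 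All four hypotheses of Theorem \ref{theorem3.9}(ii) are then satisfied for $X := L^\Phi(\rn)$ with this choice of $p$, yielding both \eqref{1752} and \eqref{1751}.

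In the endpoint case $r_\Phi^- = 1$, Theorem \ref{theorem3.9}(ii) is no longer available and I would appeal to Theorem \ref{theorem3}(ii). Fix any sequence $\{\theta_m\}_{m\in\nn} \subset (0,1)$ with $\theta_m \nearrow 1$ and set $\Psi_m(t) := \Phi(t^{1/\theta_m})$, so that $[L^\Phi(\rn)]^{1/\theta_m} = L^{\Psi_m}(\rn)$. Each $\Psi_m$ has lower type $1/\theta_m > 1$ and upper type $r_\Phi^+/\theta_m < \infty$, so the complementary $\Psi_m^*$ has lower type $(r_\Phi^+/\theta_m)' > 1$ and upper type $(1/\theta_m)' < \infty$. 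The Kita-type criterion again yields boundedness of $\cm$ on $L^{\Psi_m^*}(\rn)$ for each $m$. Combined with Lemma \ref{brorlicz} and absolute continuity of the $L^\Phi$-norm, this supplies the remaining hypotheses of Theorem \ref{theorem3}(ii), from which \eqref{1752} and \eqref{1751} follow for all $f \in \dot{W}^{1,\Phi}(\rn)$.

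The main obstacle I anticipate is verifying the limit condition \eqref{sjx} in the endpoint case; that is, showing that $\|\cm\|_{L^{\Psi_m^*} \to L^{\Psi_m^*}}$ stays bounded as $\theta_m \to 1^-$. This requires a \emph{quantitative} form of the $\nabla_2$-boundedness theorem, tracking how the operator norm depends on the type constants of $\Psi_m^*$. Since the relevant type indices vary continuously with $\theta_m$ and converge to the finite limits $(r_\Phi^+)'$ (lower) and $\infty$ (upper) as $\theta_m \to 1$, a careful quantitative examination of the classical proof of $\cm$-boundedness on Orlicz spaces should deliver the required uniform bound. Once \eqref{sjx} is secured, the remainder of the argument is a direct specialization of the abstract framework established in Section \ref{sec:5}.
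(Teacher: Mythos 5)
Your case split — applying Theorem \ref{theorem3.9}(ii) when $r_\Phi^- > 1$ and falling back to the endpoint Theorem \ref{theorem3}(ii) when $r_\Phi^- = 1$ — and your treatment of the convexification $[L^\Phi(\rn)]^{1/p} = L^{\Phi_p}(\rn)$ with the type calculus for the complementary Young function are exactly the paper's, and the appeal to Lemma \ref{brorlicz} for the uniform ball-average bound and to finite upper type for absolute continuity of $\|\cdot\|_{L^\Phi(\rn)}$ is also correct.

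The one genuine gap is the step you yourself flag and then defer: verifying \eqref{sjx}, i.e.\ that
\begin{equation*}
\lim_{m\to\infty}\left\|\cm\right\|_{([L^\Phi(\rn)]^{1/\theta_m})'\to ([L^\Phi(\rn)]^{1/\theta_m})'} < \infty .
\end{equation*}
Your justification reduces to ``the type indices vary continuously, so a careful quantitative examination of the classical proof should deliver the bound.'' That is a heuristic, not an estimate. Continuity of the type indices does not by itself control the operator norm, and in fact the upper type of $\Psi_m^*$ is unbounded as $\theta_m \to 1^-$ (it equals $(1/\theta_m)'$), so the argument needs to show that the operator norm is governed solely by the lower-type constant of the dual, which stays bounded away from $1$, and to quantify that dependence. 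The paper closes this gap by extracting an explicit constant from the proof of \cite[Theorem 1.2.1]{kk91}: for $1 = r_\Phi^- \le r_\Phi^+ < \infty$ and any $\theta \in (0,1)$,
\begin{equation*}
\|\cm f\|_{([L^\Phi(\rn)]^{1/\theta})'} \lesssim (3C_{r_\Phi^+})^{\,3 r_\Phi^+/\theta}\, \|f\|_{([L^\Phi(\rn)]^{1/\theta})'} ,
\end{equation*}
with the implicit constant depending only on $n$; the right-hand side has a finite limit as $\theta\to 1^-$, which is exactly \eqref{sjx}. Until you either derive such a bound or cite a result that supplies it, the hypotheses of Theorem \ref{theorem3}(ii) are not verified and the endpoint case is incomplete; identifying the obstacle is not the same as removing it.
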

\begin{proof} We  prove
the present theorem by considering the following two cases on both $r_{\Phi}^-$ and $r_{\Phi}^+$.

Case 1) $1< r_{\Phi}^-\leq r_{\Phi}^+<\infty$. In this case,
since  $n(\frac{1}{r_{\Phi}^-}-\f1q)<
1,$ it follows that there exists
a $p\in[1,r_{\Phi}^-)$ such that
$n(\frac{1}{p}-\f1q)<
1.$
Let $\Phi_p(t):=\Phi(t^{\frac{1}{p}})$
for any $t\in[0,\infty)$. By the
definition of $L^\Phi(\rn)$, we have
\begin{align*}
\left[L^\Phi(\rn)\right]^{\f 1 p}=L^{\Phi_{p}}(\rn).
\end{align*}
Moreover, using the proof of \cite[Lemma 4.5]{ZYYW},
\cite[Theorem 1.2.1]{kk91},
and the dual theorem of $L^{\Phi}(\rn)$ (see,
for instance, \cite[Theorem 13]{rr02}),
we further conclude that, if $1<r_{\Phi}^-\leq r_{\Phi}^+
<\infty$ and $p\in[1,r_{\Phi}^-)$,
then $L^{\Phi}(\rn)$ has an absolutely continuous norm
and $\cm$ is bounded on $([L^{\Phi}(\rn)]^{\frac{1}{p}})'$.
Thus, all the assumptions of Theorem
\ref{theorem3.9}(ii) are satisfied
for $X:=L^\Phi(\rn)$ and hence,
by Theorem
\ref{theorem3.9}(ii) with $X:=L^\Phi(\rn)$, we find that both \eqref{1752}
and \eqref{1751} hold true.

Case 2) $1= r_{\Phi}^-\leq r_{\Phi}^+<\infty$.
In this case, by the proof of
\cite[Theorem 1.2.1]{kk91}, we find that, when $1= r_{\Phi}^-\leq r_{\Phi}^+<\infty$ and $\theta\in(0,1),$
for any $f\in([L^{\Phi}(\rn)]^{\f1\theta})',$
$$\|\cm f\|_{([L^{\Phi}(\rn)]^{\f1\theta})'}
\lesssim (3C_{r_{\Phi}^+})^{\frac{3r_{\Phi}^+}{\theta}}
\|f\|_{([L^{\Phi}(\rn)]^{\f1\theta})'},$$
where the implicit positive  constant depends only on $n$. Thus, \eqref{sjx}
holds true for $X:=L^\Phi(\rn).$
From this and Lemma \ref{brorlicz}, we deduce that all the assumptions of Theorem \ref{theorem3}(ii)
are satisfied for $X:=L^\Phi(\rn)$. By Theorem \ref{theorem3}(ii)
for $X:=L^\Phi(\rn)$, we conclude that both \eqref{1752}
and \eqref{1751} hold true for any $f\in \dot{W}^{1,\Phi}(\rn)$.
This finishes the proof of Theorem \ref{xiaole}.
\end{proof}
Using  Corollaries \ref{corollary3.111}
and \ref{corollary1001}, we obtain the following conclusions whose proofs are similar to
that of Theorem \ref{xiaole}; we omit the details here.
\begin{corollary}\label{os1}
Assume that  $\Phi$ is an Orlicz function with positive
lower type $r_{\Phi}^-$ and positive upper type $
r_{\Phi}^+$. Let $1\leq r_{\Phi}^-\leq r_{\Phi}
^+<\infty,$ $q_1\in[1,\infty],$ and
$\ta\in (0,1)$. Let
$q\in [1, q_1]$ satisfy
$ \frac{1}{q}=\frac{1-\theta}{q_1}+\theta$.
\begin{itemize}
\item[{\rm(i)}] If $q_1\in[1,\infty),$ then there exists a positive constant $C$ such that, for any $f\in \dot{W}^{1,\Phi}(\rn)$,
\begin{align*}
	&\sup_{\ld\in(0,\infty)}\lambda\left\|\,\left|
	\lf\{y\in\rn:\ |f(\cdot)-f(y)|>\lambda|\cdot-y|^{\f n {q}
		+\ta}\r\}\right|\,\right\|_{L^\Phi(\rn)}^{\f1q}	\\
	&\quad\leq C\|f\|_{L^{q_1\Phi}(\rn)}^{1-\ta}\|\,|\nabla f|\,
\|_{L^\Phi(\rn)}^{\ta}.
\end{align*}
\item[{\rm(ii)}] If $q_1=\infty,$ then there exists a positive constant $C$
such that, for any $f\in \dot{W}^{1,\Phi}(\rn)$,
\begin{align*}
	&\sup_{\ld\in(0,\infty)}\lambda\left\|\,\left|
	\lf\{y\in\rn:\ |f(\cdot)-f(y)|>\lambda|\cdot-y|^{\f n {q}
		+\ta}\r\}\right|\,\right\|_{L^\Phi(\rn)}^{\f1q}\\
	&\quad\leq C\|f\|_{L^{\infty}(\rn)}^{1-\ta}\|\,|\nabla
	f|\,\|_{L^\Phi(\rn)}^{\ta}.
\end{align*}
\end{itemize}
\end{corollary}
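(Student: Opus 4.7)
The plan is to apply Corollary \ref{corollary3.111} (together with Remark \ref{r3.10}(iv) when $r_\Phi^->1$) to $X:=L^\Phi(\rn)$, so that the whole task reduces to verifying the hypotheses of that corollary for the Orlicz space. Fortunately, most of the technical ingredients have already been assembled in the proof of Theorem \ref{xiaole}, and the challenge is only to organize them to feed into the Gagliardo--Nirenberg-style statement. First, I will record that $L^\Phi(\rn)$ is a ball Banach function space with absolutely continuous norm (as in the proof of Theorem \ref{xiaole}, Case 1) and that the centered ball average operators $\{B_r\}_{r\in(0,\infty)}$ are uniformly bounded on $L^\Phi(\rn)$ by Lemma \ref{brorlicz}. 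For part (i) with $q_1\in[1,\infty)$, I will also verify that $X^{q_1}=L^{\Phi_{q_1}}(\rn)$ with $\Phi_{q_1}(t):=\Phi(t^{q_1})$ is a ball Banach function space: its lower type is $q_1r_\Phi^-\geq 1$ and its upper type $q_1r_\Phi^+<\infty$, so the standard norm inequalities (via a convexity argument) do hold.

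The core of the verification is constructing a sequence $\{\alpha_m\}_{m\in\nn}\subset(0,1)$ with $\alpha_m\to 1$, such that $X^{1/\alpha_m}=L^{\Phi^{(m)}}(\rn)$ (where $\Phi^{(m)}(t):=\Phi(t^{1/\alpha_m})$) is a ball Banach function space, $\cm$ is bounded on $(X^{1/\alpha_m})'$, and $\limsup_{m\to\infty}\|\cm\|_{(X^{1/\alpha_m})'\to(X^{1/\alpha_m})'}<\infty$. I will split into two cases according to $r_\Phi^-$, mirroring Theorem \ref{xiaole}:
\begin{itemize}
\item When $r_\Phi^->1$, fix any $\alpha_*\in(1/r_\Phi^-,1)$ and set $\alpha_m:=1-(1-\alpha_*)/m$. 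Then $\Phi^{(m)}$ has lower type $r_\Phi^-/\alpha_m>1$ and upper type $r_\Phi^+/\alpha_m$, so by \cite[Theorem 1.2.1]{kk91} combined with the duality theorem for Orlicz spaces in \cite[Theorem 13]{rr02}, $\cm$ is bounded on $(X^{1/\alpha_m})'$, uniformly in $m$.
\item When $r_\Phi^-=1$, I will use the explicit bound
$\|\cm f\|_{(X^{1/\alpha_m})'}\lesssim(3C_{r_\Phi^+})^{3r_\Phi^+/\alpha_m}\|f\|_{(X^{1/\alpha_m})'}$
extracted from the proof of \cite[Theorem 1.2.1]{kk91} (exactly as used in Case 2 of Theorem \ref{xiaole}), which remains bounded as $\alpha_m\to 1$.
\end{itemize}

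Once the sequence $\{\alpha_m\}$ has been constructed, all the hypotheses of Corollary \ref{corollary3.111}(ii) are in place for $X=L^\Phi(\rn)$, yielding part (ii) immediately (valid for any $f\in\dot{W}^{1,\Phi}(\rn)$ via the ``assume further'' clause of Corollary \ref{corollary3.111}(ii)). For part (i), the additional hypothesis that $X^{q_1}$ is a ball Banach function space has been verified above, so Corollary \ref{corollary3.111}(i) applies and yields the claimed inequality for any $f\in \dot{W}^{1,\Phi}(\rn)$.

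The main obstacle is the endpoint case $r_\Phi^-=1$: Theorem \ref{theorem3.9}(ii) cannot be invoked directly because $\cm$ is not known to be bounded on $[L^\Phi(\rn)]'$ in this regime. This is exactly why Corollary \ref{corollary3.111} was formulated with the ``extrapolation-type'' uniform-norm hypothesis \eqref{72121} rather than a single-$p$ boundedness requirement; the explicit Kokilashvili--Krbec estimate for the operator norm of $\cm$ on the scaled dual spaces gives a quantity $(3C_{r_\Phi^+})^{3r_\Phi^+/\alpha_m}$ that remains bounded as $\alpha_m\nearrow 1$, which provides precisely the input required by \eqref{72121}. Modulo this observation, the remainder of the proof is a straightforward assembly of pieces already appearing earlier in the paper.
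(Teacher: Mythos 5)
Your proposal is correct and follows essentially the same route that the paper indicates (it explicitly states that the proof of Corollary~\ref{os1} is ``similar to that of Theorem~\ref{xiaole}''): you apply Corollary~\ref{corollary3.111} to $X=L^\Phi(\rn)$ after verifying its hypotheses, splitting into $r_\Phi^->1$ (where Remark~\ref{r3.10}(iv) via Theorem~\ref{theorem3.9}(ii) already suffices) and $r_\Phi^-=1$ (where the explicit Kokilashvili--Krbec operator-norm bound gives \eqref{72121}), together with Lemma~\ref{brorlicz} for the uniform boundedness of $\{B_r\}$ and the observation that $X^{q_1}$ corresponds to the Orlicz function $t\mapsto\Phi(t^{q_1})$ and hence is a ball Banach function space. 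Two cosmetic remarks: the constraint $\alpha_*\in(1/r_\Phi^-,1)$ is superfluous (any $\alpha_m\nearrow 1$ in $(0,1)$ gives lower type $r_\Phi^-/\alpha_m>r_\Phi^->1$), and the uniformity in $m$ asserted in the $r_\Phi^->1$ bullet is not justified from the cited references alone, but this is immaterial since you correctly note that Remark~\ref{r3.10}(iv) already covers that case without the $\{\alpha_m\}$ device.
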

\begin{corollary}\label{os2}
Let $\Phi$ be an Orlicz function with positive
lower type $r_{\Phi}^-$ and positive upper
type $r_{\Phi}^+$. Let $1\leq r_{\Phi}^-\leq r_{\Phi}
^+<\infty$, $s_1\in (0,1)$, $q_1\in(1,\infty)$,
and $\theta\in(0,1).$ Let $s\in(s_1,1)$ and
$q\in(1,r_{\Phi}^-)$ satisfy
$s=(1-\theta) s_1+\theta$ and $\frac{1}{q}=\frac{1-\theta}{q_1}+\theta.$
Then, for any $f\in \dot{W}^{1,\Phi}(\rn)$,
\eqref{zhongyang} holds true
with $X$ replaced by $L^\Phi(\rn).$
\end{corollary}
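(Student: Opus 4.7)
The plan is to derive Corollary \ref{os2} from Corollary \ref{corollary1001} in the variant stated in Remark \ref{2022721}(ii), by verifying that $X := L^\Phi(\rn)$ satisfies the hypotheses of Theorem \ref{theorem3.9}(ii). Since $q \in (1, r_\Phi^-)$ forces $r_\Phi^- > 1$, we may fix a single $p \in (1, r_\Phi^-)$ and work with that exponent; no nontrivial approximating sequence $\{\alpha_m\}$ is required, because the condition $n(1/p - 1) < 1$ (needed when Theorem \ref{theorem3.9}(ii) is applied with $q := 1$ inside Corollary \ref{corollary1001}) is automatic as soon as $p \geq 1$.

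First I would invoke, without repetition, the structural facts about $L^\Phi(\rn)$ already assembled in Case 1 of the proof of Theorem \ref{xiaole}: $L^\Phi(\rn)$ is a ball Banach function space; it has an absolutely continuous norm (since $r_\Phi^+ < \infty$); the centered ball average operators $\{B_r\}_{r \in (0,\infty)}$ are uniformly bounded on it (Lemma \ref{brorlicz}); the $p$-convexification $[L^\Phi(\rn)]^{1/p} = L^{\Phi_p}(\rn)$, with $\Phi_p(t) := \Phi(t^{1/p})$, is itself a ball Banach function space; and the Hardy--Littlewood maximal operator $\cm$ is bounded on the associate space $(L^{\Phi_p}(\rn))'$. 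The last item relies on the dual theorem for Orlicz spaces \cite[Theorem 13]{rr02} together with the Orlicz maximal inequality (proof of \cite[Theorem 1.2.1]{kk91}), the essential ingredient being that the complementary function $(\Phi_p)^*$ has positive lower type strictly greater than $1$, which is guaranteed by $r_\Phi^-/p > 1$.

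With these verifications, Theorem \ref{theorem3.9}(ii) applies to $X := L^\Phi(\rn)$ with our choice of $p$, and Remark \ref{2022721}(ii) then delivers \eqref{zhongyang} for every $f \in \dot{W}^{1,\Phi}(\rn)$. The internal mechanism is that of Corollary \ref{corollary1001}: one decomposes $E_f(\lambda, q, s) \subset E_f(A^{-\theta}\lambda, q_1, s_1) \cup E_f(A^{1-\theta}\lambda, 1, 1)$ for a free parameter $A > 0$, uses the ball Banach quasi-triangle inequality together with the identity $[L^\Phi(\rn)]^q = L^{\Phi^{(q)}}(\rn)$ (where $\Phi^{(q)}(t) := \Phi(t^{1/q})$) to bound the weak-type semi-norms, optimizes in $A$ to reach the homogeneous product estimate, and finally applies Theorem \ref{theorem3.9}(ii) with $q := 1$ to convert the $\mathbf{W\dot{F}}^1_{L^\Phi(\rn), 1}(\rn)$-factor into $\|\,|\nabla f|\,\|_{L^\Phi(\rn)}$. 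The second inequality in \eqref{zhongyang} is the trivial majorization of the weak-type by the strong-type Triebel--Lizorkin semi-norm. The only genuinely technical point is the complementary-Orlicz-function duality underlying the maximal bound on $(L^{\Phi_p}(\rn))'$, but since this is exactly the verification already carried out in Case 1 of the proof of Theorem \ref{xiaole}, it poses no new obstacle, and the proof of Corollary \ref{os2} reduces to a direct transcription of that verification into the Gagliardo--Nirenberg framework.
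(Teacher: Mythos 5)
Your proposal is correct and, in substance, follows the route the paper intends; since $q\in(1,r_\Phi^-)$ forces $r_\Phi^->1$, only the Case-1 mechanism from the proof of Theorem \ref{xiaole} is relevant, so a single $p\in(1,r_\Phi^-)$ suffices and the approximating sequence $\{\alpha_m\}$ built into Theorem \ref{theorem3} and Corollary \ref{corollary1001} can be bypassed entirely via Remark \ref{2022721}(ii). Your verifications of the hypotheses of Theorem \ref{theorem3.9}(ii) for $X:=L^\Phi(\rn)$ (ball Banach, absolutely continuous norm, $[L^\Phi(\rn)]^{1/p}=L^{\Phi_p}(\rn)$ is ball Banach, $\cm$ bounded on $(L^{\Phi_p}(\rn))'$) are exactly those assembled in Case 1 of Theorem \ref{xiaole}, and the internal Gagliardo--Nirenberg interpolation mechanism you describe matches \eqref{qianniu} and \eqref{qiannianjiang}. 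Two small remarks: first, your parenthetical $\Phi^{(q)}(t):=\Phi(t^{1/q})$ has the exponent inverted for the $q$-convexification --- with the paper's convention $[L^\Phi(\rn)]^{1/p}=L^{\Phi_p}(\rn)$ where $\Phi_p(t)=\Phi(t^{1/p})$, one has $[L^\Phi(\rn)]^q=L^{\Psi}(\rn)$ with $\Psi(t)=\Phi(t^q)$; this is cosmetic and does not affect the argument. Second, your separate invocation of Lemma \ref{brorlicz} for the uniform boundedness of $\{B_r\}$ is a harmless redundancy, since Lemma \ref{yongwu} already yields it from the boundedness of $\cm$ on $(L^{\Phi_p}(\rn))'$, but it is in any case part of the explicit hypothesis list for the $\dot W^{1,X}$-variant of Corollary \ref{corollary1001}, so stating it does no harm.
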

\begin{remark}
We pointed out that the Gagliardo--Nirenberg type
inequality in the Sobolev--Orlicz space related to
the Riesz potential was obtained in
\cite{kp,KP06,mnss12}. However, to the best
of our knowledge, the Gagliardo--Nirenberg
type inequalities of both
Corollaries \ref{os1} and \ref{os2}
on the Sobolev--Orlicz space are totally new.
\end{remark}
\subsection{Orlicz-Slice Spaces\label{s6.6}}
First, we give the definition of the Orlicz-slice
spaces and describe briefly some related facts.
Throughout this subsection, we assume
that $\Phi: [0,\infty)\to [0,\infty)$
is an Orlicz function with positive
lower type $r_{\Phi}^-$ and positive upper
type $r_{\Phi}^+$. For any given $t,r\in(0,\infty)$,
the \emph{Orlicz-slice space}
$(E_\Phi^r)_t(\rn)$ is defined to be the set of all the
measurable functions $f$ on $\rn$ with the finite
quasi-norm
$$
\|f\|_{(E_\Phi^r)_t(\rn)} :=\lf\{\int_{\rn}
\lf[\frac{\|f\mathbf{1}_{B(x,t)}\|_{L^\Phi(\rn)}}
{\|\mathbf{1}_{B(x,t)}\|_{L^\Phi(\rn)}}\r]
^r\,dx\r\}^{\frac{1}{r}}.
$$
The Orlicz-slice spaces were introduced in
\cite{ZYYW} as a generalization of
both the slice space of	Auscher and Mourgoglou
\cite{AM2014,APA} and the Wiener amalgam space
in \cite{h75,knt,h19}. According to
both \cite[Lemma 2.28]{ZYYW} and \cite[Remark 7.41(i)]{ref5},
the Orlicz-slice space $(E_\Phi^r)_t(\rn)$ is a
ball Banach function space, but in general is not a
Banach function space.
We first prove the following lemma.
\begin{lemma}\label{brslice}
Let $t\in(0,\infty),\,r\in[1,\infty),$ and
$\Phi$ be an Orlicz function with positive lower
type $r_{\Phi}^-\in[1,\infty)$ and positive upper type
$r_{\Phi}^+$. Then there exists a
positive constant $C$ such that,
for any $s\in(0,\infty)$ and $g\in (E_\Phi^r)_t(\rn)$,
\begin{align}\label{k1}
\|B_s(g)\|_{(E_\Phi^r)_t(\rn)}
\leq
C\|g\|_{(E_\Phi^r)_t(\rn)},
\end{align}
where $B_s$ is the same as in \eqref{pingjun} with $r$ replaced by $s.$
\end{lemma}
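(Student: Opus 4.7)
The plan is to reduce to a pointwise estimate on $\|B_s(g)\mathbf{1}_{B(x,t)}\|_{L^\Phi(\rn)}$, then split into the two regimes $s\le t$ and $s>t$, invoking in the first the uniform $L^\Phi(\rn)$-boundedness of $B_s$ supplied by Lemma \ref{brorlicz}, and in the second the H\"older inequality in the Orlicz associate space furnished by Lemmas \ref{lemma3.5} and \ref{fushe}. The starting observation is that, by translation invariance of Lebesgue measure, the quantity $c_t:=\|\mathbf{1}_{B(x,t)}\|_{L^\Phi(\rn)}$ is independent of $x$, so that
\begin{align*}
\|B_s(g)\|_{(E_\Phi^r)_t(\rn)}^r= c_t^{-r}\int_{\rn} \|B_s(g)\mathbf{1}_{B(x,t)}\|_{L^\Phi(\rn)}^r\,dx,
\end{align*}
and an analogous identity holds for $g$ in place of $B_s(g)$; it thus suffices to control $\int_{\rn}\|B_s(g)\mathbf{1}_{B(x,t)}\|_{L^\Phi(\rn)}^r\,dx$ by $\int_{\rn}\|g\mathbf{1}_{B(x,t)}\|_{L^\Phi(\rn)}^r\,dx$ with a constant independent of $s$ and $t$.

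When $s\le t$, for every $y\in B(x,t)$ the ball $B(y,s)$ is contained in $B(x,2t)$, so $B_s(g)(y)=B_s(g\mathbf{1}_{B(x,2t)})(y)$; Lemma \ref{brorlicz} then yields $\|B_s(g)\mathbf{1}_{B(x,t)}\|_{L^\Phi(\rn)}\lesssim \|g\mathbf{1}_{B(x,2t)}\|_{L^\Phi(\rn)}$. A standard covering $B(x,2t)\subset\bigcup_{i=1}^{N_n} B(x+t v_i,t)$ with $N_n$ and $\{v_i\}_{i=1}^{N_n}$ depending only on $n$, followed by the triangle inequality, the elementary inequality $(\sum_{i=1}^{N_n} a_i)^r\le N_n^{r-1}\sum_{i=1}^{N_n} a_i^r$, and an integration in $x$ exploiting the translation invariance of Lebesgue measure, yield the required bound in this regime.

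When $s>t$, for every $y\in B(x,t)$ the ball $B(y,s)$ sits inside $B(x,2s)$ with $|B(y,s)|=2^{-n}|B(x,2s)|$, which gives $B_s(g)(y)\le 2^n B_{2s}(g)(x)$ and hence $\|B_s(g)\mathbf{1}_{B(x,t)}\|_{L^\Phi(\rn)}\le 2^n c_t\,B_{2s}(g)(x)$. Writing $1=|B(0,t)|^{-1}\int_{\rn}\mathbf{1}_{B(z,t)}(y)\,dy$, applying Fubini, and then using Lemma \ref{lemma3.5} together with Lemma \ref{fushe} (whose hypothesis is met thanks to Lemma \ref{brorlicz}) in the form $\int_{B(y,t)}|g(z)|\,dz\lesssim c_t^{-1}|B(0,t)|\,\|g\mathbf{1}_{B(y,t)}\|_{L^\Phi(\rn)}$, one obtains
\begin{align*}
B_{2s}(g)(x)\lesssim \frac{1}{c_t\,|B(x,2s)|}\int_{B(x,3s)}G(y)\,dy\sim \frac{1}{c_t}\,B_{3s}(G)(x),
\end{align*}
where $G(y):=\|g\mathbf{1}_{B(y,t)}\|_{L^\Phi(\rn)}$. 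Since $B_{3s}$ is a contraction on both $L^1(\rn)$ (by Fubini) and $L^\infty(\rn)$, hence on every $L^r(\rn)$ with $r\in[1,\infty]$ by interpolation, integrating the $r$-th power gives $\int_{\rn}(B_{2s}(g)(x))^r\,dx\lesssim c_t^{-r}\int_{\rn}G(y)^r\,dy=\|g\|_{(E_\Phi^r)_t(\rn)}^r$, which combined with the first display finishes the proof of \eqref{k1}.

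The main obstacle lies in the regime $s>t$: here the $L^\Phi(\rn)$-boundedness of $B_s$ is inadequate because the averaging radius already exceeds the localization scale of the slice norm, and one is forced to transfer the global average at scale $s$ into an average of local Orlicz norms $G$ at scale $\sim s$. The Orlicz H\"older inequality together with the product estimate $\|\mathbf{1}_{B(y,t)}\|_{L^\Phi(\rn)}\|\mathbf{1}_{B(y,t)}\|_{(L^\Phi(\rn))'}\sim |B(0,t)|$ provided by Lemma \ref{fushe} is precisely the mechanism that accomplishes this transfer.
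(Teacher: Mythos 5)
Your proposal is correct and follows essentially the same route as the paper's proof: split on $s\lesssim t$ versus $s\gtrsim t$; in the first regime reduce to the pointwise bound $\|B_s(g)\mathbf{1}_{B(x,t)}\|_{L^\Phi(\rn)}\lesssim\|g\mathbf{1}_{B(x,2t)}\|_{L^\Phi(\rn)}$ via Lemma \ref{brorlicz} and absorb the dilation by a covering argument; in the second regime swap the order of integration via Fubini so that the localization happens at scale $t$ while the averaging happens at scale $\sim s$, then invoke the Orlicz H\"older inequality ($\fint_{B(y,t)}|g|\lesssim c_t^{-1}\|g\mathbf{1}_{B(y,t)}\|_{L^\Phi(\rn)}$) and the uniform $L^r$-boundedness of ball averages to close the estimate. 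Your introduction of the auxiliary function $G(y):=\|g\mathbf{1}_{B(y,t)}\|_{L^\Phi(\rn)}$ and the explicit appeal to Riesz--Thorin for $B_{3s}$ is a transparent way of writing the Jensen-plus-Fubini step that the paper compresses into one display, but the underlying mechanism is identical.
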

\begin{proof}
Let $s\in(0,\infty)$ and $g\in (E_\Phi^r)_t(\rn)$.
We consider the following two cases  on both $s$ and $t$.

Case $1)$ $s\in(0,t).$ In this case, by an argument similar to that used in the proof of \eqref{jintui}, we conclude that,
for any $x\in\rn,$
\begin{align*}
	\left\|B_s(g)\mathbf{1}_{B(x,t)}\right\|_{L^\Phi(\rn)}
	\lesssim
	\left\|g\mathbf{1}_{B(x,2t)}\right\|_{L^\Phi(\rn)}.
\end{align*}
From this and the proof of \cite[Theorem 2.20]{ZYYW}, we infer that
\begin{align*}
	\|B_s(g)\|_{(E_\Phi^r)_t(\rn)}
	&=\lf\{\int_{\rn}
	\lf[\frac{\|B_s(g)\mathbf{1}_{B(x,t)}\|_{L^\Phi(\rn)}}
	{\|\mathbf{1}_{B(x,t)}\|_{L^\Phi(\rn)}}\r]
	^r\,dx\r\}^{\frac{1}{r}}\\\noz
	&\lesssim
	\lf\{\int_{\rn}
	\lf[\frac{\|g\mathbf{1}_{B(x,2t)}\|_{L^\Phi(\rn)}}
	{\|\mathbf{1}_{B(x,t)}\|_{L^\Phi(\rn)}}\r]
	^r\,dx\r\}^{\frac{1}{r}}\sim \|g\|_{(E_\Phi^r)_t(\rn)}.
\end{align*}
This proves \eqref{k1} in this case.

Case $2)$ $s\in[t,\infty).$ In this case, by the Fubini theorem, we have, for any $x\in\rn,$
\begin{align*}
	\left\|B_s(g)\mathbf{1}_{B(x,t)}\right\|_{L^\Phi(\rn)}
	&=\left\|\fint_{B(\cdot,s)}\fint_{B(z,t)}|g(z)|\,d\xi
	\,dz\mathbf{1}_{B(x,t)}\right\|_{L^\Phi(\rn)}\\\noz
	&\lesssim
	\left\|\fint_{B(\cdot,2s)}\fint_{B(\xi,t)}|g(z)|\,dz
	\,d\xi\mathbf{1}_{B(x,t)}\right\|_{L^\Phi(\rn)}\\\noz
	&\lesssim
	\left\|\fint_{B(x,4s)}\fint_{B(\xi,t)}|g(z)|\,dz
	\,d\xi\mathbf{1}_{B(x,t)}\right\|_{L^\Phi(\rn)}\\\noz
	&\sim
	\fint_{B(x,4s)}\fint_{B(\xi,t)}|g(z)|\,dz
	\,d\xi\left\|\mathbf{1}_{B(x,t)}\right\|_{L^\Phi(\rn)}.
\end{align*}
From this, the H\"older inequality, and the proof of
\cite[Theorem 2.20]{ZYYW}, we deduce that
\begin{align*}
	\|B_s(g)\|_{(E_\Phi^r)_t(\rn)}
	&=\lf\{\int_{\rn}
	\lf[\frac{\|B_s(g)\mathbf{1}_{B(x,t)}\|_{L^\Phi(\rn)}}
	{\|\mathbf{1}_{B(x,t)}\|_{L^\Phi(\rn)}}\r]
	^r\,dx\r\}^{\frac{1}{r}}\\\noz
	&\lesssim
	\left\{\int_{\rn}
	\fint_{B(x,4s)}
	\left[\fint_{B(\xi,t)}|g(z)|\,dz
	\,\right]^r\,d\xi
	\,dx\right\}^{\frac{1}{r}}\\\noz
	&\sim
	\left\{\int_{\rn}\left[\fint_{B(x,t)}|g(z)|\,dz\right]^r\,dx\right\}^{\frac{1}{r}}
	\lesssim
	\|g\|_{(E_\Phi^r)_t(\rn)}.
\end{align*}
This proves \eqref{k1} in this case,
which then completes the proof of Lemma
\ref{brslice}.
\end{proof}
When $X:=(E_\Phi^r)_t(\rn)$,
we  denote $\dot{W}^{1,X}(\rn)$
simply by $\dot{W}^{1,(E_\Phi^r)_t}(\rn)$.
Using both Theorems \ref{theorem3.9}(ii)
and \ref{theorem3}(ii),  we obtain the following conclusions.
\begin{theorem}\label{kan}
Let $t\in(0,\infty),\,r\in[1,\infty),$ and
$\Phi$ be an Orlicz function with positive lower
type $r_{\Phi}^-$ and positive upper type
$r_{\Phi}^+$.
Let $1\leq r_{\Phi}^-\leq
r_{\Phi}^+<\infty$ and $q\in(0,\infty)$
satisfy $n(\frac{1}{\min\{r_{\Phi}^-,r\}}-\f1q)<1$.
Then, for any $f\in \dot{W}^{1,(E_\Phi^r)_t}(\rn),$
\begin{align}\label{613}
&\sup_{\ld\in(0,\infty)}\lambda\left
\|\lf|\lf\{y\in\rn:\ |f(\cdot)-f(y)|>\lambda|
\cdot-y|^{\frac{n}{q}+1}\r\}\r|^{\frac{1}{q}}
\right\|_{(E_\Phi^r)_t(\rn)}\\
&\quad\sim\|\,|\nabla f|\,\|_{(E_\Phi^r)_t(\rn)},\noz
\end{align}
where the positive equivalence constants are independent of $f$. Moreover, for any $f\in \dot{W}^{1,(E_\Phi^r)_t}(\rn),$
\begin{align}\label{612}
&\sup_{\ld\in(0,\infty)}\lambda\left
\|\lf|\lf\{y\in\rn:\ |f(\cdot)-f(y)|>\lambda|
\cdot-y|^{\frac{n}{q}+1}\r\}\r|^{\frac{1}{q}}
\right\|_{(E_\Phi^r)_t(\rn)}\\\noz
&\quad=\left[\frac{K(q,n)}{n}\right]^{\f 1 q}
\|\,|\nabla f|\,\|_{(E_\Phi^r)_t(\rn)}.
\end{align}
\end{theorem}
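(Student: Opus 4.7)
The plan is to mimic the blueprint used for the variable Lebesgue, Orlicz, and weighted cases: verify that $X := (E_\Phi^r)_t(\rn)$ satisfies the hypotheses of Theorem \ref{theorem3.9}(ii) when $\min\{r_\Phi^-, r\} > 1$, and fall back on Theorem \ref{theorem3}(ii) in the borderline situation $\min\{r_\Phi^-, r\} = 1$. The splitting point is dictated by the condition $n(\tfrac{1}{\min\{r_\Phi^-,r\}} - \tfrac{1}{q}) < 1$, which gives room to pick an auxiliary exponent $p$ with $p < \min\{r_\Phi^-, r\}$ and $n(\tfrac{1}{p} - \tfrac{1}{q}) < 1$, exactly what the general theorems need.

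First I would collect the structural facts about Orlicz-slice spaces. The excerpt already records that $(E_\Phi^r)_t(\rn)$ is a ball Banach function space and that the centered ball average operators are uniformly bounded on it (Lemma \ref{brslice}). Two additional facts are needed: (a) absolute continuity of the norm of $(E_\Phi^r)_t(\rn)$ (under $r_\Phi^+ < \infty$ and $r < \infty$), which follows from the usual dominated convergence argument inside the Orlicz bracket, exactly as in \cite{ZYYW}; and (b) the identity $[(E_\Phi^r)_t(\rn)]^{1/p} = (E_{\Phi_p}^{r/p})_t(\rn)$ where $\Phi_p(s) := \Phi(s^{1/p})$, together with the fact that $\Phi_p$ has positive lower and upper types $r_\Phi^-/p$ and $r_\Phi^+/p$. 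These are immediate from the definitions.

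For Case 1, with $\min\{r_\Phi^-, r\} > 1$, I choose $p \in [1, \min\{r_\Phi^-, r\})$ with $n(\tfrac1p - \tfrac1q) < 1$. Then $(E_{\Phi_p}^{r/p})_t(\rn)$ is a ball Banach function space since $r/p \ge 1$ and $r_{\Phi_p}^- = r_\Phi^-/p \ge 1$. The main technical step is to show that $\cm$ is bounded on the associate space $((E_{\Phi_p}^{r/p})_t(\rn))'$; this is available in the Orlicz-slice framework of \cite{ZYYW} (a block-type predual characterization, combined with the Orlicz maximal inequality of \cite{kk91} used in the proof of Theorem \ref{xiaole}, gives the required bound). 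With all hypotheses satisfied, Theorem \ref{theorem3.9}(ii) applied with $X := (E_\Phi^r)_t(\rn)$ yields both \eqref{613} and \eqref{612}.

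For Case 2, where $\min\{r_\Phi^-, r\} = 1$, I select a sequence $\theta_m \in (0,1)$ with $\theta_m \uparrow 1$ and set up $X^{1/\theta_m} = (E_{\Phi_{\theta_m}}^{r/\theta_m})_t(\rn)$ (a ball Banach function space since $r/\theta_m > 1$ and $r_{\Phi_{\theta_m}}^- = r_\Phi^-/\theta_m > 1$). The crucial input is condition \eqref{sjx}, namely that $\|\cm\|_{(X^{1/\theta_m})' \to (X^{1/\theta_m})'}$ stays bounded as $\theta_m \to 1$. The argument from the proof of Theorem \ref{xiaole} Case 2, which tracks the dependence of the Orlicz maximal operator norm on $\theta$ via the constant $(3C_{r_\Phi^+})^{3r_\Phi^+/\theta_m}$, transfers to the slice setting because the slice norm is built out of Orlicz norms on balls; the constant stays bounded (indeed stabilizes) as $\theta_m \to 1$ since $r_\Phi^+ < \infty$. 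Combined with Lemma \ref{brslice} and absolute continuity, all hypotheses of Theorem \ref{theorem3}(ii) are met and \eqref{613}--\eqref{612} follow.

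The main obstacle is the quantitative control of $\|\cm\|$ on the associate space of $(E_{\Phi_{\theta_m}}^{r/\theta_m})_t(\rn)$ as $\theta_m \uparrow 1$ in Case 2; one must verify carefully that the associate space description and the Orlicz maximal inequality combine to give a uniform bound. Once that is in hand, the remainder of the proof is a direct invocation of the general machinery already developed in Sections \ref{sec:4} and \ref{sec:5}.
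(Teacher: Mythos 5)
Your proposal is correct and follows essentially the same route as the paper: invoke Theorem \ref{theorem3.9}(ii) for the Orlicz-slice space via the convexification identity $[(E_\Phi^r)_t(\rn)]^{1/p}=(E_{\Phi_p}^{r/p})_t(\rn)$ and the $\cm$-boundedness on its associate space from \cite{ZYYW}, and fall back on Theorem \ref{theorem3}(ii) (checking \eqref{sjx} via the quantitative Orlicz-slice maximal bound $(3C_{r_\Phi^+})^{3r_\Phi^+/\theta}+r/\theta$) in the borderline case. Your case split on whether $\min\{r_\Phi^-,r\}$ equals $1$ or is strictly larger is actually slightly more careful than the paper's split on $r_\Phi^-$ alone: the paper's Case~1 argument produces $p\in[1,\min\{r_\Phi^-,r\})$, which is vacuous when $r=1$ and $r_\Phi^->1$, whereas your split routes that sub-case to the Theorem \ref{theorem3}(ii) machinery, where the uniform bound on $\|\cm\|$ as $\theta_m\uparrow1$ does the work.
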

\begin{proof}
We  prove the present theorem by considering the following two cases on both $r_{\Phi}^-$ and $r_{\Phi}^+$.

Case 1)  $1< r_{\Phi}^-\leq
r_{\Phi}^+<\infty$.
In this case, by both \cite[Lemma 2.28]{ZYYW}
and \cite[Remark 7.41(i)]{ref5},
we find that the Orlicz-slice space
$(E_\Phi^r)_t(\rn)$ is a ball Banach function space.
Since $n(\frac{1}{\min\{r_{\Phi}^-,r\}}-\f1q)<1$, it follows that there exists a
$p\in[1,\min\{r_{\Phi}^-,r\})$ such that  $n(\frac{1}{p}-\f1q)<1.$
Using  \cite[Lemma 2.31]{ZYYW}, we find that
\begin{align*}
\left[(E_\Phi^r)_t(\rn)\right]^{1/p}=(E_{\Phi_p}^{r/p})_t(\rn).
\end{align*}
Furthermore, by \cite[Theorem 2.26 and Lemmas 4.4 and 4.5]{ZYYW},
we conclude that $(E_\Phi^r)_t(\rn)$
has an absolutely continuous norms and
that the Hardy--Littlewood maximal operator $\cm$ is
bounded on $([(E_\Phi^r)_t(\rn)]^{1/p})'$.
Thus, all the assumptions of Theorem \ref{theorem3.9}(ii) are satisfied
for $X:=(E_\Phi^r)_t(\rn)$
and hence, by Theorem \ref{theorem3.9}(ii) with $X:=(E_\Phi^r)_t(\rn)$, we find that both \eqref{612} and \eqref{613}
hold true  for any $f\in W^{1,(E_\Phi^r)_t}(\rn).$

Case 2)  $1= r_{\Phi}^-\leq
r_{\Phi}^+<\infty$. In this case,  by
the proof of \cite[Theorem 2.20]{ZYYW},
we conclude
that, when
$1= r_{\Phi}^-\leq r_{\Phi}^+<\infty,$ $r\in[1,\infty),$ and $\theta\in(0,1),$
for any $f\in[(E_{\Phi/\theta}^{r/\theta})_t(\rn)]',$
$$\|\cm f\|_{[(E_{\Phi/\theta}^{r/\theta})_t(\rn)]'}
\lesssim \left[(3C_{r_{\Phi}^+})^{\frac{3r_{\Phi}^+}{\theta}}+
\frac{r}{\theta}\right]
\|f\|_{[(E_{\Phi/\theta}^{r/\theta})_t(\rn)]'},$$
where the implicit positive  constant depends only on $n$. Thus, \eqref{sjx}
holds true for $X:=(E_\Phi^r)_t(\rn).$
From this and Lemma \ref{brslice}, we
deduce that  all the assumptions of Theorem
\ref{theorem3}(ii) are satisfied for
$X:=(E_\Phi^r)_t(\rn).$ By
Theorem
\ref{theorem3}(ii) with
$X:=(E_\Phi^r)_t(\rn),$ we conclude that, for any $f\in \dot{W}^{1,(E_\Phi^r)_t}(\rn),$ both
\eqref{612} and \eqref{613} hold true.
\end{proof}
Using both Corollaries \ref{corollary3.111}
and \ref{corollary1001}, we obtain the following conclusions whose proofs are similar to
that of Theorem \ref{kan}; we omit the details here.
\begin{corollary}\label{oss1}
Assume that  $t\in(0,\infty),\,r\in(1,\infty),$ and $\Phi$
be an Orlicz function with positive lower
type $r_{\Phi}^-$ and positive upper type
$r_{\Phi}^+$. Let $1\leq r_{\Phi}^-\leq
r_{\Phi}^+<\infty,$ $q_1\in[1,\infty],$ and
$\ta\in (0,1)$. Let
$q\in [1, q_1]$ satisfy
$ \frac{1}{q}=\frac{1-\theta}{q_1}+\theta$.
\begin{itemize}
\item[{\rm(i)}] If $q_1\in[1,\infty),$ then there exists a positive constant $C$
such that, for any $f\in \dot{W}^{1,(E_\Phi^r)_t}(\rn),$
\begin{align*}
	&\sup_{\ld\in(0,\infty)}\lambda\left\|\left|
	\lf\{y\in\rn:\ |f(x)-f(y)|>\lambda|x-y|^{\f n
		{q}+\ta}\r\}\right|\right\|
	_{(E_\Phi^r)_t(\rn)}^{\f1q}\\
	&\quad\leq C\|f\|
	_{(E_{q_1\Phi}^{q_1r})_t(\rn)}^{1-\ta}\|\,|\nabla f|\,\|
	_{(E_\Phi^r)_t(\rn)}^{\ta}.
\end{align*}
\item[{\rm(ii)}] If $q_1=\infty,$ then there exists a positive constant $C$
such that, for any $f\in \dot{W}^{1,(E_\Phi^r)_t}(\rn),$
\begin{align*}
	&\sup_{\ld\in(0,\infty)}\lambda\left\|\left|
	\lf\{y\in\rn:\ |f(x)-f(y)|>\lambda|x-y|^{\f n
		{q}+\ta}\r\}\right|\right\|
	_{(E_\Phi^r)_t(\rn)}^{\f1q}\\
	&\quad\leq C\|f\|
	_{L^\infty(\rn)}^{1-\ta}\|\,|\nabla f|\,\|
	_{(E_\Phi^r)_t(\rn)}^{\ta}.
\end{align*}
\end{itemize}
\end{corollary}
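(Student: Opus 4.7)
The plan is to verify that the Orlicz-slice space $X:=(E_\Phi^r)_t(\rn)$ satisfies every hypothesis of Corollary \ref{corollary3.111} (respectively part (i) for $q_1\in[1,\infty)$ and part (ii) for $q_1=\infty$), and then simply invoke that corollary. All of the structural facts needed have already been assembled in the proof of Theorem \ref{kan} and in Lemma \ref{brslice}, so the work is really one of bookkeeping.

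First I would record that $X=(E_\Phi^r)_t(\rn)$ is a ball Banach function space with absolutely continuous norm; the former is \cite[Lemma 2.28]{ZYYW} together with \cite[Remark 7.41(i)]{ref5}, and the latter is \cite[Theorem 2.26]{ZYYW}. Next, Lemma \ref{brslice} shows that the centered ball average operators $\{B_s\}_{s\in(0,\infty)}$ are uniformly bounded on $X$. To exhibit the sequence $\{\alpha_m\}_{m\in\nn}\subset(0,1)$ required by Corollary \ref{corollary3.111}, I would reuse the argument from the proof of Theorem \ref{kan}: pick any sequence $\alpha_m\uparrow 1$ with $\alpha_m\in(\max\{1/r,1/r_{\Phi}^-\},1)$ eventually, use the identity $X^{1/\alpha_m}=(E_{\Phi_{\alpha_m}}^{r/\alpha_m})_t(\rn)$ from \cite[Lemma 2.31]{ZYYW} (so each $X^{1/\alpha_m}$ is again a ball Banach function space), and invoke \cite[Lemmas 4.4 and 4.5]{ZYYW} to conclude that $\cm$ is bounded on $(X^{1/\alpha_m})'$. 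The quantitative bound recalled in the Case~2 discussion of the proof of Theorem \ref{kan} shows that $\|\cm\|_{(X^{1/\alpha_m})'\to(X^{1/\alpha_m})'}$ stays uniformly bounded as $\alpha_m\uparrow 1$, which is precisely \eqref{72121}.

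For part (i) with $q_1\in[1,\infty)$, an additional hypothesis is that $X^{q_1}$ is a ball Banach function space. Using \cite[Lemma 2.31]{ZYYW} in the other direction, one has $X^{q_1}=(E_{\Phi^{q_1}}^{q_1 r})_t(\rn)$ where $\Phi^{q_1}(s):=\Phi(s^{q_1})$; since $\Phi^{q_1}$ is again an Orlicz function (with lower type $q_1 r_\Phi^-\ge 1$ and upper type $q_1 r_\Phi^+$), applying \cite[Lemma 2.28]{ZYYW} once more shows $X^{q_1}$ is a ball Banach function space. The target norm $\|f\|_{(E_{q_1\Phi}^{q_1 r})_t(\rn)}$ in the statement is, up to the notational convention $E_{q_1\Phi}=E_{\Phi^{q_1}}$, exactly $\|f\|_{X^{q_1}}$. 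With every hypothesis now verified, Corollary \ref{corollary3.111}(i) delivers the inequality claimed in Corollary \ref{oss1}(i); Corollary \ref{corollary3.111}(ii) delivers Corollary \ref{oss1}(ii) directly, no further verification being needed in the $q_1=\infty$ case.

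The only step that requires any real care is verifying the uniform-bound condition \eqref{72121} in the endpoint case $r_\Phi^-=1$: this is where one cannot just quote a single-exponent boundedness result but must inspect how the operator norm of $\cm$ on $(X^{1/\alpha_m})'$ depends on $\alpha_m$ as $\alpha_m\uparrow 1$. This quantitative control was, however, already extracted inside the proof of Theorem \ref{kan} (it is the estimate $\|\cm f\|_{[(E_{\Phi/\theta}^{r/\theta})_t(\rn)]'}\lesssim[(3C_{r_\Phi^+})^{3r_\Phi^+/\theta}+r/\theta]\|f\|_{[(E_{\Phi/\theta}^{r/\theta})_t(\rn)]'}$, whose right-hand side remains finite once $\theta$ stays bounded away from~$0$), so the present proof can simply cite that line of Theorem \ref{kan}. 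Aside from this point, the argument is a direct reduction to Corollary \ref{corollary3.111}.
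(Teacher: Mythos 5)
Your proposal is correct and follows essentially the same route the paper intends: the paper explicitly states that Corollaries~\ref{oss1} and~\ref{oss2} follow from Corollaries~\ref{corollary3.111} and~\ref{corollary1001} by arguments ``similar to that of Theorem~\ref{kan},'' and your write-up is precisely that verification. You correctly assemble all the needed structural facts (ball Banach function space with absolutely continuous norm, uniform boundedness of the centered ball averages via Lemma~\ref{brslice}, identification of the convexifications $X^{1/\alpha_m}$ and $X^{q_1}$ via \cite[Lemma 2.31]{ZYYW}, boundedness of $\mathcal{M}$ on $(X^{1/\alpha_m})'$, and the quantitative operator-norm bound giving~\eqref{72121}), and you correctly note that the only delicate point is the endpoint case $r_\Phi^-=1$, which is exactly where the explicit $\theta$-dependence of the maximal operator norm, already extracted in Case~2 of the proof of Theorem~\ref{kan}, is needed.

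One small remark for completeness: you apply the sequence-based hypothesis of Corollary~\ref{corollary3.111} uniformly across the whole parameter range, whereas the paper's Theorem~\ref{kan} separated the case $r_\Phi^->1$ (handled there via Theorem~\ref{theorem3.9}(ii)) from $r_\Phi^-=1$ (handled via Theorem~\ref{theorem3}(ii)). For Corollary~\ref{oss1} the paper's implied route when $r_\Phi^->1$ would be Remark~\ref{r3.10}(iv), but your unified treatment via the sequence $\{\alpha_m\}$ also works, because the operator-norm bound $(3C_{r_\Phi^+})^{3r_\Phi^+/\theta}+r/\theta$ from the proof of \cite[Theorem 2.20]{ZYYW} remains finite as $\theta\uparrow 1$ regardless of whether $r_\Phi^-$ equals~$1$ or exceeds~$1$; it would be worth a sentence making this explicit so that the citation of ``Case~2'' does not appear restricted to the endpoint. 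Aside from that, the argument is complete and matches the paper's omitted proof.
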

\begin{corollary}\label{oss2}
Let $t\in(0,\infty),\,r\in(1,\infty),$ and
$\Phi$ be an Orlicz function with
positive lower type $r_{\Phi}^-$
and positive upper type $r_{\Phi}^+$.
Let $1\leq r_{\Phi}^-\leq r_{\Phi}^+<\infty$, $s_1
\in (0,1)$, $q_1\in(1,\infty)$, and
$\theta\in(0,1).$ Let $s\in(s_1,1)$ and $q
\in(1,\min\{r_{\Phi}^-,r\})$ satisfy
$s=(1-\theta) s_1+\theta$ and $\frac{1}{q}
=\frac{1-\theta}{q_1}+\theta.$
Then, for any $f\in \dot{W}^{1,(E_\Phi^r)_t}(\rn),$
\eqref{zhongyang} holds
true with $X$ replaced by $(E_\Phi^r)_t(\rn).$
\end{corollary}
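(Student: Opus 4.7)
The plan is to invoke Corollary \ref{corollary1001} (together with Remark \ref{2022721}(ii) which extends the inequality from smooth functions to the full Sobolev class $\dot{W}^{1,X}(\rn)$) with the choice $X := (E_\Phi^r)_t(\rn)$. Since the hypothesis $q\in (1,\min\{r_\Phi^-,r\})$ forces $\min\{r_\Phi^-,r\}>1$ and $r>1$ is already assumed, the non-vacuous case requires $r_\Phi^->1$, so I will work in that regime.

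First, I would verify the three structural hypotheses on $X$ that parallel the ones already checked in the proof of Theorem \ref{kan}, Case 1: namely, $X$ is a ball Banach function space by \cite[Lemma 2.28]{ZYYW} (see also \cite[Remark 7.41(i)]{ref5}); $X$ has an absolutely continuous norm by \cite[Theorem 2.26]{ZYYW}; and the centered ball averages $\{B_s\}_{s\in(0,\infty)}$ are uniformly bounded on $X$ by Lemma \ref{brslice}.

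Next, I would construct the required sequence $\{\alpha_m\}_{m\in\nn}\subset(0,1)$. Choose $\alpha_m\nearrow 1$ with $\alpha_m\in (\max\{1/r_\Phi^-,1/r\},1)$, which is possible precisely because $r_\Phi^->1$ and $r>1$. By \cite[Lemma 2.31]{ZYYW},
\begin{equation*}
X^{1/\alpha_m}=\left(E_{\Phi_{1/\alpha_m}}^{r/\alpha_m}\right)_t(\rn),
\end{equation*}
where $\Phi_{1/\alpha_m}(u):=\Phi(u^{\alpha_m})$ is an Orlicz function of lower type $\alpha_m r_\Phi^->1$ and upper type $\alpha_m r_\Phi^+<\infty$; moreover $r/\alpha_m>1$. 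By \cite[Lemma 2.28]{ZYYW} each $X^{1/\alpha_m}$ is a ball Banach function space, and by \cite[Theorem 2.26, Lemmas 4.4 and 4.5]{ZYYW} the Hardy--Littlewood maximal operator $\cm$ is bounded on $(X^{1/\alpha_m})'$. With these ingredients in place, Corollary \ref{corollary1001} together with Remark \ref{2022721}(ii) yields \eqref{zhongyang} with $X$ replaced by $(E_\Phi^r)_t(\rn)$.

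The main obstacle I expect is the verification of the uniform condition \eqref{72121}, that is, $\limsup_{m\to\infty}\|\cm\|_{(X^{1/\alpha_m})'\to (X^{1/\alpha_m})'}<\infty$. This requires tracking the dependence of the operator norm of $\cm$ on the parameter $\alpha_m$ within the boundedness proofs of \cite[Lemmas 4.4 and 4.5]{ZYYW}. Since the lower and upper types of $\Phi_{1/\alpha_m}$ depend continuously on $\alpha_m$ (they are $\alpha_m r_\Phi^\pm$), and $r/\alpha_m$ depends continuously on $\alpha_m$, with all limits remaining in the admissible ranges as $\alpha_m\to 1^-$, the operator norms should vary continuously in $\alpha_m$ and converge to the operator norm of $\cm$ on $(X)'$ itself. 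Verifying this continuous dependence — essentially by inspection of the constants appearing in the cited boundedness estimates, as is done for the exponent-dependent bound $(3C_{r_\Phi^+})^{3r_\Phi^+/\theta}+r/\theta$ appearing in Case 2 of the proof of Theorem \ref{kan} — is the technical crux that closes the argument.
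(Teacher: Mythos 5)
Your plan works in principle, but it follows an unnecessarily hard route and leaves a genuine gap at the step you yourself flag as the technical crux. The paper's intended argument avoids it entirely.

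The key observation you made — that $q\in(1,\min\{r_\Phi^-,r\})$ forces $r_\Phi^->1$ — means you are automatically in the regime of Case~1 of the proof of Theorem~\ref{kan}, where the hypotheses of Theorem~\ref{theorem3.9}(ii) are verified for $X:=(E_\Phi^r)_t(\rn)$ with a \emph{single} $p\in[1,\min\{r_\Phi^-,r\})$ (ball Banach function space, $X^{1/p}$ a ball Banach function space with $\cm$ bounded on $(X^{1/p})'$ by \cite[Lemmas 4.4 and 4.5]{ZYYW}, and absolutely continuous norm by \cite[Theorem 2.26]{ZYYW}). Once those are in hand, Remark~\ref{2022721}(ii) gives \eqref{zhongyang} for every $f\in\dot{W}^{1,X}(\rn)$ with no sequence and no verification of \eqref{72121}. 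You have misread Remark~\ref{2022721}(ii): it does not merely ``extend the inequality from smooth functions to the full Sobolev class'' — Corollary~\ref{corollary1001} itself already does that extension under the absolutely-continuous-norm and uniform-$\{B_r\}$ assumptions — rather, Remark~\ref{2022721}(ii) \emph{replaces} the sequence-plus-\eqref{72121} hypotheses of Corollary~\ref{corollary1001} with the single-$p$ hypotheses of Theorem~\ref{theorem3.9}(ii). Because of this misreading you instead construct the sequence $\{\alpha_m\}$ and try to prove \eqref{72121}, which you then acknowledge you have not completed (``should vary continuously \ldots\ by inspection of the constants''). That is a real gap: your route requires a quantitative bound, uniform in $m$, on $\|\cm\|_{(X^{1/\alpha_m})'\to(X^{1/\alpha_m})'}$ extracted from \cite[Lemmas 4.4 and 4.5]{ZYYW}, and you supply only a heuristic. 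The paper's route sidesteps the issue.

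Two smaller remarks. First, your convexification formula has the exponent inverted: with the paper's convention $\Phi_p(t):=\Phi(t^{1/p})$ and the identity $[(E_\Phi^r)_t(\rn)]^{1/p}=(E_{\Phi_p}^{r/p})_t(\rn)$ applied with $p=\alpha_m\in(0,1)$, one gets $X^{1/\alpha_m}=(E_{\Phi_{\alpha_m}}^{r/\alpha_m})_t(\rn)$ with $\Phi_{\alpha_m}(u)=\Phi(u^{1/\alpha_m})$, whose lower and upper types are $r_\Phi^-/\alpha_m$ and $r_\Phi^+/\alpha_m$, not $\alpha_m r_\Phi^\pm$. The qualitative conclusion (both types stay strictly greater than $1$ and bounded above as $\alpha_m\to1^-$) is unchanged, and in fact it holds automatically without your restriction $\alpha_m>\max\{1/r_\Phi^-,1/r\}$, so this slip is harmless but should be corrected. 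Second, if you do wish to insist on Route A, you could at least note that \cite[Lemma 4.5]{ZYYW} gives a bound of the same quantitative form as the one the paper records explicitly in Case~2 of the proof of Theorem~\ref{kan}, namely one that stays finite as the parameter tends to $1$; but simply invoking Remark~\ref{2022721}(ii) with a fixed $p$ is shorter and matches what the paper did.
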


\begin{remark}
To the best of our knowledge, the
Gagliardo--Nirenberg type inequalities of both Corollaries \ref{oss1}
and \ref{oss2} on the
Sobolev--Orlicz-slice space are totally new.
\end{remark}

\noindent\textbf{Acknowledgements}\quad
Dachun Yang would like to thank Professor Ha\"im Brezis and Professor Po-Lam Yung
for  kindly  providing him the reference \cite{ref8}.

\medskip

\noindent\textbf{Data Availability}\quad Data sharing not applicable to this 
article as no datasets were generated or analysed during
the current study.

\bigskip

\smallskip

\noindent Feng Dai

\smallskip

\noindent Department of Mathematical and
Statistical Sciences, University of Alberta
Edmonton, Alberta T6G 2G1, Canada

\smallskip

\noindent {\it E-mail}: \texttt{fdai@ualberta.ca}

\bigskip

\noindent Xiaosheng Lin, Dachun Yang
(Corresponding author),
Wen Yuan and Yangyang Zhang

\smallskip

\noindent Laboratory of Mathematics and Complex Systems
(Ministry of Education of China),
School of Mathematical Sciences, Beijing Normal University,
Beijing 100875, The People's Republic of China

\smallskip

\noindent {\it E-mails}: \texttt{xiaoslin@mail.bnu.edu.cn} (X. Lin)

\noindent\phantom{{\it E-mails:}} \texttt{dcyang@bnu.edu.cn} (D. Yang)

\noindent\phantom{{\it E-mails:}} \texttt{wenyuan@bnu.edu.cn} (W. Yuan)

\noindent\phantom{{\it E-mails:}} \texttt{yangyzhang@mail.bnu.edu.cn} (Y. Zhang)

\end{document}